\theoremstyle{plain}
\newtheorem*{maintheorem*}{Main Theorem}
\newtheorem*{thmd*}{Theorem 1.5}
\newtheorem*{thme*}{Theorem 1.6}
\newtheorem*{remark*}{Remark}
\newtheorem*{conjecture*}{Conjecture}
\newtheorem*{prop*}{Proposition}
\newtheorem{thm}{Theorem}[section]
\newtheorem{cor}[thm]{Corollary}
\newtheorem{lem}[thm]{Lemma}
\newtheorem{prop}[thm]{Proposition}
\theoremstyle{definition}
\newtheorem*{proofc*}{Proof of Theorem C}
\newtheorem{definition}[thm]{Definition}
\newtheorem{remark}[thm]{Remark}
\newtheorem{notation}[thm]{Notation}
\newtheorem{claim}[thm]{Claim}
\newcommand{\cl}[1]{\marginpar{\color{purple}\tiny #1 --cl}}
\newcommand{\bm}[1]{\marginpar{\color{red}\tiny #1 --bm}}
\DeclareMathOperator{\CAT}{CAT}
\DeclareMathOperator{\Teich}{Teich}
\DeclareMathOperator{\Mod}{Mod}
\DeclareMathOperator{\WP}{WP}
\DeclareMathOperator{\tw}{tw}
\DeclareMathOperator{\diam}{diam}
\DeclareMathOperator{\nonannular}{non-annular}
\DeclareMathOperator{\pr}{pr}
\DeclareMathOperator{\asya}{\stackrel{+}{\asymp}}
\DeclareMathOperator{\asym}{\stackrel{*}{\asymp}}
\DeclareMathOperator{\la}{\stackrel{+}{\prec}}
\DeclareMathOperator{\lm}{\stackrel{*}{\prec}}
\newcommand{\ML}{\mathcal{ML}}
\newcommand{\PML}{\mathcal{PML}}
\newcommand{\EL}{\mathcal{EL}}
\newcommand{\cC}{\mathcal C}
\newcommand{\cS}{\mathcal S}
\newcommand{\I}{\text{i}}
\newcommand{\ep}{\epsilon} 
\newcommand{\OT}{Teichm\"{u}ller}
\numberwithin{equation}{section}
\begin{document}


\title[Limit sets of WP geodesics]{Limit sets of Weil-Petersson geodesics}

\date{\today}


\author[Jeff Brock]{Jeffrey Brock}
\address{Department of Mathematics, Brown University, Providence, RI, }
\email{brock@math.brown.edu}

\author[Chris Leininger]{Christopher Leininger}
\address{ Department of Mathematics, University of Illinois, 1409 W Green ST, Urbana, IL}
\email{clein@math.uiuc.edu}

\author[Babak Modami]{Babak Modami}
\address{ Department of Mathematics, Yale University, 10 Hillhouse Ave, New Haven, CT}
\email{babak.modami@yale.edu}

\author[Kasra Rafi]{Kasra Rafi}
\address{Department of Mathematics, University of Toronto, Toronto, ON }
\email{rafi@math.toronto.edu}

\thanks{The first author was partially supported by NSF grant DMS-1207572, the second author by NSF grant DMS-1510034, the third by NSF grant DMS-1065872, and the fourth
author by NSERC grant \# 435885.}

\subjclass[2010]{Primary 32G15, Secondary 37D40, 53B21} 



\begin{abstract}
In this paper we prove that the limit set of any Weil-Petersson geodesic ray with uniquely ergodic ending lamination is a single point in the Thurston compactification of Teichm\"{u}ller space.  On the other hand, we construct examples of Weil-Petersson geodesics with minimal non-uniquely ergodic ending laminations and limit set a circle in the Thurston compactification. 
\end{abstract}

\maketitle

\setcounter{tocdepth}{1}
\tableofcontents

\section{Introduction}

Given a surface $S$, let $\Teich(S)$ denote the Teichm\"uller space of hyperbolic metrics on $S$, and $\Mod(S)$ the mapping class group of $S$.   Thurston compactified $\Teich(S)$ by adjoining the space of projective measured laminations $\PML(S)$, and used this in his classification of elements of $\Mod(S)$; \cite{ThurstonDiff,FLP}.  On the other hand, $\Teich(S)$ has two important, $\Mod(S)$--invariant, unique-geodesic metrics, and hence has natural visual compactifications. 
These metrics have their own drawbacks---the Teichm\"{u}ller metric is not negatively curved \cite{masur:CG} and Weil-Petersson metric is incomplete \cite{wolpert-noncomplete}---and hence the standard results about visual compactification do not readily apply to any of these metrics. For example, the action of $\Mod(S)$ extends continuously to neither the Teichm\"{u}ller visual boundary \cite{Kerck-asymp-teich} nor the Weil-Petersson visual boundary \cite{WPvisualbdry}.

In \cite{2bdriesteich}, Masur showed that the Thurston boundary and the Teichm\"uller visual boundary are not so different, proving that almost every Teichm\"uller ray converges to a point on the Thurston boundary (though positive dimensional families of rays based at a single point can converge to the same point).  Lenzhen \cite{lenzhen} constructed the first examples of Teichm\"uller geodesic rays that do not converge to a unique point in the Thurston boundary, and recent constructions have illustrated increasingly exotic behavior \cite{nonuniqueerg,nue2,nuechaikaetl,2dlimit}.


In this paper we begin an investigation into the behavior of how the Weil-Petersson visual compactification relates to Thurston's compactification.  Specifically, we study the behavior of Weil-Petersson geodesic rays in the Thurston compactification.  Our results are stated in terms of the {\em ending laminations} of Weil-Petersson geodesic rays introduced by Brock, Masur and Minsky in \cite{bmm1}; see \S\ref{subsec : endlam}.  
 The first theorem is a version of Masur's convergence for Teichm\"uller geodesics. We say that a lamination is {\em uniquely ergodic} if it admits a unique transverse measure, up to scaling. Moreover, we say that the lamination is {\em minimal} if every leaf of the lamination is dense in the lamination, and {\em filling} if the lamination intersects every simple closed geodesic on the surface nontrivially. 
\begin{thm} \label{thm : UE main vague}
Suppose that the ending lamination of a Weil-Petersson geodesic ray is minimal, filling, and uniquely ergodic.  Then the ray converges in the Thurston compactification to the unique projective class of transverse measures on the ending lamination.
\end{thm}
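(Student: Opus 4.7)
The plan is to relate the combinatorial structure of the Weil--Petersson ray, encoded by its ending lamination $\lambda$, to the length functions defining the Thurston boundary. Recall that $[\mu]\in\PML(S)$ is the Thurston limit of a sequence $X_n\in\Teich(S)$ precisely when there exist $c_n>0$ with $\ell_{X_n}(\gamma)/c_n\to i(\mu,\gamma)$ for every simple closed curve $\gamma$. Since $\lambda$ is minimal, filling, and uniquely ergodic, the projective class $[\mu]$ of transverse measures on $\lambda$ is the only candidate for the limit of $r(t)$; the problem is to show that the length-function ratios along $r$ actually detect $[\mu]$.

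First, I would use the Brock--Masur--Minsky description of the ending lamination to extract, along the ray, a sequence of times $t_n\to\infty$ and simple closed curves $\alpha_n$ whose hyperbolic lengths $\ell_{r(t_n)}(\alpha_n)$ tend to $0$ and which Hausdorff-converge to $\lambda$. Any subsequential limit in $\PML(S)$ of the projective classes $[\alpha_n]$ is a projective transverse measure supported on $\lambda$, and so by unique ergodicity $[\alpha_n]\to[\mu]$ in $\PML(S)$.

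Second, and this is the analytic heart of the argument, I would compare hyperbolic lengths at time $t_n$ to intersection numbers with the short curve $\alpha_n$. For any simple closed curve $\gamma$ one has a collar-type estimate
\[
\ell_{r(t_n)}(\gamma) \;\asymp\; i(\alpha_n,\gamma)\,\log\!\bigl(1/\ell_{r(t_n)}(\alpha_n)\bigr) \;+\; \ell_{r(t_n)}(\alpha_n)\cdot \tw_{\alpha_n}(r(t_n),\gamma),
\]
with constants depending only on the topology of $S$. Because $\lambda$ is minimal, no closed curve appears as a leaf, and the BMM characterization of the ending lamination forces the annular subsurface projections $\tw_{\alpha_n}(r(t_n),\gamma)$ to remain bounded as $n\to\infty$, so the twist term is negligible compared to the intersection term. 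Setting $c_n:=\log(1/\ell_{r(t_n)}(\alpha_n))$ and normalizing against a fixed reference curve $\gamma_0$, step one then yields
\[
\frac{\ell_{r(t_n)}(\gamma)}{\ell_{r(t_n)}(\gamma_0)} \longrightarrow \frac{i(\mu,\gamma)}{i(\mu,\gamma_0)},
\]
which is Thurston convergence along the subsequence $t_n$.

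Finally, to promote this to convergence for all $t\to\infty$, I would use continuity of $t\mapsto \ell_{r(t)}(\gamma)$ together with the fact that between consecutive short-curve events $t_n, t_{n+1}$ only a bounded amount of combinatorial change occurs in the Bers marking along $r$, so the length ratios cannot oscillate significantly. I expect the main obstacle to be step two, specifically the control on annular twisting: in the Weil--Petersson setting the bounded-geometry arguments of Rafi from the Teichm\"uller case do not apply directly, and one must use BMM product region estimates for the thin parts of Weil--Petersson geodesics to rule out large twist accumulation around the sequence $\alpha_n$.
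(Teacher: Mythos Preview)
Your proposal attempts a direct length-function analysis, but it contains a genuine gap and is far more elaborate than what is needed.

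The central problem is your step two. You assert that ``the BMM characterization of the ending lamination forces the annular subsurface projections $\tw_{\alpha_n}(r(t_n),\gamma)$ to remain bounded as $n\to\infty$.'' This is not true: unique ergodicity of the ending lamination places no bound whatsoever on annular coefficients. A uniquely ergodic $\lambda$ can have $d_{\alpha_n}(\mu,\lambda)$ arbitrarily large for the short curves $\alpha_n$ appearing along the ray (indeed, the non-uniquely ergodic examples in this very paper are built by making annular coefficients grow, but large annular coefficients arise just as easily along rays with uniquely ergodic ending laminations). Without this bound your collar estimate does not reduce to the intersection term, and the claimed convergence of length ratios fails. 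Your collar formula is also too coarse: at $r(t_n)$ there may be several short curves, and the length of $\gamma$ picks up a contribution from each, not just from $\alpha_n$. Finally, your step three (``only a bounded amount of combinatorial change occurs between $t_n$ and $t_{n+1}$'') is unsubstantiated; without bounded geometry assumptions there is no such control on WP rays.

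The paper's proof sidesteps all of this by following Masur's argument for Teichm\"uller geodesics essentially verbatim. One takes an arbitrary accumulation point $[\bar\xi]$ of $r$ in $\PML(S)$, uses the Fundamental Lemma of \cite[expos\'e 8]{FLP} to produce measured laminations $\bar\mu_i$ with $\I(\bar\mu_i,\delta)\leq\ell_\delta(r(t_i))$ and $b_i\bar\mu_i\to\bar\xi$ with $b_i\to 0$, takes rescaled Bers curves $c_i\gamma_i\to\bar\nu$ with $c_i\to 0$, and observes that
\[
\I(\bar\xi,\bar\nu)=\lim_i b_ic_i\,\I(\bar\mu_i,\gamma_i)\leq C\lim_i b_ic_i=0.
\]
Since $\nu$ is uniquely ergodic, $\I(\bar\xi,\bar\nu)=0$ forces $[\bar\xi]=[\bar\nu]$. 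No length formula, no twist control, and no subsequence-to-full-sequence promotion is needed: the argument handles every accumulation point directly.
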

On the other hand, we prove that there are geodesic rays for which the ending lamination is minimal but non-uniquely ergodic, and whose limit sets are positive dimensional and in fact are non-simply connected.
\begin{thm} \label{thm : NUE main vague}
There exist Weil-Petersson geodesic rays with minimal, filling, non-uniquely ergodic ending laminations whose limit sets in the Thurston compactification are topological circles.
\end{thm}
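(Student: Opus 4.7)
The plan is to construct WP geodesic rays by an iterative combinatorial procedure and then read off the Thurston limit set from the resulting hierarchy. First, produce a minimal filling non-uniquely ergodic lamination $\lambda$ on $S$ whose simplex $\Delta \subset \PML(S)$ of projective transverse measures is at least two-dimensional, and identify a topological circle $\gamma \subset \Delta$ that the ray will be forced to accumulate on. A natural way is to arrange $\lambda$ to have three or four ergodic components and take $\gamma$ to be, for instance, the boundary of a 2-face of $\Delta$. The lamination $\lambda$ itself is produced by a now-standard iteration of partial pseudo-Anosov maps on overlapping subsurfaces, with the interleaving pattern tailored to yield the desired number of ergodic measures.

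Next, realize $\lambda$ as the ending lamination of a WP geodesic ray. Using the WP ending-lamination machinery of Brock--Masur--Minsky and Modami, prescribe a hierarchy on $S$ whose subsurface structure is dictated by the iteration above, and then apply the shadow and fellow-traveling results underlying WP ending-lamination theory to realize this hierarchy by a WP geodesic $X_t$. The interleaving is chosen so that across successive time windows, $X_t$ tracks the action of a single partial pseudo-Anosov piece, cycling in order among the pieces that correspond to the vertices of a 2-face of $\Delta$ whose boundary is $\gamma$.

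Finally, identify the Thurston accumulation set as $\gamma$. When $X_t$ lies in a window dominated by a single partial pseudo-Anosov, Wolpert-type hyperbolic length estimates together with the same convergence mechanism underlying Theorem~\ref{thm : UE main vague} will show that the rescaled hyperbolic length function of $X_t$ is close in $\PML(S)$ to a projective class on the edge of $\Delta$ corresponding to that piece. By continuity of the normalized length functional in $t$ and the prescribed cyclic order of the windows, the limit set of the ray is precisely the closed curve $\gamma$, a topological circle, and its homeomorphism type can then be confirmed by checking that $X_t$ does not backtrack across the interior of the 2-face.

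The chief obstacle is this last step: turning the coarse combinatorial control afforded by subsurface projections and hierarchies into sharp enough metric control on $X_t$ to pinpoint its Thurston-projective limits. This requires comparing, in the WP setting, hyperbolic lengths along the geodesic with subsurface-projection coefficients of its short markings, in the spirit of the Teichm\"uller arguments of Leininger--Lenzhen--Rafi and their successors. A key ingredient will be Wolpert's comparisons between WP distance, WP length, and hyperbolic length of short curves, which let the iterative combinatorial input be translated into the asymptotic statements on length ratios $\ell_{X_t}(\alpha)/\ell_{X_t}(\beta)$ needed to locate the limit set on $\gamma$ rather than in the interior of $\Delta$.
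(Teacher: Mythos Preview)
Your outline has the right three-step architecture---build a non-uniquely ergodic lamination, realize it as the ending lamination of a WP ray, then compute the Thurston accumulation set---and you are honest that the last step is the crux.  But the middle step is where your plan genuinely breaks down, and without it the last step cannot be carried out.

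You propose to ``realize this hierarchy by a WP geodesic $X_t$'' via the Brock--Masur--Minsky ending-lamination machinery and fellow-traveling in the pants graph.  That machinery gives only coarse control: it tells you $Q(X_t)$ fellow-travels a hierarchy path in $P(S)$, up to bounded error.  This is far too weak to determine Thurston limits, which depend on \emph{ratios of hyperbolic lengths}, not on pants-graph position.  Two rays with the same ending lamination and the same hierarchy shadow can have different Thurston limit sets.  The paper instead constructs an explicit piecewise geodesic $g^\omega_{\mathcal E}$ in the WP completion $\overline{\Teich(S)}$, whose corners lie on the strata $\cS(\gamma_k)$, and then proves via a Gauss--Bonnet argument on ruled polygons (Proposition~\ref{prop : limit geodesic}) that the true WP ray is \emph{strongly asymptotic} to $g^\omega_{\mathcal E}$.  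This is what pins down, for each time $t$, exactly which curves are short on $r(t)$ and how short (Theorem~\ref{thm : short times along r}, Corollary~\ref{cor : lengths along r}).  Your proposal has no analogue of this step.

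Two further points.  First, invoking ``the same convergence mechanism underlying Theorem~\ref{thm : UE main vague}'' is a red herring: that proof is a two-line soft argument (zero intersection with the ending lamination forces the limit point) and says nothing about which projective measure you land on when there are several.  The paper's NUE argument is instead driven by the Choi--Rafi--style length expansion (Theorem~\ref{thm : CRS}) together with sharp intersection-number asymptotics $\I(\delta,\gamma_k)\asym A(0,k)$ from Theorem~\ref{thm : intersection}, neither of which you mention.  Second, the circle the paper obtains is the Hamiltonian cycle $[\bar\nu^0]\to[\bar\nu^1]\to\cdots\to[\bar\nu^{m-1}]\to[\bar\nu^0]$ in the $1$--skeleton of the $(m-1)$--simplex; for $m\geq 4$ this is not the boundary of any $2$--face, and the cyclic order is forced by the specific twist-plus-rotation construction on $S_{0,p}$, not by a generic partial-pseudo-Anosov iteration.
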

See Theorem~\ref{thm : limgE 1-sk} for a more precise statement.
Without the minimality assumption, the construction of Weil-Petersson geodesic rays that do not limit to a single point requires some different ideas. This construction is given in \cite{wplimit-nonminimal}.
\medskip

\subsection{Outline of the paper} Section \ref{sec : background} is devoted to background about \OT\ theory, curve complexes and laminations.
In Section \ref{subsec : seq of curves} we state our technical results about sequences of curves on surfaces that limit to non-uniquely ergodic laminations. These results are minor variations of those in \cite{nue2}, and their proofs are sketched in the appendix of the paper. In Section \ref{sec : sequence of curves on S0p} we construct explicit examples of non-uniquely ergodic laminations on punctured spheres, appealing to the results from Section~\ref{subsec : seq of curves}. In Section \ref{sec : closed geodesics} we study the limiting picture of axes for pseudo-Anosov mapping classes of a punctured sphere arising in the construction of non-uniquely ergodic laminations in Section \ref{sec : sequence of curves on S0p}. In Section \ref{sec : NUE case}, we use this analysis to determine limit sets of our WP geodesic rays 
with non-uniquely ergodic ending laminations, and thus prove Theorem \ref{thm : NUE main vague}. In Section \ref{sec : UE case} we prove Theorem \ref{thm : UE main vague} about limit sets of geodesic rays with uniquely ergodic ending laminations. 
\medskip

\noindent{\bf Acknowledgment:} The authors would like to thank Yair Minsky for useful conversations related to this work.  They would also like to thank the anonymous referee for many helpful suggestions.

\section{Background}\label{sec : background}
\begin{notation}
Our notation for comparing quantities in this paper is as follows: Let $K\geq 1$ and $C\geq 0$. Given two functions $f,g:X\to\mathbb{R}^{\geq 0}$, we write $f\asymp_{K,C}g$ if 
$\frac{1}{K}g(x)-C\leq f(x)\leq Kg(x)+C$ for all $x\in X$, $f\stackrel{+}{\asymp}_{C}g$ if $g(x)-C\leq f(x)\leq g(x)+C$ and $f\asym_{K}g$ if $\frac{1}{K}g(x)\leq f(x)\leq Kg(x)$.
The notation $f\la_C g$ means that $f(x)\leq g(x)+C$ for all $x\in X$ and $f\lm_K g$ means that $f(x)\leq Kg(x)$ for all $x\in X$.

When the numbers $K,C$ are understood from the context we drop them from the notation.  
\end{notation}

\subsection{Surfaces and subsurfaces:} In this paper surfaces are connected, orientable and of finite type with boundaries or punctures. We denote a surface with genus $g$ and $b$ boundary curves or punctures by $S_{g,b}$ and define the complexity of the surface by $\xi(S_{g,b})=3g+b-3$. The main surfaces we consider always have only punctures, however we consider subsurfaces of the main surfaces with both punctures and boundary curves.

\subsection{Curves and laminations}\label{subsec : combinatorics}
\begin{notation}
  Throughout this paper, by a {\em curve} we mean the isotopy class
  of an essential, simple, closed curve.  When convenient, we do not distinguish between a curve and a representative of the isotopy class.
    A {\em multicurve} is a collection of pairwise disjoint curves (that is, curves with pairwise disjoint representatives). 
    
    By a
  {\em subsurface} of $S$, we mean the isotopy class of an
  embedded essential subsurface: one whose boundary consists of
  essential curves and whose punctures agree with those of $S$.

  We say that two curves $\alpha$ and $\beta$ overlap and denote it by
  $\alpha\pitchfork \beta$ if the curves $\alpha$ and $\beta$ cannot
  be represented disjointly on the surface $S$. Two multicurves $\sigma$
  and $\tau$ overlap if there are curves $\alpha\in\sigma$ and
  $\beta\in\tau$ which overlap.
   A curve $\alpha$ and a subsurface $Y\subseteq S$ overlap, denoted
   by $\alpha\pitchfork Y$, if $\alpha$ cannot be realized
  disjointly from $Y$ (up to homotopy). A multicurve and a
  subsurface overlap if a component of the multicurve overlaps with the subsurface. We say that two subsurfaces $Y$ and
  $Z$ overlap, and denote it by $Y\pitchfork Z$, if
  $\partial{Y}\pitchfork Z$ and $\partial{Z}\pitchfork Y$.
\end{notation}

We refer the reader to \cite{mm1,mm2} for
 background about the curve complex and subsurface projection maps. Denote the curve complex of a surface $S$  
by $\mathcal{C}(S)$ and the set of vertices of the complex by $\mathcal{C}_0(S)$. 
The set $\mathcal{C}_0(S)$ is in fact the set of essential simple closed curves on $S$.

 A {\em pants decomposition} on the surface $S$ is a multicurve with a maximal number of components.
 A {\em (partial) marking} $\mu$ on the surface consists of pants decomposition, called the base of $\mu$, and a choice transversal curves for 
 (some) all curves in the base.
   For background about pants and marking graphs and hierarchical structures and (hierarchy)
resolution paths in pants and marking graphs we refer the reader to
\cite{mm2}. We denote the pants graph of the surface $S$ by $P(S)$. Here we only recall that hierarchy paths are certain quasi-geodesics
in $P(S)$ with quantifiers that only depend on the topological type of $S$. 

Let $Y\subseteq S$ be an essential subsurface. The {\em $Y$--subsurface projection coefficient} of two multicurves,
markings or laminations $\mu,\mu'$ is 
defined by
\begin{equation}d_Y(\mu,\mu'):=\diam_{\cC(Y)}\Big(\pi_Y(\mu)\cup\pi_Y(\mu')\Big).\end{equation}
Here $\pi_Y$ is the subsurface projection (coarse) map and $\diam_{\cC(Y)}(\cdot)$ denotes the diameter of the given subset of $\mathcal{C}(Y)$.
When $Y$ is an annular subsurface with core curve $\gamma$ we also denote $d_{Y}(\mu,\mu')$ by $d_\gamma(\mu,\mu')$.

Our results in this paper are formulated in terms of subsurface coefficients which can be thought of as an analogue of
continued fraction expansions which provide a kind of symbolic
coding for us. 

We assume that the reader is familiar with basic facts
about laminations and measured laminations on hyperbolic surfaces (see
e.g. \cite{phtraintr} for an introduction). We denote the space of measured laminations equipped with the weak$^{*}$ topology by $\mathcal{ML}(S)$ 
and the space of projective classes of measured laminations equipped with the quotient topology by $\mathcal{PML}(S)$.

Recall that a measurable lamination is
{\em uniquely ergodic} if it supports exactly one transverse measure up to scale. Otherwise, the lamination
is {\em non-uniquely ergodic}. 

An important property of
curve complexes is that they are Gromov hyperbolic \cite{mm1}. By the
result of Klarreich \cite{bdrycc} the Gromov boundary of the curve
complex is homeomorphic to the quotient space of the space of (projective) measured
laminations with minimal, filling supports by the measure forgetting map
equipped with the quotient topology, denoted by $\EL(S)$.
\medskip
 
 
The Masur-Minsky distance formula \cite{mm2} provides a coarse
estimate for the distance between two pants decompositions in the pants 
graph $P(S)$. More precisely, there exists a constant $M>0$ depending on the
topological type of $S$ with the property that for any threshold
$A\geq M$ there are constants $K\geq 1$ and $C\geq 0$ so that for any
$P,Q\in P(S)$ we have
 \begin{equation}\label{eq : MM dist}
 d(P,Q) \asymp_{K,C} \sum_{{\substack{ Y\subseteq S \\ \nonannular }}} \{d_{Y}(P,Q)\}_A
\end{equation} 
where the cut-off function is defined by $\{x\}_A=\begin{cases}x &\mbox{if}\; x\geq A \\ 0 &\mbox{if}\; x\leq A\end{cases}$.

 The following theorem is a straightforward consequence of \cite[Theorem 3.1]{mm2}.
 
 \begin{thm}\textnormal{(Bounded geodesic image)}\label{thm : bddgeod}
Given $K\geq 1$ and $C\geq 0$. Suppose that $\{\gamma_i\}_i$ is a sequence of curves which forms a $1-$Lipschitz $(K,C)$--quasi-geodesic in $\mathcal{C}(S)$. 
Further, suppose that for a subsurface $Y\subsetneq S$, $\gamma_i\pitchfork Y$ for all $i$. Then there is a constant $G>0$ depending on $K,C$ so that
 \[\diam_{\cC(Y)}\Big(\{\pi_{Y}(\gamma_i)\}_i\Big)\leq G.\]
 \end{thm}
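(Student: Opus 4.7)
The plan is to derive the stated bound from the Masur--Minsky bounded geodesic image theorem \cite[Theorem 3.1]{mm2} for honest $\cC(S)$--geodesics, together with the Morse stability of quasi-geodesics in the $\delta$--hyperbolic curve complex $\cC(S)$ \cite{mm1}. By the stability lemma there is a constant $R=R(K,C)$ so that any $\cC(S)$--geodesic between two points of $\{\gamma_i\}$ lies within $\cC(S)$--Hausdorff distance $R$ of the corresponding subpath of the quasi-geodesic.

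Fix indices $i<j$ and let $N$ denote the $(R+1)$--neighborhood of $\partial Y$ in $\cC(S)$; since $\diam_{\cC(S)}(N)\leq 2(R+1)$, the quasi-geodesic inequality implies that the preimage $\{k\in[i,j]:\gamma_k\in N\}$ is contained in a single subinterval $[a,b]\subseteq[i,j]$ of length at most $2K(R+1)+KC$. Thus $[i,j]$ decomposes into at most three pieces: the outside pieces $[i,a]$ and $[b,j]$, on which $\gamma_k\notin N$, and the inside piece $[a,b]$.

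On each outside piece, applying Morse stability to the corresponding sub-quasi-geodesic shows that any $\cC(S)$--geodesic between its endpoints lies at $\cC(S)$--distance $\geq 2$ from $\partial Y$, so all of its vertices intersect $Y$ essentially; the Masur--Minsky theorem then bounds the $\cC(Y)$--projection diameter over this piece by a universal constant $G_0$. On the inside piece, the $1$--Lipschitz hypothesis combined with $\gamma_k\pitchfork Y$ for all $k$ ensures that the $\cC(Y)$--projection changes by at most $2$ per step, and the bounded length of the inside piece then produces a bound depending only on $K,C$. Summing the three contributions and taking a supremum over $i<j$ yields the claimed $G=G(K,C)$. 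The main subtlety is the inside piece, where the Masur--Minsky theorem is not directly available since the nearby geodesic in $\cC(S)$ may leave the domain of $\pi_Y$; the $1$--Lipschitz hypothesis and the standing assumption $\gamma_k\pitchfork Y$ on $\{\gamma_k\}$ itself are precisely what is needed to bypass this.
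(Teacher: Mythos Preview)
Your argument is correct and is precisely the ``straightforward consequence'' the paper alludes to: the paper gives no proof at all, merely citing \cite[Theorem~3.1]{mm2}. You have supplied the standard derivation---Morse stability in the $\delta$--hyperbolic complex $\cC(S)$ to reduce to genuine geodesics on the outside pieces, and the coarse Lipschitz property of $\pi_Y$ along the $1$--Lipschitz path (using the standing hypothesis $\gamma_k\pitchfork Y$) on the short inside piece. One minor quibble: the per-step bound on the inside piece is a small universal constant coming from \cite[Lemma~2.3]{mm2} rather than literally $2$, but this is immaterial to the estimate.
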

 
 For the following inequality, see \cite{beh}\cite{Mangahasbehineq}.

\begin{thm}\textnormal{(Behrstock inequality)}\label{thm : beh ineq}
There exists a constant $B_0>0$ such that for any two subsurfaces
$Y,Z\subsetneq S$ with $Y\pitchfork Z$ and a fixed marking or lamination $\mu$ we have that
\[\min\Big\{d_{Y}(\partial{Z},\mu),d_{Z}(\partial{Y},\mu)\Big\}\leq B_0.\]
\end{thm}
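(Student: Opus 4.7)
The plan is to prove the Behrstock inequality by contradiction, using the bounded geodesic image theorem (Theorem~\ref{thm : bddgeod}) just stated as the main tool. Suppose that both $d_Y(\partial Z,\mu)$ and $d_Z(\partial Y,\mu)$ exceed some large constant $B_0$ that will be determined in terms of the bounded geodesic image constant $G$. After replacing $\mu$ by a single simple closed curve $\beta$ whose $Y$- and $Z$-projections agree with those of $\mu$ up to a universal additive error (take a base curve of the marking, or a curve approximating the endpoint of a Klarreich-type quasi-geodesic toward the lamination), we may reduce to the case that $\mu$ is itself a simple closed curve.

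The first main step is to produce a vertex that ``switches'' between the two subsurfaces. Fix a component $\partial_0 Z \subset \partial Z$ that overlaps $Y$ (such a component exists since $Y \pitchfork Z$) and take a geodesic $g$ in $\cC(S)$ from $\partial_0 Z$ to $\mu$. If every vertex of $g$ overlaps $Y$, then Theorem~\ref{thm : bddgeod} yields
\[
d_Y(\partial Z,\mu) \le \diam_{\cC(Y)}\bigl(\pi_Y(g)\bigr) \le G + O(1),
\]
contradicting the assumed lower bound once $B_0 > G + O(1)$. Therefore some vertex $\gamma$ of $g$ fails to overlap $Y$, so $\gamma$ may be isotoped disjoint from $Y$ and in particular from $\partial Y$. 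Projecting disjoint curves to the same subsurface gives
\[
d_Z(\gamma,\partial Y) \le 2
\]
whenever $\gamma$ overlaps $Z$, which one verifies using that $\gamma$ lies on a geodesic from $\partial_0 Z$ to $\mu$ at positive distance from $\partial_0 Z$.

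The second step is to bound $d_Z(\gamma,\mu)$, which combined with the previous display would give $d_Z(\partial Y,\mu) \le G + 2 + O(1)$ and complete the contradiction. I would apply Theorem~\ref{thm : bddgeod} once more, now to the sub-geodesic $g'\subset g$ running from $\gamma$ to $\mu$ and to the subsurface $Z$: if every vertex of $g'$ overlaps $Z$, bounded geodesic image gives $d_Z(\gamma,\mu) \le G$ and we are done. The main obstacle is the remaining case, where some vertex of $g'$ again fails to overlap $Z$, producing a second ``switch.'' The plan is to show, via a bounded iteration of this argument (or alternatively through Mangahas' tight-geodesic approach from \cite{Mangahasbehineq}), that such overlap-switching can occur only a uniformly bounded number of times along a single $\cC(S)$-geodesic: each switch forces a disjointness relation with $\partial Y$ or $\partial Z$ at distance $\le 1$ in $\cC(S)$ from the previous switching vertex, and the overlap hypothesis $Y \pitchfork Z$ prevents this from being iterated indefinitely. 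This yields the universal constant $B_0$.
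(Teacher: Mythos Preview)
The paper does not prove this theorem: it is stated as background with the sentence ``For the following inequality, see \cite{beh}\cite{Mangahasbehineq}'' and no argument is given. So there is no proof in the paper to compare your proposal against.

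That said, your proposal has a genuine gap precisely at the point you flag as ``the main obstacle.'' The final iteration argument is not an argument: you assert that overlap-switching along a $\cC(S)$-geodesic can happen only a uniformly bounded number of times, but nothing you have set up forces this. A vertex $\gamma$ on $g'$ that fails to overlap $Z$ is within distance $1$ of $\partial Z$ in $\cC(S)$, but the geodesic $g$ can be arbitrarily long, and nothing prevents it from passing near $\partial Y$ and $\partial Z$ many times in $\cC(S)$; the hypothesis $Y\pitchfork Z$ only tells you $d_S(\partial Y,\partial Z)\ge 1$, which is no obstruction. There is also a smaller issue at step~5: the vertex $\gamma$ you find is disjoint from $Y$, but you use $d_Z(\gamma,\partial Y)\le 2$, which requires $\gamma\pitchfork Z$; you have not established that, and if $\gamma$ misses $Z$ the projection $\pi_Z(\gamma)$ is empty and the inequality is vacuous.

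The standard elementary proof (Mangahas, \cite{Mangahasbehineq}) does not go through bounded geodesic image at all. It is a direct surgery argument on arcs: assuming $d_Y(\partial Z,\mu)$ is large, one shows that every arc of $\mu\cap Z$ can be surgered, using the many parallel strands of $\partial Z$ inside $Y$, to an arc close to $\partial Y$ in $\cC(Z)$, yielding $d_Z(\partial Y,\mu)\le 4$ with an explicit constant. If you want a self-contained proof, that is the route to take; deriving Behrstock from Theorem~\ref{thm : bddgeod} alone is not standard and your sketch does not close the gap.
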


We also need the following {\em no back-tracking} property of hierarchy paths, which follows from inequality (6.3) in \cite[\S 6.3]{mm2}.
\begin{thm}\label{thm : no back tracking}
There is a constant $C>0$ depending only on the topological type of $S$ so that given a hierarchy path $\varrho:[m,n]\to P(S)$ ($[m,n]\subset \mathbb Z\cup\{\pm\infty\}$), for parameters $i_1\leq i_2\leq i_3\leq i_4$ in $[m,n]$, and a non-annular subsurface $Y\subseteq S$ we have
\[d_Y(\varrho(i_1),\varrho(i_4))\geq d_Y(\varrho(i_2),\varrho(i_3))-C.\]
\end{thm}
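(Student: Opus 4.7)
The plan is to exploit the structure theory of the Masur--Minsky hierarchy $H$ underlying the hierarchy path $\varrho$, and the fact that along a resolution of $H$ the subsurface projections to any non-annular $Y\subseteq S$ evolve quasi-monotonically in $\cC(Y)$. The inequality then becomes the statement that two intermediate projections must already be separated by the projections at the endpoints, up to bounded error.

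The first step is to split into two cases according to whether $Y$ appears as a component domain in $H$. If $Y$ does \emph{not} support a geodesic of $H$, then a hierarchy structure forces $\diam_{\cC(Y)}(\pi_Y(\varrho))$ to be uniformly bounded (as any subsurface into which the path projects significantly must eventually be a component domain); in that case the inequality is immediate for any sufficiently large $C$. If $Y$ \emph{does} support a geodesic $h_Y\subseteq\cC(Y)$ of $H$, then I would appeal to inequality (6.3) of \cite{mm2}, which says that, up to a constant depending only on $\xi(S)$, the projections $\pi_Y(\varrho(i))$ fellow-travel $h_Y$ and move along $h_Y$ in a quasi-monotone fashion as $i$ increases through the parameter interval on which $Y$ is ``active.''

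From quasi-monotonicity the inequality is essentially formal. For $i_1\le i_2\le i_3\le i_4$, let $x_k$ denote a nearest point on $h_Y$ to $\pi_Y(\varrho(i_k))$. Quasi-monotonicity places $x_1,x_2,x_3,x_4$ in that order along $h_Y$ up to an additive error, so
\[
d_{\cC(Y)}(x_1,x_4)\;\geq\; d_{\cC(Y)}(x_2,x_3)-C_0
\]
for some uniform $C_0$. Adding the fellow-traveling error between the $\pi_Y(\varrho(i_k))$ and the $x_k$ and using that $h_Y$ is a geodesic, one recovers the claimed inequality with a single constant $C$ depending only on $\xi(S)$.

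The main obstacle is the careful bookkeeping in the second case: one must combine the constant from (6.3) (which governs fellow-traveling of $\pi_Y\circ\varrho$ with $h_Y$ and the forward/backward ordering at the endpoints where $Y$ enters and leaves the hierarchy), together with the uniform bound on projections outside the active interval of $Y$, into a single additive error $C$. This is standard but slightly delicate, since $[m,n]$ is allowed to include $\pm\infty$, so one must verify that the monotonicity statement in \cite{mm2} is insensitive to whether the active interval of $Y$ is a proper subinterval of $[m,n]$ or includes an endpoint at infinity.
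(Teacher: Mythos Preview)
Your proposal is correct and is essentially the same argument the paper points to: the paper does not give its own proof but simply asserts that the statement follows from inequality (6.3) in \cite[\S 6.3]{mm2}, which is precisely the quasi-monotonicity of $\pi_Y\circ\varrho$ along the hierarchy geodesic $h_Y$ that you invoke. Your case split and bookkeeping spell out exactly what that citation encapsulates.
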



\subsection{Twist parameter}
We define the twist parameter of a curve $\delta$ about $\gamma$ at a point $X$ in Teichm\"{u}ller space by
\begin{equation}\label{eq : twist}\tw_{\gamma}(\delta,X):=d_{\gamma}(\mu,\delta)\end{equation}
where $\mu$ is a Bers marking at $X$ (for definition of Bers marking see $\S$\ref{subsec : WP}). 

Note that for a filling set of bounded length curves $\Gamma$ at $X$ we have $\tw_{\gamma}(\delta,X)\asya \diam_{\cC(\gamma)}(\Gamma \cup \delta)$.

\subsection{The Thurston compactification}\label{subsec : Thcpct}
Recall that a point in the Teichm\"{u}ller space $\Teich(S)$ is a {\em marked complete hyperbolic surface} $[f \colon S \to X]$. The mapping class group of $S$, denoted by $\Mod(S)$, acts by remarking on $\Teich(S)$ and the quotient is the moduli space of hyperbolic surfaces $\mathcal M(S)$. 

Given a curve $\alpha\in\cC_0(S)$, the hyperbolic length of $\alpha$ at $[f \colon S \to X]$ is defined to be the hyperbolic length of the geodesic homotopic to $f(\alpha)$ in $X$. Abusing notation and denoting the point in $\Teich(S)$ by $X$, we write the hyperbolic length simply as $\ell_{\alpha}(X)$. For an $\ep>0$, the {\em $\ep$--thick part} of \OT\ space consists of points $X\in \Teich(S)$ with $\ell_\alpha(X)\geq 2\ep$ for all curves $\alpha$. The projection of this set to the moduli space is the $\ep$--thick part of moduli space.

 The hyperbolic length function extends to a continuous function
\[ \ell_{\cdot}(\cdot) \colon \Teich(S) \times \mathcal{ML}(S) \to \mathbb R.\]

Let $\nu$ be a measurable lamination and $\bar\nu$ a measured lamination with support $\nu$. Moreover, denote the projective class of $\bar\nu$ by $[\bar\nu]$. The Thurston compactification, $\widehat{\Teich(S)} = \Teich(S) \cup \mathcal{PML}(S)$ is constructed so that a sequence $\{X_n\}_n \subseteq \Teich(S)$ converges to $[\bar \nu] \in \mathcal{PML}(S)$ if and only if
\[ \lim_{n \to \infty} \frac{\ell_\alpha(X_n)}{\ell_\beta(X_n)} = \frac{\I(\alpha,\bar \nu)}{\I(\beta,\bar \nu)}\]
for all simple closed curves $\alpha,\beta$ with $\I(\bar \nu,\beta) \neq 0$.
Here and throughout this paper the bi-homogenous function $\I(\cdot,\cdot)$ denotes the geometric intersection number of 
two curves and its extension to the space of measured laminations $\mathcal{ML}(S)$. See \cite{bonlam,FLP} for more details on the intersection function and Thurston compactification.

\subsection{Sequences of curves} \label{subsec : seq of curves}

In \cite{nonuniqueerg} and \cite{nue2} the authors studied infinite sequences of curves on a surface that limit to non-uniquely ergodic laminations.  The novelty in this work is that local estimates on subsurface projections and intersection numbers are promoted to global estimates on these quantities.
We require minor modifications of some of the key results from \cite{nue2} so as to be applicable to the sequences of curves on punctured spheres described in \S\ref{sec : sequence of curves on S0p}.  We state the results here and sketch their proofs in the appendix for completeness. 

Given a curve $\gamma$ let $D_\gamma$ be the positive (left) Dehn twist about $\gamma$.

\begin{definition}\label{def : sequence of curves} 
Fix positive integers $m\leq \xi(S)$ and $b' \geq b >0$, and a sequence $\mathcal E = \{e_k\}_{k=0}^\infty\subseteq \mathbb{N}$.
We say that a sequence of curves $\{\gamma_k\}_{k=0}^\infty$ satisfies $\mathcal P = \mathcal P(\mathcal E)$ if the following hold:
\begin{enumerate}[(i)] 
\item any $m$ consecutive curves are pairwise disjoint,
\item any consecutive $2m$ curves fill $S$, and
\item for all $k \geq m$, $\gamma_{k+m} = D^{e_k}_{\gamma_k}(\gamma_{k+m}')$, where $\gamma'_{k+m}$ is a curve such that
\[ \I(\gamma'_{k+m},\gamma_j) \left\{ \begin{array}{ll} \leq b' & \mbox{ for $j=k-m,\ldots,k+m-1$}\\
= b & \mbox{for } j = k,k-1\\
= 0 & \mbox{for } j=k+1,\ldots,k+m-1 \end{array}\right. \]
\end{enumerate}
\end{definition}

\begin{remark}
The only real difference between this definition and the one given in our previous paper \cite[\S 3]{nue2} is that this one requires fewer of the intersection numbers to be nonzero.
\end{remark}

For the remainder of this subsection, we will assume that $\Gamma = \{\gamma_k\}_k$ satisfies $\mathcal P$ for some $m,b,b'$ and $\mathcal E = \{e_k\}_k$  (see \S\ref{sec : sequence of curves on S0p} for explicit examples).  Furthermore, we assume there exists an $a \geq 1$ such that $e_{k+1} \geq a e_k$ for all $k\geq 0$.

The first result describes the behavior of $\{\gamma_k\}_k$ in the curve complex of $S$ and its subsurfaces.  Let $\mathbb M$ be the monoid generated by $m$ and $m+1$, that is 
$$\mathbb M=\{im+j(m+1)\mid i,j\in\mathbb Z^{\geq 0}\}.$$

\begin{thm}\label{thm : subsurface coeff estimate} There exist constants $E,K,C >0$ such that if $e_0 \geq E$, then $\{\gamma_k\}_k$ is a $1$--Lipschitz $(K,C)$--quasi-geodesic.  In particular, there exists a $\nu\in\EL(S)$ such that any accumulation point of $\{\gamma_k\}_k$ in $\PML(S)$ is supported on $\nu$.

Furthermore, there exists a constant $R> 0$, and for a marking $\mu$ another constant $R(\mu)> 0$ depending on $\mu$, so that for $i < k < j$, with $k-i,j-k \in \mathbb M$, we have $\gamma_i \pitchfork \gamma_k$, $\gamma_j \pitchfork \gamma_k$, and
\begin{equation}\label{eqn : gamma_i, big projection}
d_{\gamma_k}(\gamma_i,\gamma_j),d_{\gamma_k}(\gamma_i,\nu) {\asya}_R e_i \quad \mbox{ and } \quad d_{\gamma_k}(\mu,\gamma_j),d_{\gamma_k}(\mu,\nu){\asya}_{R(\mu)} e_i.
\end{equation}
Also, for any $i < j$ and a subsurface $W$ which is not an annulus with core curve $\gamma_k$ for some $k$ we have
\begin{equation}\label{eqn : not gamma_i, small projection}
d_W(\gamma_i,\gamma_j),d_W(\gamma_i,\nu) \leq R \quad \mbox{ and } \quad d_W(\mu,\gamma_j),d_W(\mu,\nu) \leq R(\mu).
\end{equation}
\end{thm}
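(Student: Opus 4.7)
The plan is to first establish the subsurface coefficient estimates \eqref{eqn : gamma_i, big projection} and \eqref{eqn : not gamma_i, small projection}, and then deduce the quasi-geodesic statement and existence of $\nu\in\EL(S)$ from the Masur-Minsky distance formula \eqref{eq : MM dist} together with Klarreich's identification of $\partial\cC(S)$ with $\EL(S)$.

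I would begin with local estimates over short windows of indices. Condition (iii) of Definition \ref{def : sequence of curves} reads $\gamma_{k+m}=D_{\gamma_k}^{e_k}(\gamma'_{k+m})$, so $D_{\gamma_k}^{e_k}$ acts on the annular curve complex $\cC(\gamma_k)$ as translation by approximately $e_k$. Combined with the bounded intersections $\I(\gamma'_{k+m},\gamma_{k-1}),\I(\gamma'_{k+m},\gamma_k) = b$, this yields a local estimate of the form $d_{\gamma_k}(\gamma_{k-1},\gamma_{k+m})\asya e_k$. For a non-annular subsurface $W$ (or an annulus whose core is not some $\gamma_l$), condition (ii) gives $2m$ consecutive curves filling $S$, at least one of which must be disjoint from $\partial W$; hence the local projection to $W$ between nearby curves is bounded.

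The bootstrap from local to global would use the Behrstock inequality (Theorem \ref{thm : beh ineq}), the no back-tracking estimate (Theorem \ref{thm : no back tracking}), and the bounded geodesic image theorem (Theorem \ref{thm : bddgeod}). The monoid constraint $k-i,j-k\in\mathbb M$ guarantees a chain from $\gamma_i$ to $\gamma_j$ passing through $\gamma_k$ whose consecutive elements are index-compatibly adjacent, so that the $\gamma_k$-projections along the chain accumulate coherently while Behrstock forces contributions from other subsurfaces to stay small. The assumption $e_{k+1}\geq ae_k$, together with $e_0\geq E$ for $E$ large enough, ensures each local projection dominates any accumulated Behrstock error at every stage. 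Once \eqref{eqn : gamma_i, big projection} and \eqref{eqn : not gamma_i, small projection} are in hand, the distance formula \eqref{eq : MM dist} shows that between $\gamma_i$ and $\gamma_j$ the only supra-threshold terms are the $d_{\gamma_l}$ for $i<l<j$, producing a lower bound on $d_{P(S)}(\gamma_i,\gamma_j)$ that is linear in $|i-j|$. The $1$-Lipschitz property in $\cC(S)$ is immediate from condition (i), and Klarreich's theorem then yields convergence of $\{\gamma_k\}_k$ to a unique $\nu\in\EL(S)$; the estimates involving $\nu$ follow from continuity of subsurface projections along the quasi-geodesic $\{\gamma_j\}_j$ as $j\to\infty$.

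The main obstacle is the bootstrap step. Under the weaker intersection condition (iii)---in which $\gamma'_{k+m}$ need only overlap $\gamma_{k-1}$ and $\gamma_k$ rather than all of $\gamma_{k-m},\ldots,\gamma_k$ as in \cite{nue2}---one must reverify that for every ``incorrect'' subsurface $W$ the chain of curves contains some $\gamma_l$ disjoint from $W$, enabling Behrstock to bound $d_W$ and preventing any single $\gamma_l$ from producing a spurious large projection to $W$. This amounts to reproving the combinatorial ordering lemmas of \cite{nue2} under the present hypotheses, which is presumably the content of the appendix sketch.
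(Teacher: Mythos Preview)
Your outline has the right ingredients (Behrstock for the annular local-to-global, bounded geodesic image for non-annular subsurfaces, Klarreich for $\nu$), but the order of the argument and one key step need correction.

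First, the distance formula \eqref{eq : MM dist} cannot produce the quasi-geodesic statement. That formula computes distance in the pants graph $P(S)$ and sums only over \emph{non-annular} subsurfaces, so the large terms $d_{\gamma_l}(\gamma_i,\gamma_j)$ you wish to exploit simply do not appear in it; and the claim to be proved is that $k\mapsto\gamma_k$ is a quasi-geodesic in $\cC(S)$, not in $P(S)$ or the marking graph. The paper argues directly in $\cC(S)$: once the annular estimate $d_{\gamma_k}(\gamma_i,\gamma_j)\geq e_k-O(1)>G_0$ is established (via Behrstock, as you suggest), Theorem~\ref{thm : bddgeod} forces any $\cC(S)$--geodesic from $\gamma_i$ to $\gamma_j$ to contain a vertex disjoint from $\gamma_k$, for each intermediate $k$ in the relevant range. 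Since any $2m$ consecutive $\gamma_k$ fill $S$, no single vertex can be disjoint from more than a bounded number of them, and this gives the linear lower bound on $d_{\cC(S)}(\gamma_i,\gamma_j)$.

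Second, your plan is circular for the non-annular bound \eqref{eqn : not gamma_i, small projection}: that bound is proved \emph{using} the quasi-geodesic property, not before it. For a general $W$ one needs to know that only boundedly many $\gamma_k$ lie within distance $1$ of $\partial W$ in $\cC(S)$---which is exactly what quasi-geodesicity gives---so that bounded geodesic image applies to the two tails $\{\gamma_k\}_{k\leq i_0}$ and $\{\gamma_k\}_{k\geq i_0+D_0}$, while the intervening bounded window is handled by the local Lipschitz estimate. Thus the correct order is: (a) annular local-to-global via Behrstock (Lemma~\ref{lem : local to global} and Lemma~\ref{lem : nearly partial order}); (b) $\cC(S)$--quasi-geodesic from (a) and Theorem~\ref{thm : bddgeod}; (c) non-annular bounds from (b); (d) $\nu$ from Klarreich and the $\nu$--estimates by passing $j\to\infty$. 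Theorem~\ref{thm : no back tracking} concerns hierarchy paths and plays no role here.
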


The next result provides estimates on intersection numbers for curves in our sequence.
To describe the estimates, for all $i < k$, define the integers
 \begin{equation}\label{eq : A(i,k)} A(i,k):= \prod_{\substack{i + m\leq j< k, \\j\equiv k\mod m}} be_j  \end{equation}
where the product is taken to be $1$ whenever the index set is empty. 

\begin{thm} \label{thm : intersection}
If $a > 1$ is sufficiently large and $e_{k+1} \geq a e_k$, then there exists $\kappa_0 \geq 1$ such that $ \I(\gamma_i,\gamma_k) \leq \kappa_0 A(i,k)$
for all $i < k$, and
\begin{equation}\label{eq : gigk}\I(\gamma_i,\gamma_k)\asym_{\kappa_0} A(i,k).\end{equation}
if $k - i  \geq 2m$ and $i \equiv k$ mod $m$, or if $i \leq 2m-1$ and  $k - i \geq m^2 + m -1$.  

For any curve $\delta$, there exists $\kappa(\delta) \geq 1$ such that for all $k$ sufficiently large
\begin{equation}\label{eq : dgk}\I(\delta,\gamma_k)\asym_{\kappa(\delta)} A(0,k).\end{equation}
\end{thm}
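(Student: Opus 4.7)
The plan is to prove the universal upper bound $\I(\gamma_i,\gamma_k)\le \kappa_0 A(i,k)$ and the two-sided estimate \eqref{eq : gigk} by induction on $k$, leveraging the recursion $\gamma_{k+m}=D_{\gamma_k}^{e_k}(\gamma'_{k+m})$ from condition (iii) together with the classical Dehn-twist intersection estimate
\[
\bigl|\I(\alpha,D_\gamma^e\beta) - |e|\,\I(\alpha,\gamma)\,\I(\gamma,\beta)\bigr| \le \I(\alpha,\beta).
\]
Setting $\alpha=\gamma_i$, $\gamma=\gamma_k$, $\beta=\gamma'_{k+m}$ and using $\I(\gamma_k,\gamma'_{k+m})=b$, this reads
\[
\I(\gamma_i,\gamma_{k+m}) = be_k\,\I(\gamma_i,\gamma_k) + O\!\bigl(\I(\gamma_i,\gamma'_{k+m})\bigr).
\]
A direct inspection of \eqref{eq : A(i,k)} shows $A(i,k+m)=be_k\,A(i,k)$ whenever $k\ge i+m$, so the principal term already matches the target estimate under the inductive hypothesis. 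Everything thus reduces to controlling the additive error $\I(\gamma_i,\gamma'_{k+m})$.

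\textbf{Controlling the error term.} By property (ii) the consecutive family $\{\gamma_{k-m},\ldots,\gamma_{k+m-1}\}$ fills $S$, and by (iii) $\gamma'_{k+m}$ has intersection at most $b'$ with each of its members. A standard filling-collection argument (realizing $\gamma'_{k+m}$ on a train track carried by this $2m$-tuple with weights bounded in terms of $b'$) produces a constant $C=C(b',m,S)$ with
\[
\I(\gamma_i,\gamma'_{k+m}) \le C \max_{k-m\le l\le k+m-1}\I(\gamma_i,\gamma_l).
\]
By induction each $\I(\gamma_i,\gamma_l)$ is at most $\kappa_0 A(i,l)$, and a direct comparison of the defining products in \eqref{eq : A(i,k)} under the growth $e_{j+1}\ge ae_j$ yields $A(i,l)\le C'A(i,k+m)/a$ for all $l<k+m$, with $C'$ depending only on $m,b$. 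Choosing $a$ large enough that $CC'/a<1/2$ makes the error at most $\kappa_0 A(i,k+m)/2$, closing the induction for the universal upper bound.

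\textbf{Lower bound and the two hypotheses.} The reverse inequality $\I(\gamma_i,\gamma_{k+m})\ge be_k\,\I(\gamma_i,\gamma_k) - \I(\gamma_i,\gamma'_{k+m})$ yields the matching lower bound, but only once the induction is seeded by a pair where $\I(\gamma_i,\gamma_k)$ is comparable to $A(i,k)$ from below. The hypothesis $i\equiv k\mod m$ with $k-i\ge 2m$ is tailored to this: iterating the recursion in steps of $m$ stays inside the residue class $i\mod m$, and the base case $\I(\gamma_i,\gamma_{i+2m})\asym b^2 e_{i+m}=b\cdot A(i,i+2m)$ follows from a single application of the Dehn-twist formula, using $\I(\gamma_i,\gamma_{i+m})=\I(\gamma_i,\gamma'_{i+m})=b$ (since $D_{\gamma_i}$ fixes $\gamma_i$). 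The second hypothesis $i\le 2m-1$ and $k-i\ge m^2+m-1$ is a technical reduction to the first: the length $m^2+m-1$ leaves enough room to choose an intermediate index $i'$ satisfying $i'\equiv k\mod m$, $k-i'\ge 2m$, and $|i'-i|$ bounded by a function of $m$, so that $\I(\gamma_i,\gamma_k)$ and $\I(\gamma_{i'},\gamma_k)$ differ only by a multiplicative factor depending on the bounded-distance interaction between $\gamma_i$ and $\gamma_{i'}$, absorbed into $\kappa_0$.

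\textbf{Arbitrary $\delta$ and main obstacle.} For \eqref{eq : dgk}, since $\{\gamma_0,\ldots,\gamma_{2m-1}\}$ fills $S$, every curve $\alpha$ satisfies $\I(\delta,\alpha)\asym_{\kappa(\delta)} \max_{0\le j\le 2m-1}\I(\gamma_j,\alpha)$ with constants depending only on $\delta$; applying this with $\alpha=\gamma_k$ for $k$ large enough to satisfy one of the lower-bound hypotheses, and observing that $A(j,k)\asym A(0,k)$ for $0\le j\le 2m-1$ and $k$ large (they share all but finitely many factors), gives the claim. The main technical obstacle throughout is the comparison $\max_{l<k+m}A(i,l)\ll A(i,k+m)$: because different $l$ select different residue classes mod $m$ when building $A(i,l)$, one is not merely comparing $A(i,k+m)$ to $A(i,k)$ but to a family of twisted products across all residue classes. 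Showing that the growth $e_{j+1}\ge ae_j$ dominates this scrambling uniformly in $i$ and $k$ is precisely what forces $a$ to be large, and is the sole non-routine input beyond the standard Dehn-twist intersection calculus.
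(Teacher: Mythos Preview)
Your overall strategy---the Dehn-twist intersection formula together with the recursion $A(i,k+m)=be_kA(i,k)$---matches the paper's, but the upper-bound induction as you state it does not close. With a single fixed constant $\kappa_0$, the principal term already saturates the target: $be_k\,\I(\gamma_i,\gamma_k)\le be_k\cdot\kappa_0 A(i,k)=\kappa_0 A(i,k+m)$. Adding your error $\tfrac12\kappa_0 A(i,k+m)$ yields only $\I(\gamma_i,\gamma_{k+m})\le \tfrac32\kappa_0 A(i,k+m)$, and iterating picks up a factor $\tfrac32$ at every step. The repair is exactly the point you flag as ``the main technical obstacle'': one needs the full decay $A(i,l)/A(i,k+m)\le a^{1-\lfloor(k+m-i)/m\rfloor}$ from Lemma~\ref{lem : intersection estimate ratios}, not merely a uniform $C'/a$, so that the error ratio shrinks geometrically in $k-i$. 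The paper then tracks an increasing sequence of constants $K(n)=K(n-1)\bigl(1+2mB\,a^{-\lfloor (n-m)/m\rfloor}\bigr)$ whose infinite product converges; that limit is $\kappa_0$.

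The argument for \eqref{eq : dgk} also has a gap. The asserted two-sided comparison $\I(\delta,\alpha)\asym_{\kappa(\delta)}\max_{0\le j\le 2m-1}\I(\gamma_j,\alpha)$ for \emph{every} curve $\alpha$ is false in the lower-bound direction: take $\alpha$ disjoint from $\delta$. The upper bound does follow from the cutting argument, but no uniform lower bound exists over all of $\ML(S)$. The paper instead uses the first paragraph of the theorem to show that $\{\gamma_k/A(0,k)\}_k$ lies in a compact subset of $\ML(S)$ accumulating only on measures supported on the filling lamination $\nu$; continuity and positivity of $\I(\delta,\cdot)$ on a neighborhood of that accumulation set then give the lower bound for $k$ large. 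Finally, your reduction of case (i) of the lower bound to case (ii) via an auxiliary $i'\equiv k\pmod m$ presupposes an unproved comparison $\I(\gamma_i,\gamma_k)\asym\I(\gamma_{i'},\gamma_k)$; the paper handles case (i) directly, running the recursion down to an index $j-nm$ with $j-nm-i\ge m^2-1$ and invoking the annular projection estimate (Lemma~\ref{lem : nearly partial order}) to guarantee $\I(\gamma_i,\gamma_{j-nm})\ge 1$.
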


For reference, we also record the following simple fact (see \cite[Lemma~5.6]{nue2}).
\begin{lem} \label{lem : intersection estimate ratios} Suppose that $\mathcal E = \{e_k\}_k$ satisfies $e_k \geq ae_{k-1}$ for all $k$ and some $a > 1$.  Then whenever $k < l$, we have
\[ \frac{A(i,k)}{A(i,l)} \leq a^{1-\lfloor \frac{l-i}{m} \rfloor}.\]
\end{lem}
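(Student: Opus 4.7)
Write $J_l = \{j : i+m \leq j < l,\, j \equiv l \pmod m\}$ for the index set appearing in the definition of $A(i,l)$, and similarly for $J_k$. Set $N_l = \lfloor(l-i)/m\rfloor$. A direct count shows $J_l = \{l-sm : s = 1, \ldots, N_l - 1\}$, so $|J_l| = \max(N_l - 1,\,0)$, and analogously for $J_k$. Since $e_0 \geq 1$, $a > 1$, and $b \geq 1$, iterating the hypothesis yields $e_j \geq a^j$ for all $j$, hence $be_j \geq a$ for every $j \geq 1$.

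The trivial ranges are handled first. When $N_l \leq 1$, one has $a^{1 - N_l} \geq 1$ and $A(i,k) = A(i,l) = 1$ (since $N_k \leq N_l$ forces both index sets to be empty). When $N_k \leq 1 < N_l$, $A(i,k) = 1$ while $A(i,l)$ is a product of $N_l - 1$ factors each at least $a$, so $A(i,l) \geq a^{N_l - 1}$ and the bound holds.

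In the main case $N_k \geq 2$, I would pair the $s$-th largest element $k - sm$ of $J_k$ with the $s$-th largest element $l - sm$ of $J_l$, for $s = 1, \ldots, N_k - 1$. Since $l - sm > k - sm$, iterating the growth hypothesis gives $e_{l - sm} \geq a^{l - k}\,e_{k - sm}$ for each paired index, so the paired factors together contribute $a^{(l-k)(N_k - 1)}$ to the ratio $A(i,l)/A(i,k)$. The remaining unpaired indices $l - sm$ in $J_l$ with $s = N_k, \ldots, N_l - 1$ each contribute a further factor of at least $a$, giving an extra $a^{N_l - N_k}$. Combining,
\[\frac{A(i,k)}{A(i,l)} \leq a^{-(l-k)(N_k - 1)\,-\,(N_l - N_k)} \leq a^{-(N_k - 1)\,-\,(N_l - N_k)} = a^{1 - N_l},\]
where the last step uses $l - k \geq 1$ and $N_k \geq 2$. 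The main subtlety is that when $k$ and $l$ belong to different residue classes modulo $m$, the sets $J_k$ and $J_l$ are disjoint rather than nested; the pairing is therefore organized by the parameter $s$, not by the actual integer values, and the key inequality $e_{l-sm} \geq a^{l-k} e_{k-sm}$ uses only that $l - sm > k - sm$.
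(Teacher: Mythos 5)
Your proof is correct and complete. The paper itself does not supply a proof of this lemma but instead cites \cite{nue2}, Lemma 5.6, so there is no in-text argument to compare against. Your self-contained argument handles all three ranges cleanly: the pairing by the offset parameter $s$ (rather than by the actual index value) is precisely the right organizing device, since when $k\not\equiv l\pmod m$ the index sets of $A(i,k)$ and $A(i,l)$ are disjoint, and the pointwise comparison $e_{l-sm}\geq a^{\,l-k}e_{k-sm}$ exploits only that paired indices differ by $l-k$. The final reduction $(l-k)(N_k-1)+(N_l-N_k)\geq N_l-1$ is equivalent to $(l-k-1)(N_k-1)\geq 0$, which holds in the case $N_k\geq 2$, $l>k$; the two degenerate cases $N_l\leq 1$ and $N_k\leq 1<N_l$ are dispatched correctly using $be_j\geq a$ for $j\geq 1$ (which follows from $e_j\geq a^j e_0\geq a$ and $b\geq 1$, both guaranteed since $\mathcal E\subseteq\mathbb N$). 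No gaps.
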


The final result tells us that $\{\gamma_k\}_k$ splits into $m$ subsequences, each projectively converging to a distinct ergodic measure on $\nu$.

\begin{thm} \label{thm : nue}
If $\{\gamma_k\}_k$ is as in Theorems~\ref{thm : subsurface coeff estimate} and \ref{thm : intersection}, then the sequence determines a $\nu\in\EL(S)$
 which is non-uniquely ergodic and supports $m$ ergodic measures, $\bar\nu^0,\ldots,\bar\nu^{m-1}$, given by
\[
\lim_{i\to\infty}\frac{\gamma_{h+im}}{A(0,h+im)}=\bar{\nu}^{h},
\]
for each $h = 0,\ldots,m-1$.
\end{thm}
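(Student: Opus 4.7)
The plan is to fix $h\in\{0,1,\ldots,m-1\}$ and analyze the normalized sequence $\bar\lambda_i^h := \gamma_{h+im}/A(0,h+im)$ in $\ML(S)$. By the two-sided estimate \eqref{eq : dgk}, for any curve $\delta$ and all sufficiently large $i$ one has $\I(\delta,\bar\lambda_i^h)\in[\kappa(\delta)^{-1},\kappa(\delta)]$; pairing with a filling pair of curves shows that $\{\bar\lambda_i^h\}_i$ lies in a compact subset of $\ML(S)\setminus\{0\}$, so subsequential limits exist and are nonzero. By Theorem~\ref{thm : subsurface coeff estimate}, every accumulation point of $\{\gamma_k\}_k$ in $\PML(S)$ is supported on the minimal filling lamination $\nu\in\EL(S)$, so every subsequential limit of $\{\bar\lambda_i^h\}_i$ in $\ML(S)$ is a nonzero transverse measure on $\nu$.

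The core step is to promote subsequential convergence to convergence of the full sequence, for which it suffices to show that $i\mapsto \I(\delta,\bar\lambda_i^h)$ is Cauchy for each curve $\delta$. I apply the standard Dehn twist intersection estimate
\[\bigl|\I(D_\gamma^n(\alpha),\beta) - n\,\I(\alpha,\gamma)\,\I(\gamma,\beta)\bigr| \leq \I(\alpha,\beta) + \I(\alpha,\gamma)\,\I(\gamma,\beta)\]
to the identity $\gamma_{h+im} = D_{\gamma_{h+(i-1)m}}^{e_{h+(i-1)m}}(\gamma'_{h+im})$, combined with $\I(\gamma'_{h+im},\gamma_{h+(i-1)m})=b$ from Definition~\ref{def : sequence of curves}, to obtain
\[\I(\delta,\gamma_{h+im}) = b\,e_{h+(i-1)m}\,\I(\delta,\gamma_{h+(i-1)m}) + E_i,\]
with $|E_i|\leq \I(\delta,\gamma'_{h+im}) + b\,\I(\delta,\gamma_{h+(i-1)m})$. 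Since $A(0,h+im) = b\,e_{h+(i-1)m}\,A(0,h+(i-1)m)$, this gives the telescoping identity
\[\I(\delta,\bar\lambda_i^h) = \I(\delta,\bar\lambda_{i-1}^h) + E_i/A(0,h+im).\]
Using $\I(\delta,\gamma_{h+(i-1)m}) \leq \kappa(\delta)A(0,h+(i-1)m)$ from \eqref{eq : dgk} forces $b\,\I(\delta,\gamma_{h+(i-1)m})/A(0,h+im) \leq \kappa(\delta)/e_{h+(i-1)m}$, which decays geometrically in $i$ thanks to $e_{k+1}\geq ae_k$ and Lemma~\ref{lem : intersection estimate ratios}; an analogous geometric bound for $\I(\delta,\gamma'_{h+im})/A(0,h+im)$, obtained by unwinding $\gamma'_{h+im}$ through the earlier stages of the recursion and reapplying Theorem~\ref{thm : intersection}, shows $|E_i|/A(0,h+im) \leq C(\delta)a^{-i}$. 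Summing these geometric tails yields the Cauchy property, and the limit $\I(\delta,\bar\nu^h) := \lim_i \I(\delta,\bar\lambda_i^h)$ defines a transverse measured lamination $\bar\nu^h\in\ML(S)$ supported on $\nu$.

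Finally, the measures $\bar\nu^h$ must be shown to be distinct and ergodic. For distinctness, the sharp two-sided estimate \eqref{eq : gigk} applies to $\I(\gamma_k,\gamma_{h+im})$ precisely when $k\equiv h\pmod m$, while only the one-sided bound $\I(\gamma_k,\gamma_{h+im})\leq \kappa_0 A(k,h+im)$ is available for $k\not\equiv h\pmod m$; this asymmetry is preserved in the limit, and comparing ratios $\I(\gamma_k,\bar\nu^h)/\I(\gamma_{k'},\bar\nu^h)$ for an appropriate pair $(k,k')$ of different residue classes separates distinct values of $h$. For ergodicity, the cone of transverse measures on the minimal filling lamination $\nu$ is finite dimensional by Katok, and one argues as in \cite{nue2} that each $\bar\nu^h$ is an extreme ray (hence ergodic) by comparing the intersection profiles of $\bar\nu^h$ with its putative decomposition into ergodic pieces, using the bounded non-annular subsurface projections in \eqref{eqn : not gamma_i, small projection} to rule out any nontrivial decomposition. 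The principal technical obstacle is establishing the geometric decay $\I(\delta,\gamma'_{h+im})/A(0,h+im)\leq C(\delta)a^{-i}$ in the Cauchy step, which requires a careful inductive decomposition of $\gamma'_{h+im}$ through all prior Dehn twists and a uniform control of the intermediate intersection numbers.
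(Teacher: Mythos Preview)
Your convergence argument is essentially the paper's: the paper also telescopes via the Dehn twist estimate and bounds the error geometrically. One simplification you miss: rather than ``unwinding $\gamma'_{h+im}$ through all prior Dehn twists,'' the paper observes directly that $\gamma'_{k+m}$ has bounded intersection with the filling system $\gamma_{k-m},\ldots,\gamma_{k+m-1}$, giving $\I(\delta,\gamma'_{k+m}) \leq B\sum_{l=k-m}^{k+m-1}\I(\delta,\gamma_l)$ in one line, after which \eqref{eq : dgk} and Lemma~\ref{lem : intersection estimate ratios} finish the geometric decay.

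Your distinctness argument has a gap. The asymmetry you cite---two-sided bounds when $k\equiv h$, only upper bounds otherwise---does not let you compare ratios $\I(\gamma_k,\bar\nu^h)/\I(\gamma_{k'},\bar\nu^h)$ against $\I(\gamma_k,\bar\nu^{h'})/\I(\gamma_{k'},\bar\nu^{h'})$, because you have no \emph{lower} bound on the off-diagonal terms. The paper instead proves the stronger statement $\I(\gamma_{h+im},\bar\nu^h)/\I(\gamma_{h+im},\bar\nu^{h'})\to\infty$, i.e.\ non-absolute-continuity, by computing both numerator and denominator against $A(h+im,h+km)/A(0,h+km)$ and $A(h+im,h'+km)/A(0,h'+km)$ and showing the latter decays like $a^{-i}$.

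The real gap is ergodicity. You describe the argument from \cite{nue2} as comparing intersection profiles and using the non-annular projection bounds \eqref{eqn : not gamma_i, small projection} to rule out decompositions---but that is not what \cite{nue2} does, and it is not clear such an argument can work: bounded subsurface projections constrain the combinatorics of $\nu$, not the structure of the cone of measures on it. The actual proof in \cite{nue2} (to which the paper defers verbatim) runs a Teichm\"uller geodesic with vertical lamination $\nu$ and uses the Lenzhen--Masur criterion together with Rafi's short-curve estimates to show that exactly $m$ ergodic components arise; the $\bar\nu^h$ are then identified with these. Nothing in your outline substitutes for this dynamical input.
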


\subsection{Weil-Petersson metric}\label{subsec : WP}

The Weil-Petersson (WP) metric is an incomplete, mapping class group invariant, Riemannian metric with negative curvature on the Teichm\"{u}ller space. The WP completion of Teichm\"{u}ller space $\overline{\Teich(S)}$ is a stratified $\CAT(0)$ space. Each stratum is canonically isometric to the product of Teichm\"{u}ller spaces of lower complexity, each equipped with the WP metric. More precisely, for any possibly empty multicurve $\sigma$ on $S$ the stratum $\cS(\sigma)$ consists of finite type Riemann surfaces appropriately marked by $S\backslash\sigma$, and this is isometric to the product of Teichm\"uller spaces of the connected components of $S\backslash\sigma$.  An important property of completion strata is the following {\em non-refraction} property.

\begin{thm}\textnormal{(Non-refraction; \label{thm : non-ref}\cite{wol}\cite{dwwp})}
The interior of the geodesic segment connecting a point $X\in\cS(\sigma)$ to a point $Y\in\cS(\sigma')$ lies in the stratum $\cS(\sigma\cap\sigma')$.
\end{thm}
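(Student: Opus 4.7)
The plan is to combine Wolpert's convexity of the completion stratum closures with the convexity and asymptotic behaviour of geodesic-length functions along WP geodesics. The two inputs I would use are: (i) for every multicurve $\tau$, the closure $\overline{\cS(\tau)}$ is a convex subset of the CAT(0) space $\overline{\Teich(S)}$, and is isometric to a product $\prod_j\overline{\Teich(S_j)}$ where $S\setminus\tau=\sqcup_j S_j$, with WP geodesics in the product being products of the factor-wise WP geodesics; and (ii) for every simple closed curve $\alpha$ on $S$, the function $\sqrt{\ell_\alpha}$ extends continuously to $\overline{\Teich(S)}$, vanishing precisely on strata pinching $\alpha$, and is convex along WP geodesics.

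First I would localize the segment in the correct stratum closure. Since $\sigma,\sigma'\supseteq \sigma\cap\sigma'$, both $X$ and $Y$ lie in $\overline{\cS(\sigma\cap\sigma')}$; convexity of this set plus uniqueness of CAT(0) geodesics then forces $[X,Y]\subseteq \overline{\cS(\sigma\cap\sigma')}$, so $\ell_\alpha\equiv 0$ along $[X,Y]$ for every $\alpha\in\sigma\cap\sigma'$. Using the product decomposition and writing $X=(X_j)$, $Y=(Y_j)$, the geodesic factors as a product of WP geodesics $[X_j,Y_j]\subseteq\overline{\Teich(S_j)}$, and the additional pinchings at $X_j,Y_j$ (lying in $\sigma\setminus\sigma'$ and $\sigma'\setminus\sigma$ respectively) are automatically disjoint. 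Hence it suffices to prove the following \emph{disjoint-pinching} case: if the endpoints of a WP geodesic lie in strata whose pinching multicurves are disjoint, the open interior of the geodesic lies in the top stratum.

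For the reduced claim I would argue by contradiction. Suppose a curve $\beta$ is pinched at some interior point $\gamma(t_0)$. By (ii), the zero set $T_\beta=\{t\in[0,T]:\ell_\beta(\gamma(t))=0\}$ is closed and convex, hence a (possibly empty) subinterval of $[0,T]$; in the disjoint-pinching case neither endpoint lies in $T_\beta$, so $T_\beta$ would have to be a nonempty subinterval of $(0,T)$. The contradiction comes from Wolpert's asymptotic expansion of the WP metric and of $\ell_\beta$ in Fenchel-Nielsen coordinates near the stratum $\cS(\{\beta\})$: this shows that a WP geodesic not contained in $\cS(\{\beta\})$ cannot touch this stratum at any interior parameter, so any interior zero of $\sqrt{\ell_\beta}$ forces the entire geodesic to lie in $\cS(\{\beta\})$, contradicting $\ell_\beta(X),\ell_\beta(Y)>0$.

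The main obstacle is this last step: convexity (or even strict convexity in the sense of no affine sub-arc) of $\sqrt{\ell_\beta}$ is by itself compatible with an isolated interior zero, namely a strict global minimum of a convex function. What rules this out is the precise boundary asymptotics of $\sqrt{\ell_\beta}$ near its vanishing stratum, combined with Yamada's CAT(0) structure of the WP completion; together they give the required ``no transverse touching'' property of WP geodesics. This is the technical content of \cite{wol,dwwp} that must be invoked to close the argument.
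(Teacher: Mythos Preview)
The paper does not prove this statement; it is quoted as background from \cite{wol} and \cite{dwwp} and used without argument.  Your sketch is broadly along the lines of those references: reduce to the stratum $\overline{\cS(\sigma\cap\sigma')}$ by convexity, pass to the factor Teichm\"uller spaces, and then use Wolpert's boundary asymptotics (equivalently, the Daskalopoulos--Wentworth description of the Alexandrov tangent cone at a stratum point) to rule out a WP geodesic touching a deeper stratum at an interior time.  You also correctly isolate the real content: convexity of $\sqrt{\ell_\beta}$ alone does \emph{not} preclude an interior zero, and it is the ``no transverse touching'' input from \cite{wol,dwwp} that closes the argument.

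There is one slip in your case analysis.  After your reduction, ``disjoint-pinching'' means only that the pinching multicurves at the two endpoints share no isotopy class; it does \emph{not} mean that no curve is pinched at either endpoint.  So when you pick a curve $\beta$ with $\ell_\beta(\gamma(t_0))=0$ at an interior time, your assertion that ``neither endpoint lies in $T_\beta$'' is unjustified: $\beta$ may lie in (say) $\sigma\setminus\sigma'$, in which case $T_\beta$ is an interval of the form $[0,t_1]$ with $t_1<T$.  The correct statement is that at most one endpoint lies in $T_\beta$, so $T_\beta$ is a proper closed subinterval of $[0,T]$.  Your concluding sentence---that any interior zero of $\sqrt{\ell_\beta}$ forces the whole geodesic into $\overline{\cS(\beta)}$, contradicting positivity at the other endpoint---still disposes of this case, but the preceding trichotomy should be rewritten to include it.
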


Let $L_S>0$ be the {\em Bers constant} of $S$ (see \cite[\S 5]{buser}). Then each point $X\in\Teich(S)$ has a pants decomposition $P$ ({\em Bers pants decomposition}) with the property that the length of every curve in $P$ with respect to $X$ is at most $L_S$. Any curve in a Bers pants decomposition is called a {\em Bers curve}. Moreover, a marking whose base is a Bers pants decomposition and has transversal curves with shortest possible length is called a {\em Bers marking}. 

Brock \cite{br} showed that the coarse map 
\begin{equation}\label{eq : Brock qi}Q:\Teich(S)\to P(S)\end{equation}
which assigns to a point in the Teichm\"{u}ller space a Bers pants decomposition at that point is a quasi-isometry.
\medskip

Using the non-refraction property of completion strata Wolpert
\cite{wols} gives a picture for the limits of sequences of bounded
length WP geodesic segments in Teichm\"{u}ller space after
remarkings. The following strengthening of the picture was proved
in \cite[\S 4]{wpbehavior}.  For reference, given a multicurve
$\sigma$ let $\tw(\sigma) < \Mod(S)$ denote the subgroup
generated by positive Dehn twists about the curves in $\sigma$.
\begin{thm}\textnormal{(Geodesic Limit)}\label{thm : geodesic limit}
 Given $T>0$, let  $\zeta_{n}:[0,T]\to \Teich(S)$ be a sequence of
 geodesic segments parametrized by arc length. After possibly passing to a subsequence, there exist
 a partition $0=t_0<\ldots<t_{k+1}=T$ of the interval $[0,T]$,
 multicurves $\sigma_l, \; l=0,\ldots,k+1$, a multicurve $\hat{\tau}$
 with $\hat{\tau}=\sigma_l\cap\sigma_{l+1}$ for all $l=0,\ldots,k$ and a
 piecewise geodesic segment 
\[\hat{\zeta}:[0,T]\to\overline{\Teich(S)}\]
such that
\begin{enumerate}
\item $\hat{\zeta}(t_l)\in\cS(\sigma_l) \ \ \text{for each} \ \ l=0,\ldots,k+1,$
\item $\hat{\zeta}((t_l,t_{l+1}))\subset \cS(\hat{\tau}) \ \ \text{for
  each} \ \ l=0,\ldots,k,$ 
  \item there exist elements $\psi_{n} \in \Mod(S)$ which are either trivial or unbounded as $n \to \infty$ and elements
$\mathcal{T}_{l,n}\in\tw(\sigma_l-\hat{\tau})$ 
such that for any $\gamma\in\sigma_l-\hat{\tau}$
the power of the positive Dehn twist $D_{\gamma}$ about $\gamma$ is unbounded as $n\to\infty$, and we have:
$$\lim_{n\to\infty}\psi_{n}(t)=\hat{\zeta}(t)$$ 
for any $t\in[0,t_1]$. Moreover, for each $l=1,\ldots, k$ let
\begin{equation}\label{eq : varphi}\varphi_{l,n}=\mathcal{T}_{l,n}\circ \ldots\circ \mathcal{T}_{1,n}\circ \psi_{n},\end{equation}
 then
 \[\lim_{n\to\infty}\varphi_{l,n}(\zeta_{n}(t))=\hat{\zeta}(t)\] 
 for any $t\in [t_l,t_{l+1}]$.
 \end{enumerate}
\end{thm}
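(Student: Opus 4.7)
The plan is to prove the theorem by an iterated compactness argument in the stratified $\CAT(0)$ space $\overline{\Teich(S)}$, piecing together the limit geodesic $\hat\zeta$ one segment at a time. The three essential ingredients are: (a) Mumford compactness (together with control on twist coordinates) which lets us, after applying mapping classes, extract a subsequence of any bounded sequence in $\Teich(S)$ that converges in $\overline{\Teich(S)}$, with limit in a stratum $\cS(\sigma)$ whose multicurve records exactly which curves become short; (b) $\CAT(0)$ convexity of the WP metric on the completion, which guarantees that if the endpoints of a sequence of geodesic segments converge in $\overline{\Teich(S)}$ then the segments themselves converge uniformly; and (c) the non-refraction Theorem~\ref{thm : non-ref}, which forces the interior of a geodesic between points in strata $\cS(\sigma_l)$ and $\cS(\sigma_{l+1})$ to lie entirely inside $\cS(\sigma_l\cap \sigma_{l+1})$.

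I would start with the initial point $\zeta_n(0)$. Using Mumford compactness, I choose $\psi_n\in\Mod(S)$ (trivial if already convergent, otherwise unbounded) and pass to a subsequence so that $\psi_n(\zeta_n(0))$ converges to some $\hat\zeta(0)\in\cS(\sigma_0)$ for a multicurve $\sigma_0$. Ingredient (b) then produces a candidate limit geodesic $\hat\zeta$ on a maximal half-open interval $[0,t_1)$ along which $\hat\zeta$ stays inside a fixed stratum $\cS(\hat\tau)$ with $\hat\tau\subseteq\sigma_0$; by non-refraction we must have $\hat\tau=\sigma_0\cap\sigma_1$ where $\sigma_1$ is the multicurve of the stratum containing $\hat\zeta(t_1)$.

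For the inductive step at parameter $t_l$, the approximating geodesics $\varphi_{l-1,n}(\zeta_n)$ may fail to converge at $t_l$ because curves in $\sigma_l-\hat\tau$ become short there but the twist parameters around them can vary wildly along the sequence. I compose with a twist $\mathcal{T}_{l,n}\in\tw(\sigma_l-\hat\tau)$ designed to normalize these twist coordinates. The required unboundedness of $D_\gamma$-powers for $\gamma\in\sigma_l-\hat\tau$ comes from the fact that a genuine stratum transition forces nontrivial twisting to accumulate—if some power were bounded, non-refraction would be violated on one side of $t_l$. Then ingredients (a) and (b) give convergence of $\varphi_{l,n}(\zeta_n)$ on $[t_l, t_{l+1}]$ into $\cS(\hat\tau)\cup\cS(\sigma_{l+1})$, and the piecewise geodesic is extended.

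The main obstacle, I expect, is proving that this procedure terminates after finitely many steps, so that the partition is finite. This requires a uniform lower bound on the WP-length of any stratum-crossing subsegment that starts and ends in $\cS(\hat\tau)$ after a detour through a deeper stratum $\cS(\sigma_l)$. Such a bound should follow from the geometry of the WP completion near its strata (which behave like metric cones in the transverse directions, producing a definite length cost for each excursion) combined with the finitely many topological possibilities for the $\sigma_l$. A subsidiary difficulty is verifying the commutation/compatibility needed for the composed remarkings $\varphi_{l,n}=\mathcal{T}_{l,n}\circ\cdots\circ\mathcal{T}_{1,n}\circ\psi_n$ to act correctly on the earlier intervals; here the point is that each $\mathcal{T}_{l,n}$ is supported on curves of $\sigma_l-\hat\tau$ which are already pinched in the current limit stratum, so these twists fix the relevant limit points and do not disturb convergence already established on $[0,t_l]$.
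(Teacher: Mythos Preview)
The paper does not prove this theorem. It is stated as a quotation from the literature: the text immediately preceding the statement says that Wolpert \cite{wols} gives the basic picture and that ``the following strengthening of the picture was proved in \cite[\S 4]{wpbehavior}.'' So there is no proof in this paper to compare your proposal against.

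That said, your outline is a reasonable sketch of the argument that actually appears in those references. The ingredients you list---Mumford compactness to renormalize endpoints into $\overline{\Teich(S)}$, $\CAT(0)$ convexity to pass from endpoint convergence to uniform convergence of segments, and non-refraction to identify the stratum containing the interior of each limiting piece---are exactly the tools used. The iterative scheme (normalize, converge on a maximal subinterval, apply twists in $\tw(\sigma_l-\hat\tau)$ to cross the next stratum, repeat) is also the correct structure. Your identification of the two subtle points---finiteness of the partition and compatibility of the accumulated remarkings $\varphi_{l,n}$ with earlier convergence---matches where the work lies in \cite{wpbehavior}. One point to sharpen: the finiteness of the partition is not obtained from a uniform lower bound on excursion length per se, but rather from the fact that at each $t_l$ the limit $\hat\zeta(t_l)$ lies in a stratum strictly deeper than $\cS(\hat\tau)$, and the set of such times is discrete in $(0,T)$ by non-refraction applied to the limiting piecewise geodesic; boundedness of $T$ then gives finiteness.
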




In \cite{wpbehavior} controls on length-functions along WP geodesics in terms of
subsurface coefficients are developed. The following are corollaries 4.10 and 4.11 in \cite{wpbehavior}. Here we denote a Bers marking at a
point $X\in\Teich(S)$ by $\mu(X)$. 

\begin{thm} \label{thm : tw-sh}
Given $T, \epsilon_{0}$ and $\epsilon<\epsilon_{0}$ positive, there is
an $N\in\mathbb{N}$ with the following property. Let  $\zeta: [0, T']
\to \Teich(S)$ be a WP geodesic segment parametrized by arc length, of length $T'\leq T$, such that 
\[\sup_{t \in [0,T']}\ell_{\gamma}(\zeta(t))\geq \epsilon_{0}.\] 
Then if $d_{\gamma}(\mu(\zeta(0)), \mu(\zeta(T'))) >N$ we have 
  \[\inf_{t \in [0,T']} \ell_{\gamma}(\zeta(t))\leq \epsilon.\]
\end{thm}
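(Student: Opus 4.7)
I argue by contradiction using the Geodesic Limit Theorem. Suppose no such $N$ exists. Then there is a sequence of WP geodesic segments $\zeta_n\colon [0,T_n]\to \Teich(S)$ of length $T_n \leq T$, satisfying $\sup_t \ell_\gamma(\zeta_n(t)) \geq \epsilon_0$ and $\inf_t \ell_\gamma(\zeta_n(t)) > \epsilon$, yet $d_\gamma(\mu(\zeta_n(0)), \mu(\zeta_n(T_n))) \to \infty$. After passing to a subsequence with $T_n \to T^* \in [0,T]$, apply Theorem~\ref{thm : geodesic limit} to obtain a partition $0 = t_0 < \cdots < t_{k+1} = T^*$, multicurves $\sigma_l$ and $\hat\tau = \sigma_l \cap \sigma_{l+1}$, mapping classes $\psi_n$ and $\mathcal{T}_{l,n} \in \tw(\sigma_l - \hat\tau)$, compositions $\varphi_{l,n} = \mathcal{T}_{l,n}\circ\cdots\circ\mathcal{T}_{1,n}\circ\psi_n$, and a piecewise WP geodesic limit $\hat\zeta$ with $\hat\zeta(t_l)\in\cS(\sigma_l)$.

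The key geometric claim is that $\gamma$ is invisible to the pinching: for each $l$ and $n$ large, $\varphi_{l-1,n}(\gamma)$ is disjoint from $\sigma_l$ (with the convention $\varphi_{-1,n}:=\psi_n$). Indeed, since $\varphi_{l-1,n}(\zeta_n(t_l)) \to \hat\zeta(t_l) \in \cS(\sigma_l)$, the lengths of curves in $\sigma_l$ at $\varphi_{l-1,n}(\zeta_n(t_l))$ tend to zero, while the length of $\varphi_{l-1,n}(\gamma)$ there equals $\ell_\gamma(\zeta_n(t_l)) \geq \epsilon$; the collar lemma then forbids transverse intersection. Consequently $\mathcal{T}_{l,n}$, a twist about curves in $\sigma_l - \hat\tau$, fixes the isotopy class of $\varphi_{l-1,n}(\gamma)$, and by induction $\varphi_{l,n}(\gamma) = \psi_n(\gamma)$ for every $l$, with $\psi_n(\gamma)$ disjoint from every $\sigma_l$. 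After a further subsequence, $\psi_n(\gamma)$ is constantly equal to a fixed curve $\gamma^*\subseteq S$ lying in a single component $Y$ of $S \setminus \hat\tau$, using that $\psi_n(\gamma)$ has bounded length at the convergent sequence $\psi_n\zeta_n(0) \to \hat\zeta(0)$.

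To finish, estimate the annular projection. By mapping-class equivariance and since each $\mathcal{T}_{l,n}$ twists only about curves disjoint from $\gamma^*$ (hence acts trivially on annular projection to $\gamma^*$),
\[ d_\gamma\bigl(\mu(\zeta_n(0)), \mu(\zeta_n(T_n))\bigr) = d_{\gamma^*}\bigl(\mu(\psi_n\zeta_n(0)),\, \mu(\varphi_{k,n}\zeta_n(T_n))\bigr). \]
Both $\psi_n\zeta_n(0) \to \hat\zeta(0)$ and $\varphi_{k,n}\zeta_n(T_n) \to \hat\zeta(T^*)$ in $\overline{\Teich(S)}$, and $\gamma^* \subset Y$ is disjoint from the pinched multicurves at either limit. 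The restrictions of the Bers markings to $Y$ therefore converge in $\Teich(Y)$ after a further subsequence, so their annular projections to $\gamma^*$ remain bounded, contradicting $d_\gamma \to \infty$.

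The principal technical obstacle lies in this last step: one must show that the restrictions of the Bers markings of $\psi_n\zeta_n(0)$ and $\varphi_{k,n}\zeta_n(T_n)$ to the non-pinched component $Y$ converge, notwithstanding that the global markings may twist unboundedly about the pinched curves $\sigma_l$ outside $Y$, and that this component-wise convergence suffices to control the annular projection onto $\gamma^* \subset Y$.
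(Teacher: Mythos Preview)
The paper does not prove this statement; it is quoted verbatim as Corollary~4.10 of \cite{wpbehavior}, so there is no in-paper argument to compare against. Your strategy---contradiction plus the Geodesic Limit Theorem, then showing the endpoint markings have bounded annular projection to a fixed curve $\gamma^*$---is exactly the approach taken in \cite{wpbehavior}, and the last paragraph of your outline correctly isolates one of the technical points that must be handled (component-wise convergence of Bers markings on the non-pinched piece).

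There is, however, a genuine gap earlier, in the ``collar lemma'' step. You argue that since the curves of $\sigma_l$ have length tending to zero at $\varphi_{l-1,n}(\zeta_n(t_l))$ while $\ell_{\varphi_{l-1,n}(\gamma)}$ there equals $\ell_\gamma(\zeta_n(t_l))\ge\epsilon$, the collar lemma forbids transverse intersection. But the collar lemma runs the other way: if $\varphi_{l-1,n}(\gamma)$ crossed a component of $\sigma_l$, the collar about that short curve would force $\ell_{\varphi_{l-1,n}(\gamma)}\to\infty$, which is perfectly compatible with the \emph{lower} bound $\ge\epsilon$. To get disjointness you need an \emph{upper} bound on $\ell_{\gamma}(\zeta_n(t_l))$, and nothing in the hypotheses supplies one (the condition $\sup_t\ell_\gamma\ge\epsilon_0$ is a lower bound on the supremum). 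The same missing upper bound undermines the next claim, that $\psi_n(\gamma)$ has bounded length at $\psi_n\zeta_n(0)$ and hence subconverges to a fixed curve $\gamma^*$. In \cite{wpbehavior} this difficulty is handled not by proving disjointness of $\gamma$ from the pinching multicurves, but by establishing directly (this is essentially Claim~4.9 there, invoked in the present paper in the proof of Proposition~\ref{prop : minimum of f}) that the \emph{only} curves whose annular coefficient between the endpoint markings diverges are the preimages $\varphi_{l,n}^{-1}(\beta)$ of curves $\beta\in\sigma_l$; since those curves do get pinched along $\zeta_n$ and $\gamma$ by hypothesis does not, one concludes $d_\gamma$ is bounded. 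Your outline would need either that converse statement or a separate argument covering the case that $\ell_\gamma(\zeta_n(\cdot))$ is unbounded.
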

 
 \begin{thm} \label{thm : shtw}
Given $T,\epsilon_{0},s$ positive with $T>2s$ and $N\in\mathbb{N}$,
there is an $\epsilon\in(0,\epsilon_{0})$ with the following property. Let
$\zeta: [0, T'] \to\Teich(S)$ be a WP geodesic segment parametrized by
arc length of length $T'\in [2s,T]$. Let $J \subseteq [s,T'-s]$ be a
subinterval. Suppose that for some $\gamma\in\mathcal{C}_{0}(S)$ we
have 
\[\sup_{t \in [0,T']} \ell_{\gamma}(\zeta(t)) \geq \epsilon_{0}.\]
Then, if $\inf_{t \in J} \ell_{\gamma}(\zeta(t)) \leq \epsilon$, we have 
\[d_{\gamma}(\mu(\zeta(0)),\mu(\zeta(T')))>N.\]
\end{thm}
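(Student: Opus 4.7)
My plan is to argue by contradiction, using the Geodesic Limit Theorem (Theorem~\ref{thm : geodesic limit}) as the central tool. Suppose the statement fails for some fixed $T,\epsilon_{0},s,N$. Then there exist $\epsilon_n\to 0$, WP geodesics $\zeta_n\colon[0,T_n']\to\Teich(S)$ with $T_n'\in[2s,T]$, subintervals $J_n\subseteq[s,T_n'-s]$, and curves $\gamma_n\in\cC_0(S)$ such that $\sup_{[0,T_n']}\ell_{\gamma_n}(\zeta_n)\geq\epsilon_0$ and $\inf_{J_n}\ell_{\gamma_n}(\zeta_n)\to 0$, yet $d_{\gamma_n}(\mu(\zeta_n(0)),\mu(\zeta_n(T_n')))\leq N$. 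After passing to a subsequence, $T_n'\to T^*\in[2s,T]$, and I can choose $u_n\in J_n$ with $\ell_{\gamma_n}(\zeta_n(u_n))\to 0$ and $u_n\to u^*\in[s,T^*-s]$. Applying Theorem~\ref{thm : geodesic limit} to $\zeta_n$ yields a partition $0=t_0<\cdots<t_{k+1}=T^*$, multicurves $\sigma_l\supseteq\hat\tau$, mapping classes $\varphi_{l,n}=\mathcal T_{l,n}\circ\cdots\circ\mathcal T_{1,n}\circ\psi_n$ with each $\mathcal T_{l,n}\in\tw(\sigma_l-\hat\tau)$ having unbounded Dehn-twist powers about every curve of $\sigma_l-\hat\tau$, and a piecewise limit $\hat\zeta\colon[0,T^*]\to\overline{\Teich(S)}$.

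Fix $l^*$ with $u^*\in[t_{l^*},t_{l^*+1}]$. Since $\varphi_{l^*,n}(\zeta_n(u_n))\to\hat\zeta(u^*)$ and $\ell_{\varphi_{l^*,n}(\gamma_n)}(\varphi_{l^*,n}(\zeta_n(u_n)))=\ell_{\gamma_n}(\zeta_n(u_n))\to 0$, compactness of the set of bounded-length curves at a fixed point yields, after a further subsequence, a fixed curve $\gamma^*$ with $\varphi_{l^*,n}(\gamma_n)=\gamma^*$ and $\ell_{\gamma^*}(\hat\zeta(u^*))=0$. Thus $\gamma^*$ lies in the multicurve defining the stratum containing $\hat\zeta(u^*)$---either $\hat\tau$ or some $\sigma_l$. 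If $\gamma^*\in\hat\tau$, then $\gamma^*$ is disjoint from every $\sigma_l-\hat\tau$, so each $\mathcal T_{l,n}$ fixes $\gamma^*$ and $\varphi_{l,n}(\gamma_n)=\gamma^*$ for all $l$. On each $[t_l,t_{l+1}]$, \CAT(0) convexity of WP geodesics gives uniform convergence $\varphi_{l,n}(\zeta_n(t))\to\hat\zeta(t)$, so $\ell_{\gamma_n}(\zeta_n(t))=\ell_{\gamma^*}(\varphi_{l,n}(\zeta_n(t)))\to\ell_{\gamma^*}(\hat\zeta(t))=0$, since $\hat\zeta$ always lies in strata containing $\hat\tau\ni\gamma^*$. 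Piecing these together contradicts $\sup\ell_{\gamma_n}(\zeta_n)\geq\epsilon_0$.

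The remaining case is $\gamma^*\in\sigma_{l_0}-\hat\tau$, forcing $u^*=t_{l_0}$. Then $\mathcal T_{l_0,n}$ contains $D_{\gamma^*}^{e_n}$ with $e_n\to\infty$, while its other factors twist about curves in $\sigma_{l_0}-\hat\tau$ disjoint from $\gamma^*$ and hence act trivially on $\pi_{\gamma^*}$. Setting $X_n=\varphi_{l_0-1,n}(\zeta_n(t_{l_0-1}))$ and $W_n=\varphi_{l_0,n}(\zeta_n(t_{l_0+1}))$, both of which converge in $\overline{\Teich(S)}$, the identity $\varphi_{l_0-1,n}(\zeta_n(t_{l_0+1}))=\mathcal T_{l_0,n}^{-1}(W_n)$ combined with mapping-class invariance of subsurface projections gives
\[
d_{\gamma_n}\bigl(\mu(\zeta_n(t_{l_0-1})),\mu(\zeta_n(t_{l_0+1}))\bigr)=d_{\gamma^*}\bigl(\mu(X_n),\mathcal T_{l_0,n}^{-1}\mu(W_n)\bigr)\geq e_n-O(1)\to\infty,
\]
since $d_{\gamma^*}(\mu(X_n),\mu(W_n))$ converges to a finite limit. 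I then propagate this divergent lower bound from the sub-interval $[t_{l_0-1},t_{l_0+1}]$ to the entire interval $[0,T_n']$: by Brock's quasi-isometry~\eqref{eq : Brock qi} the Bers-marking path along the WP geodesic is a quasi-geodesic in the marking graph, and annular subsurface projections along such quasi-geodesics obey a no-backtracking estimate in the spirit of Theorem~\ref{thm : no back tracking}. This forces $d_{\gamma_n}(\mu(\zeta_n(0)),\mu(\zeta_n(T_n')))\to\infty$, contradicting the bound $\leq N$. I expect the main obstacle to be this last propagation step: the annular version of no-backtracking along WP-geodesic Bers-marking paths is subtler than the non-annular statement of Theorem~\ref{thm : no back tracking}, and carrying it out rigorously requires combining Brock's quasi-isometry with the tight-geodesic and hierarchy machinery of Masur--Minsky.
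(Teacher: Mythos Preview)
The paper does not prove this theorem: it is quoted verbatim as Corollary~4.11 of \cite{wpbehavior} (see the sentence preceding Theorems~\ref{thm : tw-sh} and~\ref{thm : shtw}). So there is no ``paper's own proof'' to compare against here; the argument lives in \cite{wpbehavior}.

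That said, your overall strategy---contradiction plus the Geodesic Limit Theorem, splitting into the cases $\gamma^*\in\hat\tau$ versus $\gamma^*\in\sigma_{l_0}-\hat\tau$---is exactly the approach taken in \cite{wpbehavior}, and your treatment of the first case is correct. The genuine gap is precisely where you flag it: the propagation from $[t_{l_0-1},t_{l_0+1}]$ to $[0,T_n']$. Your proposed fix does not work. Brock's quasi-isometry~\eqref{eq : Brock qi} is to the \emph{pants} graph, and the distance formula~\eqref{eq : MM dist} sums only over \emph{non-annular} subsurfaces; the pants graph is blind to annular coefficients, so you cannot extract any annular no-backtracking from it. Theorem~\ref{thm : no back tracking} is likewise stated only for non-annular $Y$. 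Moreover, the annular comparison statement in this paper (Lemma~\ref{lem : ann coeff comp}) already \emph{uses} Theorems~\ref{thm : tw-sh} and~\ref{thm : shtw} as input, so invoking anything of that flavor here would be circular.

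The way \cite{wpbehavior} closes the gap is not by propagation at all, but by computing $d_{\gamma_n}(\mu(\zeta_n(0)),\mu(\zeta_n(T_n')))$ directly from the full twist composition $\varphi_{k,n}=\mathcal T_{k,n}\circ\cdots\circ\mathcal T_{1,n}\circ\psi_n$, tracking how each $\mathcal T_{l,n}$ contributes to the $\gamma^*$--annular projection after conjugation; this is the content of Claim~4.9 in the proof of Theorem~4.6 of \cite{wpbehavior} (which is in fact invoked in the present paper inside the proof of Proposition~\ref{prop : minimum of f}). The unbounded power of $D_{\gamma^*}$ in $\mathcal T_{l_0,n}$ produces the divergence, and the remaining $\mathcal T_{l,n}$ are shown to contribute only boundedly because the curves they twist about become controlled relative to $\gamma^*$ under the accumulated compositions. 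That direct computation replaces your propagation step.
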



In this paper we will frequently use the following result of Wolpert for estimating distance of a point and a completion stratum. 
It is part of \cite[Corollary 4.10]{wolb}.

\begin{cor}\label{cor : dist to stratum}
Let $X\in\Teich(S)$ and let $\sigma$ be a multicurve, then
\[ d_{\WP}(X,\cS(\sigma))\leq \sqrt{2\pi\sum_{\alpha\in\sigma}\ell_{\alpha}(X)} \;\;\text{and}\;\;\]
\[d_{\WP}(X,\cS(\sigma))=\sqrt{2\pi\sum_{\alpha\in\sigma}\ell_{\alpha}(X)}+O\Big(\sum_{\alpha\in\sigma}\ell_{\alpha}(X))^{5/2}\Big)\]
where the constant of the $O$ notation depends only on an upper bound for the length of curves in $\sigma$ at $X$.
\end{cor}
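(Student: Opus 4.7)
The strategy is to derive both parts from Wolpert's local asymptotic expansion of the WP metric in Fenchel--Nielsen coordinates near the completion stratum $\cS(\sigma)$, by constructing an explicit pinching path from $X$ and comparing it to the intrinsic distance. Extend $\sigma$ to a pants decomposition $P$ of $S$ and take Fenchel--Nielsen coordinates $(\ell_\alpha,\tau_\alpha)_{\alpha\in P}$ on a neighborhood of $X$. For each $\alpha\in\sigma$ introduce the rescaled coordinate $u_\alpha:=\sqrt{2\pi\ell_\alpha}$, so that $du_\alpha^{2}=\tfrac{\pi}{2\ell_\alpha}d\ell_\alpha^{2}$. Wolpert's analysis shows that, with appropriate coordinates on the stratum, the WP line element in a neighborhood of $\cS(\sigma)$ decouples to leading order as
\[ ds_{\WP}^{2} \;=\; \sum_{\alpha\in\sigma}\bigl(du_\alpha^{2}+c_\alpha u_\alpha^{6}\,d\tau_\alpha^{2}\bigr) \;+\; g_{\cS(\sigma)} \;+\; E, \]
where $g_{\cS(\sigma)}$ is a smooth positive-definite quadratic form in the stratum-coordinate differentials (restricting to the intrinsic WP metric on $\cS(\sigma)$) and $E$ is a perturbation controlled by higher powers of $\|u\|$, with constants depending only on an upper bound for $\max_{\alpha}\ell_\alpha(X)$.

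For the upper bound, consider the explicit path $\zeta\colon[0,1]\to\overline{\Teich(S)}$ defined by $u_\alpha(t):=(1-t)u_\alpha(X)$ for each $\alpha\in\sigma$, with all twists $\tau_\alpha$ and all stratum coordinates frozen at their $X$-values. Then $\zeta(0)=X$, $\zeta(1)\in\cS(\sigma)$, and only the $du_\alpha^{2}$ terms contribute along $\zeta$. A direct integration of the displayed expansion gives
\[ L(\zeta) \;=\; \sqrt{\,\textstyle\sum_{\alpha\in\sigma}u_\alpha(X)^{2}\,}\;+\;O\Bigl(\bigl(\textstyle\sum_{\alpha}\ell_\alpha(X)\bigr)^{5/2}\Bigr) \;=\; \sqrt{2\pi\textstyle\sum_{\alpha}\ell_\alpha(X)}\;+\;O\Bigl(\bigl(\textstyle\sum_{\alpha}\ell_\alpha(X)\bigr)^{5/2}\Bigr). \]
Since $d_{\WP}(X,\cS(\sigma))\le L(\zeta)$, this yields the upper half of the asymptotic equality. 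The clean inequality of the first statement then follows by checking that the error term in $L(\zeta)$ can be absorbed into a strict bound, most easily via Wolpert's companion estimate that $\sqrt{2\pi\ell_\alpha}$ is essentially $1$-Lipschitz for the WP metric (from the same expansion applied to the length gradient).

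For the matching lower bound in the asymptotic equality, observe that the leading part of the metric above is a Riemannian product. In the $u$-factor the straight-line pinching $\zeta$ is the unique geodesic from $(u_\alpha(X))_{\alpha\in\sigma}$ to the origin in $\mathbb R_{\geq 0}^{\sigma}$, of Euclidean length $\sqrt{\sum u_\alpha(X)^{2}}$. Any path from $X$ to $\cS(\sigma)$ projects to a path in this factor between the same endpoints, and hence has leading WP-length at least this value; the correction from $E$ and from the other factors contributes at most an additive $O\bigl((\sum_\alpha\ell_\alpha(X))^{5/2}\bigr)$. The real work, and main technical obstacle, is uniformly controlling the perturbation $E$ and extracting the correct $5/2$-power error, which is exactly the content of the cited corollary in Wolpert's paper.
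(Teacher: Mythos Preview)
The paper does not prove this statement at all: it is quoted directly as part of \cite[Corollary~4.10]{wolb} (Wolpert), with no argument given. So there is nothing to compare your proof against in this paper; what you have written is a reconstruction of Wolpert's own argument, and you yourself note at the end that the technical content is ``exactly the content of the cited corollary in Wolpert's paper.''

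As a sketch of Wolpert's method your outline is accurate: pass to the coordinate $u_\alpha=\sqrt{2\pi\ell_\alpha}$, use Wolpert's asymptotic expansion of the WP metric near $\cS(\sigma)$ to see the leading Euclidean block $\sum_{\alpha\in\sigma} du_\alpha^2$, get the upper bound by the straight pinching path, and get the lower bound from the product structure of the leading term. One caveat: your derivation of the \emph{clean} global inequality $d_{\WP}(X,\cS(\sigma))\le\sqrt{2\pi\sum_\alpha\ell_\alpha(X)}$ is not quite complete. The pinching-path argument, as you present it, only yields the asymptotic version with an $O((\sum\ell_\alpha)^{5/2})$ error; absorbing that error does not by itself give the sharp constant without remainder. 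In Wolpert's treatment the exact inequality comes from his global pointwise bound on $\|\grad\ell_\alpha^{1/2}\|_{\WP}$ (equivalently the $1$--Lipschitz property of $\sqrt{2\pi\ell_\alpha}$), applied along the actual WP geodesic to the stratum together with convexity of length functions, rather than from the local expansion alone. You gesture at this (``Wolpert's companion estimate''), but it is a separate ingredient, not a consequence of the path length computation.
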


\subsection{End invariants}\label{subsec : endlam} 
 Brock, Masur, and Minsky \cite{bmm1} introduced the notion
 of {\em ending lamination} for Weil-Petersson geodesic rays as follows:
 Note that here and throughout the paper all geodesics would be parametrized by the arc length.
  Let $r \colon [a,b)\to\Teich(S)$ be a complete WP geodesic ray (a ray whose domain cannot be extended to the left end point $b$). 
 First, the
 weak$^{*}$ limit of an infinite sequence of weighted distinct Bers
 curves at times $t_i\to b$ is an {\em ending measure} of the ray
 $r$, and any curve $\alpha$ with $\lim_{t\to
   b}\ell_{\alpha}(r(t_i))=0$ is a {\em pinching curve} of $r$. Now the
 union of supports of ending measures and pinching curves of $r$ is
 the {\em ending lamination} of $r$ which we denote by $\nu(r)$. 

Let $g \colon I \to \Teich(S)$ be a WP geodesic, where $I\subseteq \mathbb{R}$ is an interval. 
Denote the left and right end points of $I$ by $a,b$, respectively, and let $c$ be a point in the interior of $I$.
 If $g$ is extendable to $b$ in $\overline{\Teich(S)}$, including the situation that $b\in I$, then the forward end invariant of $g$, denoted by $\nu^+$,
 is a (partial) Bers marking at $g(b)$. If not, the forward end invariant of $g$ 
 (also called the forward ending lamination) is the ending lamination of the geodesic ray $g(t)|_{[c,b)}$ defined above.
  The backward end invariant (ending lamination) $\nu^-$ of $g$ is defined similarly considering the ray $g(-t)|_{(a,c]}$. 
  Finally, the pair $(\nu^-,\nu^+)$ is called the end invariant of $g$.  
  
  For example, the end invariant of a geodesic segment $g\colon [a,b]\to \Teich(S)$ is the pair of markings $(\mu(g(a)),\mu(g(b)))$.

 For more detail about end invariants of WP geodesics and their application to study the geometry and dynamics of Weil-Petersson metric see \cite{bmm1,bmm2}\cite{wpbehavior,asympdiv}\cite{wprecurnue}\cite{Hamen-teich-wp}.

\subsection{Bounded combinatorics} Given $R>0$, a pair of (partial)
markings or laminations $(\mu,\nu)$ has {\em $R$--bounded
  combinatorics} if for any proper subsurface $Y\subsetneq S$ the
bound 
\begin{equation}\label{eq : bdd comb}
d_{Y}(\mu,\nu)\leq R
\end{equation}
holds. If the bound holds only for non-annular subsurfaces of $S$ we say that the pair has {\em non-annular $R$--bounded combinatorics}. 
\medskip

The following theorem relates the non-annular bounded combinatorics of end invariants to the behavior of WP geodesics. 

\begin{thm}\label{thm : bd comb recurrence}
For any $R>0$ there is an $\ep>0$ so that any WP geodesic ray $r \colon [0,\infty) \to \Teich(S)$ whose end invariant has non-annular $R$--bounded combinatorics visits the $\ep$--thick part of $\Teich(S)$ infinitely often.
\end{thm}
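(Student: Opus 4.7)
The plan is to argue by contradiction, combining the Geodesic Limit Theorem (Theorem \ref{thm : geodesic limit}) with the twist-shortening dichotomy of Theorems \ref{thm : tw-sh} and \ref{thm : shtw}. Suppose, for $\epsilon$ to be chosen below, that there is a ray $r$ with end invariant $(\nu^-,\nu^+)$ of non-annular $R$-bounded combinatorics such that $r(t)$ lies in the $\epsilon$-thin part for all $t \geq T_0$. Since $r$ is unit-speed of infinite length, Brock's quasi-isometry $Q$ from \eqref{eq : Brock qi} forces the Bers pants decompositions $Q(r(t))$ to traverse an infinite quasi-geodesic in $P(S)$, and by the Masur--Minsky distance formula \eqref{eq : MM dist} some subsurface coefficient must diverge along the ray.

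Next, pick $t_n \to \infty$ with $t_{n+1} - t_n$ large, and apply the Geodesic Limit Theorem to $\zeta_n := r|_{[t_n, t_n+T]}$ for a fixed large $T$. After passing to a subsequence, we obtain a piecewise geodesic limit $\hat\zeta$ in $\overline{\Teich(S)}$, stratum multicurves $\sigma_0,\ldots,\sigma_{k+1}$, pinching multicurve $\hat\tau$, and normalizing mapping classes $\varphi_{l,n} = \mathcal{T}_{l,n}\circ\cdots\circ\mathcal{T}_{1,n}\circ\psi_n$. Because the ray remains in the $\epsilon$-thin part, each $\sigma_l$ is non-empty, so the Dehn twist factors $\mathcal{T}_{l,n}$ about $\sigma_l-\hat\tau$ contribute the large annular projections predicted by Theorem \ref{thm : shtw}; these are innocuous for non-annular bounded combinatorics.

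I would then split on the behavior of the non-twist factors $\psi_n$, which by Theorem \ref{thm : geodesic limit} are either trivial or unbounded in $\Mod(S)$. In the trivial case, $r(t_n+t)\to\hat\zeta(t)$ in $\overline{\Teich(S)}$, so $r(t_n)\to\hat\zeta(0)$; if $\sigma_0$ is non-empty the point $\hat\zeta(0)$ lies on the WP boundary and the triangle inequality forces $|t_{n+1}-t_n|\to 0$, contradicting the choice of $t_n$, while if $\sigma_0$ is empty, a unit-speed geodesic ray in the $\mathrm{CAT}(0)$ completion cannot converge to an interior point of $\Teich(S)$. Iterating along the partition handles all pieces of $\hat\zeta$. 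In the unbounded case, the distance formula applied to the marking graph yields a non-annular subsurface $Y\subsetneq S$ on which $d_Y(\mu_0,\psi_n(\mu_0))$ grows without bound for a fixed marking $\mu_0$. The Dehn twist normalizations $\mathcal{T}_{l,n}$ alter $\pi_Y$ by uniformly bounded amounts by Theorem \ref{thm : bddgeod} and Theorem \ref{thm : beh ineq}, so this growth transfers to $d_Y(\mu(r(t_n)),\mu(r(t_n+T)))\to\infty$; by no back-tracking (Theorem \ref{thm : no back tracking}) the contributions accumulate in $d_Y(\nu^-,\nu^+)$, contradicting the $R$-bounded non-annular combinatorics.

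The main obstacle is the unbounded case: identifying a single non-annular $Y$ whose projection growth survives post-composition with the twist factors. This requires a careful application of the bounded geodesic image and Behrstock inequality to argue that Dehn twists about curves of $\sigma_l-\hat\tau$ do not cancel the non-annular projections produced by $\psi_n$. The threshold $\epsilon$ is then chosen smaller than the constant produced by Theorem \ref{thm : shtw} with $N$ equal to a suitable multiple of $R$, providing the quantitative lower bound on non-annular projections needed to exceed $R$ and close the contradiction.
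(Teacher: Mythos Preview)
Your approach is quite different from the paper's, which outsources both halves of the argument: it cites \cite[Theorem~4.1]{wprecurnue} for the fact that any single ray with non-annular $R$--bounded combinatorics recurs to some thick part, and then uses stability of hierarchy paths (from \cite{bmm2,wpbehavior}) to show that any two Bers markings along $r$ inherit a uniform non-annular bound $R'$, after which \cite[Lemma~4.2]{wprecurnue} supplies the uniform $\epsilon$. Your idea of attacking the problem directly via the Geodesic Limit Theorem is interesting, and your treatment of the trivial-$\psi_n$ case is essentially correct.

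However, the unbounded-$\psi_n$ case has a genuine gap that you yourself flag but do not close. The assertion that ``the distance formula applied to the marking graph yields a non-annular subsurface $Y\subsetneq S$ on which $d_Y(\mu_0,\psi_n(\mu_0))$ grows'' is false: the marking-graph distance formula sums over \emph{all} subsurfaces including annuli, so nothing in Theorem~\ref{thm : geodesic limit} prevents $\psi_n$ from being, say, an unbounded sequence of Dehn twist powers about a fixed curve, in which case only an annular coefficient diverges and no contradiction with non-annular $R$--bounded combinatorics arises. Second, even granting a non-annular $Y$, your final step invokes Theorem~\ref{thm : no back tracking}, which is stated for hierarchy paths, to pass from $d_Y(\mu(r(t_n)),\mu(r(t_n+T)))$ to $d_Y(\nu^-,\nu^+)$; this requires knowing that $Q(r)$ fellow-travels a hierarchy path, i.e.\ precisely the stability input the paper uses and you omit. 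Without that bridge the large projection along a segment of $r$ need not survive to the end invariants.
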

\begin{proof}
The fact that an individual ray $r$ visits an $\ep$--thick part of $\Teich(S)$ infinitely often is \cite[Theorem 4.1]{wprecurnue}. To show that $\ep$ can be chosen uniformly for all geodesic rays $r$ whose end invariants have non-annular $R$--bounded combinatorics consider a decreasing sequence $\ep_n\to 0$ and a sequence of WP geodesic rays $r_n:[0,\infty)\to\Teich(S)$ with non-annular $R$--bounded combinatorics end invariants $(\mu(r_n(0)),\nu^+_n)$ and assume that $\ep_n$ is the largest number that $r_n$ visits the $\ep_n$--thick part of \OT\ space infinitely often. In particular, for each $n\geq 1$ there is a time $t_n$ so that $r_n([t_n,\infty))$ does not intersect the $2\ep_n$--thick part of $\Teich(S)$.

 Since the end invariant of $r_n$, has non-annular $R$--bounded combinatorics, a hierarchy path $\varrho_n$ between the end invariant is stable in the pants graph $P(S)$ \cite[Theorem 4.3]{bmm2}\cite[Theorem 5.13]{wpbehavior}, in particular $\varrho_n$ and $Q(r_n)$, the image of $r_n$ under Brock's quasi-isometry (\ref{eq : Brock qi}), $D$--fellow travel in $P(S)$ where the constant $D$ depends only on $R$. Theorem \ref{thm : no back tracking} then guarantees that any two points along $\varrho_n$ also satisfy the non-annular bounded combinatorics condition (\ref{eq : bdd comb}) with a larger constant. 
 
 Now for any two times $t_1,t_2\in[0,\infty)$, let $i_1,i_2$ be so that $d(\varrho_n(i_1),Q(r_n(t_1)))$ and $d(\varrho_n(i_2),Q(r_n(t_2)))$ are at most $D$. Then from the distance formula (\ref{eq : MM dist})
 we see that all subsurface coefficients of the pair $(\varrho_n(i_1),Q(r_n(t_1)))$ and the pair $(\varrho_n(i_2),Q(r_n(t_2)))$ are bounded by $\max\{A,KD+KC\}$ for a choice of threshold the $A$ in the formula. This together with the fact that the pair $(\varrho_n(i_1),\varrho_n(i_2))$ has non-annular bounded combinatorics imply that the pair $(Q(r_n(t_1)),Q(r_n(t_2)))$ also satisfies the non-annular bounded combinatorics condition for some $R'>R$ which depends only on $R$. 
 
For $R'>0$ as above let the constants $T_0>0$ and $\ep_0>0$ be as in Lemma 4.2 of \cite{wprecurnue}. Let $I_n\subset [t_n,\infty)$ be an interval of length $T_0$ which contains a time $s_n$ so that $r_n(s_n)$ is in the $\ep_n$--thick part of \OT\ space and $s_n\to\infty$ as $n\to\infty$. By the previous paragarph the end points of $r_n|_{I_n}$ have non-annular $R'$--bounded combinatorics, so by Lemma 4.2 of \cite{wprecurnue} after possibly passing to a subsequence the geodesic segments $r_n|_{I_n}$ intersect the $\epsilon_0$--thick part of \OT\ space. But for $n$ sufficiently large $2\ep_n<\ep_0$ and by the choice of $I_n$ the segment $r_n|_{I_n}$ does not intersect the $2\ep_n$--thick part, which contradicts that $r_n|_{I_n}$ intersects the $\ep_0$--thick part. The existence of a uniform $\ep$ for all $r$ with non-annular $R$--bounded combinatorics follows from this contradiction.
\end{proof}

\subsection{Isolated annular subsurfaces}\label{subsec : isolated annular subsurf}

In this section we recall the relevant aspects of the notion of an
isolated annular subsurface along a hierarchy path from \cite[\S 6]{wpbehavior}
and its consequences for our purposes in this paper.  

Let $(\mu,\nu)$ be a pair of (partial) markings or laminations with non-annular $R$--bounded combinatorics. A hierarchy path
$\varrho:[m,n]\to P(S)$, $[m,n]\subseteq \mathbb Z\cup\{\pm\infty\}$, with end points $(\mu,\nu)$ is stable in the pants graph of $S$ \cite{bmm2}. In particular, $Q(g)$ the image of a WP
geodesic $g$ with end invariant $(\mu,\nu)$ under Brock's
quasi-isometry (\ref{eq : Brock qi}) $D$--fellow travels $\varrho$ for a $D>0$ depending only on $R$. 
For a parameter $i\in[m,n]$, we say that the time $t$ corresponds to $i$, if $Q(g(t))$ is within distance $D$ of $\varrho(i)$, and vice versa.

Let $i\in [m,n]$, and let $Q$ be a pants decomposition within distance $D$ of the point $\varrho(i)$, moreover let $\gamma$ be a curve in $Q$.
 By \cite[Definition 6.3]{wpbehavior} the annular subsurface with core curve $\gamma$ is
isolated at $i$ along $\varrho$ and hence by \cite[Lemma 6.4]{wpbehavior} we have:
\begin{lem} \label{lem : ann coeff comp}\textnormal{(Annular
    coefficient comparison)} 
There are positive constants ${\bar w},b$ and $B$ depending on $R$ and
a constant $L$ depending only on the topological type of $S$, so that
for the curve $\gamma$, a time $t$ corresponding to $i$, any $s\geq {\bar w}$ and $s'\asya_b s$, we have:
\begin{equation}\label{eq : anncoeffcomp}
d_{\gamma}(\mu(g(t-s')),\mu(g(t+s'))) \asya_{B}
  d_{\gamma}(\varrho(i-s),\varrho(i+s)),
  \end{equation} 
where $\mu(\cdot)$ is a choice of Bers marking at the point. Moreover, 
\begin{equation}\label{eq : lng}\min \{ \ell_{\gamma}(g(t-s')),\ell_{\gamma}(g(t+s'))\} \geq L.\end{equation}
\end{lem}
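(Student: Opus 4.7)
The plan is to leverage the stability of the hierarchy path and the definition of an isolated annular subsurface to transfer estimates between $\varrho$ and the WP geodesic $g$. Because the end invariant of $g$ has non-annular $R$--bounded combinatorics, the hierarchy path $\varrho$ is stable in $P(S)$ by \cite{bmm2}, and thus $Q(g)$ and $\varrho$ $D$--fellow travel for a $D$ depending on $R$. In particular, moving geodesic time by $s$ changes the matching hierarchy parameter by $s$ up to bounded additive error coming from $D$ together with Brock's quasi-isometry constants of (\ref{eq : Brock qi}); this is the sense in which the constant $b$ absorbs the mismatch between $s$ and $s'$.

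For the comparison of annular coefficients, the strategy is to pass through the pants decomposition $Q$ at $\varrho(i)$. Since $\gamma$ is isolated at $i$, by definition all large annular projection to $\gamma$ occurs in a bounded window $[i-w_0, i+w_0]$ along $\varrho$ (with $w_0$ depending on $R$). Choosing $\bar{w} \geq w_0$ (with room to spare for the $b$--slack), Theorem~\ref{thm : no back tracking} applied to the annular subsurface around $\gamma$ gives
\[
d_\gamma(\varrho(i-s),\varrho(i+s)) \asya d_\gamma(\varrho(i-s'),\varrho(i+s'))
\]
for any $s' \asya_b s$, up to an additive error controlled by $b$. It then suffices to compare the annular coefficient along $\varrho$ with the one for Bers markings of $g$ at the matching times. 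Here the Bers marking $\mu(g(t \pm s'))$ differs from $\varrho(i \pm s')$ by a bounded excursion in the marking graph; since one still needs the annular projection to be controlled (annular projections are \emph{not} coarsely Lipschitz under arbitrary marking graph moves), one uses that $\gamma$ has definite length at both endpoints, so its transversals in the Bers marking and the tight transversals coming from $\varrho$ differ by a bounded number of Dehn twists about $\gamma$. Combining these two steps yields (\ref{eq : anncoeffcomp}) with an additive constant $B$ depending on $R$.

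For the length bound (\ref{eq : lng}), I would argue by contradiction via Theorem~\ref{thm : shtw}: if $\ell_\gamma(g(t \pm s'))$ were smaller than a threshold, then applying Theorem~\ref{thm : shtw} to a sub-segment of $g$ lying just before $t - s'$ (respectively just after $t + s'$) would force large annular twisting about $\gamma$ strictly outside the window $[t-s', t+s']$. Translating back via fellow-traveling, this would manifest as significant annular projection to $\gamma$ occurring along $\varrho$ outside $[i-s, i+s]$, contradicting the isolation of the annulus at $i$ provided $\bar{w}$ was chosen larger than both $w_0$ and the constant from Theorem~\ref{thm : shtw} corresponding to that threshold. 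The resulting $L$ depends only on the topological type of $S$ because the threshold in Theorem~\ref{thm : shtw} is topologically determined once the bound on twisting is fixed by the isolation data, which in turn is absorbed into $\bar{w}$.

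The main obstacle is the interplay between the two parametrizations: the $b$--slack in $s' \asya_b s$ must be large enough to accommodate the quasi-isometry constants of $Q$ together with the fellow-traveling constant $D$, yet small enough (relative to $\bar{w}$) that $t \pm s'$ still lie outside the active window of the isolated annulus. Carefully ordering the choice of constants $\bar{w}, b, B, L$ so that all of these comparisons close up—and verifying that annular projections are preserved by the passage from hierarchy pants decompositions to Bers markings precisely because the length of $\gamma$ stays bounded below—constitutes the technical core of the argument.
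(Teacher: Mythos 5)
The paper does not actually prove Lemma~\ref{lem : ann coeff comp}; it is imported verbatim from \cite[Lemma 6.4]{wpbehavior}, with the notion of an isolated annular subsurface carrying the content via \cite[Definition 6.3]{wpbehavior}. So there is no ``paper's proof'' to compare against---your proposal is an attempt at a self-contained argument, and it needs to be judged on its own terms. Evaluated that way, the broad strategy (transfer estimates across Brock's quasi-isometry using stability, control the annular data through a length lower bound, use Theorem~\ref{thm : shtw} to tie short length to large twisting) is the right circle of ideas, but there are genuine gaps.

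The most concrete problem is your appeal to Theorem~\ref{thm : no back tracking} for the annulus with core $\gamma$. As stated in the paper that theorem is explicitly for \emph{non-annular} subsurfaces, so it does not license the step $d_\gamma(\varrho(i-s),\varrho(i+s)) \asya d_\gamma(\varrho(i-s'),\varrho(i+s'))$. Even if you grant an annular version, no-backtracking is a monotonicity statement and gives only a one-sided bound when $s'$ can exceed $s$; the two-sided comparison requires a \emph{localization} statement, namely that the $\gamma$-annular projection of $\varrho(j)$ is essentially stationary once $j$ leaves a bounded window around $i$. That localization should come from Theorem~\ref{thm : bddgeod} applied to the quasi-geodesic that $\varrho$ traces in $\cC(S)$ (once $\varrho(j)$ is at curve-complex distance $\geq 2$ from $\gamma$, its annular projection to $\gamma$ changes by a bounded amount), and that is exactly what has to be extracted from \cite[Lemma 6.4]{wpbehavior}; it is not a consequence of no-backtracking. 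Relatedly, you treat ``isolated'' as if it \emph{means} that large annular projection to $\gamma$ is confined to a window $[i-w_0, i+w_0]$; in the paper isolation at $i$ only means that $\gamma$ lies in a Bers pants decomposition within $D$ of $\varrho(i)$, and the window statement is the conclusion of the cited lemma, not part of its definition. Finally, your proof of \eqref{eq : lng} by contradiction via Theorem~\ref{thm : shtw} presupposes precisely the control on annular projection outside $[i-s,i+s]$ that you have not yet established, so the constant ordering you gesture at does not close without first proving the localization step.
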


\section{Sequences of curves on punctured spheres}\label{sec : sequence of curves on S0p}

In this section we construct a sequence of curves that satisfies the condition $\mathcal P$ of Definition~\ref{def : sequence of curves}.
This construction is a generalization of the one in \cite{nonuniqueerg} to spheres with more punctures. 
Fix a sequence of positive integers $\mathcal E = \{e_k\}_{k=0}^\infty$.

Let $p\geq 5$ be an odd integer and $S = S_{0,p}$ be a sphere with $p$ punctures.  We visualize $S$ as the double of a regular $p$-gon (with vertices removed), admitting an order $p$ {\em rotational symmetry}, as in Figure~\ref{fig : 07}.  Let $\rho\colon S \to S$ be the counterclockwise rotation by angle $4\pi/p$.  Set $m = \frac{p-1}2$.

Next, let $\gamma_0$ be a curve obtained by doubling an arc connecting two sides of the polygon adjacent to a common side.  Then $\{\rho^j(\gamma_0)\}_{j=0}^{p-1}$ is a set of $p$ curves that pairwise intersection $0$ or $2$ times; see Figure~\ref{fig : 07}.   We let $\alpha = \rho^m(\gamma_0)$, and recall that $D_\alpha$ denotes the positive Dehn twist about the curve $\alpha$.  For $k \geq 1$, set
\begin{eqnarray}\label{eq : phi}
 \phi_k &=& D_\alpha^{e_{k+m-1}} \rho,
\;\text{and}\\
 \Phi_k &=& \phi_1 \phi_2 \cdots \phi_k,\nonumber
 \end{eqnarray}
(in particular, $\Phi_0 = id$).

Define a sequence of curves $\Gamma = \Gamma(\mathcal E) = \{\gamma_k\}_{k=0}^\infty$, starting with $\gamma_0$, by the formula
\begin{equation}\label{eq : gamma}
 \gamma_k = \Phi_k(\gamma_0).
 \end{equation}
Since a twist about $\alpha$ has no effect on a curve disjoint from it, for $0 \leq j \leq 2m-1$, 
\begin{equation} \label{eqn : alt def gamma} \gamma_k = \Phi_{k-j}(\gamma_j)= \Phi_{k-m}(\alpha),
\end{equation}
for all $k\geq m$.
See Figure~\ref{F : 7-in-a-row} for a picture illustrating $2m+1$ consecutive curves.

\begin{figure} [htb]
\labellist
\small\hair 2pt
\pinlabel $\gamma_0$ [b] at 27 27
\pinlabel $\gamma_1$ [b] at 76 27
\pinlabel $\gamma_2$ [b] at 66 72
\pinlabel $\gamma_3\!=\!\alpha$ [b] at 29 49
\pinlabel $\gamma_4$ [b] at 52 16
\pinlabel $\gamma_5$ [b] at 81 53
\pinlabel $\gamma_6'\!=\!\rho(\gamma_5)$ [b] at 38 68
\endlabellist
\begin{center}
\includegraphics[width=6cm]{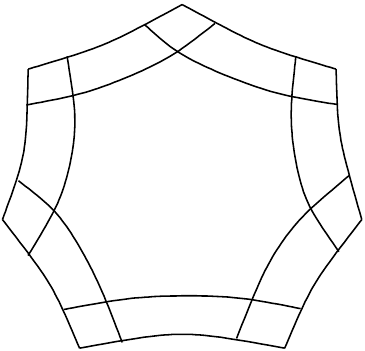} \caption{$S_{0,7}$ as a double of a $7$--gon.  The curves $\gamma_0,\gamma_1,\gamma_2,\gamma_3=\alpha,\gamma_4,\gamma_5$ and $\gamma'_6=\rho(\gamma_5)$ are shown.}
\label{fig : 07}
\end{center}
\end{figure}

\begin{prop}\label{prop : sequence on S_0,p}
The sequence $\Gamma(\mathcal E) = \{\gamma_k\}_{k=0}^\infty$ satisfies condition $\mathcal P(\mathcal E)$ in Definition~\ref{def : sequence of curves} for $m = \frac{p-1}{2}$ and $b' = b = 2$.
\end{prop}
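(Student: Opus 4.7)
The plan is to derive all three parts of $\mathcal P(\mathcal E)$ from the following structural identity, which I would prove first:
\[
\gamma_{k+i} = \Phi_k(\rho^i(\gamma_0)) \qquad \text{for all } k\geq 0 \text{ and } 0\leq i\leq m.
\]
In particular this gives $\gamma_k=\rho^k(\gamma_0)$ for $k\leq m$ and $\gamma_m=\alpha$. The identity follows by evaluating $\phi_{k+1}\cdots\phi_{k+i}(\gamma_0)$ from the innermost operator outwards: since $\phi_\ell = D_\alpha^{e_{\ell+m-1}}\rho$, the $j$-th innermost step produces $D_\alpha^{e_?}(\rho^j(\gamma_0))$, and for $1\leq j\leq m$ the intermediate curve $\rho^j(\gamma_0)$ is either disjoint from $\alpha=\rho^m(\gamma_0)$ (when $j\not\equiv m,m+1\pmod p$) or equal to $\alpha$ (when $j=m$); in either case the twist is trivial. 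The only ``bad'' index $j=m+1$ is avoided because $j\leq i\leq m$.

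Condition (i) is then immediate: $\gamma_k,\dots,\gamma_{k+m-1}$ are the $\Phi_k$-images of $\gamma_0,\rho(\gamma_0),\dots,\rho^{m-1}(\gamma_0)$, and the latter are pairwise disjoint because any two differ by some $\rho^s$ with $1\leq s\leq m-1$, a value not in $\{m,m+1\}\pmod p$. For condition (ii), by $\Phi_k$-equivariance it suffices to show that the $m+1$ curves $\{\rho^i(\gamma_0)\}_{i=0}^m$ fill $S_{0,p}$: each $\rho^j(\gamma_0)$ bounds a pair of pants containing exactly the punctures $V_{2j+1}$ and $V_{2j+2}$, so an essential curve $\delta$ disjoint from all of them and not isotopic to any must lie in the large complement of each; the punctures in its region are those outside every pair $\{V_{2j+1},V_{2j+2}\}_{j=0}^m$, a union that exhausts $V_0,\dots,V_{p-1}$, so the region is puncture-free and a direct check that puncture-free complementary regions are disks forces $\delta$ to be trivial.

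For condition (iii), the plan is to isolate the single Dehn twist $D_\alpha^{e_k}$ carried by $\phi_{k-m+1}$. Writing
\[
\Phi_{k+m} = \Phi_{k-m}\cdot D_\alpha^{e_k}\rho\cdot \phi_{k-m+2}\cdots\phi_{k+m}
\]
and conjugating $D_\alpha^{e_k}$ through $\Phi_{k-m}$ using $\Phi_{k-m}(\alpha)=\gamma_k$, one obtains $\gamma_{k+m}=D_{\gamma_k}^{e_k}(\gamma'_{k+m})$ with $\gamma'_{k+m}=\Phi_{k-m}(\rho(\chi))$ and $\chi=\phi_{k-m+2}\cdots\phi_{k+m}(\gamma_0)$. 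The main technical step is to evaluate $\chi$: the innermost $m$ operators take $\gamma_0$ to $\alpha$ by the structural identity, and the remaining $m-1$ operators, applied to $\alpha$, produce an expression of the form $D_\alpha^{e_{k+1}}D_{\alpha_1}^{e_{k+2}}\cdots D_{\alpha_{m-2}}^{e_{k+m-1}}(\alpha_{m-1})$ where $\alpha_i=\rho^i(\alpha)$; because $\alpha_{m-1}$ is disjoint from each twisting curve $\alpha_0,\alpha_1,\dots,\alpha_{m-2}$, all twists act trivially, so $\chi=\alpha_{m-1}=\rho^{-2}(\gamma_0)$ and $\gamma'_{k+m}=\Phi_{k-m}(\rho^{-1}(\gamma_0))$.

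The intersection numbers $\I(\gamma'_{k+m},\gamma_j)$ then reduce under $\Phi_{k-m}^{-1}$ to $\I(\rho^{-1}(\gamma_0),\Phi_{k-m}^{-1}(\gamma_j))$. For $j\in\{k-m,\dots,k\}$ the structural identity applied with $k-m$ in place of $k$ gives $\Phi_{k-m}^{-1}(\gamma_j)=\rho^{j-k+m}(\gamma_0)$; for $j\in\{k+1,\dots,k+m-1\}$ an analogous innermost-first computation again collapses because the relevant twisting curves $\alpha_0,\dots,\alpha_{j-k-1}$ are disjoint from $\alpha_{j-k}$, giving $\Phi_{k-m}^{-1}(\gamma_j)=\rho^{j-k+m}(\gamma_0)$ as well. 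Hence $\I(\gamma'_{k+m},\gamma_j)=\I(\gamma_0,\rho^{j-k+m+1}(\gamma_0))$, which equals $2$ precisely when $j-k+m+1\in\{m,m+1\}$, i.e., $j\in\{k-1,k\}$, and equals $0$ otherwise; this confirms (iii) with $b=b'=2$. I expect the main obstacle to be the bookkeeping in evaluating $\chi$ and the parallel collapses for $j>k$, both of which hinge on the precise disjointness pattern among the $\alpha_i$.
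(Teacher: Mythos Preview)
Your overall strategy is sound and parts (i) and (iii) go through, but there is a genuine gap in part (ii), and it stems from a miscomputed disjointness condition in your structural identity.

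The condition you state for $\rho^j(\gamma_0)$ to be disjoint from $\alpha=\rho^m(\gamma_0)$ is ``$j\not\equiv m,m+1\pmod p$''. That is actually the condition for $\rho^j(\gamma_0)$ to be disjoint from $\gamma_0$, not from $\alpha$.  Since $\I(\rho^j(\gamma_0),\alpha)=\I(\gamma_0,\rho^{m-j}(\gamma_0))$, the correct condition is $m-j\not\equiv m,m+1\pmod p$, i.e.\ $j\not\equiv 0,\,2m\pmod p$.  Thus the index $j=m+1$ is \emph{not} bad; the first genuinely bad index is $j=2m$.  Your structural identity $\gamma_{k+i}=\Phi_k(\rho^i(\gamma_0))$ therefore extends (by exactly your argument) to all $0\le i\le 2m-1$, which is the form the paper records as equation~(\ref{eqn : alt gamma def}).

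This matters because your treatment of (ii) relies on the claim that the $m+1$ curves $\{\rho^i(\gamma_0)\}_{i=0}^{m}$ fill $S_{0,p}$, and that claim is false.  Indeed $\gamma_1=\rho(\gamma_0)$ bounds a twice\-punctured disk containing $V_3,V_4$, and one checks that $\gamma_1$ is disjoint from every other $\rho^i(\gamma_0)$ with $0\le i\le m$ (the relevant differences $i-1$ all lie in $\{1,\dots,m-1\}$, never $m$ or $m+1$).  Hence that twice\-punctured disk is a complementary region and the collection does not fill; for $p=5$ this is the three curves with $2$--sets $\{V_1,V_2\},\{V_3,V_4\},\{V_0,V_1\}$.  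The fix is precisely to use the extended identity: the $2m$ curves $\gamma_k,\dots,\gamma_{k+2m-1}$ are the $\Phi_k$--images of $\rho^0(\gamma_0),\dots,\rho^{2m-1}(\gamma_0)$, and it is this full orbit (minus one curve) that fills, as in the paper.

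With the identity extended to $i\le 2m-1$, your argument for (iii) also simplifies considerably: you no longer need the separate ``innermost-first collapse'' computations for $\chi$ and for $\Phi_{k-m}^{-1}(\gamma_j)$ when $j>k$, since $\Phi_{k-m}^{-1}(\gamma_j)=\rho^{j-k+m}(\gamma_0)$ now follows directly for all $j=k-m,\dots,k+m-1$.  That is exactly how the paper organizes the proof; your route through $\chi$ reaches the same $\gamma'_{k+m}=\Phi_{k-m}(\rho^{2m}(\gamma_0))$ but with extra bookkeeping that the stronger identity renders unnecessary.
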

\begin{figure} [htb]
\labellist
\small\hair 2pt
\pinlabel $\gamma_0$ [b] at 24 87
\pinlabel $\gamma_1$ [b] at 113 87
\pinlabel $\gamma_2$ [b] at 183 110
\pinlabel $\gamma_3=\alpha$ [b] at 249 98
\pinlabel $\gamma_4$ [b] at 52 13
\pinlabel $\gamma_5$ [b] at 151 30
\pinlabel $\gamma_6^{(k)}$ [b] at 233 30
\pinlabel $\stackrel{2e_k}{}$ [b] at 200 3
\endlabellist
\begin{center}
\includegraphics[width=8cm]{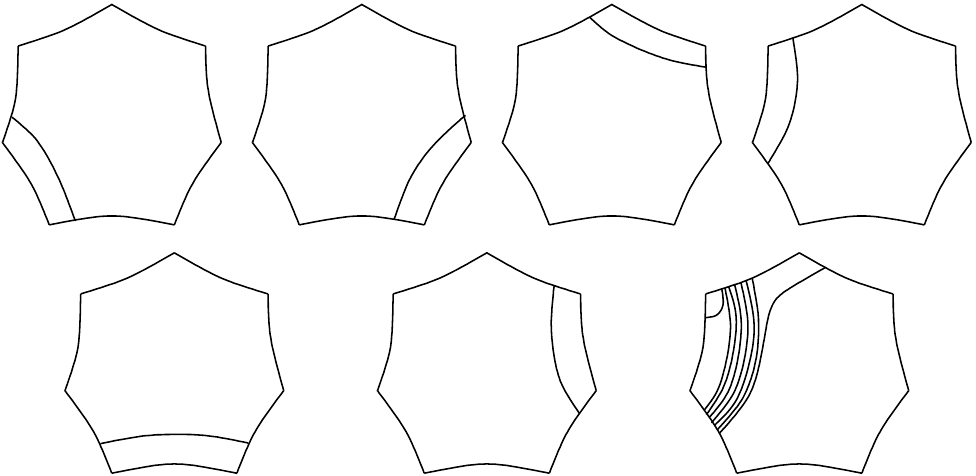} \caption{For $S_{0,7}$ and any $k \geq 3$, applying $\Phi_{k-3}$ to any seven consecutive curves in the sequence, $\gamma_{k-3},\ldots,\gamma_{k+3}$, gives $\gamma_0,\ldots,\gamma_5,\gamma_6^{(k)} = \Phi_{k-3}(\gamma_{k+3})$ as shown here.}
\label{F : 7-in-a-row}
\end{center}
\end{figure}

\begin{proof} The proof boils down to showing that, after applying an appropriate homeomorphism, any $2m+1 = p$ consecutive curves differ from $\gamma_0,\ldots,\gamma_{2m}$ only in the amount of relative twisting of $\gamma_0$ and $\gamma_{2m}$ around $\gamma_m$; see Figure~\ref{F : 7-in-a-row} and compare with the construction from \cite{nonuniqueerg}.  We now explain this in more detail.

First, observe that for $j = 1,\ldots,p-2 = 2m-1$, $\I(\alpha,\rho^j(\gamma_0)) = 0$.  Thus
\[ \gamma_j = \rho^j(\gamma_0),\]
for $j = 0,\ldots,2m-1$.
By construction, any two of these curves intersect $0$ or $2$ times, while the first $m$ are pairwise disjoint.  Furthermore, the entire set of $2m$ curves fills $S$; see Figure~\ref{fig : 07}.

Next, for any $k \geq m$, applying $\Phi_{k-m}^{-1}$ to the $2m-1$ consecutive curves $\{\gamma_{k-m},\ldots,\gamma_{k+m-1}\}$, (\ref{eqn : alt def gamma}) implies
\begin{equation}
\label{eqn : alt gamma def} \Phi_{k-m}^{-1}(\gamma_{k-m+j}) = \gamma_j
\end{equation}
for each $j = 0,\ldots,2m-1$.  Since $k$ was arbitrary, it follows that any $m$ consecutive curves are pairwise disjoint, and any $2m$ consecutive fill $S$.  Thus, conditions (i) and (ii) of $\mathcal P$ are satisfied.

For part (iii), let $\gamma_{k+m}' = \Phi_{k-m}(\rho(\gamma_{2m-1})) = \Phi_{k-m}(\rho^{2m}(\gamma_0))$.    Then, for $j = 0,\ldots,2m-1$, we may apply $\Phi_{k-m}^{-1}$, and we have
\[ \I(\gamma_{k+m}',\gamma_{k-m+j}) = \I(\rho^{2m}(\gamma_0),\gamma_j) = \I(\rho^{2m}(\gamma_0),\rho^j(\gamma_0)) = \left\{ \begin{array}{ll} 2 & \mbox{for $j=m$,}\\
&\mbox{and $m-1$}\\
0 & \mbox{otherwise,} \end{array} \right. \]
which implies the intersection number requirement for (iii), with $b' = b = 2$.

Finally, applying $\Phi_{k-m}$ to $\gamma_{k+m}$ we get
\begin{eqnarray*}
\Phi_{k-m}^{-1}(\gamma_{k+m})&  = & \phi_{k-m+1}\cdots \phi_{k+m}(\gamma_0) = \phi_{k-m+1}(\rho^{2m-1}(\gamma_0))\\
& = & D_\alpha^{e_{(k-m+1)+m-1}}\rho(\rho^{2m-1}(\gamma_0)) = D_{\gamma_m}^{e_k}(\rho^{p-1}(\gamma_0)),
\end{eqnarray*}
where we have used the fact that $\alpha = \rho^m(\gamma_0) = \gamma_m$.  Therefore,
\begin{eqnarray*} \gamma_{k+m} & = &\Phi_{k-m}(D_{\gamma_m}^{e_k}(\rho^{p-1}(\gamma_0))\\
& = &\Phi_{k-m}D_{\gamma_m}^{e_k}\Phi_{k-m}^{-1}\Phi_{k-m}\rho^{p-1}(\gamma_0)\\
& = &D_{\Phi_{k-m}(\gamma_m)}^{e_k}(\gamma_{k+m}') = D_{\gamma_k}^{e_k}(\gamma_{k+m}').
\end{eqnarray*}
Therefore, part (iii) from $\mathcal P$ is also satisfied.
\end{proof}

\begin{cor} \label{cor : sequence to ending lamination}  If $\mathcal E = \{e_k\}_{k=0}^\infty$ satisfies $e_{k+1} \geq a e_k$ for all $k$ and for an $a > 1$ sufficiently large, then the conclusions of Theorem~\ref{thm : subsurface coeff estimate}, Theorem~\ref{thm : intersection} and Theorem~\ref{thm : nue} hold for the sequence $\Gamma(\mathcal E) = \{\gamma_k\}_{k=0}^\infty$.  In particular, the sequence determines a minimal, filling non-uniquely ergodic lamination $\nu$.
\end{cor}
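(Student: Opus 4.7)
The proof is a direct verification that the constructed sequence satisfies the hypotheses of Theorems~\ref{thm : subsurface coeff estimate}, \ref{thm : intersection}, and \ref{thm : nue}. The plan has two steps: first, invoke Proposition~\ref{prop : sequence on S_0,p} to see that $\Gamma(\mathcal E) = \{\gamma_k\}$ satisfies the structural condition $\mathcal P(\mathcal E)$ of Definition~\ref{def : sequence of curves} with parameters $m = (p-1)/2$ and $b = b' = 2$; second, match the quantitative growth hypotheses needed by the three theorems in Section~\ref{subsec : seq of curves}.

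For the second step, I would choose $a > 1$ large enough to exceed every threshold that the cited theorems require: the constant from Theorem~\ref{thm : intersection} that makes the conclusion $\I(\gamma_i,\gamma_k) \asym_{\kappa_0} A(i,k)$ hold, and a lower bound guaranteeing $e_0 \geq E$, where $E$ is the constant from Theorem~\ref{thm : subsurface coeff estimate}. Each of these thresholds depends only on the topology of $S$ and the parameters $m, b, b'$ already fixed by Proposition~\ref{prop : sequence on S_0,p}, so they can all be absorbed into the single phrase ``$a$ sufficiently large'' in the statement (enlarging $a$ and, if necessary, imposing $e_0\geq E$; in any case $e_k \geq a^k e_0$ so all but finitely many $e_k$ exceed any given threshold).

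With the hypotheses verified, Theorem~\ref{thm : subsurface coeff estimate} gives that $\{\gamma_k\}$ is a $1$--Lipschitz quasi-geodesic in $\mathcal C(S)$ whose projective accumulation points in $\mathcal{PML}(S)$ are all supported on a single $\nu \in \mathcal{EL}(S)$. By Klarreich's description of $\mathcal{EL}(S)$ recalled in Subsection~\ref{subsec : combinatorics}, $\nu$ is automatically minimal and filling. Theorem~\ref{thm : intersection} then yields the intersection number estimates in terms of $A(i,k)$, and Theorem~\ref{thm : nue} provides the $m = (p-1)/2 \geq 2$ distinct ergodic transverse measures on $\nu$, demonstrating that $\nu$ is non-uniquely ergodic. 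The only ``obstacle'' is the bookkeeping around compatibility of the various thresholds in the three theorems, which is entirely routine since each is only a lower bound on $a$ or on $e_0$.
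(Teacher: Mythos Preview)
Your proposal is correct and matches the paper's approach exactly: the paper gives no explicit proof of this corollary, treating it as an immediate consequence of Proposition~\ref{prop : sequence on S_0,p} (which verifies condition $\mathcal P(\mathcal E)$) together with the standing hypotheses of Theorems~\ref{thm : subsurface coeff estimate}, \ref{thm : intersection}, and \ref{thm : nue}. Your observation that the condition $e_0 \geq E$ is not literally implied by ``$a$ sufficiently large'' is a fair bookkeeping point, but the paper simply folds this into the implicit hypotheses on $\mathcal E$.
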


\section{Limits of closed geodesics}\label{sec : closed geodesics}

Let $S=S_{0,p}$ be the $p$--punctured sphere where $p\geq 5$ is an odd integer. Let $\alpha,\rho$ then be as in \S\ref{sec : sequence of curves on S0p}, $e$ be a positive integer, and ${f}_e = D_\alpha^e\rho$.
To relate this to the previous section, note that for any fixed $e$, the sequence of mapping classes $\{\phi_k\}_{k=0}^\infty$ obtained from the constant sequence $\mathcal E = \{e\}_{k=0}^\infty$, is constant; $\phi_k = {f}_e$ for all $k$.  Consequently, the sequence of curves $\Gamma(\mathcal E)$ is obtained by iteration: $\Gamma(\mathcal E) = \{{f}_e^k(\alpha)\}_k$ (after a shift of indices).  

We assume in the following that $e > E$ from Theorem~\ref{thm : subsurface coeff estimate}.  Then by Proposition~\ref{prop : sequence on S_0,p} and Theorem~\ref{thm : subsurface coeff estimate}, $k \mapsto {f}_e^k(\alpha)$ is a ${f}_e$--invariant quasi-geodesic in the curve complex, and hence $f_e$ is pseudo-Anosov. By \cite[Theorem 4.1]{bdrycc} the sequence of curves $\{f_e^k(\alpha)\}_{k=0}^\infty$ determines a projective measured lamination $[\bar\nu_e^+]$ and $\{f_e^k(\alpha)\}_{k=0}^{-\infty}$ determines a projective measured lamination $[\bar\nu_e^-]$. 

A key ingredient in our construction of a Weil-Petersson geodesic ray in \S\ref{sec : NUE case} will be a very precise understanding of the limiting picture of the axes $g_e$ of the pseudo-Anosov mapping classes ${f}_e$, as $e$ tends to infinity. The main results of this section are Proposition \ref{prop : converge to omega} in which we describe a biinfinite piecewise geodesic in $\overline{\Teich(S)}$ which approximate the geodesics $g_e$ in the Hausdorff topology and gives us the necessary limiting picture for $g_e$ as $e\to\infty$. 
\medskip

Our analysis of the axes $g_e$ of ${f}_e$ begins with an analysis of the action of $\rho$ on $\Teich(S)$ and certain strata in the Weil-Petersson completion.
Observe that the quotient of $S$ by $\langle \rho \rangle$ is a sphere with one puncture and two cone points.  A fixed point of $\rho$ in $\Teich(S)$ is a $\rho$--invariant conformal structure on $S$, or equivalently, a conformal structure obtained by pulling back a conformal structure on the quotient $S/\langle \rho \rangle$.
Since the sphere with three marked points is rigid, there is a unique such conformal structure, and hence exactly one fixed point $Z \in \Teich(S)$ for the action of $\rho$.

\begin{prop}\label{prop : minimum of f}
For the stratum $\cS(v)$ defined by a multicurve $v$, there exists a point $X_0 \in \overline{\cS(v)}$ so that 
\[
d_{\WP}(X_0, \rho(X_0)) = \inf_{Y\in \cS(v)} d_{\WP}(Y, \rho(Y)). 
\]
\end{prop}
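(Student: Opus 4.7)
The plan is to take an infimizing sequence $(Y_n)\subset\cS(v)$ for $F(Y):=d_{WP}(Y,\rho Y)$ and extract a subsequential limit $X_0\in\overline{\cS(v)}$ that realizes the infimum. Because $\rho\in\Mod(S)$ extends to an isometry of the complete $\CAT(0)$ space $\overline{\Teich(S)}$ and $\overline{\cS(v)}\subset\overline{\Teich(S)}$ is closed, $F$ is continuous on $\overline{\cS(v)}$; together with density of $\cS(v)$ in $\overline{\cS(v)}$ this means $\inf_{\overline{\cS(v)}}F=\inf_{\cS(v)}F=:c$, so any subsequential limit of $(Y_n)$ achieves the infimum.

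I would argue by induction on the complexity $\xi(S\setminus v)$, the base case being $v$ a pants decomposition, for which $\cS(v)$ is a single point. For the inductive step, either some curve $\alpha$ disjoint from $v$ has $\ell_\alpha(Y_n)\to 0$ along a subsequence, or the injectivity radii of $(Y_n)$ on curves disjoint from $v$ are uniformly bounded below. In the first case, Corollary~\ref{cor : dist to stratum} gives $d_{WP}(Y_n,\cS(v\cup\{\alpha\}))\to 0$; picking $X_n\in\cS(v\cup\{\alpha\})$ with $d_{WP}(Y_n,X_n)\to 0$ and using continuity of $F$ gives $F(X_n)\to c$, so $\inf_{\cS(v\cup\{\alpha\})}F\le c$, while the reverse inequality is automatic from $\cS(v\cup\{\alpha\})\subseteq\overline{\cS(v)}$. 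The inductive hypothesis applied to the strictly lower-complexity stratum $\cS(v\cup\{\alpha\})$ produces a minimizer $X_0\in\overline{\cS(v\cup\{\alpha\})}\subseteq\overline{\cS(v)}$.

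In the thick case, $(Y_n)$ lies in the thick part of $\cS(v)\cong\prod_i\Teich(S_i)$, on which the stabilizer $\text{Stab}(v)\subset\Mod(S)$ acts cocompactly via $\prod_i\Mod(S_i)$. Choose $\psi_n\in\text{Stab}(v)$ so that $\tilde Y_n:=\psi_n^{-1}Y_n\to X_0'\in\cS(v)$. Since $\psi_n^{-1}$ is an isometry,
\[
F(Y_n)\;=\;d_{WP}\bigl(\tilde Y_n,\,\psi_n^{-1}\rho\psi_n\,\tilde Y_n\bigr)\longrightarrow c,
\]
so the finite-order conjugates $\rho_n:=\psi_n^{-1}\rho\psi_n$ have uniformly bounded WP-displacement at the converging sequence $\tilde Y_n$. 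A proper-discontinuity argument forces $\{\rho_n\}$ to take only finitely many values along a further subsequence, so one may assume $\rho_n\equiv\rho'$. The relation $\rho\psi_N=\psi_N\rho'$ then gives $F(\psi_N X_0')=d_{WP}(X_0',\rho' X_0')=c$, so $X_0:=\psi_N X_0'\in\cS(v)$ attains the infimum.

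The main obstacle is the stabilization $\rho_n\equiv\rho'$ in the thick case. Proper discontinuity is a priori problematic because $\cS(v)\subset\overline{\Teich(S)}\setminus\Teich(S)$ when $v\neq\emptyset$ and WP-closed balls in $\overline{\Teich(S)}$ are not compact. The natural workaround is to pick a fixed interior reference point $Y_0\in\Teich(S)$ and bound the WP-displacement of $\rho_n$ at $Y_0$ via the triangle inequality, using that each $\rho_n$ preserves the injectivity radius of $Y_0$ (since it is finite order and merely permutes curves). The orbit $\{\rho_n Y_0\}$ then lies in a fixed thick part of $\Teich(S)$, where WP- and Teichm\"{u}ller-metrics are bi-Lipschitz, so the standard proper discontinuity of $\Mod(S)$ with respect to the Teichm\"{u}ller metric delivers the required finiteness.
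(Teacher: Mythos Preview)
There is a genuine gap in the thick case. Your key claim---that the Weil--Petersson and Teichm\"uller distances are bi-Lipschitz on the $\epsilon$--thick part---is false. It holds only infinitesimally (for the metric tensors), not for the distance functions, because WP geodesics between thick points can pass through the thin part. Concretely, for any curve $\alpha$ with $\ell_\alpha(Y_0)>\epsilon$, the iterates $D_\alpha^n Y_0$ all lie in the $\mathrm{syst}(Y_0)$--thick part and satisfy
\[
d_{\WP}(Y_0, D_\alpha^n Y_0)\ \le\ 2\,d_{\WP}(Y_0,\cS(\alpha)),
\]
since $D_\alpha$ fixes $\cS(\alpha)$ pointwise; yet $d_T(Y_0,D_\alpha^n Y_0)\to\infty$. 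So a WP-bounded set of thick points need not be precompact. Worse, this directly defeats your proper-discontinuity step: the elements $D_\alpha^{-n}\rho D_\alpha^n$ are infinitely many distinct order-$p$ conjugates of $\rho$, and since $F$ is $2$--Lipschitz their WP-displacement at $Y_0$ is bounded by $F(Y_0)+4\,d_{\WP}(Y_0,\cS(\alpha))$. Thus ``bounded WP-displacement at a thick interior point'' does not force $\{\rho_n\}$ to be finite, and the workaround you propose does not close the gap. There is also a smaller issue in the thin case: you assume a single fixed curve $\alpha$ gets short along a subsequence, but a priori the short curve may depend on $n$; pinning it down already requires some boundedness input.

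The paper resolves both issues differently. It first shows (Lemma~\ref{lem : f is proper}) that $F$ is convex and $F^{-1}([0,R])$ is WP-bounded, using the unique fixed point of the finite-order $\rho$. Boundedness alone is still not precompactness, so instead of cocompactness/proper discontinuity the paper applies the Geodesic Limit Theorem (Theorem~\ref{thm : geodesic limit}) to the segments $[X_1,X_n]$ inside a stratum $\cS(v')\subset\overline{\cS(v)}$ of \emph{minimal} dimension with the same infimum. Either no pinching occurs and $X_n$ converges outright, or the theorem singles out a specific curve $\beta$ with $d_\beta$--twisting going to infinity; convexity of $F$ then produces an infimizing sequence approaching $\cS(v'\cup\beta)$, contradicting minimality of $\dim\cS(v')$. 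In effect the Geodesic Limit Theorem is the correct substitute for the proper-discontinuity step you attempted.
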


\begin{remark}
Note that unless $\cS(v)$ is a point (i.e.~$v$ is a pants decomposition), $\overline{\cS(v)}$ is not compact.
\end{remark}

Now we define the function
\[
F:\overline{\Teich(S)}\to\mathbb{R}^{\geq 0},
\]
by $F(X)=d_{\WP}(X,\rho(X))$. The proposition is then equivalent to showing that the restriction of $F$ to the closure $\overline{\cS(v)}$ attains a minimum value.
We begin with a lemma.

\begin{lem} \label{lem : f is proper} 
The function $F:\overline{\Teich(S)}\to\mathbb{R}^{\geq 0}$ is convex, $2$--Lipschitz, and for any $R >0$, $F^{-1}([0,R])$ is a bounded set.
\end{lem}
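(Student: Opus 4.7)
My plan is to treat the three assertions separately, all exploiting that $\overline{\Teich(S)}$ is a complete $\CAT(0)$ space on which $\rho$ acts by isometries.

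\emph{Convexity} follows from the standard $\CAT(0)$ convexity of the distance between two geodesics: if $\alpha$ is any geodesic in $\overline{\Teich(S)}$ and $\beta = \rho \circ \alpha$ (also a geodesic, since $\rho$ is an isometry), then $t \mapsto d_{\WP}(\alpha(t), \beta(t)) = F(\alpha(t))$ is convex. The \emph{$2$--Lipschitz estimate} is immediate from the triangle inequality: for $X, Y \in \overline{\Teich(S)}$,
\[ F(X) \leq d_{\WP}(X,Y) + F(Y) + d_{\WP}(\rho(Y), \rho(X)) = 2\, d_{\WP}(X,Y) + F(Y), \]
using that $\rho$ is an isometry; symmetry gives the Lipschitz bound.

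The substantive part is boundedness of the sublevel sets, for which my plan is a $\CAT(0)$ circumcenter argument based at $Z$. First I would check that $\mathrm{Fix}(\rho) \cap \overline{\Teich(S)} = \{Z\}$. A fixed point in a stratum $\cS(v)$ with $v \neq \emptyset$ would force $\rho(v) = v$ as a multicurve. Since $p$ is prime, every $\rho$-orbit on simple closed curves has size $1$ or $p$. A $\rho$-fixed essential simple closed curve would partition the $p$ punctures into two nonempty $\rho$-invariant subsets, which is impossible because $\rho$ (rotation by $4\pi/p$) acts transitively on the $p$ punctures. So $\rho$ fixes no curve, and any $\rho$-invariant multicurve must consist solely of $\rho$-orbits of size $p$. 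But multicurves on $S_{0,p}$ have at most $p-3 < p$ components, so no such multicurve exists, and no boundary stratum is $\rho$-invariant.

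Now, for $X \in \overline{\Teich(S)}$ consider the orbit $O(X) = \{\rho^i(X) : 0 \leq i \leq p-1\}$. The triangle inequality and $d_{\WP}(\rho^i(X), \rho^{i+1}(X)) = F(X)$ give $\diam O(X) \leq (p-1) F(X)$. Because $\overline{\Teich(S)}$ is complete $\CAT(0)$, the bounded set $O(X)$ has a unique circumcenter $c(X)$; since $\rho$ permutes $O(X)$ isometrically, uniqueness forces $\rho(c(X)) = c(X)$, so $c(X) = Z$. Therefore
\[ d_{\WP}(X, Z) \leq \diam O(X) \leq (p-1) F(X), \]
and $F^{-1}([0, R])$ sits inside the closed $\WP$-ball of radius $(p-1)R$ about $Z$, hence is bounded. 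The main obstacle I anticipate is the first step — verifying that no fixed points of $\rho$ appear in boundary strata after passing to the completion — which rests on the combinatorial observation about $\rho$-orbits on curves and multicurves; once that is in place, the circumcenter argument and the other two assertions fall out directly from the $\CAT(0)$ structure.
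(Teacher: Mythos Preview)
Your treatment of convexity and the $2$--Lipschitz estimate matches the paper's. For boundedness of sublevel sets your circumcenter argument has a gap, and it also differs from the paper's route.

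The gap: the paper assumes only that $p\geq 5$ is \emph{odd}, not prime. For composite odd $p$ (e.g.\ $p=9$) a curve can have $\rho$--orbit of size a proper divisor $d$ of $p$, and then $d\leq p-3$ is possible, so your counting argument (orbits must have size $p>p-3$) breaks down. The conclusion that $\rho$ fixes no boundary stratum is still correct --- one can see it by noting that the quotient orbifold $S/\langle\rho\rangle$ is a sphere with one puncture and two cone points of order $p$, on which every simple closed curve bounds a disk containing at most one marked point, and each such curve lifts to an inessential curve in $S$ --- but this requires a separate argument from the one you gave.

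The paper sidesteps this issue entirely. It uses only that $\rho$ has a unique fixed point $Z$ in $\Teich(S)$ (established just before the lemma, from rigidity of the thrice-marked sphere), together with the convexity of $F$ already proved. Choosing $R_0$ small enough that the ball $B_{R_0}(Z)$ lies in $\Teich(S)$, compactness and uniqueness of $Z$ give $R_1:=\min_{\partial B_{R_0}(Z)}F>0$. For $Y\notin B_{R_0}(Z)$, convexity of $F$ along the geodesic $[Z,Y]$ and $F(Z)=0$ force $R_1\leq \tfrac{R_0}{d_{\WP}(Z,Y)}F(Y)$, hence $d_{\WP}(Z,Y)\leq \tfrac{R_0}{R_1}F(Y)$. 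This argument never needs to know whether $\rho$ has fixed points in completion strata; uniqueness inside $\Teich(S)$ suffices. Your circumcenter approach, once the gap is patched, yields a clean explicit constant, but at the cost of establishing the stronger fixed-point statement on $\overline{\Teich(S)}$.
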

\begin{proof}
Since the completion of the Weil-Petersson metric is $\CAT(0)$, the distance function on $\overline{\Teich(S)}$ is convex, and hence so is $F$.  The triangle inequality proves that $F$ is $2$--Lipschitz, since
\begin{eqnarray*}
|F(Y)-F(X)| & = & \big|d_{\WP}(Y,\rho(Y)) - d_{\WP}(X,\rho(X))\big| \\
&\leq& \big|d_{\WP}(X,Y)+d_{\WP}(X,\rho(Y)) - d_{\WP}(X,\rho(X))\big|\\
&\leq & d_{\WP}(X,Y) + d_{\WP}(\rho(X),\rho(Y)) \\
&=& 2 d_{\WP}(X,Y).
\end{eqnarray*}

Let $Z \in \Teich(S)$ be the fixed point of the action of $\rho$ on $\Teich(S)$ and suppose that $R_0 > 0$ is sufficiently small so that $B_{R_0}(Z)$, the closed ball of radius $R_0$ in $\overline{\Teich(S)}$ about $Z$, is contained in $\Teich(S)$, and thus is compact.  Let $R_1 > 0$ be the minimum value of $F$ on $\partial B_{R_0}(Z)$.

For any $Y \in \overline{\Teich(S)} \setminus B_{R_0}(Z)$, let $Y_0$ be the unique point of intersection of the geodesic from $Z$ to $Y$ with the sphere $\partial B_{R_0}(Z)$.  Then it follows that $d_{\WP}(Y,Z) = R_0 + d_{\WP}(Y,Y_0)$, and so convexity of $F$ implies
\[
F(Y_0)\leq \frac{R_0}{d_{\WP}(Y,Z)} F(Y)+\frac{d_{\WP}(Y,Y_0)}{d_{\WP}(Y,Z)} F(Z)
\]
But then since $F(Z)=0$ and $F(Y_0)\geq R_1$ we have
\[ F(Y) \geq \frac{d_{WP}(Z,Y)}{R_0} F(Y_0) \geq \frac{d_{WP}(Z,Y)}{R_0} R_1.\]
Rearranging the above inequality, we have
\[ d_{WP}(Z,Y) \leq \frac{R_0}{R_1} F(Y) \]
and hence if $R > R_0$ and $F(Y) \leq R$, then we have $d_{\WP}(Z,Y) \leq \frac{R_0R}{R_1}$.  That is, $F^{-1}([0,R]) \subset B_{R_0R/R_1}(Z)$, as required.
\end{proof}

\begin{proof}[Proof of Proposition~\ref{prop : minimum of f}.]
Any stratum in $\overline{\cS(v)}$ has the form $\cS(v')$ for a multicurve $v'$ containing $v$.
Observe that the infimum of the function $F$ on any stratum $\cS(v')$ in $\overline{\cS(v)}$ is no less than the infimum of $F$ on $\cS(v)$.  Let $\cS(v')$ in $\overline{\cS(v)}$ be a stratum in the closure having minimal dimension, so that the infimum of $F$ on $\cS(v')$ is equal to the infimum on $\cS(v)$.  It suffices to show that the infimum of $F$ on $\cS(v')$ is realized on $\cS(v')$. 

Let $\{X_n\}_{n=1}^\infty \subset \cS(v')$ be an {\em infimizing sequence} for $F$ on $\cS(v')$; that is 
\begin{equation}
\lim_{n\to\infty} F(X_n)=\inf_{X\in\cS(v')} F(X).
\end{equation} 
Let $R <\infty$ be such that $F(X_n) \leq R$ for all $n \geq 1$.
Lemma \ref{lem : f is proper} then implies that there exists $D > 0$ such that $d_{\WP}(Z,X_n) \leq D$ for all $n \geq 1$.

By the triangle inequality, the lengths of the geodesic segments $[X_1,X_n]$ are bounded by $2D$. Let $S_1,\ldots, S_m$ be the connected component of $S\backslash v'$. Then $\cS(v')$ is isometric to $\prod_{j=1}^m\Teich(S_j)$ with the product of WP metrics on each factor. Let $\zeta_n:[0,T_n]\to\prod_{j=1}^m\Teich(S_j)$ be the parametrization of $[X_1,X_n]$ by arc length. Let 
\[\pr_j:\prod_{j=1}^m \Teich(S_j)\to \Teich(S_j)\]
 be the projection to the $j$-the factor and let $\zeta^j_n:[0,T^j_n]\to \Teich(S_j)$ be parametrization by arc length of $\pr_j\circ \zeta_n$. Note that $T^j_n\leq 2D$ for $j=1,\ldots,m$. So for a fixed $j$, trimming the intervals and reparametrization we get a sequence of geodesic segments $\zeta^j_n:[0,T^j]\to \Teich(S_j)$ of equal length. We may then apply Theorem \ref{thm : geodesic limit} (Geodesic Limit Theorem) to the sequence of geodesic segments $\zeta^j_n:[0,T^j]\to\Teich(S_j)$.  Let the multicurves $\sigma^j_i$, $i=0,\ldots,k_j+1$, the multicurve $\hat{\tau}^j$, the partition $t^j_0<\ldots<t^j_{k_j+1}$ and the piecewise geodesic $\hat{\zeta}^j$ be as in the theorem. Also let the elements of mapping class group $\psi^j_n$ and $\varphi^j_{l,n},\; l=1,\ldots,k_j$ be as in the theorem. Note that by the theorem when $k_j\geq 1$ we have that $\hat{\zeta}^j(t^j_1)\in\cS(\sigma^j_1)$ and $\lim_{n\to\infty}\varphi^j_{1,n}(\zeta^j_n(t^j_1))=\hat{\zeta}^j(t^j_1)$.
 
Since the geodesics $\zeta^j_n$ have a common starting point $\pr_j(X_1)$, it follows that $\psi^j_n$ is the identity map for all $n$.
Hence, if $k_j=0$, then after possibly passing to a subsequence the points $\pr_j(X_n)$ converge. 
\medskip

First suppose that $k_j=0$ for all $j=1,\ldots,m$, then after possibly passing to a subsequence all sequences $\pr_j(X_n)$ converge as $n\to\infty$. As a result the points $X_n$ converge and we are done.
 
Now we suppose that $k_j\geq 1$ for some $j$, let $\beta\in\sigma^j_1$, and we derive a contradiction. Note that $\pr_j(X_1)$ and $\pr_j(X_n)$ are in $\Teich(S_j)$. Claim 4.9 in the proof of Theorem 4.6 in \cite{wpbehavior} tells us that for Bers markings $\mu(\pr_j(X_1))$ and $\mu(\pr_j(X_n))$ and curves $\beta_n=(\varphi_{1,n}^j)^{-1}(\beta)$, 
\[d_{\beta_n}\Big(\mu(\pr_j(X_1)),\mu(\pr_j(X_n))\Big)\to \infty.\]
as $n\to\infty$. 

Now recall that $\varphi_{1,n}^j=\mathcal{T}_{1,n}^j\circ \psi^j_n$, also that $\mathcal{T}_{1,n}^j$ is the composition of a power of the Dehn twist about the curve $\beta_n$ and powers of Dehn twists about curves disjoint from $\beta_n$. Moreover, as we saw above $\psi_n$ is identity. Thus $\beta_n\equiv\beta$ for all $n$. Therefore the above limit becomes 
\[d_{\beta}\Big(\mu(\pr_j(X_1)),\mu(\pr_j(X_n))\Big) \to \infty\]
 as $n\to\infty$. 
 We may then choose a sequence $\{n_k\}_{k=1}^\infty$ so that 
\begin{equation}\label{eq : dbXn}
d_{\beta}\Big(\mu(\pr_j(X_{n_k})),\mu(\pr_j(X_{n_{k+1}}))\Big)\to \infty
\end{equation}
 as $k\to\infty$. 

 \begin{claim}
There exists a sequence of points $\{Y_{n_k}\}_k$ on the geodesic segments $[X_{n_k},X_{n_{k+1}}]$ with the property that the distance between $Y_{n_k}$ and $\cS(v' \cup \beta)$ goes to $0$.
 \end{claim}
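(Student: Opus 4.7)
The plan is to combine Theorem~\ref{thm : tw-sh} with Corollary~\ref{cor : dist to stratum}. Since $\cS(v')$ is isometric to the product $\prod_i \Teich(S_i)$ with the product WP metric, the segment $[X_{n_k}, X_{n_{k+1}}]$ is a product geodesic in $\cS(v')$, and its projection $\zeta_k := \pr_j([X_{n_k}, X_{n_{k+1}}])$ to the factor $\Teich(S_j)$ containing $\beta$ is a WP geodesic of length at most $d_{\WP}(X_{n_k}, X_{n_{k+1}}) \leq 2D$. The hyperbolic length of $\beta$ at any point of $[X_{n_k}, X_{n_{k+1}}]$ coincides with the length of $\beta$ at its $\pr_j$-image, so it suffices to produce $Y_{n_k} \in [X_{n_k}, X_{n_{k+1}}]$ with $\ell_\beta(Y_{n_k}) \to 0$. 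Indeed, Corollary~\ref{cor : dist to stratum} applied inside the factor $\Teich(S_j)$ then yields
\[
d_{\WP}(Y_{n_k}, \cS(v' \cup \beta)) \leq \sqrt{2\pi\,\ell_\beta(Y_{n_k})} \to 0.
\]

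I would take $Y_{n_k}$ to be a point at which $\ell_\beta$ achieves its minimum along $[X_{n_k}, X_{n_{k+1}}]$. To verify $\ell_\beta(Y_{n_k}) \to 0$, fix $\epsilon > 0$, set $\epsilon_0 = \min(\epsilon, 1)$, and let $N = N(\epsilon_0, \epsilon, 2D)$ be the constant produced by Theorem~\ref{thm : tw-sh} applied with $T = 2D$. By (\ref{eq : dbXn}), for all $k$ sufficiently large the Bers markings $\mu(\pr_j(X_{n_k}))$ and $\mu(\pr_j(X_{n_{k+1}}))$ satisfy $d_\beta > N$. Splitting into cases: if $\sup_t \ell_\beta(\zeta_k(t)) < \epsilon_0 \leq \epsilon$, then $\ell_\beta(Y_{n_k}) < \epsilon$ trivially; otherwise $\sup_t \ell_\beta(\zeta_k(t)) \geq \epsilon_0$, and Theorem~\ref{thm : tw-sh} forces $\inf_t \ell_\beta(\zeta_k(t)) \leq \epsilon$, hence $\ell_\beta(Y_{n_k}) \leq \epsilon$. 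Either way, $\ell_\beta(Y_{n_k}) \leq \epsilon$ for large $k$, so $\ell_\beta(Y_{n_k}) \to 0$.

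The only minor obstacle is the hypothesis $\sup_t \ell_\beta \geq \epsilon_0$ of Theorem~\ref{thm : tw-sh}, which the elementary dichotomy above handles automatically: failure of this hypothesis itself forces $\ell_\beta$ to be small everywhere on $\zeta_k$, making the desired estimate immediate.
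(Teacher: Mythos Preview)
Your proof is essentially the same as the paper's: project the segment to $\Teich(S_j)$, use Theorem~\ref{thm : tw-sh} together with the growing annular coefficient~\eqref{eq : dbXn} to force $\ell_\beta$ small somewhere on the segment, then appeal to Corollary~\ref{cor : dist to stratum}. Your per-$k$ dichotomy on $\sup_t \ell_\beta(\zeta_k(t))$ is in fact slightly cleaner than the paper's case split, which argues by whether or not $d_{\WP}(\pr_j(X_{n_k}),\cS(\beta))\to 0$ and implicitly passes to a subsequence in the ``otherwise'' case.

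There is one small parameter slip: setting $\epsilon_0=\min(\epsilon,1)$ gives $\epsilon_0\le\epsilon$, which violates the hypothesis $\epsilon<\epsilon_0$ in Theorem~\ref{thm : tw-sh}, so you cannot invoke the theorem as stated in your Case~2. The fix is immediate: take $\epsilon_0=\epsilon$ as the dichotomy threshold and apply Theorem~\ref{thm : tw-sh} with target $\epsilon/2<\epsilon_0$. Then Case~1 ($\sup_t\ell_\beta<\epsilon$) gives $\ell_\beta(Y_{n_k})<\epsilon$ directly, and Case~2 gives $\inf_t\ell_\beta\le\epsilon/2<\epsilon$; either way $\ell_\beta(Y_{n_k})\le\epsilon$ for large $k$, and the argument goes through.
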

\begin{proof}
It suffices to show that there is a sequence of points $Y_{n_k}$ on $[X_{n_k},X_{n_{k+1}}]$ so that the distance between $\pr_j(Y_{n_k})$ and $\cS(\beta) \subset \overline{\Teich(S_j)}$ goes to $0$.  If the distance between $\pr_j(X_{n_k})$ and $\cS(\beta)$ goes to $0$, the sequence $Y_{n_k} = X_{n_k}$ is the desired sequence.  
Otherwise, there is a lower bound for the distance between $\pr_j(X_{n_k})$ and $\cS(\beta)$.  Moreover, by Corollary~\ref{cor : dist to stratum} we have 
\[d_{\WP}(\pr_j(X_{n_k}),\cS(\beta))\leq \sqrt{2\pi \ell_{\beta}(\pr_j(X_{n_k}))}.\]
 Thus we obtain a lower bound for $\ell_{\beta}(\pr_j(X_{n_k}))$. Appealing to Theorem \ref{thm  : tw-sh}, the annular coefficient limit (\ref{eq : dbXn}) provides a point $Y^j_{n_k}$ on $[\pr_j(X_{n_k}),\pr_j(X_{n_{k+1}})]$ so that $\ell_{\beta}(Y^j_{n_k})\to 0$, and hence again by  Corollary~\ref{cor : dist to stratum} the distance between $Y^j_{n_k}$ and $\cS(\beta)$ goes to $0$. Now the points $Y_{n_k}$ on $[X_{n_k},X_{n_{k+1}}]$ with $\pr_j(Y_{n_k})=Y^j_{n_k}$ are the desired points.
\end{proof}

It follows from the above claim and the convexity of the function $F$ that
\begin{equation}\label{eq : fY fX}
F(Y_{n_k})\leq \max\{F(X_{n_k}),F(X_{n_{k+1}})\}.
\end{equation}
Therefore, $\{Y_{n_k}\}$ is also an infimizing sequence for the function $F$ on $\cS(v)$.
Let $Y'_k$ be the closest point to $Y_{n_k}$ in $\cS(v' \cup \beta)$.  Since the distance of the points $Y_{n_k}$ and $\cS(v' \cup \beta)$ goes to $0$ we have that
\[d_{\WP}(Y_{n_k},Y'_{k})\to 0\] 
and therefore $F(Y_{n_k})$ and $F(Y'_n)$ have the same limit since $F$ is $2$--Lipschitz.  Therefore $\{Y'_{k}\}_{k}$ is a infimizing sequence for the function $F$ in the stratum $\cS(v' \cup \beta)$, but this stratum has dimension less than that of $\cS(v')$. This contradiction finishes the proof of the proposition.
\end{proof}


Now we can describe the biinfinite piecewise geodesics $g^{\omega}_e\subset \overline{\Teich(S)}$ which approximate the geodesics $g_e$, the axes of the pseudo-Anosov mapping classes $f_e$ as follows.
First, appealing to Proposition~\ref{prop : minimum of f}, let $X_0 \in \overline{\cS(\rho^{-1}(\alpha))}$ be a point where the function $F(X) = d_{WP}(X,\rho(X))$ is minimized in the closure of the stratum $\cS(\rho^{-1}(\alpha))$.  As already observed, on $\overline{\cS(\rho^{-1}(\alpha))}$, we have ${f}_e = D_\alpha^e \rho = \rho$ since $D_\alpha^e$ acts trivially on $\rho(\overline{\cS(\rho^{-1}(\alpha))}) = \overline{\cS(\alpha)}$.  Consequently, ${f}_e(X_0) = \rho(X_0)$, and we may concatenate the geodesic segment $\omega = [X_0,\rho(X_0)]$ with its ${f}_e$--translates to produce an ${f}_e$--invariant, biinfinite piecewise geodesic in $\overline{\Teich(S)}$:
\begin{equation} \label{eq:g-omega-e}
g_e^\omega = \cdots \cup {f}_e^{-2}(\omega) \cup {f}_e^{-1}(\omega) \cup \omega \cup {f}_e(\omega) \cup {f}_e^2 (\omega) \cup \cdots 
\end{equation}

\begin{prop} \label{prop : converge to omega}
The path $g_e^{\omega}$ is a biinfinite piecewise geodesic that fellow travels $g_e$, and the Hausdorff distance between $g_e^\omega$ and $g_e$ tends to $0$ as $e \to \infty$.
\end{prop}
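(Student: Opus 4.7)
The path $g_e^\omega$ is manifestly a biinfinite piecewise geodesic: each $f_e^k(\omega)$ is a geodesic segment in $\overline{\Teich(S)}$ as the isometric image of $\omega = [X_0, \rho(X_0)]$, and consecutive segments meet at shared endpoints $f_e^k(X_0) = f_e^{k-1}(\rho(X_0))$ via the identity $f_e(X_0) = \rho(X_0)$ established above. By construction $g_e^\omega$ is $f_e$-invariant, and $f_e$ acts on it as a shift by $F(X_0)$ in arc-length.

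For fellow-traveling at fixed $e$, the plan is to exploit the $\CAT(0)$ geometry of $\overline{\Teich(S)}$. Since $f_e$ is pseudo-Anosov it is a hyperbolic isometry of $\overline{\Teich(S)}$ with axis $g_e$, which coincides with the minimum set of the convex displacement function $D_e(Y) := d_{\WP}(Y, f_e(Y))$, attaining the translation length $\tau(f_e)$. On the $f_e$-invariant set $g_e^\omega$, the function $D_e$ is uniformly bounded by $F(X_0)$: at the vertices $f_e^k(X_0)$ this follows from $f_e$-equivariance, and on each geodesic segment $f_e^k(\omega)$ from the convexity of $D_e$. A standard $\CAT(0)$ argument, valid because pseudo-Anosov WP-axes are rank-one, then bounds $d_{\WP}(Y, g_e)$ for $Y \in g_e^\omega$ in terms of the excess $F(X_0) - \tau(f_e)$, yielding the fellow-traveling statement.

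The crux of the proposition is Hausdorff convergence, which reduces to showing $\tau(f_e) \to F(X_0)$ as $e \to \infty$. Let $Y_e \in g_e$ with $D_e(Y_e) = \tau(f_e) \leq F(X_0)$. Writing $f_e(Y_e) = D_\alpha^e(\rho(Y_e))$, Wolpert's WP length-of-twist asymptotics furnish a lower bound of the form $d_{\WP}(\rho(Y_e), D_\alpha^e \rho(Y_e)) \gtrsim e\sqrt{\ell_\alpha(\rho(Y_e))}$ when $\ell_\alpha(\rho(Y_e))$ is small, so applying the triangle inequality with the upper bound $D_e(Y_e) \leq F(X_0)$ forces $\ell_\alpha(\rho(Y_e)) \to 0$. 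By Corollary \ref{cor : dist to stratum} this yields $d_{\WP}(\rho(Y_e), \overline{\cS(\alpha)}) \to 0$, equivalently $d_{\WP}(Y_e, \overline{\cS(\rho^{-1}(\alpha))}) \to 0$. Passing to a subsequence, $Y_e \to Y_\infty \in \overline{\cS(\rho^{-1}(\alpha))}$; since $D_\alpha^e$ fixes $\overline{\cS(\alpha)}$ pointwise, the twist contribution to $D_e(Y_e)$ vanishes in the limit and $D_e(Y_e) \to F(Y_\infty)$. Combined with $D_e(Y_e) \leq F(X_0)$ and the minimality of $F|_{\overline{\cS(\rho^{-1}(\alpha))}}$ at $X_0$, we conclude $F(Y_\infty) = F(X_0)$, so $Y_\infty$ is a minimizer. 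Provided $X_0$ is the unique minimizer we get $Y_\infty = X_0$, and then $f_e$-equivariance propagates this single-point convergence to Hausdorff convergence of the entire axes onto $g_e^\omega$.

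The main obstacle I anticipate is twofold: first, making the Wolpert asymptotic estimate for WP displacement under high powers of $D_\alpha$ precise and uniform in $Y_e$ near $\overline{\cS(\rho^{-1}(\alpha))}$; and second, either securing uniqueness of the minimizer $X_0$ of $F$ on $\overline{\cS(\rho^{-1}(\alpha))}$ via strict convexity of $F$ transverse to the $\rho$-equivariant directions, or, failing uniqueness, promoting subsequential convergence to Hausdorff convergence through compactness and the $f_e$-equivariance of the whole configuration.
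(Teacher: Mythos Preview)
Your outline diverges from the paper's argument and has a genuine gap at the compactness step.  You choose $Y_e \in g_e$ and attempt to pass to a subsequential limit $Y_\infty \in \overline{\cS(\rho^{-1}(\alpha))}$, but that stratum is noncompact, so nothing guarantees convergence.  Worse, the triangle inequality you invoke to bound $d_{\WP}(\rho(Y_e), D_\alpha^e \rho(Y_e))$ requires an a priori bound on $F(Y_e) = d_{\WP}(Y_e,\rho(Y_e))$, which you do not have: the only bound available is on $D_e(Y_e) = d_{\WP}(Y_e, D_\alpha^e \rho(Y_e))$, and these differ precisely by the quantity you are trying to control.  (The twist asymptotic is also misstated---the WP norm of the Fenchel--Nielsen twist field is $\asymp \ell_\alpha^{3/2}$---but that is secondary.)  You flag both issues yourself, correctly, as obstacles.

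The paper supplies the missing compactness through Theorem~\ref{thm : short curves along ge}, a substantial result proved with hierarchy machinery (subsurface coefficient bounds from Theorem~\ref{thm : subsurface coeff estimate}, the annular coefficient comparison of Lemma~\ref{lem : ann coeff comp}, and Theorems~\ref{thm : tw-sh}--\ref{thm : shtw}).  That theorem shows directly that $\ell_{\rho^{-1}(\alpha)}(g_e(-t_e)) \to 0$ and, crucially, that $g_e$ meets a \emph{fixed} $\epsilon$--thick part at the $f_e$--equivariant times $(k-1)t_e + t_e'$.  With this in hand, Lemma~\ref{lem : lim te} establishes $\tau(f_e) \to |\omega|$ by comparing $g_e(-t_e)$ with its nearest point in $\cS(\rho^{-1}(\alpha))$ and invoking only the minimality of $X_0$; no limit extraction and no uniqueness of the minimizer is needed.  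The Hausdorff convergence is then finished not by a generic rank-one argument but by strict negative curvature in the thick part: if the nearest points $X_{e,k} \in g_e^\omega$ to $g_e((k-1)t_e + t_e')$ stayed a definite distance from $g_e$, nearest-point projection to $g_e$ would contract by some $\delta < 1$, forcing $|\omega| = \lim \tau(f_e) \leq \delta |\omega|$.  Thus the thick-part input from Theorem~\ref{thm : short curves along ge} is used twice, and your proposal would need a substitute for both uses.
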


For the proof of the proposition we need the following theorem which is a {\em characterization of the short curves} along the geodesic $g_e$. 
In the following let $E>0$ be the constant from Theorem~\ref{thm : subsurface coeff estimate}.

\begin{thm} \label{thm : short curves along ge} There exists $\epsilon > 0$ so that for all $e > E$ and every point of $g_e$, at most one curve on $S$ has length less than $\epsilon$, and such a curve is in the set $\{{f}_e^k(\alpha)\}_{k \in \mathbb Z}$ (with $\alpha$ as in \S\ref{sec : sequence of curves on S0p}). Moreover, let $t_e$ be the translation length of $f_e$, then after reparametrization of $g_e$ we have that the minimal length of the curve ${f}_e^k(\alpha), \; k \in \mathbb Z$ along $g_e$ is realized at $kt_e$ and tends to zero as $e \to \infty$. 
\end{thm}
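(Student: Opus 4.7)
\medskip

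\noindent\textbf{Proof plan.} My strategy combines the subsurface-coefficient estimates of Theorem~\ref{thm : subsurface coeff estimate} with the length-coefficient dictionary of Theorems~\ref{thm : tw-sh}--\ref{thm : shtw} and the thick-recurrence of Theorem~\ref{thm : bd comb recurrence}. For the constant sequence $\mathcal E=\{e\}$ the curves $\gamma_k$ of Proposition~\ref{prop : sequence on S_0,p} are the $f_e$--orbit of $\gamma_0$, with $f_e^k(\alpha)=\gamma_{k+m}$ for all $k\in\mathbb Z$, and the end invariants of the axis $g_e$ are the projective laminations $[\bar\nu_e^\pm]$. First I will verify that $g_e$ has non-annular $R$-bounded combinatorics uniformly in $e$: by~(\ref{eqn : not gamma_i, small projection}), $d_W(\bar\nu_e^-,\bar\nu_e^+)\le R$ for every non-annular subsurface $W$, with $R$ depending only on the topology of $S$. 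Theorem~\ref{thm : bd comb recurrence} then produces an $e$-independent $\epsilon_0>0$ such that every $g_e$ recurs to the $\epsilon_0$-thick part of $\Teich(S)$; since $g_e$ is $f_e$-invariant and the thick locus is $\Mod(S)$-invariant, this recurrence places in each fundamental segment $[k t_e,(k+1)t_e]$ a point of $g_e$ at which every curve has length at least $2\epsilon_0$.

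Next I will identify the candidates for short curves. Fix $\epsilon_1<\epsilon_0$. If a curve $\gamma$ satisfies $\ell_\gamma(X)<\epsilon_1$ at some $X\in g_e$, then applying Theorem~\ref{thm : shtw} to a sufficiently long subsegment of $g_e$ about $X$ forces $d_\gamma$ across that subsegment to exceed a threshold $N(\epsilon_1)$, and $N(\epsilon_1)\to\infty$ as $\epsilon_1\to 0$; the no-backtracking property (Theorem~\ref{thm : no back tracking}) then gives the same bound for $d_\gamma(\bar\nu_e^-,\bar\nu_e^+)$. By~(\ref{eqn : not gamma_i, small projection}), the only curves whose annular projection can exceed $R$ are those in $\{f_e^k(\alpha)\}_{k\in\mathbb Z}$, so $\gamma$ must lie in this orbit. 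Conversely,~(\ref{eqn : gamma_i, big projection}) gives $d_\alpha(\bar\nu_e^-,\bar\nu_e^+)\asymp e\to\infty$ as $e\to\infty$, and Theorem~\ref{thm : tw-sh} applied with $\epsilon$ shrinking with $e$ yields $\inf_{g_e}\ell_\alpha\to 0$. Strict convexity of length functions along WP geodesics (Wolpert), together with the thick points supplied by Step~1 on either side of any short point, guarantees that this infimum is attained at a unique interior time. Reparametrizing $g_e$ so that this minimum occurs at $t=0$, $f_e$-invariance places each $\ell_{f_e^k(\alpha)}\circ g_e$ at its unique minimum at $t=k t_e$, with the common minimum value tending to $0$ as $e\to\infty$.

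Finally, to rule out two simultaneously short curves, I argue by contradiction. Suppose $\ell_{f_e^{k_1}(\alpha)}(X),\ell_{f_e^{k_2}(\alpha)}(X)<\epsilon_1$ at some $X=g_e(t^*)$ with $k_1<k_2$. By strict convexity the sublevel intervals $J_j=\{t:\ell_{f_e^{k_j}(\alpha)}(g_e(t))<\epsilon_1\}$ are open intervals containing $k_j t_e$, and by assumption both contain $t^*$. Step~1 supplies a parameter $t_{thick}\in[k_1 t_e,k_2 t_e]$ at which every curve length on $g_e$ is at least $2\epsilon_0>\epsilon_1$; convexity then confines $J_{k_1}$ to $(-\infty,t_{thick})$ and $J_{k_2}$ to $(t_{thick},+\infty)$, so they cannot both contain $t^*$. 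The main obstacle I expect is verifying the uniformity in $e$ of every constant in the argument---the projection bound $R$, the recurrence threshold $\epsilon_0$, and the length threshold $\epsilon_1$---which ultimately rests on the fact that the non-annular projection bound in Theorem~\ref{thm : subsurface coeff estimate} and the constants in Theorems~\ref{thm : tw-sh}--\ref{thm : shtw} and~\ref{thm : bd comb recurrence} depend only on the topology of $S$.
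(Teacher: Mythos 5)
Your proposal follows the same overall strategy as the paper: establish uniform thick recurrence via Theorem~\ref{thm : bd comb recurrence}, use the length--twisting dictionary (Theorems~\ref{thm : tw-sh}, \ref{thm : shtw}) together with the subsurface bounds of Theorem~\ref{thm : subsurface coeff estimate} to pin down which curves can get short, and invoke strict convexity of length functions plus the thick points to isolate the minima and forbid two simultaneously short orbit curves. The reparametrization and the $\epsilon=\min\{\ep_1,2\ep_2\}$ conclusion are exactly as in the paper.

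However there is a real gap in the step where you pass from ``$\gamma$ is short at $X$'' to ``$d_\gamma(\bar\nu_e^-,\bar\nu_e^+)$ is large.'' Theorem~\ref{thm : shtw} gives you a lower bound on $d_\gamma\big(\mu(g_e(t-s')),\mu(g_e(t+s'))\big)$, i.e.\ on an annular coefficient between \emph{Bers markings at two interior times}. To convert that into a statement about the end invariants $\bar\nu_e^\pm$ (or about the hierarchy path), you must (a) compare those Bers-marking annular coefficients to the coefficients $d_\gamma(\varrho_e(i-s),\varrho_e(i+s))$ taken along the hierarchy path at the corresponding parameters, and (b) push from there to $d_\gamma(\bar\nu_e^-,\bar\nu_e^+)$. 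Step (a) is not automatic: $Q(g_e)$ fellow-travels $\varrho_e$ only in the \emph{pants graph}, which carries no annular-twisting data, so ``fellow traveling'' gives you no direct control on $d_\gamma$. Step (b) uses no-backtracking, but Theorem~\ref{thm : no back tracking} as quoted is for non-annular $Y$; in any case it is only applicable once step (a) has been carried out. The piece of machinery you are missing is the isolated annular subsurface framework and the annular coefficient comparison of Lemma~\ref{lem : ann coeff comp} (which is exactly what the paper invokes in the proof of Lemma~\ref{lem : lb} and again for the second statement). Lemma~\ref{lem : ann coeff comp} is also what supplies the lower bound $\min\{\ell_\gamma(g_e(t-s')),\ell_\gamma(g_e(t+s'))\}\geq L$ that verifies the hypothesis $\sup\ell_\gamma\geq\ep_0$ in Theorems~\ref{thm : tw-sh} and \ref{thm : shtw}: your proposal asserts the needed twisting implications without ever checking that $\gamma$ is long somewhere on the window, which those theorems require. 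Once you insert Lemma~\ref{lem : ann coeff comp} at both of these points (short $\Rightarrow$ large end-invariant coefficient, and large end-invariant coefficient $\Rightarrow$ short), your argument closes and agrees with the paper's.

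Two smaller remarks. First, for the ``(4.4) gives $d_\alpha(\bar\nu^-_e,\bar\nu^+_e)\asymp e$'' step, note that the relevant estimate is equation~(\ref{eq : tw fek(alpha)}), which relies on the uniformity in $e$ of the additive constant in Theorem~\ref{thm : subsurface coeff estimate}; you should state that the constants depend only on $m,b,b'$ and the initial marking and hence not on~$e$, as the paper does explicitly. Second, in the two-short-curves argument you write ``convexity then confines $J_{k_1}$ to $(-\infty,t_{thick})$''; what convexity actually gives is that $J_{k_j}$ is an interval, and then the thick point at $t_{thick}$ lies outside both intervals $J_{k_1}, J_{k_2}$, so one sits to its left and the other to its right. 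State that the thick point $t_{thick}$ can be chosen in $[k_1 t_e,\, k_2 t_e]$ using $f_e$-equivariance of the recurrence; the paper does this by placing thick points at the times $kt_e+t_e'$.
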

\begin{proof}

Let $\nu_e^{\pm}$, as before, be the laminations determined by the sequences of curves $\{f_e^k(\alpha)\}_{k=0}^{\pm\infty}$. There is a uniform bound for all subsurface coefficients of the pairs $(\nu_e^-,\nu_e^+)$ except those of $\{f_e^k(\alpha)\}_{k\in\mathbb Z}$. This follows from the fact that in Theorem \ref{thm : subsurface coeff estimate} the upper bound $R$ depends only on the parameters from Definition \ref{def : sequence of curves} and the initial marking $\mu$ which is the same for all ${f}_e$. 

Similarly we have 
 \begin{equation}\label{eq : tw fek(alpha)}d_{f_e^k(\alpha)}(\nu_e^-,\nu_e^+)\asya e\end{equation}
  for all $k\in\mathbb Z$, where the additive error is independent of $e$.

Let $\varrho_e:[-\infty,+\infty]\to P(S)$ be a hierarchy path between the pair $(\nu_e^-,\nu_e^+)$ (see \cite{mm2}). Since the pair has non-annular $R$--bounded combinatorics $\varrho_e$ is stable in $P(S)$ \cite[Theorem 4.3]{bmm2}\cite[Theorem 5.13]{wpbehavior}. Therefore, $\varrho_e$ and $Q(g_e)$ the image of $g_e$ under Brock's quasi isometry (\ref{eq : Brock qi}) $D$--fellow travel, where the constant $D\geq 0$ depends only on $R$.

\begin{lem}\label{lem : ge recur}
There is an $\ep_2>0$, so that for all $e>E$, $g_e$ visits the $\ep_2$--thick part of $\Teich(S)$ infinitely often in both forward and backward times. 
\end{lem}

\begin{proof}
Let $\mu(g_e(0))$ be a Bers marking at $g_e(0)$, and let $i$ be so that $\varrho_e(i)$ is within distance $D$ of $Q(g_e(0))$. Then all non-annular subsurface coefficients of the pair $(\varrho_e(i),Q(g_e(0)))$ are bounded by $\max\{KD+KC,A\}$ by the distance formula (\ref{eq : MM dist}) for a choice of threshold $A$. Moreover, by Theorem \ref{thm : no back tracking}  all non-annular subsurface coefficients of the pair $(\varrho_e(i),\nu_e^+)$ are bounded by an enlargement of $R$. Combining the bounds with the triangle inequality in the curve complex of each subsurface then implies that $(\mu(g_e(0)),\nu_e^+)$, the end invariant of $g_e|_{[0,\infty)}$, has non-annular bounded combinatorics, independent of $e$. Theorem \ref{thm : bd comb recurrence}, then guarantees that for an $\ep_2>0$, independent of $e$, the geodesic ray $g_e|_{[0,\infty)}$ visits the $\ep_2$--thick part of $\Teich(S)$ infinitely often. The proof of that the geodesic ray $g_e|_{[0,-\infty)}$ visits the $\ep_2$--thick part of $\Teich(S)$ infinitely often is similar.
\end{proof}

Now we prove the following:
\begin{lem}\label{lem : lb}
 There exists $\ep_1>0$, depending only on $R$, so that for all $e> E$ the length of each curve $\gamma\notin\{f_e^k(\alpha)\}_k$ is bounded below by $\ep_1$ along $g_e$.
 \end{lem}
 \begin{proof}

Suppose that for a $t\in\mathbb{R}$ the length of $\gamma$ at $g_e(t)$ is less than the Bers constant. Then, $\gamma\in Q(g_e(t))$ and thus $\gamma$ is isolated at some $i$ along $\varrho_e$; for the discussion about isolated annular subsurfaces see $\S$\ref{subsec : isolated annular subsurf}. By Lemma \ref{lem : ann coeff comp}, there are constants ${\bar w}, b$ depending only on $R$ and a constant $L$ such that for any $s>{\bar w}$ and $s'\asya_b s$,
$$\min\{\ell_{\gamma}(g_e(t-s')),\ell_{\gamma}(g_e(t+s'))\}\geq L.$$

 Fix $s,s'$ as above and fix $u< s'$. Let $J=[t-s'+u,t+s'-u]$.  
 Then, Theorem \ref{thm : shtw} applies to the geodesic segment $g_e|_{[t+s',t-s']}$ and implies that for any integer $N\geq 1$,  there is an $\epsilon\in(0,L/2)$ so that 
 \begin{equation}\label{eq : shtw}
 \text{if}\; \inf_{r\in J}\ell_{\gamma}(g_e(r))<\epsilon,\text{then}\; d_{\gamma}(\mu(g_e(t-s')),\mu(g_e(t+s')))>N,
 \end{equation}
where $\mu(\cdot)$ denotes a Bers marking at the given point. 

 According to Lemma \ref{lem : ann coeff comp} there is a constant $B>0$ depending only on $R$ such that
\begin{equation}\label{eq : acc}d_{\gamma}(\mu(g_e(t-s')),\mu(g_e(t+s'))) \asya_{B} d_{\gamma}(\varrho_e(i-s),\varrho_e(i+s)).\end{equation}

Further, suppose that $\gamma$ is not in the set $\{{f}_e^k(\alpha)\}_{k \in \mathbb Z}$. Then the upper bound for $d_\gamma(\nu_e^-,\nu_e^+)$
and Theorem \ref{thm : no back tracking} for the parameters $-\infty,i-s,i+s,\infty$ of $\varrho_e$ give us an upper bound for the subsurface coefficient 
\[d_{\gamma}(\varrho_e(i-s),\varrho_e(i+s))\]
 depending only on $R$. So by (\ref{eq : acc}) we get an upper bound for 
 \[d_{\gamma}(\mu(g_e(t-s')),\mu(g_e(t+s'))\]
  depending only on $R$. On the other hand, since $t\in J$ by (\ref{eq : shtw}) if $\ell_\gamma(g(t))$ gets arbitrary small, then $d_{\gamma}(\mu(g_e(t-s')),\mu(g_e(t+s')))$
 would become arbitrary large, which contradicts the upper bound we just obtained. Therefore, there is a lower bound $\ep_1>0$ for the length of $\gamma$ at time $t$ which depends only on $R$. Since $t$ was arbitrary the proof of the lemma is complete.
\end{proof}

 The length of each one of the curves in the set $\{{f}_e^k(\alpha)\}_{k \in \mathbb Z}$ is strictly convex along $g_e$ (\cite{wolb}), and so has a unique minimum. The unique minimum for ${f}_e^k(\alpha)$ occurs at the ${f}_e^k$--image of the point where $\alpha$ is minimized.  Thus, we can parameterize $g_e$ by arc length so that for $t_e$ the WP translation length of ${f}_e $, the length of the curve ${f}_e^k(\alpha)$ is minimized at $g_e(kt_e)$.

By Lemma \ref{lem : ge recur} there is an $\ep_2>0$ so that for all $e>E$, $g_e$ visits the $\ep_2$--thick part infinitely often in both forward and backward times. Let $t_e' \in (0,t_e)$ be a time for which $g_e(t_e')$ is in the $\ep_2$--thick part.  But then $g_e(kt_e + t_e')$ is in the thick part for all $k$.  In particular, by convexity of the length of $\alpha$, it follows that outside the interval $(-t_e+t_e',t_e')$, $2\ep_2$ is a uniform lower bound for the length of $\alpha$.  Likewise, the length of ${f}_e^k(\alpha)$ is uniformly bounded below by $2\ep_2$ outside the interval $((k-1)t_e + t_e',k t_e + t_e')$.  Consequently, for $k \neq k'$, the curves ${f}_e^k(\alpha)$ and ${f}_e^{k'}(\alpha)$ cannot simultaneously have length less than $2\ep_2$. 

As we saw in Lemma \ref{lem : lb} the only curves which can get shorter than $\ep_1$ along $g_e$ are $\{f_e^k(\alpha)\}_k$. Moreover, since we saw above that two of these curves cannot get shorter than $\ep_2$ at the same time, the first statement of the theorem holds for $\ep=\min\{\ep_1,2\ep_2\}$.
\medskip

Let the laminations $\nu_e^{\pm}$ be as before, and let $\varrho_e$ be a hierarchy path between $\nu_e^-$ and $\nu_e^+$. Recall that $\varrho_e$ is stable and that $\varrho_e$ and $Q(g)$, $D$--fellow travel for a $D$ that depends only on $R$. 

Note that by (\ref{eq : tw fek(alpha)}) each curve $f_e^k(\alpha)$ ($k\in\mathbb Z$) for $e$ sufficiently large has a big enough subsurface coefficient that $f_e^k(\alpha)$ is in $\varrho_e(i)$ for an $i$ in the domain of $\varrho_e$ by \cite[Lemma 6.2 (Large link)]{mm2}. Thus $f_e^k(\alpha)$ is isolated at $i$ along $\varrho_e$ (see $\S$\ref{subsec : isolated annular subsurf}). Let $t$ be a time so that $Q(g_e(t))$ is within distance $D$ of $\varrho_e(i)$. Then for constants ${\bar w}, b,B$ from Lemma \ref{lem : ann coeff comp} and any $s\geq {\bar w}$ and $s'\asya_b s$ we have that 
\[d_{\gamma}(\mu(g_e(t-s')),\mu(g_e(t+s'))) \asya_{B} d_{\gamma}(\varrho_e(i-s),\varrho_e(i+s)).\] 
Thus the bound $d_{f_e^k(\alpha)}(\nu_e^-,\nu_e^+)\asya e$ and Theorem \ref{thm : no back tracking} for the parameters $-\infty,i-s,i+s,\infty$ of $\varrho_e$ imply that 
\[d_{\gamma}(\mu(g_e(t-s')),\mu(g_e(t+s')))\asya e.\]
 Moreover, for the constant $L>0$ form Lemma \ref{lem : ann coeff comp} we have that
 \[\min\{\ell_{\gamma}(g_e(t-s')),\ell_{\gamma}(g_e(t+s'))\}\geq L.\] 
 
 Now fix $s,s'$, then Theorem \ref{thm : tw-sh} applies to geodesic segment $g_e|_{[t-s',t+s']}$ and implies that 
 \[\inf_{r\in[t-s',t+s']} \ell_{{f}_e^k(\alpha)}(g_e(t))\to 0\]
  as $e\to\infty$. But the minimal length of $f_e^k(\alpha)$ is realized at $kt_e$ so
\[\lim_{e\to\infty}\ell_{{f}_e^k(\alpha)}(g_e(kt_e))= 0.\]
This completes the proof of the second statement of the theorem.
\end{proof}

We continue to use $t_e > 0$ to denote the WP translation length of ${f}_e$ and assume the geodesic $g_e$ is parameterized as in the proof of the theorem above.  
Then, in particular the minimal length of $f_e^k(\alpha)$ along $g_e$ is realized at time $kt_e$ and $\ell_{f^k_e(\alpha)}(g_e(kt_e))\to 0$ as $e\to\infty$.
Likewise, $\{(k-1)t_e + t_e'\}_{k \in \mathbb Z}$ denotes times when $g_e$ intersects the fixed thick part of $\Teich(S)$. Also, note that the minimum of the length of $f_e^k(\alpha)$ is realized at $kt_e$ ($k\in \mathbb Z$) and $\lim_{e\to\infty}\ell_{f_e^k(\alpha)}(kt_e)=0$.

To prove Proposition~\ref{prop : converge to omega} we also need the following lemma about the limit of translation length of $f_e$. 

\begin{lem} \label{lem : lim te}
 The translation distance $t_e$ of ${f}_e$ limits to $|\omega|$ the length of $\omega$; that is,
\[ \lim_{e \to \infty} t_e = d_{\WP}(X_0,\rho(X_0)) = |\omega|, \]
where $X_0\in \overline{\cS(\rho^{-1}(\alpha))}$, as before, is the point where $ d_{\WP}(X_0,\rho(X_0))=\inf_{X\in\cS(\rho^{-1}(\alpha))}d_{\WP}(X,\rho(X))$.
\end{lem}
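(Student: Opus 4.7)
The plan is to prove the two inequalities $\limsup_{e \to \infty} t_e \leq |\omega|$ and $\liminf_{e \to \infty} t_e \geq |\omega|$ separately. Both rely on two algebraic observations: first, $D_\alpha^e$ acts as the identity on the stratum $\overline{\cS(\alpha)}$ (being a Dehn twist about a pinched curve), so $f_e = D_\alpha^e\rho$ restricted to $\overline{\cS(\rho^{-1}(\alpha))}$ coincides with $\rho$ (since $\rho$ sends $\overline{\cS(\rho^{-1}(\alpha))}$ isometrically onto $\overline{\cS(\alpha)}$, where $D_\alpha^e$ then acts trivially); and second, $f_e^{-1}(\alpha) = \rho^{-1}(D_\alpha^{-e}(\alpha)) = \rho^{-1}(\alpha)$ since $D_\alpha$ fixes the isotopy class $\alpha$.

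For the upper bound, since $X_0 \in \overline{\cS(\rho^{-1}(\alpha))}$ we have $f_e(X_0) = \rho(X_0)$, so $d_{\WP}(X_0, f_e(X_0)) = |\omega|$. By density of $\Teich(S)$ in $\overline{\Teich(S)}$, I pick a sequence $X_n \in \Teich(S)$ converging to $X_0$; continuity of $d_{\WP}$ and the fact that $f_e$ is a WP isometry give $d_{\WP}(X_n, f_e(X_n)) \to |\omega|$. Since $f_e$ is pseudo-Anosov its translation length is the infimum of displacements over $\Teich(S)$, so $t_e \leq d_{\WP}(X_n, f_e(X_n))$, and taking $n \to \infty$ yields $t_e \leq |\omega|$.

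For the lower bound, Theorem \ref{thm : short curves along ge} applied at $k = -1$ asserts that the minimum of $\ell_{f_e^{-1}(\alpha)} = \ell_{\rho^{-1}(\alpha)}$ along $g_e$ is attained at time $-t_e$ and tends to zero as $e \to \infty$. Corollary \ref{cor : dist to stratum} then produces $Y_e \in \overline{\cS(\rho^{-1}(\alpha))}$ with $\epsilon_e := d_{\WP}(g_e(-t_e), Y_e) \to 0$. Using the identification $f_e|_{\overline{\cS(\rho^{-1}(\alpha))}} = \rho$ together with the reverse triangle inequality and the isometry property of $f_e$,
\[
t_e \; = \; d_{\WP}\bigl(g_e(-t_e),\, f_e(g_e(-t_e))\bigr) \;\geq\; d_{\WP}(Y_e, \rho(Y_e)) - 2\epsilon_e.
\]
Since $F(Y) := d_{\WP}(Y, \rho(Y))$ is $2$--Lipschitz and $\cS(\rho^{-1}(\alpha))$ is dense in $\overline{\cS(\rho^{-1}(\alpha))}$, Proposition \ref{prop : minimum of f} extended by continuity gives $F(Y_e) \geq F(X_0) = |\omega|$. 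Therefore $t_e \geq |\omega| - 2\epsilon_e$, yielding $\liminf_{e \to \infty} t_e \geq |\omega|$, which together with the upper bound forces $\lim_{e\to\infty} t_e = |\omega|$.

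The main point requiring care is the chain of stratum identifications under $\rho$ and $D_\alpha^e$: that $\rho$ bijects $\overline{\cS(\rho^{-1}(\alpha))}$ isometrically onto $\overline{\cS(\alpha)}$ while $D_\alpha^e$ fixes the latter pointwise, so that the composition $f_e$ restricts to the finite-order isometry $\rho$ on $\overline{\cS(\rho^{-1}(\alpha))}$. This collapsing of $f_e$ onto $\rho$ on the stratum is precisely what reduces the pseudo-Anosov translation length to the variational characterization of $X_0$ as the minimizer of $F$.
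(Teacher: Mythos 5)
Your argument is correct and follows essentially the same route as the paper's: both directions rest on the observation that $f_e = \rho$ on $\overline{\cS(\rho^{-1}(\alpha))}$; the upper bound by plugging $X_0$ into the displacement infimum (you take a density $\Teich(S)$-sequence, the paper uses $X_0$ directly, both fine), and the lower bound via Theorem~\ref{thm : short curves along ge}, Corollary~\ref{cor : dist to stratum}, the variational characterization $F(Y_e)\geq F(X_0)$ from Proposition~\ref{prop : minimum of f}, and the triangle inequality with the endpoints of $\omega$. The estimates and invocations line up with the paper's proof.
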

\begin{proof} 
Let $Y_e \in \cS(\rho^{-1}(\alpha))$ be the closest point to $g_e(-t_e)$ (and hence closest to the entire geodesic $g_e$).   Then  ${f}_e(Y_e) =D_\alpha^e \rho(Y_e) = \rho(Y_e)$, and hence
\[ d_{\WP}({f}_e(Y_e),Y_e) = d_{\WP}(\rho(Y_e),Y_e) \geq d_{\WP}(X_0,\rho(X_0)).\]
Moreover, $\rho^{-1}(\alpha)=f_e^{-1}(\alpha)$ so the minimal length of $\rho^{-1}(\alpha)$ along $g_e$ is realized at time $-t_e$ and $\ell_{\rho^{-1}(\alpha)}(g_e(-t_e))\to 0$ as $e\to\infty$.  By Corollary~\ref{cor : dist to stratum} the distance between $g_e(-t_e)$ and $\cS(\rho^{-1}(\alpha))$ is bounded above by $\sqrt{2\pi \ell_{\rho^{-1}(\alpha)}(g_e(-t_e))}$, so we obtain
 \[d_{\WP}(Y_e,g_e(-t_e)) = d_{\WP}({f}_e(Y_e),g_e(0)) \to 0\]
  as $e \to \infty$. It follows then from the triangle inequality that
\begin{eqnarray*}
\liminf_{e \to \infty} t_e&=& \liminf_{e \to \infty} d_{\WP}(g_e(-t_e),g_e(0))\\
 &\geq& \liminf_{e\to\infty} \Big(d_{\WP}(Y_e,f_e(Y_e))-d_{\WP}(g_e(-t_e),Y_e)\\
 &-&d_{\WP}(f_e(Y_e),g_e(0))\Big)\\
 &\geq& d_{\WP}(X_0,\rho(X_0)).
 \end{eqnarray*}

On the other hand, since $g_e$ is the geodesic axis of $f_e$, $t_e$ is less than the distance that ${f}_e$ translates along $g_e^\omega$, which is precisely $|\omega|=d_{\WP}(X_0,\rho(X_0))$.  That is, $t_e \leq d_{\WP}(X_0,\rho(X_0))$, and hence
\[ \limsup_{e \to \infty} t_e \leq d_{\WP}(X_0,\rho(X_0)). \]
Combining this with the above, we have $\displaystyle{\lim_{e \to \infty} t_e = d_{\WP}(X_0,\rho(X_0))}$, completing the proof of the lemma.
\end{proof}

We are now ready for the proof of Proposition~\ref{prop : converge to omega}.
\begin{proof}[Proof of Proposition~\ref{prop : converge to omega}.]
We recall that $g_e$ intersects a fixed thick part of \OT\ space, independent of $e$, at the times $(k-1)t_e + t_e'$, for all $k \in \mathbb Z$. Denote the closest point on $g_e^\omega$ to the point $g_e((k-1)t_e + t_e')$ by $X_{e,k}$. The distance between $X_{e,k}$ and $g_e((k-1)t_e + t_e')$ must tend to zero as $e \to \infty$.  Otherwise, the strict negative curvature in the thick part of $\Teich(S)$ would imply a definite contraction factor $\delta < 1$ for the closest point projection to $g_e$ restricted to $g_e^\omega$ for all $e$ sufficiently large.  Since $X_{e,k+1}=f_e(X_{e,k})$, $d_{\WP}(X_{e,k},X_{e,k+1})=|\omega|$. Now by the contraction of the projection on $g_e$ and Lemma \ref{lem : lim te} we would have that
\[ |\omega| = \lim_{e \to \infty} t_e \leq \delta |\omega| \]
an obvious contradiction.  The sequence of points $\{X_{e,k}\}_{k \in \mathbb Z}$ is ${f}_e$--invariant and its distance to $g_e$ tends to $0$ as $e \to \infty$.  Appealing to the CAT(0) property of $\overline{\Teich(S)}$, the furthest point of $g_e^\omega$ to $g_e$ must also have distance tending to $0$, and hence the Hausdorff distance between $g_e$ and $g_e^\omega$ tends to $0$, as desired.
\end{proof}

\begin{cor}\label{cor : min in S(alpha)}
The point $X_0 \in \overline{\cS(\rho^{-1}(\alpha))}$ where the minimum of the function $F(X) = d_{\WP}(X,\rho(X))$ (restricted to $\overline{\cS(\rho^{-1}(\alpha))}$) is realized lies in $\cS(\rho^{-1}(\alpha))$.  Moreover,
\[ \lim_{e \to \infty} g_e([-t_e,0]) = \omega.\]
\end{cor}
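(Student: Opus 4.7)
The plan is to establish the two assertions of the corollary separately. The first exploits the at-most-one-short-curve property from Theorem~\ref{thm : short curves along ge} to rule out $X_0$ lying in a deeper stratum. The second reduces, via the $\CAT(0)$ geometry of $\overline{\Teich(S)}$, to showing that the endpoints $g_e(-t_e)$ and $g_e(0)$ converge to $X_0$ and $\rho(X_0)$ respectively.

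For the first assertion, suppose for contradiction $X_0 \in \cS(v)$ for some multicurve $v \supsetneq \rho^{-1}(\alpha)$, and fix $\beta \in v$ with $\beta \neq \rho^{-1}(\alpha)$. Since $X_0$ is an endpoint of $\omega \subset g_e^\omega$ for every $e > E$, Proposition~\ref{prop : converge to omega} yields $s(e) \in \mathbb{R}$ with $d_{\WP}(g_e(s(e)), X_0) \to 0$. By continuity of hyperbolic length on the WP completion,
\[
\ell_\beta(g_e(s(e))) \to \ell_\beta(X_0) = 0 \quad\text{and}\quad \ell_{\rho^{-1}(\alpha)}(g_e(s(e))) \to 0,
\]
so for $e$ sufficiently large two distinct curves have length less than the $\ep$ of Theorem~\ref{thm : short curves along ge} at $g_e(s(e))$, a contradiction. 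Hence $v = \rho^{-1}(\alpha)$.

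For the second assertion I will first show $g_e(0) \to \rho(X_0)$. Theorem~\ref{thm : short curves along ge} gives $\ell_\alpha(g_e(0)) \to 0$, while the single-short-curve property combined with the $f_e$-equivariance of lengths yields the lower bound $\ell_\alpha(g_e(\pm t_e)) = \ell_{\rho^{\mp 1}(\alpha)}(g_e(0)) \geq \ep$; Wolpert's strict convexity of $\ell_\alpha$ along $g_e$ then confines $g_e(0)$ and so its closest-point projection $\pi_e(g_e(0))$ onto $g_e^\omega$ to a fixed WP-ball $B_R(\rho(X_0))$. Inside this ball $g_e^\omega$ consists of finitely many pieces $f_e^k(\omega)$ with $|k|$ bounded, and via the identity $\ell_\alpha(f_e^k(Y)) = \ell_{f_e^{-k}(\alpha)}(Y)$ the restriction of $\ell_\alpha$ to each piece is, as a function of arc length, independent of $e$. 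Moreover $\ell_\alpha$ vanishes on $g_e^\omega \cap B_R(\rho(X_0))$ only at $\rho(X_0)$---the unique breakpoint at which $\alpha$ is pinched, since $f_e$ being pseudo-Anosov forces $f_e^k(\rho^{-1}(\alpha)) = \alpha$ only when $k = 1$, and non-refraction keeps the interiors of the pieces out of $\cS(\alpha)$---and is uniformly bounded below off of $\rho(X_0)$. Since $\ell_\alpha(\pi_e(g_e(0))) \to 0$ by continuity, we conclude $\pi_e(g_e(0)) \to \rho(X_0)$ and therefore $g_e(0) \to \rho(X_0)$. Applying the isometry $f_e^{-1}$ and noting that $\rho(X_0) \in \cS(\alpha)$ together with the fact that $D_\alpha$ acts trivially on $\cS(\alpha)$ gives $f_e^{-1}(\rho(X_0)) = \rho^{-1}D_\alpha^{-e}(\rho(X_0)) = X_0$ for every $e$, so $g_e(-t_e) = f_e^{-1}(g_e(0)) \to X_0$.

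The two endpoint convergences together with Lemma~\ref{lem : lim te} ($t_e \to |\omega|$) make $g_e([-t_e,0])$ a sequence of WP geodesic segments with converging endpoints and lengths, so by $\CAT(0)$ uniqueness of geodesics in $\overline{\Teich(S)}$ they converge in Hausdorff distance to the unique geodesic from $X_0$ to $\rho(X_0)$, which is $\omega$. The main obstacle I foresee is the structural step in the previous paragraph, namely identifying $\rho(X_0)$ as the unique zero of $\ell_\alpha$ on the portion of $g_e^\omega$ near it and obtaining a uniform positive lower bound off of $\rho(X_0)$; this requires combining the $f_e$-equivariance of lengths, the pseudo-Anosov property of $f_e$ (no periodic curves), and non-refraction of completion strata to exclude other potential pinch points and to rule out interior points of the pieces approaching $\cS(\alpha)$.
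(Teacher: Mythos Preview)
Your argument for the first assertion is correct and close in spirit to the paper's: both use Theorem~\ref{thm : short curves along ge} to rule out two distinct short curves at a point of $g_e$ near $X_0$. You get that point via Proposition~\ref{prop : converge to omega}; the paper instead directly analyzes $g_e(-t_e)$.

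The second assertion, however, has a real gap. The step ``Wolpert's strict convexity of $\ell_\alpha$ along $g_e$ then confines $g_e(0)$\ldots to a fixed WP-ball $B_R(\rho(X_0))$'' is not justified: convexity of $\ell_\alpha$ together with $\ell_\alpha(g_e(\pm t_e))\geq\epsilon$ only localizes the minimum of $\ell_\alpha$ to the \emph{parameter} interval $[-t_e,t_e]$; it says nothing about where $g_e(0)$ sits in $\overline{\Teich(S)}$, and in particular does not place it near $\rho(X_0)$. Your subsequent analysis depends on this confinement. Moreover, even granting it, the claim that $\ell_\alpha$ restricted to each piece $f_e^k(\omega)$ is independent of $e$ fails once $|k|>m$: one has $\ell_\alpha(f_e^k(Y))=\ell_{f_e^{-k}(\alpha)}(Y)$, and $f_e^{-k}(\alpha)=\rho^{-k}(\alpha)$ only while the successive $\rho^{-j}(\alpha)$ stay disjoint from $\alpha$, which breaks down at $j=m$.

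The paper avoids all of this by a direct stratum argument (essentially already contained in the proof of Lemma~\ref{lem : lim te}). Since $\ell_{\rho^{-1}(\alpha)}(g_e(-t_e))\to 0$, Corollary~\ref{cor : dist to stratum} gives $d_{\WP}(g_e(-t_e),\cS(\rho^{-1}(\alpha)))\to 0$, and Theorem~\ref{thm : short curves along ge} forces any limit point to lie in $\cS(\rho^{-1}(\alpha))$ itself. Letting $Y_e\in\cS(\rho^{-1}(\alpha))$ be the nearest point to $g_e(-t_e)$, one has $f_e(Y_e)=\rho(Y_e)$ and hence $F(Y_e)=d_{\WP}(Y_e,f_e(Y_e))\to t_e\to|\omega|$; thus $Y_e$ is an infimizing sequence for $F$ on $\cS(\rho^{-1}(\alpha))$, bounded by Lemma~\ref{lem : f is proper}, and so accumulates on the minimizer $X_0$. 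This gives $g_e(-t_e)\to X_0$ and, applying $f_e$ (with $f_e(Y_e)=\rho(Y_e)$), $g_e(0)\to\rho(X_0)$. The $\CAT(0)$ conclusion for the segments then follows exactly as you wrote. I would replace your projection-onto-$g_e^\omega$ paragraph with this stratum-projection argument.
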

\begin{proof}
First recall that $\ell_{f_e^{-1}(\alpha)}(g_e(-t_e))=\ell_{\rho^{-1}(\alpha)}(g_e(-t_e))$ and that $\ell_{\alpha}(g_e(0))$ goes to $0$ as $e\to\infty$. The distance between the point $g_e(-t_e)$ and the stratum $\cS(\rho^{-1}(\alpha))$ by Corollary~\ref{cor : dist to stratum} is bounded above by $\sqrt{2\pi \ell_{\rho^{-1}(\alpha)}(g_e(-t_e))}$, and hence tends to zero. Thus the point $g_e(-t_e)$ converges to the closure of $\cS(\rho^{-1}(\alpha))$. From Theorem~\ref{thm : short curves along ge}, the only curve which is very short (has length less than $\ep$) at $g_e(-t_e)$ is $\rho^{-1}(\alpha)$, so the point $g_e(-t_e)$ converges to $\cS(\rho^{-1}(\alpha))$. Similarly we can see that $g_e(0)$ converges to $\cS(\alpha)$.

Moreover, since $g_e$ is a geodesic and $g_e(0)=\rho(g_e(-t_e))$ the point $g_e(-t_e)$ converges to $X_0$ at which the minimum of the function $F$ is realized. Also, $g_e(0)$ converges to $\rho(X_0)$.
 
 By the non-refraction of property of WP geodesics (Theorem~\ref{thm : non-ref}) then the interior of $\omega$ lies in $\Teich(S)$. The limiting behavior of the geodesic follows from the CAT(0) property of the metric on $\overline{\Teich(S)}$.
\end{proof}

The geodesic axis $g_e$ descends to a closed geodesic $\hat g_e$ in ${\mathcal M}(S_{0,p})$ and $\omega$ descends to a geodesic segment $\hat \omega$ in $\overline{\mathcal M(S_{0,p})}$.  The previous corollary immediately implies the following.
\begin{cor} \label{cor : limit ge in moduli}
As $e \to \infty$, we have convergence $\hat g_e \to \hat \omega \subset \overline{\mathcal M(S_{0,p})}$.
\end{cor}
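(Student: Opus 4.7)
The plan is to deduce this corollary directly from Corollary \ref{cor : min in S(alpha)} by pushing the Hausdorff convergence in $\overline{\Teich(S)}$ down to $\overline{\mathcal M(S)}$ via the quotient map. Since $f_e \in \Mod(S)$ acts by WP-isometries on $\Teich(S)$ and this action extends continuously to $\overline{\Teich(S)}$, the quotient map $\pi \colon \Teich(S) \to \mathcal{M}(S)$ is $1$--Lipschitz and admits a natural $1$--Lipschitz extension $\bar\pi \colon \overline{\Teich(S)} \to \overline{\mathcal{M}(S)}$.

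First, I would observe that because $g_e$ is the $f_e$--axis with translation length $t_e$, we have $f_e(g_e(-t_e)) = g_e(0)$, so the segment $g_e([-t_e,0])$ is a fundamental domain for the $\langle f_e\rangle$--action on $g_e$. Consequently $\hat g_e = \pi(g_e([-t_e,0]))$, and similarly $\hat\omega = \bar\pi(\omega)$ by definition.

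Next, I would apply Corollary \ref{cor : min in S(alpha)}, which gives $g_e([-t_e,0]) \to \omega$ in the Hausdorff topology on compact subsets of $\overline{\Teich(S)}$. Since $\bar\pi$ is $1$--Lipschitz, it sends Hausdorff-convergent sequences of compact sets to Hausdorff-convergent sequences with image equal to the image of the limit. Applying $\bar\pi$ therefore yields $\hat g_e = \bar\pi(g_e([-t_e,0])) \to \bar\pi(\omega) = \hat\omega$ in $\overline{\mathcal{M}(S_{0,p})}$, as required.

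The argument is essentially immediate once Corollary \ref{cor : min in S(alpha)} is in hand; there is no real obstacle beyond noting that the quotient map extends continuously (and $1$--Lipschitzly) to the WP completion. For that reason I expect the author's proof to simply cite the previous corollary with a one-sentence explanation.
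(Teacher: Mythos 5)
Your proof is correct and matches the paper's intent exactly; the paper itself simply asserts ``The previous corollary immediately implies the following,'' and what you have written is precisely the implicit argument: $g_e([-t_e,0])$ is a fundamental domain for $\langle f_e\rangle$ on $g_e$, Corollary~\ref{cor : min in S(alpha)} gives Hausdorff convergence $g_e([-t_e,0])\to\omega$ in $\overline{\Teich(S)}$, and the $1$--Lipschitz quotient map $\overline{\Teich(S)}\to\overline{\mathcal M(S)}$ preserves Hausdorff convergence of compact sets.
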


For any $e> 0$, we let $\delta_e$ denote the geodesic segment from the midpoint of $\omega$ to the midpoint of ${f}_e(\omega)$.  We also let $\omega^-$ and $\omega^+$ denote the first and second half-segments of $\omega$, respectively (so $\omega = \omega^- \cup \omega^+$ and $\omega^- \cap \omega^+$ is the midpoint of $\omega$).  Our construction in the next section will use the following.
\begin{lem}  \label{lem : small angle for consecutive segments} 
Given $\epsilon > 0$, there exists $N > 0$ so that for all $e \geq N$, the triangle with sides $\delta_e$,  $\omega^+$, and $f_e(\omega^-)$ has angles less than $\epsilon$ at the endpoints of $\delta_e$, and the Hausdorff distance between $\delta_e$ and $\omega^+ \cup f_e(\omega^-)$ is at most $\epsilon$.
\end{lem}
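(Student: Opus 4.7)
My plan is to show that, as $e\to\infty$, both $\delta_e$ and the concatenation $\omega^+\cup {f}_e(\omega^-)$ closely track the same subsegment $G_e := g_e([-t_e/2,\,t_e/2])$ of the axis, from which the two conclusions follow by CAT(0) comparison. The main preparatory step is to identify the midpoint $M$ of $\omega$ as the limit of the arc-length midpoint $g_e(-t_e/2)$ of $g_e([-t_e,0])$. Corollary~\ref{cor : min in S(alpha)} gives $g_e([-t_e,0])\to\omega$ with converging endpoints $g_e(-t_e)\to X_0$ and $g_e(0)\to\rho(X_0)$, and Lemma~\ref{lem : lim te} gives $t_e\to|\omega|$. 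The CAT(0) convexity of the distance between two geodesic segments parameterized proportionally on $[0,1]$ then forces $g_e(-t_e/2)\to M$; the ${f}_e$-equivariance $g_e(t_e/2)={f}_e(g_e(-t_e/2))$ together with the fact that ${f}_e(M)$ is (by isometry) the midpoint of ${f}_e(\omega)$ gives $d_{\WP}(g_e(t_e/2),{f}_e(M))=d_{\WP}(g_e(-t_e/2),M)\to 0$. The triangle inequality applied to the points $M,\,g_e(-t_e/2),\,g_e(t_e/2),\,{f}_e(M)$ then yields $d_{\WP}(M,{f}_e(M))=t_e+o(1)\to|\omega|$.

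For the angle estimate, consider the geodesic triangle in $\overline{\Teich(S)}$ with vertices $M$, $\rho(X_0)$, ${f}_e(M)$ and with sides $\omega^+$, ${f}_e(\omega^-)$, $\delta_e$ of respective lengths $|\omega|/2$, $|\omega|/2$, $|\delta_e|$. The Euclidean comparison triangle is isosceles, and the law of cosines shows that its base angles---those at the endpoints of the side of length $|\delta_e|$, namely at $M$ and at ${f}_e(M)$---are each equal to $\arccos(|\delta_e|/|\omega|)$. Since $|\delta_e|\to|\omega|$ by the previous paragraph, these comparison angles tend to $0$; the CAT(0) angle inequality bounds the actual angles at $M$ and ${f}_e(M)$ in $\overline{\Teich(S)}$ above by the comparison angles, so both become less than $\epsilon$ for all $e$ sufficiently large.

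For the Hausdorff estimate I will verify separately that $d_H(\delta_e,G_e)=o(1)$ and $d_H(\omega^+\cup {f}_e(\omega^-),G_e)=o(1)$ and then conclude by the triangle inequality for Hausdorff distance. The first bound follows from CAT(0) convexity applied to the proportionally parameterized geodesics $\delta_e$ and $G_e$, whose corresponding endpoints are $o(1)$-close by the first paragraph. For the second, the concatenation $\omega^+\cup {f}_e(\omega^-)$ lies inside $g_e^\omega$, which is within Hausdorff distance $o(1)$ of $g_e$ by Proposition~\ref{prop : converge to omega}; combined with the fact that the endpoints $M$ and ${f}_e(M)$ of this subpath are $o(1)$-close to the endpoints of $G_e$, every point of the concatenation is $o(1)$-close to some point of $G_e$. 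The principal ``obstacle'' is simply careful bookkeeping of the $o(1)$ error terms: the geometric content is provided entirely by the preceding results (Proposition~\ref{prop : converge to omega}, Lemma~\ref{lem : lim te}, Corollary~\ref{cor : min in S(alpha)}) together with standard CAT(0) comparison.
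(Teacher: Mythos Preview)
Your proof is correct and the Hausdorff half essentially matches the paper: both you and the authors route through the axis segment $G_e=g_e([-t_e/2,t_e/2])$, using Corollary~\ref{cor : min in S(alpha)} to see $G_e$ is close to $\omega^+\cup f_e(\omega^-)$, and CAT(0) convexity on endpoints to see $G_e$ is close to $\delta_e$.

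For the angle estimate the two arguments diverge. The paper argues Riemannianly: since $M\in\Teich(S)$ lies in the smooth part of the metric, and short initial segments of $\delta_e$ converge to the initial segment of $\omega^+$ (a consequence of the Hausdorff estimate just proved), the Riemannian angle between them must tend to zero. You instead use a pure CAT(0) comparison-triangle argument, observing that the Euclidean comparison triangle is isosceles with base angles $\arccos(|\delta_e|/|\omega|)\to 0$ and invoking the CAT(0) angle inequality. Your route is cleaner in that it avoids any appeal to the smooth structure; the paper's route, on the other hand, speaks directly about the Riemannian angle, which is the quantity actually fed into the Gauss--Bonnet estimate in \S\ref{sec : NUE case}. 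These agree since $M\in\Teich(S)$ and Alexandrov angles coincide with Riemannian angles at smooth points, but it is worth noting that you are implicitly using this identification when the lemma is later applied.
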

\begin{proof}  By Proposition~\ref{prop : converge to omega} and Corollary~\ref{cor : min in S(alpha)}, the segment $g_e([-\frac{t_e}{2},\frac{t_e}{2}])$ can be made as close as we like to $\omega^+ \cup {f}_e(\omega^-)$.  Since $g_e([-\frac{t_e}{2},\frac{t_e}{2}])$ and $\delta_e$ are both geodesics in a CAT(0) space, and since their endpoints become closer and closer as $e$ tends to infinity, it follows that the distance between $g_e([-\frac{t_e}{2},\frac{t_e}{2}])$ and $\delta_e$ tends to zero as $e \to \infty$.   Therefore, the distance between $\delta_e$ and $\omega^+ \cup {f}_e(\omega^-)$ tends to zero as $e \to \infty$.  This proves the second statement of the lemma.  

Short initial segments of $\delta_e$ and $\omega^+$ are both geodesics in a Riemannian manifold, they have a common initial point, and the initial segment of $\delta_e$ converges to that of $\omega^+$ as $e \to \infty$.  It follows that the angle between $\delta_e$ and $\omega^+$ tends to zero as $e \to \infty$.  A similar argument (composing with ${f}_e^{-1}$) shows that the angle at endpoint of $\delta_e$ and ${f}_e(\omega^-)$ tends to zero as $e \to \infty$.  This proves the first statement of the lemma.
\end{proof}

\section{The non-uniquely ergodic case}\label{sec : NUE case}

Given a sequence of integers $\mathcal{E}\subset \mathbb{N}$ in this section first we construct a WP geodesic ray $r$ that is strongly asymptotic to the piecewise geodesic $g^{\omega}_{\mathcal{E}}$ in $\overline{\Teich(S)}$ similar to the construction in \S\ref{sec : closed geodesics}, but now for a non-constant sequence $\mathcal E$; see (\ref{eq:g-omega-E}). The proof of strong asymptoticity involves producing regions with definite total negative curvature on ruled surfaces and an application of the Gauss-Bonnet Theorem (c.f.~\cite{bmm2,asympdiv}).
The asymptoticity to $g^\omega_{\mathcal{E}}$ helps us to develop good control on lengths of curves along $r$ in \S\ref{subsec : curves on r}  and determine the limit set of $r$ in the Thurston compactification of Teichm\"uller space in \S\ref{subsec : limit sets}. 
In \S\ref{subsec : main reduction} we prove a technical result required for determining the limit sets of rays in \S\ref{subsec : limit sets}.

\subsection{Infinite geodesic ray}
Consider a sequence $\mathcal E = \{e_k\}_k\subset \mathbb{N}$ with $e_0 > E$ and $e_{k+1} \geq a e_k$ for some $a > 1$ and all $k$, to which we will impose further constraints later.  We write $\phi_k$, $\Phi_k = \phi_1 \cdots \phi_{k-1} \phi_k$, and $\Gamma(\mathcal E) = \{\gamma_k\}_{k=0}^\infty$ as in (\ref{eq : phi}) and (\ref{eq : gamma}) in \S\ref{sec : sequence of curves on S0p}.   Recall from (\ref{eqn : alt def gamma}) that $\gamma_k = \Phi_{k-m}(\alpha)$ for all $k \geq m$. Moreover, recall that the sequence $\{\gamma_k\}_k$ converges to a minimal non-uniquely ergodic lamination $\nu$ in $\mathcal{EL}(S)$ by Corollary \ref{cor : sequence to ending lamination}.

Let $\omega$ denote the Weil-Petersson geodesic segment connecting the point $X_0 \in \cS(\rho^{-1}(\alpha))$ to $\rho(X_0) \in \cS(\alpha)$ as in \S\ref{sec : closed geodesics}.  Note that $X_0 \in \cS(\rho^{-1}(\alpha))$ and so
\begin{equation} \label{eq : where PhikX0 is} \Phi_k(X_0) \in \cS(\Phi_k(\rho^{-1}(\alpha))) = \cS(\Phi_{k-1}(\alpha)) = \cS(\gamma_{k+m-1}).
\end{equation}
Write $\delta_k$ for the geodesic segment connecting midpoints of $\omega$ and $\phi_k(\omega)$ (compare with \S\ref{sec : closed geodesics} and Lemma~\ref{lem : small angle for consecutive segments}).  The endpoint of $\delta_k$ on $\omega$ will be called its initial endpoint, and the one on $\phi_k(\omega)$ its terminal endpoint.  The image of $\delta_k$ under any mapping class will have its endpoints labelled as initial and terminal according to those of $\delta_k$.

With this notation, we claim that the terminal endpoint of $\Phi_k(\delta_{k+1})$ is the same as the initial endpoint of $\Phi_{k+1}(\delta_{k+2})$.  Indeed, applying $\Phi_k^{-1}$ to this pair of arcs, we have $\delta_{k+1}$ and $\phi_{k+1}(\delta_{k+2})$.  The terminal endpoint of $\delta_{k+1}$ is the midpoint of $\phi_{k+1}(\omega)$.  This is the $\phi_{k+1}$--image of the midpoint of $\omega$, which is also the $\phi_{k+1}$--image of the initial endpoint of $\delta_{k+1}$, as claimed.

Concatenating segments of this type defines a half-infinite path:
\begin{equation}\label{eq:RE} R_{\mathcal E} = \delta_1 \cup \Phi_1(\delta_2) \cup \Phi_2(\delta_3) \cup \Phi_3(\delta_4) \cup \cdots.\end{equation}
This path fellow-travels the concatenation of $\omega$ and its translates:
\begin{equation}\label{eq:g-omega-E} g_{\mathcal E}^\omega = \omega \cup \Phi_1(\omega) \cup \Phi_2(\omega) \cup \Phi_3(\omega) \cup \cdots. \end{equation}
By (\ref{eq : where PhikX0 is}), projecting $R_{\mathcal E}$ and $g_{\mathcal E}^\omega$ to the curve complex (via the systole map) gives paths fellow traveling $\{\gamma_k\}_{k=0}^\infty$.  By Proposition~\ref{prop : sequence on S_0,p} and Theorem~\ref{thm : subsurface coeff estimate}, it follows that these are quasi-geodesics in the curve complex.  Since the projection to the curve complex is coarsely Lipschitz, so $R_{\mathcal E}$ and $g_{\mathcal E}^\omega$ are also quasi-geodesics.

We will also be interested in a truncation of $R_{\mathcal E}$ after $k$ steps:
\[ R_{\mathcal E}^k = \delta_1 \cup \Phi_1(\delta_2)\cup \cdots \cup \Phi_{k-1}(\delta_k),\]
and let $r_k$ denote the geodesic segment connecting the initial and terminal point of the broken geodesic segment $R_{\mathcal E}^k$; see Figure~\ref{fig : GB}.

The angle between consecutive segments $\Phi_{k-1}(\delta_k)$ and $\Phi_k(\delta_{k+1})$ will be denoted $\theta_k$.  Applying $\Phi_{k-1}$, this is the same as the angle between $\delta_k$ and $\phi_k(\delta_{k+1})$.  Observe that the angle $\theta_k$ is at least $\pi$ minus the sum of the angle  between $\delta_k$ and $\phi_k(\omega)$ and $\phi_k(\delta_{k+1})$ and $\phi_k(\omega)$ (with appropriate directions chosen).  Since $\phi_k = D_\alpha^{e_{k+m-1}}\rho = f_{e_{k+m-1}}$, by taking $e_{k+m-1}$ and $e_{k+m}$ sufficiently large, appealing to Lemma~\ref{lem : small angle for consecutive segments} we can ensure that $\theta_k$ is as close to $\pi$ as we like.  In particular, we additionally assume that our sequence $\{e_k\}_k$ grows fast enough that
\begin{equation}\label{eq : fast growth} 
\sum_{k=1}^\infty \pi - \theta_k < 1.
\end{equation}
We can also (clearly) assume that the integers $e_k$ are all large enough so that $\theta_k \geq \frac{\pi}{2}$ for all $k$.

\begin{remark} While we have imposed growth conditions here to control angles, it is worth mentioning that these are in addition to those conditions already imposed to prove non-unique ergodicity.
\end{remark}


\begin{figure} [htb]
\labellist
\small\hair 2pt
\pinlabel $^{X_0}$ [b] at 7 -8
\pinlabel $^{\Phi_1(X_0)}$ [b] at 65 -8
\pinlabel $^{\Phi_2(X_0)}$ [b] at 120 -8
\pinlabel $^{\Phi_3(X_0)}$ [b] at 177 -8
\pinlabel $^{\Phi_4(X_0)}$ [b] at 233 -8
\pinlabel $^{\Phi_5(X_0)}$ [b] at 290 -8
\pinlabel $^{\Phi_6(X_0)}$ [b] at 345 -8
\pinlabel $^{\delta_1}$ [b] at 67 27
\pinlabel $^{\Phi_1(\delta_2)}$ [b] at 120 27
\pinlabel $^{\Phi_2(\delta_3)}$ [b] at 177 27
\pinlabel $^{\Phi_3(\delta_4)}$ [b] at 233 27
\pinlabel $^{\Phi_4(\delta_5)}$ [b] at 282 27
\pinlabel $r_5$ [b] at 177 55
\pinlabel $\mathcal{R}^5_{\mathcal{E}}$ [b] at 50 20
\pinlabel $\omega$ [b] at 33 8
\pinlabel $\Phi_1(\omega)$ [b] at 90 5
\pinlabel $\Phi_2(\omega)$ [b] at 147 5
\pinlabel $\Phi_3(\omega)$ [b] at 204 5
\pinlabel $\Phi_4(\omega)$ [b] at 260 5
\pinlabel $\Phi_5(\omega)$ [b] at 319 5
\pinlabel $v_0$ [b] at 33 20
\pinlabel $v_1$ [b] at 90 20
\pinlabel $v_2$ [b] at 147 20
\pinlabel $v_3$ [b] at 204 20
\pinlabel $v_4$ [b] at 260 20
\pinlabel $v_5$ [b] at 319 20
\endlabellist
\begin{center}
\includegraphics[width=12cm]{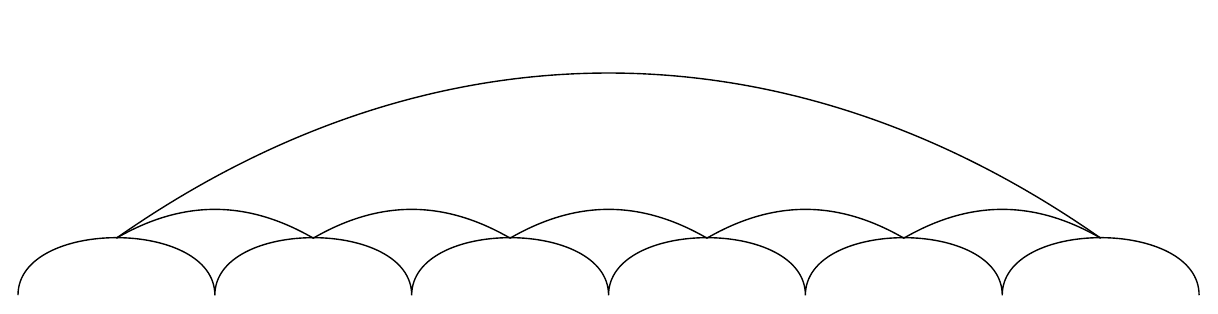}

\caption{The concatenation of geodesic segments $\delta_1,\Phi_1(\delta_2),\ldots,\Phi_4(\delta_5)$ defining $R_{\mathcal E}^5 \subset R_{\mathcal E}$ 
and $\omega\cup\Phi_1(\omega)\cup\ldots\cup\Phi_5(\omega)\subset g^\omega_{\mathcal E}$, together with the geodesic segment $r_5$ connecting the endpoints of $R_{\mathcal E}^5$.}
\label{fig : GB}
\end{center}
\end{figure}

\begin{prop} \label{prop : limit geodesic}
The geodesic segments $r_k$ limit to a geodesic ray $r$ as $k\to\infty$, and all three of $r$, $R_{\mathcal E}$, and $g^\omega_{\mathcal E}$ are strongly asymptotic (the distance between any pair of them tends to zero).
\end{prop}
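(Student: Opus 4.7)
My plan is to prove convergence of $\{r_k\}$ first via a summable-angle argument, then handle the strong asymptoticity of $R_{\mathcal E}$ to $g^\omega_{\mathcal E}$ directly from Lemma~\ref{lem : small angle for consecutive segments}, and finally establish strong asymptoticity of $r$ to $R_{\mathcal E}$ by a Gauss--Bonnet estimate on ruled surfaces (as hinted in the introduction to this section).

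For convergence of $\{r_k\}$, the key bound is on the angles $\alpha_k := \angle_{v_0}(v_k, v_{k+1})$: in the CAT(0) space $\overline{\Teich(S)}$, summability $\sum_k \alpha_k < \infty$ implies that the initial directions of the $r_k$ form a Cauchy sequence in the space of directions at $v_0$, yielding convergence $r_k(t) \to r(t)$ for each fixed $t$. To bound $\alpha_k$, I would apply the CAT(0) angle-sum inequality to the geodesic triangle $v_0 v_k v_{k+1}$, getting $\alpha_k \le \pi - \angle_{v_k}(v_0, v_{k+1})$, and show inductively that the reverse direction of $r_k$ at $v_k$ is close to the reverse of the last segment $\Phi_{k-1}(\delta_k)$ of $R_{\mathcal E}^k$, so that $\angle_{v_k}(v_0, v_{k+1}) \ge \theta_k - \eta_k$ with $\sum_k \eta_k < \infty$. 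Combined with the hypothesis (\ref{eq : fast growth}), this yields $\sum_k \alpha_k < \infty$. Since $R_{\mathcal E}$ projects to a quasi-geodesic in $\mathcal{C}(S)$ via Proposition~\ref{prop : sequence on S_0,p} and Theorem~\ref{thm : subsurface coeff estimate}, it is itself a quasi-geodesic in $\overline{\Teich(S)}$, so $L_k = |r_k| \to \infty$ and the $r_k$ assemble into a geodesic ray $r$ based at $v_0$.

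Strong asymptoticity of $R_{\mathcal E}$ to $g^\omega_{\mathcal E}$ is immediate from Lemma~\ref{lem : small angle for consecutive segments}: with $e = e_{k+m-1} \to \infty$, the Hausdorff distance between $\delta_k$ and $\omega^+ \cup \phi_k(\omega^-)$ tends to $0$, and applying the isometry $\Phi_{k-1}$ preserves this. Since $\Phi_{k-1}(\delta_k)$ and $\Phi_{k-1}(\omega^+) \cup \Phi_k(\omega^-)$ are the matched $k$-th pieces of $R_{\mathcal E}$ and $g^\omega_{\mathcal E}$, sharing endpoints $v_{k-1}$ and $v_k$, strong asymptoticity follows.

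For strong asymptoticity of $r$ to $R_{\mathcal E}$, which is the main technical step, I argue by contradiction using Gauss--Bonnet. Suppose there exist $\epsilon_0 > 0$ and times $s_j \to \infty$ with $d(r(s_j), R_{\mathcal E}) \ge \epsilon_0$. For each large $N$, span a ruled disk $D_N$ in $\overline{\Teich(S)}$ with boundary $r|_{[0, T_N]}$, a short crosscut from $r(T_N)$ to $R_{\mathcal E}(L_N)$ (short by the convergence of $r_k$ to $r$ established above), and $R_{\mathcal E}^N$ traversed in reverse. Gauss--Bonnet yields
\[ -\int_{D_N} K \, dA = \sum (\pi - \text{interior angle at each corner}) - 2\pi, \]
and the corner sum is bounded above by $\sum_k (\pi - \theta_k) + 2\pi < 1 + 2\pi$ using (\ref{eq : fast growth}), giving a uniform upper bound on $-\int_{D_N} K \, dA$. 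On the other hand, by Theorem~\ref{thm : bd comb recurrence}, $r$ recurs to a fixed thick part of $\Teich(S)$ (its end invariant inherits non-annular bounded combinatorics from the controlled non-annular projections of $R_{\mathcal E}$ provided by Theorem~\ref{thm : subsurface coeff estimate}), where the WP sectional curvature is bounded above by a negative constant; each $\epsilon_0$-wide gap corresponding to some $s_j$ passing through the thick part therefore contributes a definite amount to $-\int_{D_N} K \, dA$. With infinitely many such gaps as $N \to \infty$, this contradicts the uniform Gauss--Bonnet bound. The main obstacle is the careful setup of the ruled surface in the stratified completion $\overline{\Teich(S)}$, together with quantifying the negative-curvature contribution of each gap, along the lines of \cite{bmm2, asympdiv}.
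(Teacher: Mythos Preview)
Your overall plan is close in spirit to the paper's, but there are two genuine gaps.

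\textbf{Step 1 (convergence of $r_k$).} The inductive angle bound you sketch does not yield $\sum_k \eta_k < \infty$. Writing $\eta_k = \angle_{v_k}(v_0,v_{k-1})$, the CAT(0) triangle $v_0 v_{k-1} v_k$ gives only $\eta_k \le (\pi - \theta_{k-1}) + \eta_{k-1}$, hence $\eta_k \le \sum_{j<k}(\pi - \theta_j)$, which is bounded by (\ref{eq : fast growth}) but not summable. Consequently your bound $\alpha_k \le (\pi-\theta_k) + \eta_k$ only gives $\alpha_k = O(1)$. Even in the Euclidean plane, a broken geodesic with unit segments and summable turning angles produces $\alpha_k \asymp 1/k$, so summability of $\alpha_k$ genuinely fails; the directions at $v_0$ still converge, but not by this estimate.

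\textbf{Step 3 (strong asymptoticity of $r$ to $R_{\mathcal E}$).} Invoking Theorem~\ref{thm : bd comb recurrence} for recurrence of $r$ is circular: that theorem requires the end invariant of $r$, but the ending lamination of $r$ is identified only in Corollary~\ref{cor : nu is end lam of r}, which in turn relies on Theorem~\ref{thm : short times along r} and hence on the strong asymptoticity you are trying to prove here. You also need the negative curvature to sit inside your ruled disk, not just somewhere along $r$.

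The paper's argument sidesteps both issues with a single Gauss--Bonnet estimate on the ruled polygon $P_k$ bounded by $r_k$ and $R_{\mathcal E}^k$. The crucial observation is that the vertices $v_i$ are midpoints of $\Phi_i(\omega)$ and hence lie in a fixed thick part of $\Teich(S)$ (by Corollary~\ref{cor : min in S(alpha)} and $\Mod(S)$--invariance), so a definite quarter-disk of curvature $\le c_0 < 0$ sits in $P_k$ near any $v_i$ that is far from $r_k$. Gauss--Bonnet and (\ref{eq : fast growth}) then bound the \emph{number} of vertices far from $r_k$ by a constant $J(d)$ independent of $k$. This simultaneously yields (i) convergence of $r_k$, since for large $k,k'$ the segments $r_k, r_{k'}$ both come $d$--close to a common far-out vertex, and (ii) strong asymptoticity, since the same bound persists in the limit. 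No knowledge of the ending lamination of $r$ is needed.
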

\begin{proof}  According to the last part of Lemma~\ref{lem : small angle for consecutive segments}, $R_{\mathcal E}$ and $g^\omega_{\mathcal E}$ are strongly asymptotic.  Therefore, it suffices to prove that $r_k$ has a limit $r$, and that this is asymptotic to $R_{\mathcal E}$.  Before proceeding, we note that $\Phi_k(X_0)$ lies in the stratum $\cS(\Phi_k(\rho^{-1}(\alpha))) = \cS(\Phi_{k-1}(\alpha)) = \cS(\gamma_{k+m})$

Let $\{v_i\}_{i=0}^\infty$ denote the concatenation points of $R_{\mathcal E}$.  Denote by $P_k$ a ruled polygon bounded by $r_k$ and $R_{\mathcal E}^k$.  This polygon has vertices $v_0,\ldots,v_k$.   Let $\theta_i^k$ denote the interior angles of $P_k$ at $v_i$, for $i = 0,\ldots,k$, and observe that for $0 < i < k$, we have $\theta_i \leq \theta_i^k$.  In addition, there are constants $c_0 < 0$ and $d_0 > 0$ so that the $d_0$--neighborhood of $v_i$ in $P_k$ has Gaussian curvature $K \leq c_0$.  Consequently, for any $d < d_0$, if $r_k$ is disjoint from the $d$--neighborhood $N_d(v_i)$ of $v_i$, then since $\theta_i^k \geq \frac{\pi}2$, $N_d(v_i) \cap P_k$ contains a quarter-sector of a disk of radius $d$ centered at $v_i$ in a surface of curvature at most $c_0$.  Therefore, the integral of the curvature $K$ over $N_d(v_i) \cap P_k$ satisfies
\[ \int_{N_d(v_i) \cap P_k} K \, dA \leq \frac{c_0 \pi d^2}{4}.\]
By the Gauss-Bonnet Theorem (see e.g. \cite[Theorem V.2.5]{chaveldg}), we have
\[ \int_{P_k} K dA + \sum_{i=0}^k (\pi - \theta_i^k) = 2 \pi \]
which implies
\[ \theta_0^k + \theta_k^k - \sum_{i=1}^{k-1} (\pi - \theta_i^k) =  \int_{P_k} K dA .\]
For any $d > 0$, let $i_1,i_2,\ldots,i_j$ denote those indices $i$ for which $R_{\mathcal E}^k$ is more than $d$ away from $v_i$.  Then by our assumption on the angles $\theta_i$ in (\ref{eq : fast growth}) we have

\begin{eqnarray*} 
\theta_0^k +\theta_k^k - 1 \leq  \theta_0^k + \theta_k^k - \sum_{i=1}^{k-1} (\pi - \theta_i^k) &=&  \int_{P_k} K dA \\
&\leq& \sum_{\ell = 1}^j \int_{N_{v_\ell} \cap P_k} K dA  \leq \frac{ j c_0 \pi d^2}{4}.
\end{eqnarray*}
Since $c_0 < 0$, this implies
\[ j \leq \frac{4(\theta_0^k + \theta_k^k - 1)}{c_0 \pi d^2} \leq \frac{4|1 - (\theta_0^k + \theta_k^k)|}{|c_0| \pi d^2}\leq \frac{4(1 + 2\pi)}{|c_0| \pi d^2} .\]

This bounds the number of vertices along $R_{\mathcal E}^k$ that can be further than $d$ away from $r_k$ by some number $J(d)$, which is independent of $k$.  Therefore, for any $N > 0$ and $k,k' \geq N+2J(d) + 1$, there is a vertex $v_i$ of $R_{\mathcal E}$ with $N \leq i \leq \min\{k,k'\}$ so that $r_k$ and $r_{k'}$ contain points $x_k$ and $x_{k'}$, respectively, which are within distance $d$ of $v_i$.  Therefore, $x_k$ and $x_{k'}$ are within distance $2d$ of each other.  Since $R_{\mathcal E}$ is a quasi-geodesic, the distance from $v_i$ to $v_0$ tends to infinity with $i$.  Consequently, as $N$ tends to infinity, the distance from $x_k$ and $x_{k'}$ to $v_0$ also tends to infinity.  By convexity of the distance function  between two geodesic segments in a $\CAT(0)$ space, it follows that for any $D > 0$, the initial segments of $\{r_k\}_k$ of length $D$ form a Cauchy sequence in the topology of uniform convergence.  By completeness of $\overline{\Teich(S)}$, these segments of length $D$ converge. Letting $D$ tend to infinity, it follows that $r_k$ converges (locally uniformly) to a geodesic ray $r$.

For any $d > 0$, suppose that $v_i$ is a vertex of $R_{\mathcal E}$ further than $2d$ away from any point of $r$.  For $k$ sufficiently large, it follows that $r_k$ is further than $d$ from $v_i$. Since there are at most $J(d)$ of the latter indices $i$, it follows that $r$ must come closer than $2d$ from all but $J(d)$ vertices. In particular, there exists $N(d)$ so that for all $i \geq N(d)$, $r$ comes within $2d$ of $v_i$.  
By convexity of the distance between geodesics in the WP metric, the distance of any point on $\mathcal{R}_{\mathcal E}$ lying between consecutive vertices $v_i$ and $v_{i+1}$ (for $i \geq N(d)$) and $r$ is no more than $2d$.  Therefore, the tail of $R_{\mathcal E}$ starting at $v_{N(d)}$ is within Hausdorff distance $2d$ from some tail of $r$. Since $d$ was arbitrary, it follows that $R_{\mathcal E}$ and $r$ are strongly asymptotic, as required.
\end{proof} 

In the rest of this section let $r:[0,\infty)\to \Teich(S)$ be the geodesic ray from Proposition \ref{prop : limit geodesic}.

\subsection{Curves along $r$}\label{subsec : curves on r}

 The following lemma is a straightforward consequence of the setup of curves $\{\gamma_k\}_k$ in $\S 3$ and the choice of the segment $\omega$ in the previous section which we record  as a convenient reference.
\begin{lem} \label{lem : index record} For any $k \geq m-1$ the initial and terminal endpoints of $\Phi_{k-m+1}(\omega)$ are in the strata $\cS(\gamma_k)$ and $\cS(\gamma_{k+1})$, respectively.  Furthermore, for any compact subsegment $I \subset int(\omega)$, the $2m$ consecutive curves 
\[ \{\gamma_{k-m+1},\ldots,\gamma_{k+m}\} \]
have bounded length on $\Phi_{k-m+1}(I)$, with the bound depending on the choice of interval $I$, but independent of $k$.
\end{lem}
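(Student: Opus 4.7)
The plan is to unwind the notational definitions in Section~\ref{sec : sequence of curves on S0p} and then use the fact that $\Phi_{k-m+1} \in \Mod(S)$ acts by isometries on $\Teich(S)$.

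For the first assertion, I would compute the images of the endpoints of $\omega$ directly. By construction, the initial endpoint of $\omega$ is $X_0 \in \cS(\rho^{-1}(\alpha))$ and its terminal endpoint is $\rho(X_0) \in \cS(\alpha)$. Applying $\Phi_{k-m+1}$, the terminal endpoint sits in $\cS(\Phi_{k-m+1}(\alpha))$, and from the reformulation $\gamma_j = \Phi_{j-m}(\alpha)$ in equation~(\ref{eqn : alt def gamma}) this is precisely $\cS(\gamma_{k+1})$. For the initial endpoint, I would expand
\[
\Phi_{k-m+1}(\rho^{-1}(\alpha)) = \Phi_{k-m}\,\phi_{k-m+1}(\rho^{-1}(\alpha)) = \Phi_{k-m}(D_\alpha^{e_k}(\alpha)) = \Phi_{k-m}(\alpha) = \gamma_k,
\]
which places the initial endpoint in $\cS(\gamma_k)$, as claimed.

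For the second assertion, the key observation is that equation~(\ref{eqn : alt gamma def}) (applied with the index $k$ there replaced by $k+1$) gives
\[
\Phi_{k-m+1}^{-1}(\gamma_{k-m+1+j}) = \gamma_j \quad \text{for } j = 0,\ldots,2m-1.
\]
As $i = k-m+1+j$ ranges over $\{k-m+1,\ldots,k+m\}$, the curves $\Phi_{k-m+1}^{-1}(\gamma_i)$ sweep out exactly the fixed finite set $\{\gamma_0,\ldots,\gamma_{2m-1}\}$, independent of $k$. Since $\Phi_{k-m+1}$ acts on $\Teich(S)$ by isometries preserving hyperbolic length up to remarking, for any $X \in I$ and any $i \in \{k-m+1,\ldots,k+m\}$ we have
\[
\ell_{\gamma_i}(\Phi_{k-m+1}(X)) = \ell_{\Phi_{k-m+1}^{-1}(\gamma_i)}(X) = \ell_{\gamma_j}(X)
\]
for the appropriate $j \in \{0,\ldots,2m-1\}$.

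It remains to explain why the right-hand side is uniformly bounded as $X$ varies over $I$. By Corollary~\ref{cor : min in S(alpha)}, $\operatorname{int}(\omega) \subset \Teich(S)$, so the compact set $I$ lies in $\Teich(S)$ where the length functions $\ell_{\gamma_0},\ldots,\ell_{\gamma_{2m-1}}$ are continuous and therefore each bounded on $I$. Taking the maximum of the $2m$ suprema yields a bound depending only on $I$ (in particular, independent of $k$), completing the argument. The only mildly subtle point is the careful bookkeeping in verifying $\Phi_{k-m+1}^{-1}(\rho^{-1}(\alpha)) = \Phi_{k-m+1}^{-1}(\gamma_k) = \gamma_{m-1}$ matches what equation~(\ref{eqn : alt gamma def}) predicts, but this is a direct substitution rather than a genuine obstacle.
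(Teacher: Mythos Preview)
Your proof is correct and follows essentially the same approach as the paper's. The only cosmetic difference is that the paper first observes $\rho^{-1}(\alpha)=\gamma_{m-1}$ and then applies equation~(\ref{eqn : alt gamma def}) to get $\Phi_{k-m+1}(\gamma_{m-1})=\gamma_k$, whereas you compute $\Phi_{k-m+1}(\rho^{-1}(\alpha))=\gamma_k$ directly by expanding $\Phi_{k-m+1}=\Phi_{k-m}\phi_{k-m+1}$; these are equivalent one-line computations, and your treatment of the second assertion via the length identity $\ell_{\gamma_i}(\Phi_{k-m+1}(X))=\ell_{\gamma_j}(X)$ matches the paper's argument exactly.
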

\begin{proof}
Recall that $\alpha = \rho^m(\gamma_0) = \rho(\gamma_{m-1})$, and hence $X_0 \in \cS(\alpha) =\cS(\gamma_{m-1})$.  
Consequently $\Phi_{k-m+1}(X_0) \in \cS(\gamma_k)$, since $\Phi_{k-m+1}(\gamma_{m-1}) = \gamma_k$; see (\ref{eqn : alt gamma def}).
Thus the initial endpoint of $\Phi_{k-m+1}(\omega)$ is in $\cS(\gamma_k)$.  Since the terminal endpoint of $\Phi_{k-m+1}(\omega)$ is the initial endpoint of $\Phi_{k-m+2}(\omega)$, this common endpoint lies in $\cS(\gamma_{k+1})$, proving the first statement.

The compact subsegment $I \subset int(\omega)$ is entirely contained in Teichm\"uller space, and hence the curves $\gamma_0,\ldots,\gamma_{2m-1}$ have bounded length in $I$.  Since the $\Phi_{k-m+1}$--image of these curves are precisely those listed in the lemma, the second statement also follows.
\end{proof}

\begin{thm}\label{thm : short times along r}
There exists a sequence $\{t_k\}_{k=1}^\infty$ which is eventually increasing, such that $\displaystyle{\lim_{k \to \infty} \ell_{\gamma_k}(r(t_k)) = 0}$.  Furthermore, for any $\epsilon > 0$ sufficiently small, the set of curves with length less than $\epsilon$ along $r$ is contained in $\{\gamma_k\}_{k=0}^\infty$ and contains a tail of this sequence, $\{\gamma_k\}_{k \geq N}$, for some $N = N(\epsilon) \in \mathbb Z$.
\end{thm}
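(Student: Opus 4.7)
The plan is to leverage the strong asymptoticity of $r$ to $g^\omega_{\mathcal E}$ from Proposition~\ref{prop : limit geodesic} to locate times when each $\gamma_k$ is short, and then to follow the argument of Theorem~\ref{thm : short curves along ge}/Lemma~\ref{lem : lb} to rule out any other curves from becoming short along $r$.

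First I would set $X_k := \Phi_{k-m+1}(X_0)$ for $k \geq m-1$, a vertex of $g^\omega_{\mathcal E}$ lying in the stratum $\cS(\gamma_k)$ by Lemma~\ref{lem : index record}, and let $t_k$ realize the closest point of $r$ to $X_k$. Since $r$ and $g^\omega_{\mathcal E}$ are strongly asymptotic, $d_{\WP}(r(t_k), X_k) \to 0$ and consequently $d_{\WP}(r(t_k), \cS(\gamma_k)) \to 0$. Inverting the Wolpert expansion of Corollary~\ref{cor : dist to stratum} in the small-distance regime (where $d_{\WP}(\,\cdot\,,\cS(\gamma_k)) \sim \sqrt{2\pi\,\ell_{\gamma_k}}$) then forces $\ell_{\gamma_k}(r(t_k)) \to 0$. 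Eventually monotonicity of $\{t_k\}$ follows because the $X_k$ appear in monotone order along the quasi-geodesic $g^\omega_{\mathcal E}$, so their closest-point projections to the strongly asymptotic, arc-length parametrized ray $r$ must also be eventually monotone (with consecutive spacings close to $|\omega|$).

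Next I would prove the structural statement. By the non-annular projection bound (\ref{eqn : not gamma_i, small projection}) of Theorem~\ref{thm : subsurface coeff estimate}, the end invariant pair $(\mu(r(0)), \nu)$ of $r$ (with $\nu$ the lamination of Corollary~\ref{cor : sequence to ending lamination}) has non-annular bounded combinatorics. Hence any hierarchy path $\varrho$ joining $\mu(r(0))$ to $\nu$ is stable in $P(S)$ and $D$--fellow travels $Q(r)$ under Brock's quasi-isometry (\ref{eq : Brock qi}). Suppose $\ell_\delta(r(t)) < L_S$ for some curve $\delta$ at some time $t$. Then $\delta \in Q(r(t))$ and $\delta$ is isolated at some parameter $i$ of $\varrho$ in the sense of \S\ref{subsec : isolated annular subsurf}. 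If moreover $\delta \notin \{\gamma_k\}_{k=0}^\infty$, applying (\ref{eqn : not gamma_i, small projection}) to $(\mu(r(0)), \nu)$ together with the no-back-tracking Theorem~\ref{thm : no back tracking} yields a uniform upper bound on $d_\delta(\varrho(i-s), \varrho(i+s))$. The annular coefficient comparison Lemma~\ref{lem : ann coeff comp} then transfers this to a uniform bound on $d_\delta(\mu(r(t-s')), \mu(r(t+s')))$, and the contrapositive of Theorem~\ref{thm : shtw} furnishes a uniform lower bound $\epsilon_1 > 0$ for $\ell_\delta(r(t))$. Taking $\epsilon < \epsilon_1$ shows any curve of length less than $\epsilon$ at some point of $r$ lies in $\{\gamma_k\}_k$, and the tail inclusion $\{\gamma_k\}_{k \geq N(\epsilon)}$ is immediate from the first assertion by choosing $N$ so that $\ell_{\gamma_k}(r(t_k)) < \epsilon$ for all $k \geq N$.

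The main obstacle I anticipate is the clean passage from $d_{\WP}(r(t_k), \cS(\gamma_k)) \to 0$ to $\ell_{\gamma_k}(r(t_k)) \to 0$: inverting Corollary~\ref{cor : dist to stratum} requires a uniform-in-$k$ control on the constant hidden in the $O(\ell_{\gamma_k}^{5/2})$ error, which in turn requires an a priori upper bound on $\ell_{\gamma_k}(r(t_k))$. This bound should come for free from the fact that $r(t_k)$ stays close to translates of a fixed compact subsegment of $\omega$ (so that $\ell_{\gamma_k}$ is controlled by applying the mapping class $\Phi_{k-m+1}$ to a uniformly bounded length on $\omega$) combined with continuity of length functions on the Weil-Petersson completion.
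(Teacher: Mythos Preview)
Your first paragraph (existence and eventual monotonicity of the $t_k$) is essentially the paper's argument: pick $t_k$ so that $r(t_k)$ is close to the vertex $\Phi_{k-m+1}(X_0)\in\cS(\gamma_k)$ of $g^\omega_{\mathcal E}$, invoke strong asymptoticity, and use Corollary~\ref{cor : dist to stratum}. Your concern about inverting the Wolpert expansion is legitimate and your resolution is correct; one can also sidestep it by noting that $\ell_{\gamma_k}(r(t_k)) = \ell_{\gamma_{m-1}}\big(\Phi_{k-m+1}^{-1}(r(t_k))\big)$ and that $\Phi_{k-m+1}^{-1}(r(t_k))\to X_0\in\cS(\gamma_{m-1})$, so continuity of a \emph{fixed} length function on $\overline{\Teich(S)}$ does the work.

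For the structural second statement you take a genuinely different route from the paper. You rerun the combinatorial machinery from the closed-geodesic case (non-annular bounded combinatorics of $(\mu(r(0)),\nu)$, stability of the hierarchy path, Lemma~\ref{lem : ann coeff comp}, and the contrapositive of Theorem~\ref{thm : shtw}) to obtain a uniform $\epsilon_1>0$ below which no curve outside $\{\gamma_k\}$ can fall. The paper instead exploits directly the geometry of the limiting concatenation: by Corollary~\ref{cor : min in S(alpha)} (non-refraction) the interior of $\omega$ lies in $\Teich(S)$, so along each translate $\Phi_{k-m+1}(\omega)$ only the two endpoint curves $\gamma_k,\gamma_{k+1}$ can be short; strong asymptoticity then transfers this to $r$ on $[t_k,t_{k+1}]$ for $k$ large. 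The paper's argument is shorter and avoids the hierarchy machinery entirely. Your approach has the merit of being uniform with the $g_e$ analysis, but you should note that near $t=0$ the window $[t-s',t+s']$ may leave the domain of $r$; this is harmless since the compact initial segment of $r$ sits in $\Teich(S)$ and thus carries a uniform lower bound on all curve lengths, but it should be said.
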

\begin{proof}
Since $r$ is strongly asymptotic to $g^{\omega}_{\mathcal{E}}$ by Proposition \ref{prop : limit geodesic} we may choose $t_k$ so that
\[d_{\WP}(r(t_k),\cS(\gamma_k))\to0\]
as $k\to\infty$. Then by the formula 
$$d_{\WP}(r(t_k),\cS(\gamma_k))=\sqrt{2\pi \ell_{\gamma_k}(r(t_k))} +O\Big(\ell_{\gamma_k}(r(t_k))^{5/2}\Big)$$ 
from Corollary~\ref{cor : dist to stratum}, where the constant of the $O$ notation depends only on an upper bound for the length of $\gamma_k$ at the point $r(t_k)$, we see that
\[ \lim_{k \to \infty} \ell_{\gamma_k}(r(t_k)) = 0.\]
Since $g_{\mathcal E}$ passes through the strata $\{\cS(\gamma_k)\}_k$ in order (i.e.~$g^\omega_{\mathcal E}$ intersects $\cS(\gamma_k)$ before $\cS(\gamma_{k+1})$), the times when $r$ comes close to $\{ \cS(\gamma_k)\}_k$ also occur in order.  This proves the first statement.

For the second statement, we note that the first statement implies that for any $\epsilon > 0$, there exists $N(\epsilon) > 0$ so that for all $k \geq N(\epsilon)$, $\gamma_k$ has length less than $\epsilon$ at some point along $r$ (in fact, at the point $r(t_k)$).  Moreover, by Lemma~\ref{lem : index record} $\Phi_{k-m+1}(\omega)$ goes from $\cS(\gamma_k)$ to $\cS(\gamma_{k+1})$, and no other curves become very short along $\Phi_{k-m+1}(\omega)$.  Again appealing to the fact that $r$ is asymptotic to $g_{\mathcal E}^\omega$, it follows that for $k$ sufficiently large, the only curves of length less than $\epsilon$ on $r([t_k,t_{k+1}])$ are $\gamma_k$ and $\gamma_{k+1}$.  Therefore, for $\epsilon$ sufficiently small, the only curves that can have length less than $\epsilon$ along $r$ are from $\{\gamma_k\}_k$.
\end{proof}

Because $r$ is asymptotic to $g^\omega_{\mathcal E}$, there is a version of Lemma~\ref{lem : index record} for $r$.
We will consider sequences $\{s_k\}_k \subset [0,\infty)$ satisfying one of the following:

\begin{enumerate}
\item[(C1)] There exists $\epsilon > 0$ such that $t_k + \epsilon < s_k < t_{k+1}-\epsilon$, or
\item[(C2)] $\displaystyle{\lim_{k \to \infty} |t_{k+1} - s_k| = 0 }$.
\end{enumerate}

\begin{cor} \label{cor : lengths along r} Suppose that $\{s_k\}_k \subset [0,\infty)$ is a sequence.
\begin{itemize}
\item If $\{s_k\}_k$ satisfies (C1), then the $2m$ consecutive curves $\gamma_{k-m+1},\ldots,\gamma_{k+m}$ have bounded length at $r(s_k)$, independent of $k$, but depending on $\epsilon$.
\item If $\{s_k\}_k$ satisfies (C2), then $\displaystyle{\lim_{k\to\infty}\ell_{\gamma_{k+1}}(r(s_k)) = 0}$, and the $2m-1$ consecutive curves $\gamma_{k-m+2},\ldots,\gamma_{k+m}$ have bounded length at $r(s_k)$, independent of $k$.
\end{itemize}
\end{cor}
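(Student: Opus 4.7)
The key tool is Proposition~\ref{prop : limit geodesic}, which gives strong asymptoticity of $r$ and $g^\omega_{\mathcal E} = \omega \cup \Phi_1(\omega) \cup \Phi_2(\omega)\cup\cdots$. Combined with the fact that $r$ is unit speed and $g^\omega_{\mathcal E}$ is a concatenation of geodesic segments each of length $|\omega|$, the times $\{t_k\}$ from Theorem~\ref{thm : short times along r} satisfy $t_{k+1}-t_k\to |\omega|$, and for each $s_k$ there is a nearest point $y_k\in g^\omega_{\mathcal E}$ with $d_{\WP}(r(s_k),y_k)\to 0$. The plan is to read off both length bounds from Lemma~\ref{lem : index record} (or from continuity at $X_0$) after using the $\Phi_k$--equivariance of length functions to pull every estimate back to a fixed compact object in $\overline{\Teich(S)}$.

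For case (C1), the conditions $t_k + \epsilon < s_k < t_{k+1} - \epsilon$ force $y_k$ to lie, for $k$ large, on the interior of the segment $\Phi_{k-m+1}(\omega)$, at arc length distance at least $\epsilon/2$ from either endpoint. Thus $y_k \in \Phi_{k-m+1}(I_\epsilon)$ for a fixed compact subinterval $I_\epsilon \subset \text{int}(\omega)$, and Lemma~\ref{lem : index record} bounds the lengths of $\gamma_{k-m+1}, \ldots, \gamma_{k+m}$ at $y_k$ by a constant $M(\epsilon)$. To pass from $y_k$ to $r(s_k)$, I would enlarge $I_\epsilon$ to a small closed WP--neighborhood $K$ still compactly contained in $\Teich(S)$ (possible since $I_\epsilon$ is bounded away from both endpoints of $\omega$), and observe by compactness that every hyperbolic length is uniformly bounded on $K$. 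Translating by the isometry $\Phi_{k-m+1}$, the curves $\gamma_{k-m+1}, \ldots, \gamma_{k+m}$ are the $\Phi_{k-m+1}$--images of $\gamma_0, \ldots, \gamma_{2m-1}$, so the lengths remain bounded at $r(s_k) \in \Phi_{k-m+1}(K)$.

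For case (C2), the assumption $|t_{k+1}-s_k|\to 0$ yields $d_{\WP}(r(s_k), \cS(\gamma_{k+1})) \to 0$ via the triangle inequality and $d_{\WP}(r(t_{k+1}),\cS(\gamma_{k+1}))\to 0$, and Corollary~\ref{cor : dist to stratum} gives $\ell_{\gamma_{k+1}}(r(s_k)) \to 0$. For the remaining length bounds, strong asymptoticity gives more: $d_{\WP}(r(s_k), \Phi_{k-m+2}(X_0)) \to 0$, where $\Phi_{k-m+2}(X_0)\in \cS(\gamma_{k+1})$ is the transition point of $g^\omega_{\mathcal E}$. The curves $\gamma_{k-m+2}, \ldots, \gamma_{k+m}$ are the $\Phi_{k-m+2}$--images of $\gamma_0, \ldots, \gamma_{2m-2}$, and each $\gamma_j$ with $0 \le j \le 2m-2$ is disjoint from $\gamma_{m-1}$ by property (i) (since $|j-(m-1)| \leq m-1$). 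Therefore $\ell_{\gamma_0}, \ldots, \ell_{\gamma_{2m-2}}$ extend continuously to a WP--neighborhood of $X_0$ in $\overline{\Teich(S)}$ with finite values at $X_0$. Applying $\Phi_{k-m+2}$ and using equivariance gives the desired uniform bounds on $\ell_{\gamma_{k-m+2}}(r(s_k)), \ldots, \ell_{\gamma_{k+m}}(r(s_k))$.

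The principal obstacle I anticipate is that the fellow-travelled pieces $\Phi_{k-m+1}(I_\epsilon)$ and the points $\Phi_{k-m+2}(X_0)$ drift to infinity in $\overline{\Teich(S)}$, so uniform continuity of length functions cannot be invoked directly; the clean resolution is the mapping class group equivariance, which reduces every estimate to a single fixed compact piece of $\overline{\Teich(S)}$. A secondary delicacy in (C2) is the book-keeping that identifies precisely which curves have finite length at $X_0\in\cS(\gamma_{m-1})$, and thus explains why the statement excludes $\gamma_{k-m+1}$ and only bounds the remaining $2m-1$ consecutive curves.
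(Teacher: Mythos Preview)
Your argument for (C1) is essentially the same as the paper's: both use strong asymptoticity to place $r(s_k)$ near $\Phi_{k-m+1}(I)$ for a fixed compact $I\subset\operatorname{int}(\omega)$, then invoke Lemma~\ref{lem : index record}. The paper phrases the transfer from $\Phi_{k-m+1}(I)$ to $r(s_k)$ by noting that $\Phi_{k-m+1}(I)$ stays bounded away from the completion strata; your formulation via equivariance under $\Phi_{k-m+1}$ is just a more explicit version of the same observation.

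For (C2) the approaches genuinely diverge. The paper does not argue via continuous extension of length functions to the stratum $\cS(\gamma_{m-1})$; instead it reduces to (C1) by convexity of WP length functions: the curves $\gamma_{k-m+2},\ldots,\gamma_{k+m}$ have bounded length at the two midpoints $r(\tfrac{t_k+t_{k+1}}2)$ and $r(\tfrac{t_{k+1}+t_{k+2}}2)$ by the (C1) case, and convexity along the geodesic $r$ then bounds them on the whole interval in between, which contains $s_k$. Your route---using that each $\gamma_j$ with $0\le j\le 2m-2$ is disjoint from $\gamma_{m-1}$, hence $\ell_{\gamma_j}$ extends continuously to $X_0\in\cS(\gamma_{m-1})$, and then pushing forward by $\Phi_{k-m+2}$---is also correct and has the virtue of making transparent exactly why $\gamma_{k-m+1}$ must be dropped (it is the one curve in the window that intersects $\gamma_{k+1}$). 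The paper's convexity argument is slightly slicker and avoids any discussion of how length functions behave near strata, at the cost of obscuring that combinatorial reason.
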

\begin{proof} Suppose that we are in case (C1).  Then there exists a compact interval $I \subset int(\omega)$ so that the Hausdorff distance between $\Phi_{k-m+1}(I)$ and $r([t_k+\epsilon,t_{k+1}-\epsilon])$ tends to zero as $k \to \infty$.  By Lemma~\ref{lem : index record}, $\gamma_{k-m+1},\ldots,\gamma_{k+m}$ have bounded length along $\Phi_{k-m+1}(I)$.  Since $\Phi_{k-m+1}(I)$ remains bounded away from the completion strata of $\Teich(S)$,  $\gamma_{k-m+1},\ldots,\gamma_{k+m}$ also have bounded length along $r([t_k+\epsilon,t_{k+1}-\epsilon])$, as required.

For case (C2), the assumptions imply that $d_{WP}(r(s_k),\cS(\gamma_{k+1})) \to 0$ as $k \to \infty$, and hence $\ell_{\gamma_{k+1}}(r(s_k)) \to 0$ as $k\to\infty$.  The bound on the lengths of $\gamma_{k-m+2},\ldots,\gamma_{k+m}$ follows from case (C1) and convexity of the length-functions (\cite{wolb}).  Indeed, from case (C1), we know that the curves $\gamma_{k-m+2},\ldots,\gamma_{k+m}$ have uniformly bounded lengths at $r(\tfrac{t_k+t_{k+1}}2)$ and $r(\tfrac{t_{k+1}+t_{k+2}}2)$, and hence all the curves have uniformly bounded length along $r([\tfrac{t_k+t_{k+1}}2,\tfrac{t_{k+1}+t_{k+2}}2])$ by convexity of length-functions.
\end{proof}

As another application of Theorem~\ref{thm : short times along r}, we can identify the ending lamination of $r$.

\begin{cor} \label{cor : nu is end lam of r} The lamination $\nu$ is the ending lamination of the ray $r$.
\end{cor}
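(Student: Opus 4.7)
The plan is to prove the two inclusions $\nu\subseteq\nu(r)$ and $\nu(r)\subseteq\nu$ separately, drawing on the preceding analysis of $\{\gamma_k\}_k$ and the approximation of $r$ by $g^{\omega}_{\mathcal E}$.

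For the inclusion $\nu\subseteq\nu(r)$, I will exhibit an ending measure of $r$ whose support is $\nu$. By Theorem~\ref{thm : short times along r}, $\ell_{\gamma_k}(r(t_k))\to 0$ and the sequence $\{t_k\}$ is eventually increasing, so after discarding finitely many $k$ we have $t_k\to\infty$ monotonically and $\gamma_k$ has length less than the Bers constant $L_S$ at $r(t_k)$; hence $\gamma_k$ is a Bers curve at $r(t_k)$. Fixing any $h\in\{0,\ldots,m-1\}$, Corollary~\ref{cor : sequence to ending lamination} together with Theorem~\ref{thm : nue} gives
\[ \frac{\gamma_{h+im}}{A(0,h+im)}\longrightarrow \bar\nu^h\quad\text{in}\ \mathcal{ML}(S)\ \text{as}\ i\to\infty, \]
where $\bar\nu^h$ is an ergodic measured lamination supported on $\nu$. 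Since the $\gamma_{h+im}$ are distinct Bers curves at times $t_{h+im}\to\infty$, the weighted sequence $\{(1/A(0,h+im))\,\gamma_{h+im}\}_i$ fits the definition of an ending measure of $r$ given in \S\ref{subsec : endlam}, so $\bar\nu^h$ is an ending measure and $\nu=|\bar\nu^h|\subseteq\nu(r)$.

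For the reverse inclusion $\nu(r)\subseteq\nu$, I will use the fact that $\nu\in\mathcal{EL}(S)$ is minimal and filling by Corollary~\ref{cor : sequence to ending lamination}. The complement of a filling geodesic lamination on $S$ is a disjoint union of simply connected and once-punctured disks, and such regions admit neither essential simple closed curves nor non-empty geodesic sublaminations. Since distinct leaves of a geodesic lamination cannot cross, any pinching curve of $r$ or any leaf of an ending measure of $r$ not contained in $\nu$ would have to be disjoint from $\nu$ and hence trapped in this complement, which is impossible. Combining the two inclusions gives $\nu(r)=\nu$.

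I do not foresee a serious obstacle here: both steps reduce to direct applications of the results already established in Section~\ref{sec : sequence of curves on S0p} and of the short-curve information in Theorem~\ref{thm : short times along r}. The only point requiring even mild attention is verifying that the weighted curves $\gamma_{h+im}/A(0,h+im)$ indeed realize an ending measure, which is immediate once $\gamma_k$ is recognized as a Bers curve at $r(t_k)$ and the times $t_{h+im}$ are seen to tend to infinity.
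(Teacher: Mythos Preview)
Your proof is correct and follows essentially the same approach as the paper's. The paper's argument is simply a terser version of yours: it uses Theorem~\ref{thm : short times along r} and Theorem~\ref{thm : nue} to exhibit $\bar\nu^0$ as an ending measure (your first inclusion), and then dispatches the reverse inclusion in a single clause by noting $\nu\in\EL(S)$, which is exactly your filling-complement argument spelled out.
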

\begin{proof} By Theorem~\ref{thm : short times along r}, $\displaystyle{\lim_{k \to \infty} \ell_{\gamma_k}(r(t_k)) = 0}$.  Since, by Theorem \ref{thm : nue}, the subsequence
\[ \{\gamma_k \mid k \equiv 0 \mbox{ mod } m\}\]
converges to $\bar\nu_0$ in $\ML(S)$ (after appropriately scaling), it follows that the ending lamination of $r$ contains $\nu$. 
Moreover, $\nu\in\EL(S)$, and hence $\nu$ is the ending lamination of $r$.
\end{proof}

\subsection{The Limit set}\label{subsec : limit sets}

By Corollary \ref{cor : nu is end lam of r} the ending lamination of $r$ is the minimal non-uniquely ergodic lamination $\nu$.
Let $\bar{\nu}^h,\; h=0,\ldots, m-1,$ be the ergodic measures supported on $\nu$ as in Theorem \ref{thm : nue}. 
Theorem~\ref{thm : NUE main vague} follows immediately from the following theorem.

\begin{thm}\label{thm : limgE 1-sk}
The limit set of $r$ in $\mathcal{PML}(S)$ is the concatenation of the edges 
\[\big[[\bar{\nu}^{0}],[\bar{\nu}^{1}]\big],\ldots, \big[[\bar{\nu}^{m-1}],[\bar{\nu}^{0}]\big]\]
 in the $1-$skeleton of the simplex of projective measures supported on $\nu$.
\end{thm}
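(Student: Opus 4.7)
By Corollary~\ref{cor : nu is end lam of r}, the ending lamination of $r$ is $\nu$. Every accumulation point of $r$ in $\PML(S)$ is then a projective measured lamination supported on $\nu$, hence lies in the $(m{-}1)$-simplex $\Delta$ with vertices $[\bar\nu^0],\ldots,[\bar\nu^{m-1}]$.

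\textbf{Length formula and limits along subsequences.} Because $r$ is strongly asymptotic to $g^\omega_{\mathcal E}$ and each $\Phi_{k-m+1}(\omega)$ joins $\cS(\gamma_k)$ to $\cS(\gamma_{k+1})$ by Lemma~\ref{lem : index record}, the times $t_k$ from Theorem~\ref{thm : short times along r} partition the ray so that only $\gamma_k$ and $\gamma_{k+1}$ are allowed to become short on $[t_k,t_{k+1}]$ (Corollary~\ref{cor : lengths along r}). The main reduction in \S\ref{subsec : main reduction}, which I take to yield a formula of the shape
\[
\ell_\alpha(r(s))\asym \I(\alpha,\gamma_k)\,D_k(s)+\I(\alpha,\gamma_{k+1})\,D_{k+1}(s)+O_\alpha(1),\quad s\in[t_k,t_{k+1}],
\]
with $D_j(s)\geq 1$ measuring the depth of the thin part at $\gamma_j$, lets us compute Thurston limits explicitly. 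Given $s_k\in[t_k,t_{k+1}]$ with $k\equiv h\pmod m$, extract a subsequence on which
\[
\lambda_k:=\frac{A(0,k)D_k(s_k)}{A(0,k)D_k(s_k)+A(0,k+1)D_{k+1}(s_k)}\longrightarrow\lambda\in[0,1].
\]
Then Theorem~\ref{thm : nue} (yielding $\gamma_k/A(0,k)\to\bar\nu^h$ and $\gamma_{k+1}/A(0,k+1)\to\bar\nu^{h+1}$) implies
\[
\frac{\ell_\alpha(r(s_k))}{\ell_\beta(r(s_k))}\longrightarrow\frac{\I(\alpha,\lambda\bar\nu^h+(1-\lambda)\bar\nu^{h+1})}{\I(\beta,\lambda\bar\nu^h+(1-\lambda)\bar\nu^{h+1})},
\]
so that $r(s_k)\to[\lambda\bar\nu^h+(1-\lambda)\bar\nu^{h+1}]$, a point on the edge $[[\bar\nu^h],[\bar\nu^{h+1}]]$.

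\textbf{Surjectivity and ruling out other limits.} For each $\lambda\in[0,1]$, the function $\lambda_k(\cdot)$ is continuous on $[t_k,t_{k+1}]$, is approximately $1$ near $t_k$ (where $D_k\gg D_{k+1}$ since $\gamma_k$ pinches), and approximately $0$ near $t_{k+1}$ (where $\gamma_{k+1}$ pinches). An Intermediate Value and diagonal argument thus produces $s_k\in[t_k,t_{k+1}]$ with $k\equiv h\pmod m$ and $\lambda_k(s_k)\to\lambda$, realizing every point of the edge $[[\bar\nu^h],[\bar\nu^{h+1}]]$. As $h$ ranges over $\{0,\ldots,m-1\}$, every point of every consecutive edge in the cyclic concatenation appears as an accumulation point of $r$. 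The length formula also rules out other limits, since only $\bar\nu^h$ and $\bar\nu^{h+1}$ ever contribute to the limit on a given interval $[t_k,t_{k+1}]$.

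\textbf{Main obstacle.} The crux is establishing the length formula in \S\ref{subsec : main reduction}: a thin-part analysis \`a la Minsky's product region theorem is needed to simultaneously track contributions from two coexisting potentially short curves $\gamma_k,\gamma_{k+1}$, and careful treatment is required at the transition from the $\gamma_k$-thin regime to the $\gamma_{k+1}$-thin regime within a single interval. The non-annular subsurface coefficient bounds of Theorem~\ref{thm : subsurface coeff estimate}, together with the annular-coefficient control of Lemma~\ref{lem : ann coeff comp} and the asymptoticity of $r$ to $g^\omega_{\mathcal E}$, provide the underlying geometric and combinatorial input.
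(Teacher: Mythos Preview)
Your strategy is essentially the paper's: reduce to a length estimate (the paper's Theorem~\ref{thm : main reduction}), pass to subsequences in a fixed residue class mod $m$, and read off the Thurston limit from the convergence $\gamma_{h+im}/A(0,h+im)\to\bar\nu^h$ of Theorem~\ref{thm : nue}. Two differences are worth noting.

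First, your description of the coefficient $D_j(s)$ as ``depth of the thin part at $\gamma_j$'' is not quite what happens. When $s$ stays bounded away from both $t_k$ and $t_{k+1}$ (the paper's case (C1)), \emph{neither} $\gamma_k$ nor $\gamma_{k+1}$ is short, so the collar-width contribution is bounded. The $\gamma_k$--term nevertheless dominates, but the reason is the \emph{twisting}: by Lemma~\ref{lem : sufficient twist estimates} one has $\tw_{\gamma_k}(\delta,s_k)\asya e_k\to\infty$, whereas $\tw_{\gamma_{k+1}}(\delta,s_k)\asya 0$. The correct coefficient is $x_k=w_{s_k}(\gamma_k)+\tw_{\gamma_k}(\gamma_0,s_k)\,\ell_{\gamma_k}(s_k)$, combining collar width and twisting. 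This is why the paper splits into cases (C1) and (C2) rather than running a single uniform two-term formula over all of $[t_k,t_{k+1}]$; your formula as stated would not pick out $[\bar\nu^h]$ as the limit in the interior of the interval without this twisting input. Once you replace your $D_j$ by the correct $x_k,y_k$ (with the twist term), your argument goes through.

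Second, for surjectivity onto each edge the paper does not set up an intermediate-value argument on an explicit parameter $\lambda_k$; instead it observes that the accumulation set of the sequence of arcs $r([\tfrac{t_k+t_{k+1}}2,\tfrac{t_{k+1}+t_{k+2}}2])$ with $k\equiv h$ mod $m$ is a connected subset of $[[\bar\nu^h],[\bar\nu^{h+1}]]$ containing both endpoints, hence equals the edge. Your IVT/diagonal route is a legitimate alternative once the length formula is in hand, though the paper's connectedness argument sidesteps having to verify the endpoint behavior of $\lambda_k$ quantitatively.
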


We will reduce this to a more technical statement, and then in the next subsection, prove that technical statement.  As we will be exclusively interested in lengths of curves along $r$, for any curve $\delta$ and $s \in [0,\infty)$, we write
\[ \ell_\delta(s)= \ell_\delta(r(s)).\]
Our main technical result is the following theorem.

\begin{thm} \label{thm : main reduction}  Suppose that $\{s_k\}_k \subset [0,\infty)$ is a sequence.
\begin{itemize}
\item If $\{s_k\}_k$ satisfies (C1), then there exists $x_k > 0$ such that for any simple closed curve $\delta$, we have 
\[ \lim_{k \to \infty} \frac{x_k \I (\delta,\gamma_k)}{\ell_\delta(s_k)} = 1.\]
\item If $\{s_k\}_k$ satisfies (C2), then there exist $x_k,y_k \geq 0$ with $x_k+y_k > 0$ such that for any simple closed curve $\delta$, we have
\[ \lim_{k \to \infty} \frac{x_k \I (\delta,\gamma_k) + y_k \I(\delta,\gamma_{k+1})}{\ell_\delta(s_k)} = 1.\]
\end{itemize}
\end{thm}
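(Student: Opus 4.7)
The plan is to reduce the problem to computing hyperbolic lengths at controlled points of $\omega$, exploiting the strong asymptoticity of Proposition~\ref{prop : limit geodesic}. For each $k$, pick $y_k \in \omega$ so that $d_{\WP}(r(s_k),\Phi_{k-m+1}(y_k)) \to 0$. Under condition (C1), $y_k$ lies in a fixed compact subinterval $I \subset \mathrm{int}(\omega) \subset \Teich(S)$, while under (C2), $y_k$ converges to the terminal endpoint $\rho(X_0) \in \cS(\alpha) = \cS(\gamma_m)$. Using $\Phi_{k-m+1}$--equivariance,
\[
\ell_\delta(r(s_k)) \sim \ell_\delta(\Phi_{k-m+1}(y_k)) = \ell_{\Phi_{k-m+1}^{-1}(\delta)}(y_k),
\]
and Lemma~\ref{lem : index record} together with Corollary~\ref{cor : lengths along r} identifies filling bounded--length sets on $\omega$ with the ones at $r(s_k)$.

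For case (C1), the $2m$ curves $\{\gamma_{k-m+1}, \ldots, \gamma_{k+m}\}$ fill $S$ and have lengths uniformly bounded above and below at $\Phi_{k-m+1}(y_k)$, so the standard length--versus--intersection comparison against a filling bounded--length collection gives
\[
\ell_\delta(r(s_k)) \asymp \sum_{j=-m+1}^{m} \I(\delta, \gamma_{k+j}).
\]
Combining Theorem~\ref{thm : intersection} with Lemma~\ref{lem : intersection estimate ratios}, and imposing (if necessary) sufficient growth on $\mathcal E$, we get $A(0, k+j)/A(0, k+m) \to 0$ for every $j < m$, so the sum is dominated by its $j = m$ term. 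Since $k+m \equiv k \bmod m$, Theorem~\ref{thm : nue} shows the ratio $\I(\delta, \gamma_{k+m})/\I(\delta, \gamma_k)$ tends to a $\delta$--independent quantity: the common factor $\I(\delta, \bar\nu^{k \bmod m})$ in numerator and denominator cancels, leaving $A(0, k+m)/A(0, k)$. Defining $x_k := \ell_{\delta_0}(r(s_k))/\I(\delta_0, \gamma_k)$ for any fixed reference curve $\delta_0$ crossing $\nu$, a direct verification then yields $x_k \I(\delta, \gamma_k)/\ell_\delta(r(s_k)) \to 1$ for every simple closed curve $\delta$.

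In case (C2), $\gamma_{k+1}$ becomes very short at $r(s_k)$, and the hyperbolic length of any crossing curve $\delta$ decomposes, via the collar lemma, as
\[
\ell_\delta(r(s_k)) = 2\, \I(\delta, \gamma_{k+1})\log\!\bigl(1/\ell_{\gamma_{k+1}}(r(s_k))\bigr) + L^{\mathrm{out}}_\delta(r(s_k)) + O\bigl(\I(\delta, \gamma_{k+1})\bigr),
\]
where $L^{\mathrm{out}}_\delta$ is the total length of $\delta$ outside the Margulis tube of $\gamma_{k+1}$. The bounded--length filling set $\{\gamma_{k-m+2}, \ldots, \gamma_{k+m}\} \setminus \{\gamma_{k+1}\}$ from Corollary~\ref{cor : lengths along r} governs $L^{\mathrm{out}}_\delta$, and the argument of case (C1), applied to this sub--collection, shows $L^{\mathrm{out}}_\delta$ is asymptotically proportional to $\I(\delta, \gamma_k)$ with a $\delta$--independent coefficient $x_k$. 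Setting $y_k := 2\log(1/\ell_{\gamma_{k+1}}(r(s_k)))$ and matching coefficients produces the required $x_k \I(\delta, \gamma_k) + y_k \I(\delta, \gamma_{k+1})$ asymptotic with $x_k + y_k > 0$.

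The main obstacle is upgrading the coarse comparisons ($\asymp$) into the precise limit $1$. Two points are needed. First, the subdominant intersection terms $\I(\delta, \gamma_{k+j})$ for $j < m$ must be genuinely negligible relative to $\I(\delta, \gamma_{k+m})$; this follows from sufficient growth of $\{e_k\}$ via Lemma~\ref{lem : intersection estimate ratios}, and can be folded into the growth conditions on $\mathcal E$ already imposed at the start of Section~\ref{sec : NUE case}. Second, the multiplicative constants implicit in the length--versus--intersection comparison at $y_k$, and in the approximation $\ell_\delta(r(s_k)) \sim \ell_\delta(\Phi_{k-m+1}(y_k))$, must stabilize as $k \to \infty$; this holds because $y_k$ is trapped either in a fixed compact set (C1) or near a fixed stratum point $\rho(X_0)$ (C2), so the conformal/hyperbolic geometry at $y_k$ has a well-defined limit along subsequences, and log--Lipschitz control of length--functions in the relevant region transfers the estimates from $\Phi_{k-m+1}(y_k)$ to $r(s_k)$.
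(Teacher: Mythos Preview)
Your approach diverges from the paper's and, as written, has a genuine gap at the step where you upgrade the coarse comparison to a precise limit equal to $1$.

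The coarse estimate $\ell_\delta(s_k) \asymp \sum_{j=-m+1}^{m}\I(\delta,\gamma_{k+j})$ holds with multiplicative constants that are independent of $\delta$, and it is true that the $j=m$ term eventually dominates. But this only yields $\ell_\delta(s_k) \asym \I(\delta,\gamma_{k+m})$: the ratio $\ell_\delta(s_k)/\I(\delta,\gamma_{k+m})$ is trapped in some interval $[c_1,c_2]$ with $c_1<c_2$, and for different curves $\delta$ it can sit at different points of this interval. Your argument that ``the geometry at $y_k$ stabilizes'' shows only that $c_1,c_2$ are bounded (or even constant), not that the ratios for different $\delta$ coalesce. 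Concretely, with $\delta_k = \Phi_{k-m+1}^{-1}(\delta)$ you need $\ell_{\delta_k}(y_0)/\I(\delta_k,\gamma_{2m-1})$ to converge to something independent of $\delta$, which would require knowing that $\delta_k$ converges projectively in $\ML(S)$ to a lamination independent of $\delta$; this north--south type statement for the inverse sequence $\Phi_k^{-1}$ is not established anywhere and is essentially equivalent to what you are trying to prove.

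The paper avoids this by using the finer length expansion of Theorem~\ref{thm : CRS}, which decomposes $\ell_\delta(s_k)$ as $\sum_{\beta\in P_k}\I(\delta,\beta)\bigl(w_{s_k}(\beta)+\tw_\beta(\delta,s_k)\ell_\beta(s_k)\bigr)$ plus a controlled error. The dominant contribution is from $\beta=\gamma_k$, and the crucial point (Lemma~\ref{lem : sufficient twist estimates}) is that $\tw_{\gamma_k}(\delta,s_k) \asya e_k$ with an \emph{additive} error independent of $\delta$. Since $e_k\to\infty$, the bracket $w_{s_k}(\gamma_k)+\tw_{\gamma_k}(\delta,s_k)\ell_{\gamma_k}(s_k)$ is asymptotically $\delta$--independent (ratio tending to $1$), which is exactly what your coarse comparison cannot see. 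The large twist about $\gamma_k$ is of course the same mechanism that makes $\I(\delta,\gamma_{k+m})$ large, but extracting it through the CRS formula gives additive rather than multiplicative control, and that is what produces the exact limit.
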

\begin{proof}[Proof of Theorem~\ref{thm : limgE 1-sk} assuming Theorem~\ref{thm : main reduction}.] We will pass to subsequences in the following, and to avoid double subscripts, for a subsequence of a sequence $\{c_k\}_{k=1}^\infty$, we simply write $\{c_k\}_{k \in J}$, where $J$ is the index set defining the subsequence.  Likewise $\displaystyle{\lim_{k \in J} c_k}$ will denote the limit of the subsequence as the indices from $J$ tend to infinity.

Now suppose that $[\bar \mu] \in \PML(S)$ is a limit point of the ray $r$.  That is, for some sequence of times $\{s_j\}_j \subset [0,\infty)$ and any two curves $\delta,\delta'$, we have
\[ \lim_{j \to \infty} \frac{\ell_\delta(s_j)}{\ell_{\delta'}(s_j)} = \frac{\I(\delta,\bar \mu)}{\I(\delta',\bar \mu)}\]
(see $\S$\ref{subsec : Thcpct}). Since $s_j$ must tend to infinity as $j \to \infty$, by passing to a subsequence, we may assume that the sequence is increasing, and there exists an increasing sequence $\{k_j\}_j$ such that either $|s_j-t_{k_j}| \to 0$ or else there exists $\epsilon > 0$ so that $t_{k_j}+\epsilon < s_j < t_{k_j+1}-\epsilon$.  Consequently, after reindexing, we assume (as we may) that our sequence is a subsequence $\{s_k\}_{k \in J}$ of some sequence $\{s_k\}_{k=1}^\infty$ satisfying either (C1) or (C2).

Suppose first that $\{s_k\}_{k=1}^\infty$ satisfies (C1), and pass to a further subsequence (with index set still denoted $J$ for simplicity) so that all $k \in J$ are congruent to some $h \in \{0,\ldots,m-1\}$ mod $m$.  Then by our assumption and Theorem~\ref{thm : main reduction} we have
\[ \frac{\I(\delta,\bar \mu)}{\I(\delta',\bar \mu)} = \lim_{k \in J} \frac{\ell_\delta(s_k)}{\ell_{\delta'}(s_k)}
 = \lim_{k \in J} \frac{x_k\I(\delta,\gamma_k)}{x_k\I(\delta',\gamma_k)} = \frac{\I(\delta,\bar \nu^h)}{\I(\delta',\bar \nu^h)}
\]
where the last equality follows from the fact that $[\gamma_k] \to [\bar \nu^h]$ in $\PML(S)$, for $k \in J$ by Theorem \ref{thm : nue}.  But this implies that $[\bar \mu] = [\bar \nu^h]$ since $\delta,\delta'$ were arbitrary.

We further observe that if $h \in \{0,\ldots,m-1\}$, then $\{s_k:= \frac{t_k+t_{k+1}}2\}_k$ satisfies (C1), and the computations just given show that for any subsequence $\{s_k\}_{k \in J}$ such that $k \equiv h$ mod $m$ for all $k \in J$, we have $\displaystyle{\lim_{k \in J} r(s_k) = [\bar \nu^h]}$ in the Thurston topology.  Consequently, all the vertices of the simplex are in fact accumulation points.

Next, suppose that $\{s_k\}_{k=1}^\infty$ satisfies (C2), and again pass to yet another subsequence so that all $k \in J$ are congruent to some $h \in \{0,\ldots,m-1\}$ mod $m$.  In this case, we must pass to yet another subsequence so that $[x_k \gamma_k + y_k \gamma_{k+1}]$ converges to some $[\bar \mu'] \in \PML(S)$, for $k \in J$.  Note that, since by Theorem \ref{thm : nue} $[\gamma_k] \to [\bar \nu^h]$ and $[\gamma_{k+1}] \to [\bar \nu^{h+1}]$ (where we replace $h+1$ by $0$, if $h+1 = m$), we have  $[\bar \mu'] \in [[\bar \nu^h],[\bar \nu^{h+1}]]$.  Then, by similar reasoning we have
\begin{eqnarray*} \frac{\I(\delta,\bar \mu)}{\I(\delta',\bar \mu)} & = & \lim_{k \in J} \frac{\ell_\delta(s_k)}{\ell_{\delta'}(s_k)} = \lim_{k \in J} \frac{x_k\I(\delta,\gamma_k) + y_k\I(\delta,\gamma_{k+1})}{x_k\I(\delta',\gamma_k) + y_k\I(\delta',\gamma_{k+1})}\\
& = & \lim_{k \in J} \frac{\I(\delta,x_k\gamma_k+ y_k\gamma_{k+1})}{\I(\delta',x_k\gamma_k + y_k \gamma_{k+1})} = \frac{\I(\delta,\bar \mu')}{\I(\delta',\bar \mu')}.
\end{eqnarray*}
Here the second to last equality follow from bilinearity of intersection number, while the last equality follows since $[x_k \gamma_k + y_k \gamma_{k+1}] \to [\bar \mu']$ in $\PML(S)$, for $k \in J$.  Thus again we see that $[\bar \mu] = [\bar \mu']$.

So, the limit set of $r$ is contained in the required loop in the $1$--skeleton of the simplex of projective classes of measures on $\nu$.  If we fix $h \in \{0,\ldots,m-1\}$ and consider the arcs
\[ \{ r([\tfrac{t_k + t_{k+1}}{2},\tfrac{t_{k+1} + t_{k+2}}2]) \mid k \equiv h \mbox{ mod } m \} \]
it follows that the initial endpoints converge to $[\bar \nu^h]$ while the terminal endpoints converge to $[\bar \nu^{h+1}]$ (again replacing $h+1$ with $0$ if $h+1=m$).  Moreover, the accumulation set of this sequence of arcs is a connected subset of $[[\bar \nu^h],[\bar \nu^{h+1}]]$.  Any such set is necessarily the entire $1$--simplex.  Therefore, the ray $r$ accumulates on the entire loop, as required.
\end{proof}

\subsection{Proof of Theorem~\ref{thm : main reduction}}\label{subsec : main reduction}

Here we prove the required technical theorem used in the proof of Theorem~\ref{thm : limgE 1-sk}.  Throughout what follows, we assume that $\{s_k\}_k$ satisfies (C1) or (C2).  Many of the estimates can be carried out for both cases simultaneously.

From Corollary~\ref{cor : lengths along r}, the $2m-1$ curves $\gamma_{k-m+2},\ldots,\gamma_{k+m}$ have bounded lengths in $r(s_k)$, and since
\[ \gamma_{k-m+2},\ldots,\gamma_k,\gamma_{k+2},\ldots,\gamma_{k+m} \]
fill $S - \gamma_{k+1}$, there is a pants decomposition $P_k$ containing the $m$--component multicurve
\[ \sigma_k := \gamma_k \cup \ldots \cup \gamma_{k+m-1} \]
such that $\ell_{\beta}(s_k)$ is bounded for all $\beta \in P_k$, independent of $k$ (though the bounds depend on $\epsilon$ in case (C1)).
Write $P_k^c = P_k \setminus \sigma_k$.

For an arbitrary curve $\delta$ and a curve $\beta \in P_k$, the {\em contribution to the length of $\delta$ from $\beta$ in $r(s_k)$} is defined by the equation:
\begin{equation}\label{eq : l_del(s,al)} \ell_{\delta}(s_k,\beta)  := \I(\delta,\beta)\Big(w_{s_k}(\beta) + \tw_{\beta}(\delta,s_k)\ell_{\beta}(s_k)\Big) \end{equation}
where $\tw_{\beta}(\delta,s_k)$ is the twist of $\delta$ about $\beta$ at $r(s_k)$ as is defined in (\ref{eq : twist}), and $w_{s_k}(\beta)$ is the width of the largest embedded tubular neighborhood of $\beta$ in $r(t_k)$ (i.e.~the minimal distance between boundary components of the neighborhood).  By \cite[\S 4.1]{buser}, we have
\begin{equation} \label{eqn : collar lemma} w_{s_k}(\beta) = 2\log\left(\tfrac{1}{\ell_{\beta}(s_k)}\right). \end{equation}

The following estimate for the hyperbolic length of a curve $\delta$ from \cite[Lemmas 7.2, 7.3]{lineminimateichgeod} will be our primary tool.
\begin{thm} \label{thm : CRS} Suppose that the sequence $\{s_k\}_k$ satisfies (C1) or (C2).  Then, for any curve $\delta$ we have
\begin{equation}\label{eq : length expansion}
\Big|\ell_\delta(s_k)-\sum_{\beta\in P_k} \ell_\delta(s_k,\beta) \Big|= O\Big(\sum_{\beta\in P_k}\I(\delta,\beta)\Big).
\end{equation}
Here the constant of the $O$ notation depends only on the upper bound for the length of the curves in $P_k$.
\end{thm}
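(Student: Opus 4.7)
The plan is to establish the length expansion via a thick-thin decomposition of the hyperbolic surface $X_k := r(s_k)$ and a careful accounting of the contributions from subarcs of the geodesic representative of $\delta$ as it crosses Margulis tubes about short curves and the thick complement. The basic picture is that the length of $\delta$ equals, up to a controlled error, the sum of (i) widths of collars it crosses, weighted by crossing numbers, (ii) spiraling contributions inside those collars, and (iii) bounded-length arcs in the thick part; the first two contributions combine into $\sum_\beta \ell_\delta(s_k,\beta)$ while the third becomes the error $O(\sum_\beta \I(\delta,\beta))$.

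First I would fix a Margulis constant $\epsilon_M > 0$ small enough that the $\epsilon_M$--thin part of $X_k$ is a disjoint union of standard collars around short simple closed geodesics together with horoball neighborhoods of cusps, and small enough relative to the uniform upper bound $L$ on $\{\ell_\beta(s_k):\beta\in P_k\}$ (supplied by Corollary \ref{cor : lengths along r}) that no curve in $P_k^c := P_k\setminus\sigma_k$ lies in the thin part. Consequently the short curves at $X_k$ form a sub-multicurve of $\sigma_k=\gamma_k\cup\cdots\cup\gamma_{k+m-1}$, and Buser's collar formula gives the collar width $w_{s_k}(\beta)=2\log(1/\ell_\beta(s_k))$ recorded in (\ref{eqn : collar lemma}).

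Next I would realize $\delta$ as its hyperbolic geodesic on $X_k$ and decompose it into \emph{thin arcs} $a\subset T_\beta$ each traversing a single Margulis collar $T_\beta$ about a short curve $\beta$, and \emph{thick arcs} lying in the $\epsilon_M$--thick part. There are exactly $\I(\delta,\beta)$ thin arcs in $T_\beta$ and at most $1+\sum_\beta \I(\delta,\beta)$ thick arcs. Using Fermi coordinates based on $\beta$ in the universal cover of $T_\beta$, a standard hyperbolic trigonometry computation yields
\[
\length(a) \;=\; w_{s_k}(\beta)\;+\;\tau(a)\,\ell_\beta(s_k)\;+\;O(1),
\]
where $\tau(a)$ is the signed number of times $a$ wraps around $\beta$ as it traverses $T_\beta$. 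Summing over thin arcs in $T_\beta$ and identifying $\sum_a \tau(a)$ with $\tw_\beta(\delta,s_k)\,\I(\delta,\beta)$ via the Bers-marking definition of the twist in (\ref{eq : twist}) produces $\ell_\delta(s_k,\beta)+O(\I(\delta,\beta))$ by (\ref{eq : l_del(s,al)}). For the thick arcs, the injectivity radius in the $\epsilon_M$--thick part is bounded below, and a Mumford-compactness argument (using that curves in $P_k^c$ have length bounded by $L$ and cut the thick part into standard pairs of pants of uniformly bounded diameter) shows each thick arc has length at most some $C(L)$, yielding a total thick contribution of $O(\sum_\beta \I(\delta,\beta))$. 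For $\beta\in P_k^c$, both $w_{s_k}(\beta)$ and $\tw_\beta(\delta,s_k)\,\ell_\beta(s_k)$ are $O(1)$ with constants depending only on $L$, so $\ell_\delta(s_k,\beta)=O(\I(\delta,\beta))$ and including these terms on the right-hand side of (\ref{eq : length expansion}) only perturbs the identity by an amount absorbed in the error.

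The main obstacle is the Fermi-coordinate computation inside each Margulis tube: one must cleanly separate the width contribution from the twist contribution, and verify that all implicit constants depend only on $L$ and not on any lower bound for the lengths of the short curves (since those lengths are allowed to tend to $0$ along $\{s_k\}$ in case (C2)). This is essentially the content of Lemmas 7.2 and 7.3 of \cite{lineminimateichgeod}, so the remaining work is to check that the hypotheses of those lemmas apply to our $X_k$ and $P_k$, which is immediate from Corollary \ref{cor : lengths along r} together with the choice of $P_k$ extending $\sigma_k$ by curves of uniformly bounded length.
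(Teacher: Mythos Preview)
The paper does not actually prove Theorem~\ref{thm : CRS}; it simply imports the estimate from \cite[Lemmas~7.2, 7.3]{lineminimateichgeod}. Your sketch is therefore more than the paper offers, and your closing appeal to those same lemmas is exactly what the paper does.

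That said, the sketch you give before that appeal has a genuine gap. You decompose $X_k$ using Margulis tubes only around curves of length below $\epsilon_M$, and then claim each thick arc has length at most $C(L)$. In case~(C1), however, $\ell_{\gamma_k}(s_k)\asym 1$ (see the proof of Theorem~\ref{thm : main reduction}), so $\gamma_k$ has no Margulis tube and lies in your thick part; yet by Lemma~\ref{lem : sufficient twist estimates} the twist $\tw_{\gamma_k}(\delta,s_k)\asya e_k\to\infty$. A strand of $\delta$ near $\gamma_k$ then spirals $\approx e_k$ times and has length $\approx e_k\,\ell_{\gamma_k}(s_k)\to\infty$, so the bound ``each thick arc has length at most $C(L)$'' is false. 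Cutting the thick part only along $P_k^c$ does not help, since simple arcs in a pair of pants can still spiral arbitrarily around a boundary component of bounded length.

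The fix is to take standard collars around \emph{all} curves $\beta\in P_k$ (each of width $\asym 1$ since $\ell_\beta(s_k)\leq L$), not just around the $\epsilon_M$--short ones. The Fermi-coordinate computation then applies in every such collar and produces the term $\ell_\delta(s_k,\beta)$ for each $\beta\in P_k$; the complement of all the collars is a union of truncated pairs of pants of uniformly bounded diameter in which simple arcs genuinely have length $O(1)$, giving the error term. This is the decomposition used in \cite{lineminimateichgeod}, so your final deferral to that reference is correct even though the preceding heuristic is not.
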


The proof of Theorem~\ref{thm : main reduction} now follows from estimating various terms in the sum in the above theorem, and finding that one (in case (C1)) or two (in case (C2)) dominate not only the other terms, but also the error term on the right. 

Recall that for any simple closed curve $\delta$, Theorem~\ref{thm : intersection} implies that for all $j$ sufficiently large we have
\begin{equation} \label{eqn : intersection estimates end} \I(\delta,\gamma_j) \stackrel{*}{\asymp} A(0,j)
\end{equation}
where the multiplicative error depends on $\delta$, but not on $j$.  
Combining (\ref{eqn : intersection estimates end}) and Lemma~\ref{lem : intersection estimate ratios}, we see that for all $0 \leq h \leq m-1$, we have
\begin{equation} \label{eqn : easy pants intersection estimate} \lim_{k \to \infty} \frac{\I(\delta,\gamma_{k+h})}{A(0,k+m)} = 0.
\end{equation}
Observe that the curves $\gamma_{k+h}$ here are precisely the components of $\sigma_k$.  It turns out that the intersection numbers with the other curves in $P_k$ (not just those in $\sigma_k$), are also controlled by $A(0,k+m)$.  This is essentially the Weil-Petersson analogue of \cite[Theorem 9.15]{nue2}.

\begin{lem} \label{lem : asymptotic intersection off gamma-k} For any $\beta_k \in P_k$ we have
\[ \lim_{k \to \infty} \frac{\I(\delta,\beta_k)}{A(0,k+m)} = 0.\]
\end{lem}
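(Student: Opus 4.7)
The lemma splits into two cases depending on whether $\beta_k \in \sigma_k$ or $\beta_k \in P_k^c := P_k \setminus \sigma_k$. The first case is essentially a restatement of (\ref{eqn : easy pants intersection estimate}): for $\beta_k = \gamma_{k+h}$ with $0 \leq h \leq m-1$, estimate (\ref{eq : dgk}) of Theorem~\ref{thm : intersection} gives $\I(\delta, \gamma_{k+h}) \asym A(0, k+h)$ for $k$ large, while Lemma~\ref{lem : intersection estimate ratios} yields $A(0, k+h)/A(0, k+m) \leq a^{1 - \lfloor(k+m)/m\rfloor}$, which decays to zero. So the real work is in the $P_k^c$ case.

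For $\beta_k \in P_k^c$, I would first establish that $\beta_k$ cannot equal any curve $\gamma_j$ in the sequence. By property (i) of Definition~\ref{def : sequence of curves}, any $\gamma_j$ disjoint from $\gamma_k$ must have $j \in \{k-m+1, \ldots, k+m-1\}$, and disjointness from $\gamma_{k+m-1}$ forces $j \in \{k, \ldots, k+2m-2\}$; intersecting these with the complement of $\{k, \ldots, k+m-1\}$ leaves no admissible indices. Theorem~\ref{thm : short times along r} (applied with $\epsilon$ sufficiently small) then provides a uniform lower bound $\ell_{\beta_k}(s_k) \geq \epsilon_0 > 0$, and combined with the uniform upper bound from the construction of $P_k$, both $\ell_{\beta_k}(s_k)$ and the collar width $w_{s_k}(\beta_k) = 2\log(1/\ell_{\beta_k}(s_k))$ lie in a fixed compact subinterval of $(0, \infty)$.

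With these bounds in hand, I would apply Theorem~\ref{thm : CRS} to $\ell_\delta(s_k)$ using the pants decomposition $P_k$, and isolate the $\beta_k$-term
\[
\ell_\delta(s_k, \beta_k) = \I(\delta, \beta_k)\bigl(w_{s_k}(\beta_k) + \tw_{\beta_k}(\delta, s_k)\,\ell_{\beta_k}(s_k)\bigr).
\]
Since $\beta_k$ is not of the form $\gamma_j$, estimate (\ref{eqn : not gamma_i, small projection}) in Theorem~\ref{thm : subsurface coeff estimate} bounds $d_{\beta_k}(\mu, \nu)$ uniformly; propagating this to $\tw_{\beta_k}(\delta, s_k) = d_{\beta_k}(\mu(s_k), \delta)$ via the triangle inequality in $\mathcal{C}(Y_{\beta_k})$---together with the bounded variation of the Bers marking along $r$ controlled by Theorem~\ref{thm : no back tracking}---yields $\tw_{\beta_k}(\delta, s_k) = O(1)$. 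Hence $\ell_\delta(s_k, \beta_k) \asym \I(\delta, \beta_k)$ with uniform constants, and it suffices to show that the $\beta_k$-summand is $o(A(0, k+m))$ within the expansion of $\ell_\delta(s_k)$.

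The main obstacle is the last step: a crude bound $\I(\delta, \beta_k) = O(\ell_\delta(s_k))$ is insufficient, because $\ell_\delta(s_k)$ itself is of order $A(0, k+m)$, with the bulk of this contribution coming from the twist terms $\I(\delta, \gamma_{k+h})\,\tw_{\gamma_{k+h}}(\delta, s_k)\,\ell_{\gamma_{k+h}}(s_k)$ for $\gamma_{k+h} \in \sigma_k$ (especially the very short $\gamma_{k+1}$ in case (C2)). Using (\ref{eqn : gamma_i, big projection}) of Theorem~\ref{thm : subsurface coeff estimate} to identify the twist parameters as $\asya e_{k+h-m}$ and (\ref{eq : dgk}) to identify $\I(\delta, \gamma_{k+h}) \asym A(0, k+h)$, one checks that these leading $\sigma_k$-contributions already saturate the $A(0, k+m)$ order, leaving only an $o(A(0, k+m))$ remainder available for the $\beta_k$-term---forcing $\I(\delta, \beta_k)/A(0, k+m) \to 0$.
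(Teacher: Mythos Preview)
Your final ``saturation'' step does not go through, and this is where your approach and the paper's diverge. Knowing that the $\sigma_k$--contributions in the CRS expansion are $\asym A(0,k+m)$ and that $\ell_\delta(s_k)$ is also (at best) $\asym A(0,k+m)$ does \emph{not} force the $P_k^c$--contributions to be $o(A(0,k+m))$: a sum of nonnegative quantities of order $X$ can easily have several summands each of order $X$. To make cancellation work you would need an \emph{exact} matching of the $\sigma_k$--terms with $\ell_\delta(s_k)$ up to $o(A(0,k+m))$, but any such estimate obtained from Theorem~\ref{thm : CRS} carries the error $O\bigl(\sum_{\beta\in P_k}\I(\delta,\beta)\bigr)$, which already contains the unknown $\I(\delta,\beta_k)$. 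Worse, the precise matching you are gesturing at is exactly Theorem~\ref{thm : main reduction}, whose proof \emph{uses} the present lemma, so the argument is circular. (A smaller point: only $\gamma_k$ carries large twist at $r(s_k)$; for $1\le h\le m-1$ one has $\tw_{\gamma_{k+h}}(\delta,s_k)\asya 0$, not $\asya e_{k+h-m}$---see Lemma~\ref{lem : sufficient twist estimates}.)

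The paper instead bounds $\I(\delta,\beta_k)$ directly, without the length expansion. Let $Y_k$ be the component of $S\setminus\sigma_k$ containing $\beta_k$. For every subsurface $Z_k\subseteq Y_k$ with $\beta_k\pitchfork Z_k$, one shows $d_{Z_k}(\beta_k,\delta)$ is uniformly bounded: since $Z_k$ is disjoint from $\sigma_k$ it is not an annulus about any $\gamma_j$, so (\ref{eqn : not gamma_i, small projection}) applies, and $\beta_k$ has bounded intersection with the bounded-length filling curves $\gamma_{k-m+2},\ldots,\gamma_{k+m}$. The Choi--Rafi formula \cite[Corollary~D]{compteichlip} then gives a uniform bound $\I(\pi_{Y_k}(\delta),\beta_k)\le I$. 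Since every arc of $\pi_{Y_k}(\delta)$ comes from intersections of $\delta$ with $\sigma_k$, this yields
\[
\I(\delta,\beta_k)\;\le\; I\sum_{h=0}^{m-1}\I(\delta,\gamma_{k+h})\;\lm\; mI\,A(0,k+m-1),
\]
and dividing by $A(0,k+m)$ gives the lemma.
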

\begin{proof} By (\ref{eqn : easy pants intersection estimate}) it suffices to prove the lemma when $\beta_k \in P_k^c$, for all $k$.


Let $\mu$ be any fixed marking on $S$ and let $Y_k$ be the component of the complement $S \setminus \sigma_k$ that contains $\beta_k$.
\begin{claim}  There exists $I > 0$, depending only on $\mu$ and $\delta$ so that $\I(\pi_Y(\delta),\beta_k) \leq I$.
\end{claim}
 Since $\delta$ and $\mu$ are a fixed curve and marking, we can assume that their projections to all subsurfaces are uniformly close. Let $Z_k \subseteq Y_k$ be any subsurface with $\beta_k \pitchfork Z_k$, and observe that since $Z_k$ is disjoint from the $m$ consecutive curves in $\sigma_k$, Theorem~\ref{thm : subsurface coeff estimate} implies that it cannot be an annulus with core curve in the sequence $\{\gamma_i\}_i$.
By Corollary \ref{cor : lengths along r}, at the point $r(s_k)$ the $2m-1$ curves $\gamma_{k-m+2},\ldots,\gamma_{k+m}$ have length bounded independent of $k$, and hence $\I(\beta_k,\gamma_l)$ is uniformly bounded for each $l = k-m+2,\ldots,k+m$.   Since these curves fill $S\backslash\gamma_{k+1}$ and $\gamma_{k+1}\in\sigma_k$, $\pi_{Z_k}(\gamma_l) \neq \emptyset$ for some $k-m+2 \leq l \leq k+m$, and hence $d_{Z_k}(\gamma_l,\beta_k)$ is uniformly bounded.  Thus, by the triangle inequality and (\ref{eqn : not gamma_i, small projection}) we have
\[   d_{Z_k}(\beta_k,\delta) \asya d_{Z_k}(\gamma_l,\mu) \leq R(\mu). \]
Since this holds for all subsurfaces $Z_k \subseteq Y_k$, \cite[Corollary D]{compteichlip} tells us that $\I(\pi_{Y_k}(\delta),\beta_k)$ is uniformly bounded, as required.
\medskip

Every arc of $\pi_{Y_k}(\delta)$ comes from a pair of intersection points with curves in $\sigma_k$.  Consequently, taking $\kappa(\delta)$ as the second paragraph of Theorem~\ref{thm : intersection} and noting that $A(0,j)$ is increasing in $j$ we have
\[\I(\delta,\beta_k) \leq I \sum_{d = k}^{k+m-1}\I(\delta,\gamma_d) \asym_{\kappa(\delta)} I \sum_{d = k}^{k+m-1}A(0,d) \leq m I A(0,k+m-1).\]
Thus, setting $K = mI \kappa(\delta)$ the proof of lemma is complete.
\end{proof}

Next, we estimate the various terms of $\ell_\delta(s_k,\beta)$ for $\beta \in P_k$. 
\begin{lem} \label{lem : sufficient twist estimates}  Suppose that $\{s_k\}_k$ is a sequence satisfying either (C1) or (C2).  Then for all $k$ sufficiently large and $\beta \in P_k$, we have
\[ \tw_{\beta}(\delta,s_k) \asya  \tw_{\beta}(\gamma_0,s_k) \asya \left\{ \begin{array}{ll} e_k & \mbox{ if } \beta = \gamma_k\\\ 0 & \mbox{ } \beta \neq \gamma_k \mbox{ or } \gamma_{k+1}. \end{array} \right. \]
If $\{s_k\}_k$ satisfies (C1), then for all $k$ sufficiently large, $\tw_{\gamma_{k+1}}(\delta,s_k)\asya 0$.
\end{lem}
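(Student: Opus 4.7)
The plan is to use the estimate
\[
\tw_\beta(\delta,s_k) \asya \diam_{\cC(\beta)}(\Gamma \cup \delta)
\]
that holds for a filling set $\Gamma$ of bounded-length curves at $r(s_k)$ (the comparison recorded just after equation~(\ref{eq : twist})), and to bound each resulting annular diameter using Theorem~\ref{thm : subsurface coeff estimate} together with the Behrstock inequality. By Corollary~\ref{cor : lengths along r}, in case (C1) I would take $\Gamma = \{\gamma_{k-m+1},\ldots,\gamma_{k+m}\}$, which fills $S$ by condition (ii) of $\mathcal P$. In case (C2) the $2m-1$ consecutive curves $\{\gamma_{k-m+2},\ldots,\gamma_{k+m}\}$ have bounded length while $\gamma_{k+1}$ is pinched; the argument proceeds by treating $\gamma_{k+1}$ as part of the base of the Bers marking and is otherwise unchanged. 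A preliminary observation needed throughout is that no $\gamma_j$ can belong to $P_k^c = P_k \setminus \sigma_k$: any such $\gamma_j$ would have to be disjoint from every curve in $\sigma_k = \{\gamma_k,\ldots,\gamma_{k+m-1}\}$, which is impossible by the intersection structure of the sequence.

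For $\beta \in P_k^c$, the annulus $\beta$ has core curve outside the sequence $\{\gamma_i\}$, so the non-annular bound in Theorem~\ref{thm : subsurface coeff estimate} gives $d_\beta(\gamma_i,\delta) = O(1)$ uniformly in $i$, and hence $\tw_\beta(\delta,s_k) \asya 0$; the same holds with $\gamma_0$ in place of $\delta$. For $\beta = \gamma_{k+h}$ with $h \in \{1,\ldots,m-1\}$, the only curve in $\Gamma$ overlapping $\gamma_{k+h}$ is $\gamma_{k+h-m}$: indeed $\gamma_{k+h+m} \notin \Gamma$ for $h \geq 1$, and any curve within $m-1$ indices of $\gamma_{k+h}$ is disjoint from it by condition (i). By Theorem~\ref{thm : subsurface coeff estimate} the projection $d_{\gamma_{k+h-m}}(\gamma_{k+h},\delta)$ is large, so Behrstock's inequality (Theorem~\ref{thm : beh ineq}) applied to the overlapping annuli $\gamma_{k+h-m},\gamma_{k+h}$ with marking $\delta$ forces $d_{\gamma_{k+h}}(\gamma_{k+h-m},\delta) \leq B_0$. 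Thus $\tw_{\gamma_{k+h}}(\delta,s_k) \asya 0$ for all $h=1,\ldots,m-1$ under (C1)---covering both the main claim for $h \geq 2$ and the extra claim for $h = 1$---while under (C2) we drop $h=1$ since $\gamma_{k+1}$ is pinched.

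For $\beta = \gamma_k$, among $\Gamma$ only $\gamma_{k+m}$ overlaps $\gamma_k$, so $\tw_{\gamma_k}(\delta,s_k) \asya d_{\gamma_k}(\gamma_{k+m},\delta)$. Here $\gamma_{k+m}$ lies on the ``future'' side of $\gamma_k$ in the annular complex $\cC(\gamma_k)$ and has projection close to that of $\nu$ (by Behrstock on the pair $\gamma_k,\gamma_{k+m}$, since the projection of $\nu$ to $\cC(\gamma_{k+m})$ is large), whereas the fixed curve $\delta$ behaves like a fixed marking on the ``past'' side. Theorem~\ref{thm : subsurface coeff estimate} gives $d_{\gamma_k}(\delta,\nu) \asya e_k$, and the triangle inequality then yields $d_{\gamma_k}(\gamma_{k+m},\delta) \asya e_k$, i.e.\ $\tw_{\gamma_k}(\delta,s_k) \asya e_k$. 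The identical reasoning with $\gamma_0$ in place of $\delta$ gives $\tw_{\gamma_k}(\gamma_0,s_k) \asya e_k$.

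The main technical obstacle is precisely this last case: one must carefully verify that $\gamma_{k+m}$ and $\delta$ (or $\gamma_0$) project to opposite ``sides'' of $\gamma_k$ in $\cC(\gamma_k)$, so that the annular distance realises the full $e_k$ twist rather than collapsing to $O(1)$. This turns on the specific structure of the sequence---$\gamma_{k+m} = D_{\gamma_k}^{e_k}(\gamma'_{k+m})$---and requires combining the sharp annular estimates of Theorem~\ref{thm : subsurface coeff estimate} with Behrstock's inequality in order to rule out cancellation between the projections.
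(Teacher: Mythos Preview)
Your approach is essentially the paper's: reduce $\tw_\beta(\cdot,s_k)$ to $\diam_{\cC(\beta)}(\Gamma\cup\cdot)$ for the filling bounded-length set $\Gamma$ supplied by Corollary~\ref{cor : lengths along r}, and then estimate annular projections case by case via Theorem~\ref{thm : subsurface coeff estimate}.

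There is one concrete slip. For $\beta=\gamma_{k+h}$ with $2\le h\le m-1$, your claim that ``the only curve in $\Gamma$ overlapping $\gamma_{k+h}$ is $\gamma_{k+h-m}$'' is false: every $\gamma_l\in\Gamma$ with $k-m+1\le l\le k+h-m$ may overlap $\gamma_{k+h}$, and certainly $\gamma_{k+h-m-1}$ does, since index difference $m+1$ forces intersection (from Definition~\ref{def : sequence of curves}(iii), $\I(\gamma'_{j+m},\gamma_{j-1})=b>0$ and $\gamma_{j-1}$ is disjoint from $\gamma_j$). Your Behrstock argument applies verbatim to each such $\gamma_l$ (since $l<k+h$ and $d_{\gamma_l}(\gamma_{k+h},\delta)$ is large), so the conclusion survives; but the sentence as written needs correction. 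The paper sidesteps this by simply observing that \emph{all} curves in $\Gamma$ overlapping $\gamma_{k+h}$ have index less than $k+h$, and that any two such curves (including $\gamma_0$) have bounded mutual projection to $\cC(\gamma_{k+h})$---this follows from the proof of Lemma~\ref{lem : local to global}, where one sees $d_{\gamma_k}(\gamma_i,\gamma_{k-1})\le B_0$ for $i<k-1$.

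A stylistic difference worth noting: the paper begins by using the Bounded Geodesic Image theorem to get $d_\beta(\gamma_0,\delta)\asya 0$ for every $\beta\in P_k$ once $k$ is large (since $\beta$ lies within curve-complex distance $1$ of $\gamma_k$, while a geodesic from $\gamma_0$ to $\delta$ stays far from $\gamma_k$). This immediately yields $\tw_\beta(\delta,s_k)\asya\tw_\beta(\gamma_0,s_k)$ for all $\beta$, so one only has to compute $\diam_{\cC(\beta)}(\gamma_0\cup\Gamma)$. In particular, for $\beta=\gamma_k$ the paper reads off $d_{\gamma_k}(\gamma_0,\gamma_{k+m})\asya e_k$ directly from Theorem~\ref{thm : subsurface coeff estimate}, avoiding your detour through $\nu$ and Behrstock. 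Your route is correct, just longer.
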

\begin{proof}   By Theorem~\ref{thm : subsurface coeff estimate} and Proposition~\ref{prop : sequence on S_0,p}, $\{\gamma_k\}_k$ is a quasi-geodesic ray in $\cC(S)$ (the curve complex of $S$).  Thus, for any fixed curve $\delta$ and $j$ sufficiently large,  Theorem~\ref{thm : bddgeod} implies that $d_{\gamma_j}(\gamma_0,\delta) \asya 0$. 
To see this, note that for $j$ sufficiently large the curve complex distance between $\gamma_j$ and every curve on a geodesic connecting $\gamma_0$ to $\delta$ is at least $3$ and hence $\gamma_j$ intersects all curves on the connecting geodesic. Thus Theorem~\ref{thm : bddgeod} implies a uniform bound on $d_{\gamma_j}(\gamma_0,\delta)$.
 Since each $\beta \in P_k$ is within distance $1$ of $\gamma_k$, similarly we have $d_{\beta}(\gamma_0,\delta) \asya 0$ for all $\beta \in P_k$, once $k$ is sufficiently large.

Suppose that $\{s_k\}_k$ satisfies (C1).  The filling set of curves $\gamma_{k-m+1},\ldots,\gamma_{k+m}$ have bounded length in $r(s_k)$.  So, for all $k$ sufficiently large and $\beta \in P_k$
\[ \tw_{\beta}(\delta,s_k) \asya \tw_{\beta}(\gamma_0,s_k)  \asya \diam_{\cC(\beta)}(\gamma_0 \cup \gamma_{k-m+1} \cup \cdots \cup \gamma_{k+m}).\]
If $\beta \in P_k^c$, then $\beta \not \in \{\gamma_j\}_j$, and so the term on the right is uniformly close to $0$ by Theorem~\ref{thm : subsurface coeff estimate}.  If $\beta = \gamma_j \neq \gamma_k$, then $j > k$, and the only curves in the set $\gamma_{k-m+1},\ldots,\gamma_{k+m}$ which actually intersect $\gamma_j$ nontrivially must have index {\em less} than $j$.  In this case, Theorem~\ref{thm : subsurface coeff estimate} implies that the term on the right is also uniformly close to $0$.
When $\beta = \gamma_k$, again appealing to Theorem~\ref{thm : subsurface coeff estimate}, the right-hand side is estimated (up to a bounded additive error) by
\[ d_{\gamma_k}(\gamma_0,\gamma_{k+m}) \asya e_k.\]
This proves the lemma when $\{s_k\}_k$ satisfies (C1).  The proof when $\{s_k\}_k$ satisfies (C2) is nearly identical since the curves 
$\gamma_{k-m+2},\ldots,\gamma_k,\gamma_{k+2},\ldots,\gamma_{k+m}$ have bounded length and fill $S \setminus \gamma_{k+1}$, so the only curve whose twisting we can no longer estimate is $\gamma_{k+1}$.  Since the conclusion of the lemma is silent regarding the twisting about this curve in case (C2), we are done.
\end{proof}

\begin{proof}[Proof of Theorem~\ref{thm : main reduction}]
In either case that $\{s_k\}_k$ satisfies (C1) or (C2), define $x_k = w_{s_k}(\gamma_k) + \tw_{\gamma_k}(\gamma_0,s_k) \ell_{\gamma_k}(s_k)$.  Observe that $\ell_{\gamma_k}(s_k) \asym 1$, so by (\ref{eqn : collar lemma}) $w_{s_k}(\gamma_k) \asya 1$ (these estimates depend on $\epsilon$ in case (C1), but not $k$).  Moreover, from Lemma~\ref{lem : sufficient twist estimates}, for any curve $\delta$ and $k$ sufficiently large we have
\[ \tw_{\gamma_k}(\gamma_0,s_k) \asya \tw_{\gamma_k}(\delta,s_k) \asya e_k \to \infty\]
as $k \to \infty$.
Consequently, we have $x_k\asym e_k$ and
\begin{equation} \label{eqn : gamma_k winner} \frac{\ell_{\delta}(s_k,\gamma_k)}{x_k\I(\delta,\gamma_k)}= \frac{w_{s_k}(\gamma_k) + \tw_{\gamma_k}(\delta,s_k) \ell_{\gamma_k}(s_k)}{w_{s_k}(\gamma_k) + \tw_{\gamma_k}(\gamma_0,s_k)\ell_{\gamma_k}(s_k)} \to 1
\end{equation}
as $k \to \infty$. Combining this with (\ref{eqn : intersection estimates end}) and using the setup of integers $A(0,k)$, for large $k$, we have
\begin{equation} \label{eqn : gamma_k contribution C1} \ell_{\delta}(s_k,\gamma_k) \asym x_k\I(\delta,\gamma_k) \asym e_k\I(\delta,\gamma_k) \asym A(0,k+m) \end{equation}

Now suppose that we are in case (C1) and $\beta_k \in P_k$, but $\beta_k \neq \gamma_k$.  As for $\gamma_k$ above, we have $\ell_{\beta_k}(s_k) \asym 1 \asya w_{s_k}(\beta_k)$ (with errors depending on $\epsilon$, but not $k$).  Combining this with (\ref{eqn : intersection estimates end}) and Lemma~\ref{lem : sufficient twist estimates}, we have
\[ \ell_{\delta}(s_k,\beta_k) \asym\I(\delta,\beta_k).\]
Therefore, by (\ref{eqn : gamma_k contribution C1}) and Lemma~\ref{lem : asymptotic intersection off gamma-k}, we have
\begin{equation} \label{eqn : beat by A(0,k+m)} \frac{\ell_{\delta}(s_k,\beta_k)}{x_ki(\delta,\gamma_k)} \asym \frac{i(\delta,\beta_k)}{A(0,k+m)} \to 0 \end{equation}
as $k \to \infty$.

Combining Theorem~\ref{thm : CRS} with Lemma~\ref{lem : asymptotic intersection off gamma-k}, (\ref{eqn : gamma_k winner}), and (\ref{eqn : beat by A(0,k+m)}), for any curve $\delta$ we have
\[ \lim_{k \to \infty} \tfrac{\ell_{\delta}(s_k)}{x_k \I(\delta,\gamma_k)}  = \lim_{k \to \infty} \tfrac{\ell_{\delta}(s_k,\gamma_k)}{x_k\I(\delta,\gamma_k)}
+ \tfrac{1}{x_k\I(\delta,\gamma_k)}\Big( \sum_{\substack{\beta_k \in P_k\;\text{and}\\ \beta_k \neq \gamma_k}} \!\!\! \ell_{\delta}(s_k,\beta_k) + O\Big(\sum_{\beta_k \in P_k}\I(\delta,\beta_k)\Big) \Big) = 1, \]
as required.

When $\{s_k\}_k$ satisfies (C2), $x_k$ is defined as above, and we define
\[ y_k = w_{s_k}(\gamma_{k+1}) + \tw_{\gamma_{k+1}}(\gamma_0,s_k)\ell_{\gamma_{k+1}}(s_k).\]  
According to Corollary~\ref{cor : lengths along r}, $\ell_{\gamma_{k+1}}(s_k) \to 0$ as $k \to \infty$, and so by (\ref{eqn : collar lemma}) we have
\[ w_{s_k}(\gamma_{k+1}) \to \infty \]
as $k \to \infty$.  Moreover, Lemma~\ref{lem : sufficient twist estimates} ensures that for any curve $\delta$ and $k$ sufficiently large, we have
\[ \tw_{\gamma_{k+1}}(\gamma_0,s_k) \asya \tw_{\gamma_{k+1}}(\delta,s_k).\]
Therefore, 
\[ \lim_{k \to \infty} \frac{\ell_{\delta}(s_k,\gamma_{k+1})}{y_k\I(\delta,\gamma_{k+1})} = 1,\]
and combining this with (\ref{eqn : gamma_k winner}) we have
\[ \lim_{k \to \infty} \frac{\ell_{\delta}(s_k,\gamma_k) + \ell_{\delta}(s_k,\gamma_{k+1})}{x_k\I(\delta,\gamma_k) + y_k\I(\delta,\gamma_{k+1})} = 1.\]
Because the estimate (\ref{eqn : beat by A(0,k+m)}) still holds for any curve $\beta_k\in P_k$ where $\beta_k \neq \gamma_k$ or $\gamma_{k+1}$, we can again apply Theorem~\ref{thm : CRS} and Lemma~\ref{lem : asymptotic intersection off gamma-k} to deduce that for any curve $\delta$ we have
\[ \lim_{k \to \infty} \frac{\ell_{\delta}(s_k)}{x_k\I(\delta,\gamma_k) + y_k\I(\delta,\gamma_{k+1})} = \lim_{k \to \infty}  \frac{\ell_{\delta}(s_k,\gamma_k) + \ell_{\delta}(s_k,\gamma_{k+1})}{x_k\I(\delta,\gamma_k) + y_k\I(\delta,\gamma_{k+1})} = 1,\]
completing the proof in case (C2), and hence in general.
\end{proof}

\section{The uniquely ergodic case}\label{sec : UE case}

Let $r:[0,\infty)\to\Teich(S)$ be a WP geodesic ray, and denote the ending lamination of $r$ by $\nu$; see $\S$\ref{subsec : endlam}. The following immediately implies Theorem~\ref{thm : UE main vague} from the introduction.
\begin{thm}  \label{thm : UE precise}
Suppose that $\nu$ is uniquely ergodic, then the limit set of $r$ in $\PML(S)$ (Thurston boundary) is the point $[\bar{\nu}]$. 
\end{thm}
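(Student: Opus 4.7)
The plan is to reduce, via compactness of the Thurston compactification, to proving that every subsequential limit $[\bar\mu]\in\PML(S)$ of $\{r(s_n)\}$ (with $s_n\to\infty$) equals $[\bar\nu]$. Throughout, let $\mu_n$ be a Bers marking at $r(s_n)$ with base Bers pants decomposition $P_n$.

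The first key input is that the components of $P_n$ accumulate on $\nu$ in the Gromov boundary $\EL(S)$ of the curve complex---this follows from the definition of ending lamination together with the hypothesis that $\nu$ is minimal and filling (which rules out pinching curves). The unique ergodicity of $\nu$ then upgrades this to the projective convergence $[\beta_n]\to[\bar\nu]$ in $\PML(S)$ for any sequence of components $\beta_n\in P_n$. A short weighted-average computation extends this to weighted sums: for any positive weight function $c_\beta^{(n)}>0$ on the components of $P_n$, one has $\bigl[\sum_\beta c_\beta^{(n)}\beta\bigr]\to[\bar\nu]$ in $\PML(S)$. This is the engine of the proof, as it allows direct comparison of hyperbolic lengths at $r(s_n)$ with intersection numbers against $\bar\nu$.

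Next I would apply the Choi--Rafi--Series length formula (Theorem~\ref{thm : CRS}) to $r(s_n)$ with the Bers pants $P_n$: for any simple closed curve $\alpha$,
\[
\ell_\alpha(r(s_n)) = \sum_{\beta\in P_n}\I(\alpha,\beta)\bigl(w_n(\beta)+\tw_\beta(\alpha,s_n)\ell_\beta(r(s_n))\bigr) + O\bigl(\I(\alpha,P_n)\bigr).
\]
The $\alpha$-dependence inside the parentheses is inconvenient. To remove it, I would fix a reference curve $\gamma_0$ and invoke the Bounded Geodesic Image theorem (Theorem~\ref{thm : bddgeod}): since $P_n$ escapes to infinity in $\cC(S)$, the quantity $d_\beta(\alpha,\gamma_0)$ is bounded uniformly in $\beta\in P_n$ for $n$ large, by a constant depending only on $\alpha$ and $\gamma_0$. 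The triangle inequality in the annular complex of $\beta$, combined with $\ell_\beta(r(s_n))\leq L_S$, then permits replacing $\tw_\beta(\alpha,s_n)$ by $\tw_\beta(\gamma_0,s_n)$ at the cost of an error that can be absorbed into the existing $O(\I(\alpha,P_n))$. Setting $c_\beta^{(n)}:=w_n(\beta)+\tw_\beta(\gamma_0,s_n)\ell_\beta(r(s_n))$ (which is now $\alpha$-independent and bounded below by a positive constant depending only on $L_S$) and $\bar P_n:=\sum_\beta c_\beta^{(n)}\beta$, this yields the $\alpha$-uniform length estimate
\[
\ell_\alpha(r(s_n)) = \I(\alpha,\bar P_n) + O(\I(\alpha,P_n)).
\]

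To conclude, for any two curves $\alpha,\alpha'$ (both necessarily intersecting $\bar\nu$ since $\nu$ fills), the first paragraph gives $[\bar P_n]\to[\bar\nu]$ and $[P_n]\to[\bar\nu]$ in $\PML(S)$, hence $\I(\alpha,\bar P_n)/\I(\alpha',\bar P_n)\to \I(\alpha,\bar\nu)/\I(\alpha',\bar\nu)$. Combined with the $\alpha$-uniform length estimate, this yields
\[
\lim_{n\to\infty}\frac{\ell_\alpha(r(s_n))}{\ell_{\alpha'}(r(s_n))} = \frac{\I(\alpha,\bar\nu)}{\I(\alpha',\bar\nu)},
\]
which by the definition of Thurston convergence forces $[\bar\mu]=[\bar\nu]$. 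The main technical obstacle is ensuring that the additive error $O(\I(\alpha,P_n))$ is genuinely negligible relative to the main term $\I(\alpha,\bar P_n)$ in the limit; this is where both the uniform positive lower bound on Bers-curve collar widths $w_n(\beta)$ and the uniformity (in $\beta\in P_n$) of the projective convergence $[\beta]\to[\bar\nu]$ provided by unique ergodicity become essential, the latter ensuring that the probability weights $\I(\alpha,\beta)/\I(\alpha,P_n)$ and $\I(\alpha',\beta)/\I(\alpha',P_n)$ are asymptotically proportional so that the weighted-average factor relating $M_\alpha/M_{\alpha'}$ to $\I(\alpha,P_n)/\I(\alpha',P_n)$ tends to $1$.
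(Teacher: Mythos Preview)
Your approach is genuinely different from the paper's, and it has a real gap at precisely the point you flag as ``the main technical obstacle''.

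The paper's argument is very short and follows Masur \cite{2bdriesteich}. For any accumulation point $[\bar\xi]$ of $r$ in $\PML(S)$, the Fundamental Lemma of \cite[expos\'e~8]{FLP} produces $\bar\mu_i\in\ML(S)$ with $\I(\bar\mu_i,\delta)\le\ell_\delta(r(t_i))$ for every curve $\delta$, together with scalars $b_i\to 0$ with $b_i\bar\mu_i\to\bar\xi$. Taking $\delta=\gamma_i$ a Bers curve at $r(t_i)$ and scalars $c_i\to 0$ with $c_i\gamma_i\to\bar\nu$ (possible since $\nu$ is minimal filling and every ending measure is supported on it) gives
\[
\I(\bar\xi,\bar\nu)=\lim_{i\to\infty} b_ic_i\,\I(\bar\mu_i,\gamma_i)\le \lim_{i\to\infty} b_ic_i\,L_S=0,
\]
and unique ergodicity then forces $[\bar\xi]=[\bar\nu]$. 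No length formula, no annular projections, no pants decomposition analysis.

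Your route via Theorem~\ref{thm : CRS} breaks down because the error $O(\I(\alpha,P_n))$ is \emph{not} in general negligible relative to the main term $\I(\alpha,\bar P_n)$. Consider a ray $r$ that stays in a fixed thick part of $\Teich(S)$ (such rays with uniquely ergodic ending lamination exist: take the axis of a pseudo-Anosov with bounded combinatorics). Then every $\ell_\beta(r(s_n))$ is bounded below, so each collar width $w_n(\beta)$ is bounded above; and every annular coefficient $\tw_\beta(\gamma_0,s_n)$ is bounded since all subsurface coefficients of the end invariant are bounded. Hence your weights $c_\beta^{(n)}$ stay bounded, so $\I(\alpha,\bar P_n)\asym\I(\alpha,P_n)$ and the $O$--error is a definite fraction of the main term. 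Writing $\ell_\alpha=\I(\alpha,\bar P_n)+E_\alpha$ with $|E_\alpha|\le C\,\I(\alpha,P_n)$ (and $C$ independent of $\alpha$), the errors $E_\alpha$ and $E_{\alpha'}$ come from arc--length contributions of $\alpha,\alpha'$ in the thick complementary pieces of $r(s_n)$; nothing forces $E_\alpha/E_{\alpha'}$ to converge to $\I(\alpha,\bar\nu)/\I(\alpha',\bar\nu)$. Even granting your claim that the weighted averages $M_n,M_n'$ of the $c_\beta^{(n)}$ (with weights $\I(\alpha,\beta)$ and $\I(\alpha',\beta)$, respectively) satisfy $M_n/M_n'\to 1$, in the bounded case one only gets
\[
\frac{\ell_\alpha(r(s_n))}{\ell_{\alpha'}(r(s_n))}\in\Big[\tfrac{M_n-C}{M_n'+C},\tfrac{M_n+C}{M_n'-C}\Big]\cdot\frac{\I(\alpha,P_n)}{\I(\alpha',P_n)},
\]
and the bracketed factor does not tend to $1$. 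So your displayed limit does not follow. (A minor additional point: Theorem~\ref{thm : CRS} as stated is tailored to the specific sequences of \S\ref{sec : NUE case}; using it for an arbitrary ray requires going back to \cite{lineminimateichgeod}.) Your preliminary steps---that Bers curves escape in $\cC(S)$ and converge projectively to $[\bar\nu]$, and that the Bounded Geodesic Image theorem then controls $d_\beta(\alpha,\gamma_0)$---are essentially correct; the failure is only in the final error analysis.
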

\begin{proof}
The proof of the theorem closely follows Masur's  proof of the analogous fact for Teichm\"{u}ller geodesics \cite[Theorem 1]{2bdriesteich}.  Assuming $[\bar \xi]$ is any accumulation point of $r$, let $\{t_i\}_i$ be a sequence of times so that $r(t_i) \to [\bar \xi]$ as $i \to \infty$ in the Thurston compactification of Teichm\"{u}ller space $\Teich(S) \cup \PML(S)$; see $\S$\ref{subsec : Thcpct}.  According to the {\em Fundamental Lemma} of \cite[expos\'e 8]{FLP}, there exists a sequence $\{\bar \mu_i\}_i \subset \ML(S)$, such that
\[ \I(\bar \mu_i,\delta) \leq \ell_{\delta}(r(t_i)), \]
for all simple closed curves $\delta$, as well as a sequence of positive real numbers $\{b_i\}_i$, so that $b_i \bar \mu_i \to \bar \xi \in \ML(S)$ and $b_i \to 0$, as $i \to \infty$.

Let $\bar \nu$ be a transverse measure on $\nu$.  Since $\nu$ is uniquely ergodic, and hence minimal, there exists a sequence of Bers curves $\{\gamma_i\}_i$ and positive real numbers $\{c_i\}_i$, so that $c_i\gamma_i \to \bar{\nu} \in \mathcal{ML}(S)$ and $c_i \to 0$, as $i \to \infty$.
Since the $\gamma_i$ are Bers curves, by the inequality above there exists $C > 0$ so that
\[ \I(\bar \mu_i,\gamma_i) \leq \ell_{\gamma_i}(r(t_i)) \leq C.\]
Consequently, by continuity of the intersection form, we have
\[ \I(\bar \xi,\bar \nu) = \lim_{i \to \infty} \I(b_i\bar \mu_i,c_i\gamma_i) = \lim_{i \to \infty} b_ic_i \I(\bar \mu_i,\gamma_i) \leq C \lim_{i \to \infty} b_i c_i = 0,\]
and hence $\I(\bar \xi,\bar \nu) = 0$.  Because $\nu$ is uniquely ergodic, \cite[Lemma 2]{2bdriesteich} implies $\bar{\xi}$ is a multiple of $\bar{\nu}$, and therefore $[\bar{\xi}]=[\bar{\nu}]$. Since $[\bar{\xi}]$ was an arbitrary accumulation point of $r$ in $\PML(S)$, the proof is complete. 
\end{proof}

\section{Appendix}

In this appendix we provide the proofs of the results of \S\ref{subsec : seq of curves} about sequences of curves.  As we mentioned there, many of the proofs closely follow the ones in \cite{nue2}, while others have been streamlined since the writing of that paper.  Here we mainly outline the proofs that are similar, incorporating the required changes, and otherwise provide the streamlined proofs.

\subsection{Subsurface coefficient estimates}
In the next Lemma, $B_0$ is the constant from Theorem~\ref{thm : beh ineq}.
\begin{lem} \label{lem : local to global} \textnormal{(Local to Global)}
Fix any $B  \geq B_0+1$, and let $\{\delta_k\}_{k=0}^\omega$ ($\omega \in\mathbb{Z}^{\geq 0}\cup\{\infty\}$) be a (finite or infinite) sequence of curves in $\cC(S)$, with the property that $\delta_{k-1}\pitchfork \delta_k,\delta_{k+1}\pitchfork \delta_k$ and that $d_{\delta_k}(\delta_{k-1},\delta_{k+1}) \geq 3B$ for all $k \geq 1$.  Then for all $0 \leq i < k <j$, we have that $\delta_i\pitchfork \delta_k,\delta_j\pitchfork \delta_k$ and that
\begin{equation} \label{eq : LtoG}
|d_{\delta_k}(\delta_i,\delta_j) - d_{\delta_k}(\delta_{k-1,},\delta_{k+1})| \leq 2B_0.
\end{equation}
\end{lem}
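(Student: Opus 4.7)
The plan is to prove by induction on $k-i$ the strengthened one-sided statement: for all $i < k$ in the index set, $\delta_i \pitchfork \delta_k$ and $d_{\delta_k}(\delta_i, \delta_{k-1}) \leq B_0$. By the symmetric argument running the induction in the other direction on $j - k$, for all $k < j$ we also get $\delta_j \pitchfork \delta_k$ and $d_{\delta_k}(\delta_j, \delta_{k+1}) \leq B_0$. The triangle inequality in $\cC(\delta_k)$ then yields
\[
\bigl|d_{\delta_k}(\delta_i, \delta_j) - d_{\delta_k}(\delta_{k-1}, \delta_{k+1})\bigr| \leq d_{\delta_k}(\delta_i, \delta_{k-1}) + d_{\delta_k}(\delta_j, \delta_{k+1}) \leq 2B_0,
\]
which is exactly \eqref{eq : LtoG}.

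The base case $i = k-1$ is immediate since $\delta_i \pitchfork \delta_k$ is part of the hypothesis and the projection distance is $0$. For the inductive step with $i \leq k - 2$, I would apply the inductive hypothesis to the pair $(\delta_i, \delta_{k-1})$ to obtain $\delta_i \pitchfork \delta_{k-1}$ and $d_{\delta_{k-1}}(\delta_i, \delta_{k-2}) \leq B_0$. Combined with the standing assumption $d_{\delta_{k-1}}(\delta_{k-2}, \delta_k) \geq 3B \geq 3(B_0 + 1)$, the triangle inequality in $\cC(\delta_{k-1})$ gives
\[
d_{\delta_{k-1}}(\delta_i, \delta_k) \geq 3B - B_0 > B_0.
\]
Now I would invoke the Behrstock inequality (Theorem~\ref{thm : beh ineq}) for the overlapping annular subsurfaces with cores $\delta_{k-1}$ and $\delta_k$, using $\delta_i$ in place of the marking. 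Since the first term $d_{\delta_{k-1}}(\delta_k, \delta_i)$ in the minimum already exceeds $B_0$, the other term must realize the bound, forcing both that $\pi_{\delta_k}(\delta_i)$ is nonempty (so $\delta_i \pitchfork \delta_k$) and that $d_{\delta_k}(\delta_{k-1}, \delta_i) \leq B_0$, completing the induction.

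The only delicate point is extracting both the overlap $\delta_i \pitchfork \delta_k$ and the projection bound from a single application of Behrstock to a curve (rather than a full marking); this is the standard ``no-backtracking'' consequence of Behrstock for annular projections, and presents no real obstacle. Everything else is a two-line induction followed by a triangle inequality, so the argument is short.
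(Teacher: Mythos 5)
Your proof is correct and takes essentially the same approach as the paper: both proceed by induction and invoke the Behrstock inequality at the key inductive step to transfer a large projection coefficient in the annulus of $\delta_{k-1}$ to a small one in the annulus of $\delta_k$. Your formulation of the inductive hypothesis as the one-sided bound $d_{\delta_k}(\delta_i,\delta_{k-1}) \le B_0$ is a slightly cleaner packaging (and gives the marginally sharper lower bound $3B - B_0$ in place of $3B - 2B_0$), but is otherwise identical in substance to the paper's induction on $j-i$.
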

\begin{proof} To simplify the notation, write $d_k(i,j)= d_{\delta_k}(\delta_i,\delta_j)$.
The proof is by induction on $n = j-i$.  The base case is $n = 2$, in which case $i = k-1$, $j= k+1$, and the conclusions of the lemma hold trivially.

We suppose that $\gamma_i\pitchfork \gamma_k$ and that (\ref{eq : LtoG}) holds for all $i,j$ with $i < k < j$ and $j-i \leq n$, and prove them for $n +1$.  To that end, suppose that $0 \leq i < k < j$ are such that $j-i = n+1$.  We claim  that $d_k(i,k-1) \leq B_0$.  To see this, note that if $i = k-1$, then the claim holds obviously.  Otherwise,  $i < k-1 < k$ and $k-i \leq n$, so by hypothesis of the induction $\gamma_i\pitchfork \gamma_k$ and
\[ d_{k-1}(i,k) \geq e_k \geq 3B-2B_0 > B_0,\]
note that $\delta_{k-1}\pitchfork \delta_k$ by assumptions of the lemma. Then Theorem~\ref{thm : beh ineq} implies that $d_k(i,k-1) \leq B_0$.  By a similar reasoning, we have that $d_k(k+1,j) \leq B_0$, and so by the triangle inequality
\[ |d_k(i,j) - d_k(k-1,k+1)| \leq d_k(i,k-1) + d_k(j,k+1) \leq 2B_0,\]
which is (\ref{eq : LtoG}). Moreover, since $d_k(k,k+1)\geq 3B_0$ for the above inequality we have that $d_k(i,j)\geq B_0+1> 1$ which implies that $\gamma_i\pitchfork\gamma_j$, finishing the proof of lemma by induction.
\end{proof}

Set $B = \max\{3,B_0 + 1,G_0\}$, where $G_0$ is the constant from Theorem~\ref{thm : bddgeod} for a geodesic in $\mathcal C(S)$.  Set $E_0 = 3B + 4$, and for the remainder of this subsection assume that the sequence $\Gamma(\mathcal E) = \{\gamma_k\}_{k=0}^\infty$ satisfies $\mathcal P(\mathcal E)$ from Definition~\ref{def : sequence of curves}, where $\mathcal E = \{e_k\}_{k=0}^\infty$, $e_k \geq a e_{k-1}$ for some $a \geq 1$ and all $k$, and $e_0 \geq E_0$ (and hence $e_k \geq E_0$ for all $k$).
Also throughout this subsection let $\mathbb{M}$ be the monoid generated by $\{m,m+1\}$. A simple arithmetic computation shows that any integer which is greater than or equal to $ m^2-1$ is in $\mathbb{M}$.

\begin{lem} \label{lem : nearly partial order} For all $i < k < j$ such that $k-i,j-k \in \mathbb M$, e.g.~if $k-i,j-k \geq m^2-1$, we have that $\gamma_i\pitchfork \gamma_k,\gamma_j\pitchfork \gamma_k$ and that
\[|d_{\gamma_k}(\gamma_i,\gamma_j)  - e_k| \leq 2B_0 + 4.\]
\end{lem}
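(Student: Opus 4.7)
The plan is to apply the Local-to-Global Lemma~\ref{lem : local to global} to a subsequence of $\{\gamma_n\}$ interpolating from $\gamma_i$ to $\gamma_j$ through $\gamma_k$ by steps of size $m$ or $m+1$. Since $k-i,\, j-k \in \mathbb M$, we write each as $am + b(m+1)$ with non-negative integers $a,b$ and produce indices $i = k_0 < k_1 < \cdots < k_s = j$ with some $k_{l_0}=k$ and every gap $k_{t+1}-k_t \in \{m,m+1\}$. Set $\delta_t = \gamma_{k_t}$.

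The heart of the matter is the following local estimate: for any $n$ (indices in range) and $m',m'' \in \{m,m+1\}$,
\[ |d_{\gamma_n}(\gamma_{n-m'},\gamma_{n+m''}) - e_n| \leq 4. \]
By property (iii), $\gamma_{n+m} = D^{e_n}_{\gamma_n}(\gamma'_{n+m})$, so $\pi_{\gamma_n}(\gamma_{n+m}) = D^{e_n}_{\gamma_n}\,\pi_{\gamma_n}(\gamma'_{n+m})$ and the standard annular twist estimate gives $|d_{\gamma_n}(\gamma'_{n+m},\gamma_{n+m}) - e_n| \leq 2$. When $m''=m+1$, applying (iii) at $n+1$ yields $\gamma_{n+m+1} = D^{e_{n+1}}_{\gamma_{n+1}}(\gamma'_{n+m+1})$; since $\gamma_{n+1}$ is disjoint from $\gamma_n$ (they lie within $m$ consecutive curves), this twist acts trivially on $\pi_{\gamma_n}$, and $\I(\gamma'_{n+m+1},\gamma_{n+m})=0$ forces $d_{\gamma_n}(\gamma_{n+m},\gamma_{n+m+1}) \leq 1$. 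A mirror argument handles $m'=m+1$: the curves $\gamma_{n-m-1}$ and $\gamma_{n-m}$ lie in the $m$ consecutive family starting at $n-m-1$ so are disjoint, and each crosses $\gamma_n$ (for $\gamma_{n-m-1}$ this uses $\gamma_n=D^{e_{n-m}}_{\gamma_{n-m}}(\gamma'_n)$ together with the disjointness of $\gamma_{n-m-1}$ from $\gamma_{n-m}$ and $\I(\gamma'_n,\gamma_{n-m-1})=b\geq 1$ from (iii)), so $d_{\gamma_n}(\gamma_{n-m-1},\gamma_{n-m}) \leq 1$. Finally, the bounded intersection $\I(\gamma'_{n+m},\gamma_{n-m}) \leq b'$ controls $d_{\gamma_n}(\gamma_{n-m},\gamma'_{n+m})$ by a small constant. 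Chaining the four contributions via the triangle inequality proves the local estimate.

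Granting the local estimate, the hypotheses of Lemma~\ref{lem : local to global} for $\{\delta_t\}$ are immediate: consecutive curves intersect because $\I(\gamma'_{n+m},\gamma_n) = \I(\gamma'_{n+m+1},\gamma_n) = b \geq 1$ and the corresponding Dehn twists preserve intersection with $\gamma_n$ (the twist is either about $\gamma_n$ itself or about a curve disjoint from $\gamma_n$); and $d_{\delta_t}(\delta_{t-1},\delta_{t+1}) \geq e_{k_t} - 4 \geq E_0 - 4 = 3B$ by the local estimate and the standing assumption $e_n \geq E_0 = 3B+4$. Local-to-Global then gives $\gamma_i, \gamma_j \pitchfork \gamma_k$ together with $|d_{\gamma_k}(\gamma_i,\gamma_j) - d_{\gamma_k}(\gamma_{k_{l_0-1}},\gamma_{k_{l_0+1}})| \leq 2B_0$; one final application of the local estimate at $n=k$ closes the proof with total error $2B_0 + 4$.

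The main obstacle is the local estimate, and within it the bound on $d_{\gamma_n}(\gamma_{n-m},\gamma'_{n+m})$ obtained only from the bounded-intersection hypothesis on $\gamma'_{n+m}$; the tightness of the additive constant $4$ depends on $b'$ being small, which holds in our punctured-sphere construction where $b'=2$.
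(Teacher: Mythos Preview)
Your argument follows the same route as the paper: interpolate from $\gamma_i$ to $\gamma_j$ through $\gamma_k$ by a subsequence with consecutive gaps in $\{m,m+1\}$, establish the local bound $|d_{\gamma_n}(\gamma_{n-m'},\gamma_{n+m''}) - e_n| \leq 4$ for $m',m''\in\{m,m+1\}$, and then invoke Lemma~\ref{lem : local to global}. The paper organizes the local step more tersely: it directly asserts $|d_{\gamma_k}(\gamma_{k-m},\gamma_{k+m}) - e_k| \leq 2$, then uses the disjointness $\I(\gamma_j,\gamma_{j+1})=0$ (hence $d_{\gamma_k}(\gamma_j,\gamma_{j+1})\leq 1$ whenever both project) to absorb the two possible unit shifts, arriving at $\leq 4$ in one line. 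Your decomposition through the untwisted curve $\gamma'_{n+m}$ is equivalent in spirit but introduces an extra summand $d_{\gamma_n}(\gamma_{n-m},\gamma'_{n+m})$, which you control only by the bounded intersection $\I(\gamma_{n-m},\gamma'_{n+m})\leq b'$; as you note, this makes the constant $4$ honest only for small $b'$. The paper's one-line assertion $|d_{\gamma_k}(\gamma_{k-m},\gamma_{k+m}) - e_k|\leq 2$ hides exactly the same dependence---it too relies on $\gamma_{k-m}$ and $\gamma'_{k+m}$ having small annular projection distance in $\mathcal C(\gamma_k)$---so your caveat is not a defect relative to the paper but simply a more explicit accounting of where the constant comes from.
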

\begin{proof}  As in the previous proof, we write $d_k(i,j) = d_{\gamma_k}(\gamma_i,\gamma_j)$, and also write $\I(i,j) = \I(\gamma_i,\gamma_j)$ and $\pi_i(j) = \pi_{\gamma_i}(\gamma_j)$.  

We make a few observations from Definition~\ref{def : sequence of curves}.  First, $\I(j,j+1) = 0$ for all $j$, and hence if $\pi_k(j),\pi_k(j+1) \neq \emptyset$ for some $k$, then $d_k(j,j+1) = 1$.  Second, for all $k$, $\I(k,k+m),\I(k,k+m+1) \neq 0$.  Consequently, $\pi_k(j) \neq \emptyset$ for all $j,k$ with $|j-k| \in \{m,m+1\}$.  Finally, observe that $|d_k(k-m,k+m) - e_k| \leq 2$.  Thus, if $i < k < j$ and $k-i,j-k \in \{m,m+1\}$,  by the triangle inequality
\begin{equation} \label{eqn : big enough twist} |d_k(i,j) - e_k | \leq 4.
\end{equation}

Now, for any sequence of integers $\{k_j\}_j$ such that $k_{j+1}-k_j \in \{m,m+1\}$ for all $j$, the sequence $\{\gamma_{k_j}\}_j$ has the property that $\gamma_{k_{j-1}},\gamma_{k_{j+1}}\pitchfork \gamma_{k_j}$ and that $d_{k_j}(k_{j-1},k_{j+1}) \geq 3B$ by (\ref{eqn : big enough twist}) and since $e_k\geq E_0$. Hence the sequence $\{\gamma_{k_j}\}_j$ satisfies the assumptions of Lemma~\ref{lem : local to global}. Then by the lemma, for any $\ell$ with $i < \ell < j$, $\gamma_\ell\pitchfork\gamma_i, \gamma_\ell\pitchfork\gamma_j$ and 
\[ |d_{k_\ell}(k_i,k_j) - e_{k_\ell}| \leq 2B_0 + 4 .\]

Therefore, if $i < k <j$ and $k-i,j-k \in \mathbb M$, then
\[ |d_k(i,j) - e_k| \leq 2B_0 + 4.\]
 This completes the proof of the inequality in the statement of the lemma. 
\end{proof}
\begin{lem} \label{lem : good quasi-geodesic} The map $k \mapsto \gamma_k$ is a $1$--Lipschitz, $(K,C)$--quasi-geodesic, where $K = C = 2m^2 + 2m -1$.
\end{lem}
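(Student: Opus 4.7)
The $1$-Lipschitz property $d_{\mathcal{C}(S)}(\gamma_j,\gamma_k)\leq |k-j|$ is immediate from condition~(i) of Definition~\ref{def : sequence of curves}: any two consecutive curves are disjoint, hence at curve-complex distance at most $1$. So the only work is the lower quasi-geodesic bound.

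Fix $j<k$, set $N=d_{\mathcal{C}(S)}(\gamma_j,\gamma_k)$, and let $\delta_0=\gamma_j,\delta_1,\ldots,\delta_N=\gamma_k$ realize a geodesic in $\mathcal{C}(S)$. Consider
\[
L=\{\ell\in(j,k)\colon \ell-j,\, k-\ell\in\mathbb{M}\},
\]
which, since every integer $\geq m^2-1$ lies in $\mathbb{M}$, satisfies $|L|\geq k-j-2m^2+3$ whenever this is positive. For each $\ell\in L$, Lemma~\ref{lem : nearly partial order} gives the large annular projection
\[
d_{\gamma_\ell}(\gamma_j,\gamma_k)\geq e_\ell-2B_0-4\geq 3B-2B_0> G_0,
\]
using $e_\ell\geq E_0=3B+4$ and $B\geq B_0+1,G_0$. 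By Theorem~\ref{thm : bddgeod} (bounded geodesic image) applied to the annulus with core $\gamma_\ell$, some vertex $\delta_{s(\ell)}$ fails to overlap $\gamma_\ell$. This defines a map $s\colon L\to\{0,\ldots,N\}$, and the task reduces to bounding its multiplicity.

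The key claim is that each fibre of $s$ has size at most $2m^2+2m-3$. Suppose to the contrary that $\ell_1<\ell_2$ both lie in $s^{-1}(s)$ with $\ell_2-\ell_1\geq 2m^2+2m-3$. Then the interval $[\ell_1+m^2-1,\,\ell_2-m^2+1]$ contains at least $2m$ consecutive integers, and for every such $\ell$ Lemma~\ref{lem : nearly partial order} applied to the triple $(\ell_1,\ell,\ell_2)$ yields $\gamma_{\ell_1},\gamma_{\ell_2}\pitchfork\gamma_\ell$ together with $d_{\gamma_\ell}(\gamma_{\ell_1},\gamma_{\ell_2})\geq 3$. I will handle two cases. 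If $\delta_s\pitchfork\gamma_\ell$ for any one such $\ell$, then since $\delta_s$ is disjoint from both $\gamma_{\ell_1}$ and $\gamma_{\ell_2}$ we get $d_{\gamma_\ell}(\gamma_{\ell_1},\delta_s),\,d_{\gamma_\ell}(\delta_s,\gamma_{\ell_2})\leq 1$, contradicting the lower bound by the triangle inequality. Hence $\delta_s$ must fail to overlap $\gamma_\ell$ for all $2m$ consecutive integers $\ell$ in this range. But the corresponding $2m$ consecutive curves $\gamma_\ell$ fill $S$ by property~(ii), and the intersection of their stars in $\mathcal{C}(S)$ is empty: any $\gamma_{\ell'}$ in the set overlaps the curve $m$ steps ahead in the set, and any essential simple closed curve disjoint from all of them would lie in the complement of the filling union, which is a disjoint union of disks and once-punctured disks. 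This contradicts $\delta_s$ lying in every star.

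Combining the multiplicity bound with the estimate on $|L|$ yields $N+1\geq (k-j-2m^2+3)/(2m^2+2m-3)$; a direct arithmetic comparison confirms that this implies $d_{\mathcal{C}(S)}(\gamma_j,\gamma_k)\geq |k-j|/K - C$ with $K=C=2m^2+2m-1$, and the proof is complete. The main obstacle is the multiplicity bound for $s$, which is where the large-projection estimates from Lemma~\ref{lem : nearly partial order} must be combined with the filling property~(ii) via the two-case argument above.
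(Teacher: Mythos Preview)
Your proof is correct and rests on the same ingredients as the paper's: the large annular projections from Lemma~\ref{lem : nearly partial order}, the bounded geodesic image theorem, and the fact that any $2m$ consecutive curves fill $S$. The organization differs slightly. The paper first proves an auxiliary statement---that $\gamma_i$ and $\gamma_j$ fill $S$ whenever $j-i\geq 2m^2+2m-1$---by observing that $d_{\gamma_k}(\gamma_i,\gamma_j)\geq 3$ for $2m$ consecutive intermediate $k$, so every curve crossing some $\gamma_k$ must cross $\gamma_i$ or $\gamma_j$. It then selects a sparse subsequence $\delta_k=\gamma_{i+k(2m^2+2m-1)}$ and argues that no vertex of a $\mathcal{C}(S)$--geodesic can be disjoint from two of the $\delta_k$'s, giving the count directly. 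Your fibre-multiplicity argument on the full set $L$ is essentially this filling lemma unpacked and applied to the specific curve $\delta_s$: your Case~1 (the triangle-inequality contradiction in $\mathcal{C}(\gamma_\ell)$) is exactly the step ``$\gamma_{\ell_1},\gamma_{\ell_2}$ fill the annulus with core $\gamma_\ell$'', and your Case~2 is the passage from filling the annuli to filling $S$. The paper's packaging is a little cleaner, while yours yields marginally sharper constants before the final comparison; both are fine.
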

\begin{proof}  First, suppose that $i < j$ with $j-i \geq 2m^2 + 2m - 1$.  Then by Lemma~\ref{lem : nearly partial order} for each $k \in\{ i+m^2,i+m^2+1,\ldots,j-m^2\}$, we have that $\gamma_k\pitchfork \gamma_i, \gamma_k\pitchfork \gamma_j$ and that
\begin{equation} \label{eqn : twisting to fill} d_{\gamma_k}(\gamma_i,\gamma_j) \geq e_k - 2B_0 - 4 \geq B \geq 3. \end{equation}
Thus the curves $\gamma_i,\gamma_j$ fill the annulus with core curve $\gamma_k$. This implies that any curve that intersects $\gamma_k$ must intersect one of $\gamma_i$ or $\gamma_j$. Moreover, the $2m$ curves $\gamma_k$ for $k=i+m^2,\ldots,i+m^2+2m-1$ fill $S$, $\gamma_i,\gamma_j$ also fill $S$.

Next, suppose that $j>i +2m^2+2m-1$ and write $j = i + q(2m^2+2m-1) + r$, where $q,r$ are nonnegative integers with $0 \leq r < 2m^2+2m-1$.  Set the curve $\delta_k = \gamma_{i+k(2m^2 + 2m-1)}$, for $k = 1,\ldots, q-1$.  Then the curves
\[ \gamma_i, \delta_1, \ldots, \delta_{q-1}, \gamma_j \]
form a sequence in $\mathcal C(S)$. As we saw above any two distinct curves in the sequence fill $S$, and by (\ref{eqn : twisting to fill}) for all $0 < k < q$ we have
\[ d_{\delta_k}(\gamma_i,\gamma_j) \geq B > G_0.\]
So, by Theorem~\ref{thm : bddgeod}, a geodesic from $\gamma_i$ to $\gamma_j$ must have a vertex disjoint from $\delta_k$, for all $k = 0,\ldots,q$.  Since any two curves $\delta_k,\delta_{k'}$ in the sequence fill $S$, no curve can be disjoint from more than one of them, and hence the geodesic must contain at least $q+1$ vertices, so
\[ d(\gamma_i,\gamma_j) \geq q = \frac{j-i - r}{2m^2 + 2m-1} \geq \frac{1}{K}(j-i - C) \]
where $K = 2m^2 + 2m -1$ and $C = 2m^2 +2m-1$.  Since this inequality trivially holds if $j - i < C$ and $i < j$, the required lower bound follows.  Moreover, since $\gamma_k,\gamma_{k+1}$ are disjoint, the map is $1$--Lipschitz. Finally the upper bound for the $(K,C)$-quasi-geodesic is immediate.
\end{proof}

For each $k \geq 0$, let $\mu_k := \{\gamma_k,\ldots,\gamma_{k+2m-1} \}$. 

\begin{lem} \label{lem : soft projection bound} There exists $M > 0$ such that for any subsurface $W \subsetneq S$ which is neither $S$ nor an annulus with core curve some $\gamma_k$, we have
\[ d_W(\mu_i,\mu_j) \leq M \]
for all $i,j$.
\end{lem}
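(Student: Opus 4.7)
The plan is to bound $\mathrm{diam}_{\mathcal{C}(W)}\bigl(\pi_W\bigl(\bigcup_k \mu_k\bigr)\bigr)$ uniformly in $W$; the lemma then follows, since each $\mu_k$ fills $S$ by property (ii) of $\mathcal{P}$ (so at least one component of $\mu_k$ overlaps $W$), while the $2m$ curves in any $\mu_k$ have pairwise intersection numbers bounded in terms of $b, b'$ (by property (iii)), forcing their $W$-projections to lie within bounded $\mathcal{C}(W)$-distance of one another.

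If every $\gamma_k$ overlaps $W$, then $\{\gamma_k\}$ is a $1$--Lipschitz $(K,C)$--quasi-geodesic in $\mathcal{C}(S)$ by Lemma~\ref{lem : good quasi-geodesic}, and the Bounded Geodesic Image Theorem~\ref{thm : bddgeod} yields the bound at once. Otherwise, fix some $\gamma_{k_0}$ that does not overlap $W$ and argue via the Behrstock inequality. For each index $k$ with both $\gamma_k\pitchfork W$ and $\gamma_k\not\subset\mathrm{int}(W)$, the subsurfaces $W$ and $Y_{\gamma_k}$ overlap, and Theorem~\ref{thm : beh ineq} gives
\[\min\bigl\{d_W(\gamma_k,\gamma_\ell),\,d_{\gamma_k}(\partial W,\gamma_\ell)\bigr\}\le B_0\]
for every $\ell$. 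Because $W$ is not an annulus with core $\gamma_k$, the projection $\pi_{\gamma_k}(\partial W)$ is a single coarse point of $\mathcal{C}(\gamma_k)$, while Lemma~\ref{lem : nearly partial order} combined with Theorem~\ref{thm : no back tracking} shows that $\{\pi_{\gamma_k}(\gamma_\ell)\}_\ell$ moves coarsely monotonically through $\mathcal{C}(\gamma_k)$ and spans a set of diameter $\asymp e_k \gg B_0$. Hence $d_{\gamma_k}(\partial W,\gamma_\ell) > B_0$ for all $\ell$ outside a bounded interval, and Behrstock forces $d_W(\gamma_k,\gamma_\ell)\le B_0$ for those $\ell$.

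Combining these pointwise bounds, and using the filling property to select $\gamma_k\in\mu_i$ and $\gamma_{k'}\in\mu_j$ overlapping $W$, produces the required uniform bound $M$ on $d_W(\mu_i,\mu_j)$. The main obstacle is the exceptional configuration in which $\gamma_k\subset\mathrm{int}(W)$, so that $W$ and $Y_{\gamma_k}$ fail to overlap and the Behrstock argument breaks down for that particular $k$. This is handled by the observation that any $2m$ consecutive $\gamma_k$ fill $S$, so only finitely many indices can have $\gamma_k\subset\mathrm{int}(W)$; those indices contribute only a bounded portion to $\pi_W\bigl(\bigcup_k\mu_k\bigr)$ and therefore do not affect the diameter bound.
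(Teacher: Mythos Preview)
Your Behrstock step has a genuine gap. You assert that $d_{\gamma_k}(\partial W,\gamma_\ell) > B_0$ for all $\ell$ outside a bounded interval, but this is not justified and is in fact false. By Lemma~\ref{lem : nearly partial order} (and the proof of Lemma~\ref{lem : local to global}), the projections $\{\pi_{\gamma_k}(\gamma_\ell)\}_\ell$ do not move ``coarsely monotonically'' through $\mathcal C(\gamma_k)$: they cluster into essentially two coarse points, one for $\ell$ well below $k$ and one for $\ell$ well above $k$, separated by roughly $e_k$. The point $\pi_{\gamma_k}(\partial W)$ can lie near either cluster---and once you fix $\gamma_{k_0}$ disjoint from $W$, you actually force $\pi_{\gamma_k}(\partial W)$ to be near $\pi_{\gamma_k}(\gamma_{k_0})$, i.e.\ near the cluster on the \emph{same} side of $k$ as $k_0$. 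Behrstock then yields $d_W(\gamma_k,\gamma_\ell)\le B_0$ only for $\ell$ on the \emph{opposite} side of $k$ from $k_0$, which is roughly half the sequence. Your ``combining these pointwise bounds'' sentence therefore does not go through: if $i<k_0<j$, the Behrstock information at a curve in $\mu_i$ points the wrong way to reach $\mu_j$, and vice versa, so you never link the two tails across the interval where the $\gamma_k$ miss $W$. (Your appeal to Theorem~\ref{thm : no back tracking} is also misplaced: that result is about hierarchy paths in $P(S)$, not the raw curve sequence.)

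The paper's argument avoids Behrstock entirely and instead extracts a one-step Lipschitz bound directly from property~(iii) of $\mathcal P$. Passing from $\mu_k$ to $\mu_{k+1}$ amounts to replacing $\gamma_k$ by $\gamma_{k+2m}'$ (bounded intersection with every curve of $\mu_k$, hence $d_W(\mu_k,\mu_{k+1}')\le M_0$ for every $W$) and then applying the power $D_{\gamma_{k+m}}^{e_{k+m}}$. A Dehn twist about $\gamma_{k+m}$ alters subsurface projections only into the annulus with core $\gamma_{k+m}$, so for any other $W$ one gets $d_W(\mu_{k+1}',\mu_{k+1})\le M_1$. Thus $d_W(\mu_k,\mu_{k+1})\le M_0+M_1$ uniformly, and iterating across the bounded window $[i_0,i_0+D_0]$ where some $\gamma_k$ may miss $W$ bridges the two tails, whose diameters are already controlled by the Bounded Geodesic Image theorem. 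This Lipschitz step is exactly what your argument is missing; your opening observation that the $2m$ curves in $\mu_k$ have pairwise bounded intersection is correct, but it is this finer two-stage comparison ($\mu_k\to\mu_{k+1}'\to\mu_{k+1}$) that makes the bound propagate.
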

\begin{proof}  First, let $\mu_k' = \{\gamma_k,\ldots,\gamma_{k+2m-2},\gamma_{k+2m-1}'\}$ where $\gamma_{k+2m-1}'$ is as in Definition~\ref{def : sequence of curves}.  From the definition, any curve in $\mu_k$ and curve in $\mu_{k+1}'$ have uniformly bounded intersection number (bounded by $m^2b_2$).  Consequently, there exists $M_0 > 0$ such that for any subsurface $W$ and any $k$, we have
\[ d_W(\mu_k,\mu_{k+1}') \leq M_0.\]

Next, observe that $\mu_{k+1}'$ and $\mu_{k+1}$ differ by Dehn twisting $\gamma_{k+2m}'$ about $\gamma_{k+m}$ (which has zero intersection number with all curves in $\mu_{k+1}'$ except $\gamma_{k+2m}'$).  Therefore, there exists another constant $M_1 >0$ so that as long as $W$ is not the annulus with core $\gamma_{k+m}$, we have
\[ d_W(\mu_{k+1}',\mu_{k+1}) \leq M_1. \]
Indeed, for any such $W$, $\pi_W(\mu_{k+1}') \cap \pi_W(\mu_{k+1}) \neq \emptyset$, and so we may take $M_1$ to be at most the sum of the diameters of $\pi_W(\mu_{k+1}')$ and $\pi_W(\mu_{k+1}')$ which is at most $4$.

From these two inequalities, we see that for any subsurface $W$ which is not the annulus with core $\gamma_{k+m}$, the triangle inequality implies
\[ d_W(\mu_k,\mu_{k+1}) \leq M_0 + M_1.\]
From this it follows that for any $D > 0$ and $|j-i| \leq D$, 
\begin{equation} \label{eqn : lipschitz bound}  d_W(\gamma_i,\gamma_j) \leq D(M_0 + M_1) \end{equation}
whenever $W$ is not an annulus with core curve $\gamma_k$, for some $k$.

Finally, suppose that $W$ is any subsurface which is not $S$ and not an annulus with core curve $\gamma_k$, for some $k$.  By Lemma~\ref{lem : good quasi-geodesic}, $k \mapsto \gamma_k$ is a quasi-geodesic in $\cC(S)$. So there is a uniform bound for the number its vertices that are within distance $1$ of $\partial W$.  Consequently, there exists $D_0 > 0$ (independent of $W$) and $i_0$ so that if $k \not \in [i_0,i_0+D_0]$, then $\pi_W(\gamma_k) \neq \emptyset$.  By Theorem~\ref{thm : bddgeod}, there exists $G = G(K,C)$ so that the projections of both sequences $\{\gamma_k\}_{k=0}^{i_0}$ and $\{\gamma_k\}_{k=i_0+D_0}^\infty$ to $W$ have diameter at most $G$.  Combining this with (\ref{eqn : lipschitz bound}) and setting $M = 2G + D_0(M_0 + M_1)$, we have
\[ \diam_{\cC(W)}(\{\gamma_k\}_{k=0}^\infty) \leq 2G + D_0(M_0 + M_1) = M.\]
Since $M$ is independent of the subsurface $W$, this completes the proof.
\end{proof}

\begin{proof}[Proof of Theorem \ref{thm : subsurface coeff estimate}]
The fact that $\{\gamma_k\}_k$ is a $1$--Lipschitz $(K,C)$--quasi-geodesic is Lemma~\ref{lem : good quasi-geodesic}. Klarreich's work \cite[Theorem 4.1]{bdrycc}) describing the Gromov boundary of the curve complex then implies that there exists a $\nu\in\EL(S)$, so that any accumulation point of $\{\gamma_k\}_k$ in $\PML(S)$ is supported on $\nu$.

Any accumulation point of $\{\gamma_k\}_k$ in the Hausdorff topology of closed subset of $S$ contains $\nu$, and hence for a subsurface $W\subseteq S$, $\pi_W(\nu) \subseteq \pi_W(\gamma_k)$ for all $k$ sufficiently large.  Consequently the equations on the left of (\ref{eqn : gamma_i, big projection}) and (\ref{eqn : not gamma_i, small projection}) follow from Lemmas~\ref{lem : nearly partial order} and~\ref{lem : soft projection bound}, respectively, setting $R = \max\{M,2B_0 + 4\}$. For any marking $\mu$, the pairwise intersection between curves in $\mu$ and in $\mu_0$ are bounded by some finite number, and hence $d_W(\mu,\mu_0)$ is uniformly bounded by some constant $D > 0$, independent of $W$.  Setting $R(\mu) = R + D$, the equations on the right-hand side of (\ref{eqn : gamma_i, big projection}) and (\ref{eqn : not gamma_i, small projection}) then follow from those on the left-hand side, together with the triangle inequality.
\end{proof}

\subsection{Intersection number estimates}

We now assume that $\mathcal E = \{e_k\}_k$ grows exponentially, with $e_k \geq a e_{k-1}$ for some $a > 1$ and all $k\geq 1$.  The aim is to estimate intersection numbers in terms of the numbers $A(i,k)$ defined in (\ref{eq : A(i,k)}).  We begin with the upper bound.

\begin{lem} \label{lem : upper bound on intersection numbers}  If $\Gamma(\mathcal E) = \{\gamma_k\}_k$ satisfies $\mathcal P(\mathcal E)$ with $e_k \geq a e_{k-1}$ for some $a > 1$, then there exists $\kappa > 0$ such that
\[ \I(\gamma_i,\gamma_k) \leq \kappa A(i,k).\]
Moreover, we may take $\kappa$ to be decreasing as a function of $a$.
\end{lem}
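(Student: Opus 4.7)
The plan is to argue by induction on $k-i$, using the standard Dehn twist inequality
\[
\bigl|\,\I(D_\alpha^n\beta,\gamma)-|n|\,\I(\alpha,\beta)\,\I(\alpha,\gamma)\,\bigr|\leq \I(\beta,\gamma)
\]
as the key quantitative tool. The base case, where $k-i$ is bounded by some fixed constant $N_0$, comprises finitely many configurations up to the action of $\Mod(S)$ (by conditions (i)--(iii) of $\mathcal P$), so $\I(\gamma_i,\gamma_k)$ is universally bounded in this range; since $A(i,k)\geq 1$ always, these cases can be absorbed by choosing $\kappa$ large.

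For the inductive step, fix $k-i\geq 2m$ and $k\geq 2m$, and use the recursive identity $\gamma_k=D_{\gamma_{k-m}}^{e_{k-m}}(\gamma_k')$ from condition (iii) together with $\I(\gamma_{k-m},\gamma_k')=b$. The Dehn twist inequality then yields
\[
\I(\gamma_i,\gamma_k)\leq b\,e_{k-m}\,\I(\gamma_{k-m},\gamma_i)+\I(\gamma_k',\gamma_i).
\]
The inductive hypothesis bounds the first term by $b\,e_{k-m}\,\kappa\,A(i,k-m)$, and the elementary identity $A(i,k)=b\,e_{k-m}\,A(i,k-m)$ (valid whenever $k-i\geq 2m$) lets me rewrite this as $\kappa A(i,k)$. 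For the second term, when $k-2m\leq i\leq k-1$ condition (iii) gives the bounded estimate $\I(\gamma_k',\gamma_i)\leq b'$. In the remaining range $i<k-2m$, I plan to bound $\I(\gamma_k',\gamma_i)$ by a parallel induction: the disjointness of $\gamma_k'$ from the $m-1$ consecutive curves $\gamma_{k-m+1},\ldots,\gamma_{k-1}$ combined with $\I(\gamma_k',\gamma_{k-m})=\I(\gamma_k',\gamma_{k-m-1})=b$ constrains $\gamma_k'$ up to bounded combinatorial data relative to the earlier part of the sequence, yielding $\I(\gamma_k',\gamma_i)\leq\kappa'\,A(i,k-m)$ for some $\kappa'$ independent of $k$.

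Finally, setting $c_k=\I(\gamma_i,\gamma_k)/A(i,k)$, the combined estimates produce a recursion of the form $c_k\leq c_{k-m}+C/e_{k-m}$ for a universal constant $C$. Since $e_k\geq a^{k}e_0$ with $a>1$, the tail $\sum_j 1/e_j$ converges to a sum that is strictly decreasing in $a$; telescoping the recursion then gives a uniform bound $\kappa$ on all $c_k$ depending only on the base cases, $C$, and this tail, and this bound inherits the monotonicity in $a$, giving the moreover clause. The main obstacle will be the sub-case $i<k-2m$ of the inductive step: here $\gamma_k'$ is not pinned down by condition (iii) alone, so one must leverage the intersection data near index $k-m$ and the global structure of the sequence to produce the secondary inductive estimate of the required shape.
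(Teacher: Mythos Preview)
Your overall strategy---induction on $k-i$ via the Dehn twist inequality, with the main term handled by $A(i,k)=be_{k-m}A(i,k-m)$---matches the paper's, and your telescoping recursion $c_k\leq c_{k-m}+C/e_{k-m}$ would indeed yield the result with the monotonicity in $a$.  The problem is the step you yourself flag as the main obstacle: bounding $\I(\gamma_k',\gamma_i)$ when $i<k-2m$.  Your proposal here is only a wish: ``disjointness from $m-1$ consecutive curves together with two intersection numbers equal to $b$ constrains $\gamma_k'$ up to bounded combinatorial data'' is not a proof, and it is not clear what parallel induction hypothesis you would use or how it would close without circularity (the constant $\kappa'$ you want depends on bounds for $\I(\gamma_i,\gamma_j)$ that in turn depend on $\kappa$).

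The paper resolves this with a concrete geometric device that you are missing.  Since the $2m$ curves $\gamma_{k-m},\ldots,\gamma_{k+m-1}$ fill $S$, they cut any simple closed curve $\delta$ into $\sum_{j=k-m}^{k+m-1}\I(\delta,\gamma_j)$ arcs lying in complementary disks or once-punctured disks.  Applying this to both $\gamma_{k+m}'$ and $\gamma_i$ and counting pairwise arc intersections gives
\[
\I(\gamma_i,\gamma_{k+m}')\leq 2\Bigl(\sum_{j=k-m}^{k+m-1}\I(\gamma_i,\gamma_j)\Bigr)\Bigl(\sum_{j=k-m}^{k+m-1}\I(\gamma_{k+m}',\gamma_j)\Bigr).
\]
The second factor is uniformly bounded by $(m+1)b'$ from condition (iii), so the error term is controlled by a sum of \emph{already inductively known} intersection numbers $\I(\gamma_i,\gamma_j)$ with $j-i<(k+m)-i$.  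This is what feeds back into the induction; the paper then runs a single induction with a sequence of constants $K(n)$ satisfying $K(n)=K(n-1)\bigl(1+2mBa^{-\lfloor(n-m)/m\rfloor}\bigr)$, whose infinite product converges to the final $\kappa$.  If you insert this cutting argument in place of your ``parallel induction'', your telescoping framework essentially becomes the paper's proof.
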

\begin{proof}[Sketch of proof]
The proof is a rather complicated induction, but is essentially identical to the proof of Proposition~5.5 from \cite{nue2}.  We sketch the proof for completeness.

We first recall from \cite[expos\'{e} 4]{FLP}, that for any simple closed curves $\beta,\delta,\delta'$ and integer $e$, we have
\begin{equation} \label{eqn : FLP intersection number} \Big| \I(D_\beta^e(\delta'),\delta) - |e|\I(\delta',\beta)\I(\delta,\beta)\Big| \leq \I(\delta,\delta').
\end{equation}
Since $\gamma_{k+m} = D_{\gamma_k}^{e_k}(\gamma_{k+m}')$ and $\I(\gamma_{k+m}',\gamma_k) = b$, we can apply this to estimate $\I(\gamma_i, \gamma_{k+m})$ to obtain
\[ \Big| \I(\gamma_i,\gamma_{k+m}) - be_k \I(\gamma_i,\gamma_k) \Big| \leq \I(\gamma_i,\gamma_{k+m}').\]
The right-hand side can be bounded as follows.  Since the curves $\gamma_{k-m},\ldots,\gamma_{k+m-1}$ fill $S$, these cut any simple closed curve $\delta$ into $N$ arcs, where
\[ N = \sum_{j=k-m}^{k+m-1} \I(\delta,\gamma_j).\]
Now apply this cutting procedure to both $\gamma_{k+m}'$ and $\gamma_i$.  Any pair of resulting arcs (one from $\gamma_{k+m}'$ and one from $\gamma_j$) are either disjoint, intersect at most once if they lie in a complementary disk, or intersect at most twice if they lie in a once-punctured complementary disk.  Thus
\[\I(\gamma_i,\gamma'_{k+m})\leq 2\sum_{j=k-m}^{k+m-1} \I(\gamma_i,\gamma_j)\sum_{j=k-m}^{k+m-1}\I(\gamma'_{k+m},\gamma_j).\]
Moreover, the assumption on the values of intersection numbers of $\gamma_{k+m}'$ and the curves $\gamma_j, \; j=k-m,\ldots,k+m-1$ from Definition~\ref{def : sequence of curves} implies that 
\[ \sum_{j=k-m}^{k+m-1} \I(\gamma_j,\gamma_{k+m}') \leq (m+1)b'.\]
Consequently, setting $B = 2(m+1)b'$, we have
\begin{equation} \label{eqn : key intersection estimate from FLP} \Big| \I(\gamma_i,\gamma_{k+m}) - be_k \I(\gamma_i,\gamma_k) \Big| \leq B \sum_{j = k-m}^{k+m-1} \I(\gamma_i,\gamma_j),
\end{equation}
and hence
\[ \I(\gamma_i,\gamma_{k+m}) \leq be_k \I(\gamma_i,\gamma_k) + B \sum_{j=k-m}^{k+m-1} \I(\gamma_i,\gamma_j).\]

The goal is to show that $\I(\gamma_i,\gamma_{k+m}) \leq \kappa A(i,k+m)$ for some $\kappa > 0$.  The proof is by induction on $(k+m)-i$, and the constant $\kappa$ is actually a limit of an increasing sequence of constants $K(1) <K(2) < K(3) < \ldots$.  To see what these constants should be, we assume by induction that $\I(\gamma_i,\gamma_j) \leq K(j-i) A(i,j)$ for all $i < j$ with $j-i < k+m$, then dividing both sides of the inequality above by $A(i,k+m)$, we have
\begin{eqnarray*} \frac{\I(\gamma_i,\gamma_{k+m})}{A(i,k+m)} & \leq & \frac{be_k \I(\gamma_i,\gamma_k)}{A(i,k+m)} + B\sum_{j=k-m}^{k+m-1} \frac{\I(\gamma_i,\gamma_j)}{A(i,k+m)} \\
& \leq & K(k-i) \frac{be_kA(i,k)}{A(i,k+m)} + B \sum_{j= k-m}^{k+m-1} K(j-i) \frac{A(i,j)}{A(i,k+m)} \\
& \leq & K(k+m-1-i) \Big( 1 + B \sum_{j=k-m}^{k+m-1} a^{-\lfloor \frac{k - i}{m} \rfloor} \Big)\\
& = & K((k+m-i)-1)(1 + 2mB a^{-\lfloor \frac{k-i}{m} \rfloor} ) \\
\end{eqnarray*}
The right-hand side above suggests the recursive/inductive definition
\[ K(k+m-i) := K((k+m-i)-1)(1 + 2mB a^{-\lfloor \frac{k-i}{m} \rfloor}).\] 
(Note that the right-hand side depends only on the difference $k-i$ and not on $i$ and $k$ independently).
For $a > 1$, one shows (using a comparison with a geometric series, after taking logarithms) that $K(j)$ so defined is bounded, and since it is increasing, it converges to a constant $\kappa > 0$.  Since $K(j) < \kappa$ for all $j$, the inequality above proves the lemma.  See \cite[Proposition~5.5]{nue2} for the details.
\end{proof}

Since we are assuming fewer nonzero intersection numbers in the current work than in \cite{nue2}, the lower bounds we obtain are weaker, and the proof is slightly more complicated than the one in $\S 5$ of \cite{nue2}.  Fortunately, the weaker estimates suffice for our purposes.

\begin{lem} \label{lem : lower bound on intersection numbers}  If $e_k \geq a e_{k-1}$ for $a > 1$ sufficiently large and all $k\geq 0$, then there exists $\kappa' > 0$ such that
\[ \I(\gamma_i,\gamma_k) \geq \kappa' A(i,k) \]
whenever
\begin{enumerate}
\item $0 \leq i \leq 2m-1$ and $k \geq i + m^2 + m -1$, or
\item $k-i \geq 2m$ and $i \equiv k$ mod $m$.
\end{enumerate}
\end{lem}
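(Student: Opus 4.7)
The plan is to derive a recursive lower bound mirroring the upper bound recursion in Lemma \ref{lem : upper bound on intersection numbers}, iterate it to a tractable base, and treat the two ranges of $i$ separately. The key ingredient is the two-sided estimate
\[
\bigl| \I(\gamma_i, \gamma_{k+m}) - be_k \I(\gamma_i, \gamma_k) \bigr| \leq B \sum_{j=k-m}^{k+m-1} \I(\gamma_i, \gamma_j),
\]
already established in the proof of Lemma \ref{lem : upper bound on intersection numbers} as a consequence of formula (\ref{eqn : FLP intersection number}) applied to $\gamma_{k+m} = D_{\gamma_k}^{e_k}(\gamma'_{k+m})$. Keeping only the lower half, bounding the error sum by $B\kappa \sum_j A(i,j)$ via Lemma \ref{lem : upper bound on intersection numbers}, dividing by $A(i, k+m) = be_k A(i, k)$, and controlling each ratio $A(i,j)/A(i,k+m)$ with Lemma \ref{lem : intersection estimate ratios} gives the ratio recursion
\[
\frac{\I(\gamma_i, \gamma_{k+m})}{A(i, k+m)} \geq \frac{\I(\gamma_i, \gamma_k)}{A(i, k)} - 2mB\kappa \cdot a^{1 - \lfloor (k+m-i)/m \rfloor}.
\]
Iterating from a base $k_0 \equiv k \pmod m$ and summing the geometric series yields $\frac{\I(\gamma_i, \gamma_k)}{A(i, k)} \geq \frac{\I(\gamma_i, \gamma_{k_0})}{A(i, k_0)} - C/a$ for a constant $C = C(m, B, \kappa)$ independent of $a$, so for $a$ large it suffices to exhibit a uniform positive lower bound on the base ratio.

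For case (2), I would take $k_0 = i + 2m$. Since $i + m \geq m$, Definition \ref{def : sequence of curves}(iii) produces $\gamma_{i+2m} = D_{\gamma_{i+m}}^{e_{i+m}}(\gamma'_{i+2m})$ with $\I(\gamma'_{i+2m}, \gamma_{i+m}) = b$ and $\I(\gamma_i, \gamma'_{i+2m}) \leq b'$; combining with the identity $\I(\gamma_i, \gamma_{i+m}) = b$ (valid for $i \geq m$) and (\ref{eqn : FLP intersection number}) gives $\I(\gamma_i, \gamma_{i+2m}) \geq b^2 e_{i+m} - b' \geq \tfrac{b^2}{2} e_{i+m} = \tfrac{b}{2} A(i, i+2m)$ once $e_{i+m}$ is sufficiently large. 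Choosing $a$ so that $C/a < b/4$ then produces $\kappa' = b/4$ and finishes case (2) when $i \geq m$. For case (1), and for the leftover case (2) with $i < m$, the hypothesis $k \geq i + m^2 + m - 1$ allows me to select $i' \in \{m, m+1, \ldots, 2m-1\}$ with $i' \equiv k \pmod m$, so that $k - i' \geq m^2 \geq 2m$; case (2) applied to $i'$ yields $\I(\gamma_{i'}, \gamma_k) \geq \tfrac{b}{4} A(i', k)$. A direct comparison of the products defining $A(i, k)$ and $A(i', k)$ shows they are comparable (in fact equal up to at most one or two initial factors $be_j$ with $j \leq 3m - 2$), and transferring the lower bound from $\gamma_{i'}$ to $\gamma_i$ uses that $\{\gamma_k\}$ is a quasi-geodesic in $\cC(S)$ by Theorem \ref{thm : subsurface coeff estimate}, so $\gamma_i$ and $\gamma_{i'}$ have bounded curve-complex distance and their intersection numbers with the far-away curve $\gamma_k$ differ by a multiplicative constant depending only on $m$ and the initial data.

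The principal obstacle is this last comparison step, since Definition \ref{def : sequence of curves} does not pin down any fixed intersection relations among the base curves $\gamma_0, \ldots, \gamma_{2m-1}$. I expect to carry it out either by an additional application of the recursion on the \emph{left} argument (writing $\gamma_{i'} = D_{\gamma_{i'-m}}^{e_{i'-m}}(\gamma'_{i'})$ and running the analogous argument), or by importing the comparison directly from the parallel proof in \cite[\S 5]{nue2}; the weaker intersection hypotheses in the present Definition \ref{def : sequence of curves} affect only the explicit values of constants, and the resulting $\kappa'$ is permitted to depend on the initial data while remaining uniform for all $k$ once $a$ is large enough.
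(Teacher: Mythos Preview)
Your recursion is correct and identical to the paper's, and your treatment of case~(2) for $i \geq m$ is essentially the paper's argument (the paper in fact iterates one step further to $k_0 = i+m$, where $A(i,i+m)=1$ and $\I(\gamma_i,\gamma_{i+m})=b$, which is marginally simpler than your $k_0=i+2m$).

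The gap you flag in case~(1) is real, and neither of your proposed remedies closes it. Bounded curve-complex distance between $\gamma_i$ and $\gamma_{i'}$ says nothing about the ratio $\I(\gamma_i,\gamma_k)/\I(\gamma_{i'},\gamma_k)$: already two disjoint curves (distance $1$) can have arbitrarily different intersection numbers with a third curve, so the ``far-away'' heuristic has no force. Running the recursion ``on the left argument'' via $\gamma_{i'} = D_{\gamma_{i'-m}}^{e_{i'-m}}(\gamma'_{i'})$ relates $\I(\gamma_{i'},\gamma_k)$ to $\I(\gamma'_{i'},\gamma_k)$ and $\I(\gamma_{i'-m},\gamma_k)$, neither of which is $\I(\gamma_i,\gamma_k)$, so no transfer results. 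And the paper explicitly notes that the present intersection hypotheses are weaker than those in \cite{nue2}, so importing that argument is not safe.

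The paper avoids the transfer entirely. Keeping $i$ fixed, it iterates the same recursion down to a base $k_0 \equiv k \pmod m$ lying in the fixed finite window $i+m^2-1 \leq k_0 \leq m^2+3m-2$ (possible since $0\leq i \leq 2m-1$). Because $k_0 - i \geq m^2-1$ lies in $\mathbb M$, Lemma~\ref{lem : nearly partial order} gives $\gamma_i \pitchfork \gamma_{k_0}$, hence $\I(\gamma_i,\gamma_{k_0}) \geq 1$. Since both $i$ and $k_0$ now range over finite sets independent of $k$, the quantity
\[
\kappa' := \tfrac12 \min\Bigl\{ \tfrac{1}{A(i,j)} \;:\; 0 \leq i \leq 2m-1,\ i < j \leq m^2+3m-2 \Bigr\}
\]
is a genuine positive constant, and the base ratio satisfies $\I(\gamma_i,\gamma_{k_0})/A(i,k_0) \geq 2\kappa'$. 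Choosing $a$ large enough that the accumulated geometric error is below $\kappa'$ then finishes case~(1) directly. This is the idea your argument is missing: use overlapping (Lemma~\ref{lem : nearly partial order}) to get $\I \geq 1$ at a base in a bounded window, rather than attempt to compare two different first indices.
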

\begin{proof}  Fix some constant $a_0 > 1$ and assume to begin that $e_k \geq a_0 e_{k-1}$, and let $\kappa > 0$ be the constant from Lemma~\ref{lem : upper bound on intersection numbers}.  We will eventually take a larger $a > a_0$, but we observe that upper bound on intersection numbers from Lemma~\ref{lem : upper bound on intersection numbers} remains valid with this choice of $\kappa$.

Now, recalling that $A(i,k+m) = be_kA(i,k)$, dividing (\ref{eqn : key intersection estimate from FLP}) by $A(i,k+m)$, we have
\[ \frac{\I(\gamma_i,\gamma_{k+m})}{A(i,k+m)} \geq \frac{\I(\gamma_i,\gamma_k)}{A(i,k)} - B \sum_{j = k-m}^{k+m-1} \frac{\I(\gamma_i,\gamma_j)}{A(i,k+m)}. \]
Combining this with Lemmas~\ref{lem : intersection estimate ratios} and \ref{lem : upper bound on intersection numbers} we have
\begin{eqnarray*}
\frac{\I(\gamma_i,\gamma_{k+m})}{A(i,k+m)} & \geq & \frac{\I(\gamma_i,\gamma_k)}{A(i,k)} - \kappa B \sum_{j=k-m}^{k+m-1} \frac{A(i,j)}{A(i,k+m)}\\
& \geq & \frac{\I(\gamma_i,\gamma_k)}{A(i,k)} - 2m\kappa B a^{-\lfloor \frac{k-i}m \rfloor}
\end{eqnarray*}
Recursively substituting, we see that for any $n \geq 0$ such that $i < k-nm$, 
\begin{equation}\label{eq : I/A}
\frac{\I(\gamma_i,\gamma_{k+m})}{A(i,k+m)} \geq \frac{\I(\gamma_i,\gamma_{k-nm})}{A(i,k-nm)} - 2m\kappa B \sum_{s=0}^n a^{-\lfloor \frac{k-i}m \rfloor + s}
\end{equation}
In this situation, taking $a>a_0 > 1$ sufficiently large, we can make the term
\[ 2m\kappa B \sum_{s=0}^n a^{-\lfloor \frac{k-i}m \rfloor + s} \]
as small as we like, independent of $n$ (since this is a partial sum of a geometric series with common ratio $a$).  Specifically, we choose $a > a_0 > 1$ large enough so that for all $k-i \geq m$ and $n \geq 0$, the sum is bounded above by $\kappa'$, where
\[ \kappa' = \frac12 \min\Big\{\frac{1}{A(i,j)} \mid 0 \leq i \leq 2m-1 \mbox{ and } i < j \leq m^2 + 3m -2 \Big\}. \]

Now, suppose that $0 \leq i \leq 2m-1$ and that $k \geq i + m^2 + m -1$, and write $k = j + m$ (so $j \geq i + m^2 -1$).  Let $n \geq 0$ be such that
\[ i + m^2 -1 \leq j - nm \leq m^2 + 3m -2,\]
which is possible since $i + m^2 -1 \leq m^2 + 2m -2$.  By Lemma~\ref{lem : nearly partial order}, $\gamma_i\pitchfork\gamma_{j-nm}$, so $\I(\gamma_i,\gamma_{j-nm}) \geq 1$, and therefore
\[
\frac{\I(\gamma_i,\gamma_{j+m})}{A(i,j+m)} \geq \frac{\I(\gamma_i,\gamma_{j-nm})}{A(i,j-nm)} - 2m\kappa B \sum_{s=0}^n a^{-\lfloor \frac{j-i}m \rfloor + s} \geq 2 \kappa' - \kappa' = \kappa'.
\]
That is, $\I(\gamma_i,\gamma_k) \geq \kappa' A(i,k)$, proving part (i).

Next, let $k,i$ be any two positive integers such that $k - i \geq 2m$ and $i \equiv k$ mod $m$.  Write $k = j+m$ and let $n \geq 0$ be such that $j-nm = i+m$.  Then by (\ref{eq : I/A}) we have
\begin{eqnarray*}
\frac{\I(\gamma_i,\gamma_{j+m})}{A(i,j+m)} & \geq & \frac{\I(\gamma_i,\gamma_{j-nm})}{A(i,j-nm)} - 2m\kappa B \sum_{s=0}^n a^{-\lfloor \frac{j-i}m \rfloor + s} \\
& = & \frac{\I(\gamma_i,\gamma_{i+m})}{1} - \kappa' \geq 2 \kappa'  - \kappa' = \kappa'.
\end{eqnarray*}
The last inequality follows from the fact that $\kappa' \leq \frac12$ and $\I(\gamma_i,\gamma_{i+m}) = b \geq 1$.  This proves (ii), and completes the proof of the lemma.
\end{proof}

\begin{proof}[Proof of Theorem~\ref{thm : intersection}.] The first paragraph of the theorem follows immediately from Lemmas~\ref{lem : upper bound on intersection numbers} and \ref{lem : lower bound on intersection numbers}, setting $\kappa_0 = \max\{\kappa,\frac1{\kappa'}\}$.  The second paragraph is the next lemma.
\end{proof}

\begin{lem} \label{lem : intersection with delta estimate} For any curve $\delta$, there exists $\kappa(\delta) > 0$ and $N(\delta) \in \mathbb Z$ so that for all $k \geq N(\delta)$, we have
\[ \I(\delta,\gamma_k) \asym_{\kappa(\delta)} A(0,k).\]
\end{lem}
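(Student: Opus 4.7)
The strategy parallels Lemmas \ref{lem : upper bound on intersection numbers} and \ref{lem : lower bound on intersection numbers}, with $\gamma_i$ replaced by the fixed simple closed curve $\delta$. Starting from the FLP identity (\ref{eqn : FLP intersection number}) applied to $\delta$, $\gamma'_{k+m}$, and $\gamma_k$, and bounding $\I(\delta,\gamma'_{k+m})$ by cutting $S$ along the filling collection $\gamma_{k-m},\ldots,\gamma_{k+m-1}$ exactly as in the proof of Lemma \ref{lem : upper bound on intersection numbers}, we obtain
\[ \big|\I(\delta,\gamma_{k+m}) - be_k\, \I(\delta,\gamma_k)\big| \le B \sum_{j=k-m}^{k+m-1} \I(\delta,\gamma_j), \]
with $B = 2(m+1)b'$. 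Dividing by $A(0,k+m) = be_k A(0,k)$ and invoking Lemma \ref{lem : intersection estimate ratios}, an identical inductive/geometric-series argument to the one in Lemma \ref{lem : upper bound on intersection numbers} (applied to the quantities $K_\delta(n) := \max_{j \le n}\I(\delta,\gamma_j)/A(0,j)$) first yields an upper bound $\I(\delta,\gamma_k) \le \kappa_1(\delta)\, A(0,k)$ for all $k$, where $\kappa_1(\delta)$ absorbs the finitely many initial values of $\I(\delta,\gamma_j)$ for small $j$.

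Plugging this upper bound back into the same divided identity gives, along each residue class $h \bmod m$, the Cauchy estimate
\[ \Big|\tfrac{\I(\delta,\gamma_{k+m})}{A(0,k+m)} - \tfrac{\I(\delta,\gamma_k)}{A(0,k)}\Big| \le C_2(\delta)\, a^{-\lfloor k/m\rfloor}, \]
so the normalized sequence $c_{h+im} := \I(\delta,\gamma_{h+im})/A(0,h+im)$ converges as $i \to \infty$ to some $L_h(\delta) \in [0,\kappa_1(\delta)]$. Viewing $\bar\mu_{h,i} := \gamma_{h+im}/A(0,h+im)$ as elements of $\ML(S)$, the upper bound applied to a fixed filling set of simple closed curves bounds the masses of $\{\bar\mu_{h,i}\}_i$ from above; hence the sequence is precompact in $\ML(S)$, and convergence of $\I(\delta,\bar\mu_{h,i}) \to L_h(\delta)$ for every simple closed curve $\delta$ forces convergence to a unique limit $\bar\nu^h \in \ML(S)$ with $L_h(\delta) = \I(\delta,\bar\nu^h)$.

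The main obstacle is showing $L_h(\delta) > 0$ for every simple closed curve $\delta$. The key observation is that $\bar\nu^h$ is nonzero: by a direct comparison of the index sets in (\ref{eq : A(i,k)}), one checks that $A(h,h+im) = A(0,h+im)$ for $0 \le h \le m-1$, so Lemma \ref{lem : lower bound on intersection numbers}(ii) applied to the pair $(\gamma_h,\gamma_{h+im})$ (valid for $im \ge 2m$) yields $L_h(\gamma_h) \ge \kappa' > 0$. The support of $\bar\nu^h$ is a sublamination of the Hausdorff limit of $\{\gamma_k\}_k$, which is $\nu$; since $\nu \in \EL(S)$ by Theorem \ref{thm : subsurface coeff estimate} is minimal, this support is all of $\nu$. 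Because $\nu$ is filling, every simple closed curve $\delta$ crosses it transversally with positive $\bar\nu^h$-mass, so $L_h(\delta) = \I(\delta,\bar\nu^h) > 0$. Choosing $N(\delta)$ large enough that $c_{h+im}$ lies within a factor of $2$ of $L_h(\delta)$ for every residue class $h$ and every $h+im \ge N(\delta)$, and setting $\kappa(\delta) := \max\{\kappa_1(\delta),\, 2\max_h L_h(\delta)^{-1}\}$, gives the desired two-sided estimate $\I(\delta,\gamma_k) \asym_{\kappa(\delta)} A(0,k)$ for $k \ge N(\delta)$.
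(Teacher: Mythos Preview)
Your argument is correct, but it takes a different (and in some sense longer) route than the paper.

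The paper's proof is a pure compactness argument: from Lemmas~\ref{lem : upper bound on intersection numbers} and~\ref{lem : lower bound on intersection numbers} one already knows $\I(\gamma_i,\gamma_k)\asym_{\kappa_0} A(0,k)$ for each $i\in\{0,\dots,2m-1\}$ and large $k$; since $\mu_0=\{\gamma_0,\dots,\gamma_{2m-1}\}$ fills $S$, this shows $\{\gamma_k/A(0,k)\}_k$ lies in a compact subset of $\ML(S)$. By Theorem~\ref{thm : subsurface coeff estimate} every accumulation point is supported on the minimal filling lamination $\nu$, so $\I(\delta,\cdot)$ is bounded away from $0$ and $\infty$ on a compact neighborhood of the accumulation set, and the estimate follows for $k$ large. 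No Cauchy estimate or identification of a specific limit is needed.

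You instead rederive the upper bound for $\delta$ directly (redoing Lemma~\ref{lem : upper bound on intersection numbers} with $\gamma_i$ replaced by $\delta$), then prove that $\I(\delta,\gamma_{h+im})/A(0,h+im)$ is Cauchy and converges to $\I(\delta,\bar\nu^h)$---this is essentially the content of Proposition~\ref{prop : convergence in ML}, which in the paper's ordering comes \emph{after} the present lemma and in fact invokes it. Your proof thus merges the two results, at the cost of extra work. What it buys is a more explicit identification of the lower bound as $\tfrac12\min_h \I(\delta,\bar\nu^h)$, rather than an abstract compactness constant.

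One small imprecision: you say the support of $\bar\nu^h$ lies in ``the Hausdorff limit of $\{\gamma_k\}_k$, which is $\nu$''. Hausdorff limits of curve sequences can in general properly contain $\nu$ (diagonal extensions). The clean justification is simply to cite Theorem~\ref{thm : subsurface coeff estimate}: any $\PML$--accumulation point of $\{\gamma_k\}_k$ is supported on $\nu$, hence so is $\bar\nu^h$; minimality of $\nu$ then forces $\mathrm{supp}(\bar\nu^h)=\nu$.
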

We only sketch the proof since this is exactly the same statement and proof as in Lemma~5.11 from \cite{nue2}.  
\begin{proof}[Sketch of proof]
The idea is that from the estimates in Lemma~\ref{lem : upper bound on intersection numbers} and \ref{lem : lower bound on intersection numbers}, we have some $\kappa_0,n_0 > 0$ so that for all $0 \leq i \leq 2m-1$ and $k \geq n_0$,
\[ \I(\gamma_i,\gamma_k) \asym_{\kappa_0} A(0,k). \]
Since $\mu_0 = \gamma_0 \cup \ldots \cup \gamma_{2m-1}$ fills $S$, this means that the set of laminations $\{\frac{\gamma_k}{A(0,k)} \}_{k \geq n_0} \subset \ML(S)$ form a compact subset of $\ML(S)$.  By Theorem~\ref{thm : intersection}, this sequence can only accumulate on points of $\ML(S)$ supported on $\nu$.  So, for any curve $\delta$, there is a compact neighborhood of the accumulation points and a number $\kappa(\delta) > 0$ on which intersection number with $\delta$ lies in the interval $\left[ \tfrac{1}{\kappa(\delta)},\kappa(\delta) \right]$.  But then for $k$ sufficiently large, $\tfrac{\gamma_k}{A(0,k)}$ is in this neighborhood and hence
\[ \frac{\I(\delta,\gamma_k)}{A(0,k)} = \Big(\delta,\frac{\gamma_k}{A(0,k)}\Big) \asym_{\kappa(\delta)} 1 \]
as required.
\end{proof}

\subsection{Convergence in $\ML(S)$}
Lemma~\ref{lem : intersection with delta estimate}, together with (\ref{eqn : FLP intersection number}), are the key ingredients in the proof of the following, which is identical to Lemma~5.13 from \cite{nue2}.
\begin{prop} \label{prop : convergence in ML} For each $h = 0,\ldots,m-1$,
\[ \lim_{i \to \infty} \frac{\gamma_{h+im}}{A(0,h+im)} = \bar \nu^h \]
in $\ML(S)$, where $\bar \nu^h$ is a measure supported on $\nu$.
\end{prop}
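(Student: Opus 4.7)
The plan is to establish convergence in $\ML(S)$ by combining precompactness of the normalized sequence with a Cauchy estimate for intersection numbers against arbitrary simple closed curves. First I would observe that, thanks to the upper bound $\I(\delta,\gamma_k)\leq \kappa(\delta) A(0,k)$ furnished by Theorem~\ref{thm : intersection}, the sequence $\{\gamma_{h+im}/A(0,h+im)\}_i$ lies in a compact subset of $\ML(S)$; indeed, intersection numbers against a fixed filling collection (e.g.\ $\gamma_0,\dots,\gamma_{2m-1}$) are uniformly bounded along the normalized sequence, which suffices for precompactness.

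The heart of the argument will be a summable difference estimate. Applying the FLP identity (\ref{eqn : FLP intersection number}) to $\gamma_{k+m}=D_{\gamma_k}^{e_k}(\gamma'_{k+m})$ yields
\[ \big|\I(\delta,\gamma_{k+m})-be_k\I(\delta,\gamma_k)\big|\leq \I(\delta,\gamma'_{k+m}), \]
and the filling property of $\gamma_{k-m},\ldots,\gamma_{k+m-1}$ from Definition~\ref{def : sequence of curves}, combined with the bound $\sum_{j=k-m}^{k+m-1}\I(\gamma'_{k+m},\gamma_j)\leq (m+1)b'$, controls the right side by a universal multiple of $\sum_{j=k-m}^{k+m-1}\I(\delta,\gamma_j)$, exactly as in the proof of Lemma~\ref{lem : upper bound on intersection numbers}. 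Dividing by $A(0,k+m)=be_k A(0,k)$, and feeding in the upper intersection bound of Theorem~\ref{thm : intersection} together with Lemma~\ref{lem : intersection estimate ratios}, I expect to derive
\[ \left|\frac{\I(\delta,\gamma_{k+m})}{A(0,k+m)}-\frac{\I(\delta,\gamma_k)}{A(0,k)}\right|\leq C(\delta)\,a^{1-\lfloor (k+m)/m\rfloor}, \]
which, telescoped along the subsequence $k\equiv h\pmod{m}$, shows that $\I(\delta,\gamma_{h+im})/A(0,h+im)$ is Cauchy, hence convergent, for every simple closed curve $\delta$.

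Since intersection numbers against simple closed curves separate points of $\ML(S)$, precompactness together with convergence against every $\delta$ forces the full sequence $\gamma_{h+im}/A(0,h+im)$ to converge to a unique $\bar\nu^h\in \ML(S)$. To see that $\bar\nu^h$ is supported on $\nu$, I would invoke Theorem~\ref{thm : subsurface coeff estimate}: every $\PML(S)$-accumulation point of $\{\gamma_k\}_k$ is supported on $\nu$, so any $\ML(S)$-limit of the scaled curves has support contained in $\nu$, and the minimality of $\nu$ makes it the full support.

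The main obstacle will be pushing the FLP/filling intersection estimate through for an arbitrary simple closed curve $\delta$ rather than another $\gamma_i$, and then chaining it with Lemma~\ref{lem : intersection estimate ratios} so that the geometric decay in $a$ dominates the factor $2m$ arising from the index window. Once this is in place for one residue class mod $m$, the passage from Cauchy intersection sequences to convergence in $\ML(S)$, together with the identification of the support as $\nu$, follows essentially formally.
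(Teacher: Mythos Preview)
Your proposal is correct and follows essentially the same route as the paper's proof: apply the FLP twist estimate to $\gamma_{k+m}=D_{\gamma_k}^{e_k}(\gamma'_{k+m})$, bound $\I(\delta,\gamma'_{k+m})$ via the filling curves $\gamma_{k-m},\ldots,\gamma_{k+m-1}$, divide by $A(0,k+m)$ and invoke Lemma~\ref{lem : intersection estimate ratios} to get a geometrically decaying difference, then telescope to obtain a Cauchy sequence of intersection numbers and identify the support via Theorem~\ref{thm : subsurface coeff estimate}. The only cosmetic difference is that the paper phrases the passage from Cauchy intersection numbers to convergence in $\ML(S)$ by directly citing Lemma~\ref{lem : intersection with delta estimate} (to ensure the limit is nonzero), whereas you frame it as precompactness plus separation by intersection numbers; these are equivalent.
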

\begin{proof}[Sketch of proof.] Applying (\ref{eqn : FLP intersection number}) to estimate $\I(\gamma_{k+m},\delta)$ using the fact that $\gamma_{k+m} = D_{\gamma_k}^{e_k}(\gamma_{k+m}')$, and applying Lemma~\ref{lem : intersection with delta estimate} we may argue as in the proof of Lemma~\ref{lem : upper bound on intersection numbers} we have
\begin{eqnarray*} |\I(\gamma_{k+m},\delta) - be_k\I(\gamma_k,\delta) | & \leq & \I(\gamma_{k+m}',\delta) \leq B \sum_{l = k-m}^{k+m-1} \I(\gamma_l,\delta)\\
& \leq & B \kappa(\delta) \sum_{l = k-m}^{k+m-1} A(0,l). \end{eqnarray*}
Dividing both side by $A(0,k+m)$ and applying Lemma~\ref{lem : intersection estimate ratios} this implies
\[ \left| \I \Big(\frac{\gamma_{k+m}}{A(0,k+m)},\delta \Big) - \I \Big(\frac{\gamma_k}{A(0,k)},\delta \Big) \right| \leq 2mB \kappa(\delta) a^{- \lfloor \frac{k}{m} \rfloor}.\]
From this and a geometric series argument, we deduce that for all $h = 0,\ldots,m-1$, the sequence $\Big\{ \I\Big( \frac{\gamma_{h+im}}{A(0,h+im)},\delta \Big) \Big\}_{i=0}^\infty$ is a Cauchy sequence of real numbers, and hence converges.  By Lemma~\ref{lem : intersection with delta estimate}, the limit is nonzero, and since this is true for every simple closed curve $\delta$, the sequence $\Big\{\frac{\gamma_{h+im}}{A(0,h+im)} \Big\}_{i=0}^\infty$ converges to some $\bar \nu^h \in \ML(S)$, supported on $\nu$ by Theorem~\ref{thm : subsurface coeff estimate}. 
\end{proof}

The next lemma is the analog of Theorem~6.1 from \cite{nue2}.  The proof is essentially the same, but since the required intersection number estimates are weaker here, we sketch the proof nonetheless.
\begin{lem} For each $h,h' \in \{0,\ldots,m-1\}$ with $h \neq h'$, we have
\[ \lim_{i \to \infty} \frac{\I(\gamma_{h+im},\bar \nu^h)}{\I(\gamma_{h+im},\bar \nu^{h'})} = \infty.\]
Consequently, $\bar \nu^h$ is not absolutely continuous with respect to $\bar \nu^{h'}$.
\end{lem}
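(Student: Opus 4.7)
The plan is to compute $\I(\gamma_{h+im}, \bar\nu^{h''})$ for both $h'' = h$ and $h'' = h'$ by using Proposition~\ref{prop : convergence in ML} to realize each $\bar\nu^{h''}$ as the scaling limit $\lim_j \gamma_{h''+jm}/A(0,h''+jm)$, passing the intersection pairing under this limit by continuity, and then invoking Theorem~\ref{thm : intersection} to estimate each $\I(\gamma_{h+im}, \gamma_{h''+jm})$. This gives
\[\I(\gamma_{h+im}, \bar\nu^{h''}) = \lim_{j \to \infty} \frac{\I(\gamma_{h+im}, \gamma_{h''+jm})}{A(0, h''+jm)}.\]

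For $h''=h$, the two-sided part of Theorem~\ref{thm : intersection} applies since $h+im \equiv h+jm \pmod{m}$ and the indices differ by at least $2m$ once $j \geq i+2$. I would use $\I(\gamma_{h+im}, \gamma_{h+jm}) \asym_{\kappa_0} A(h+im, h+jm)$ together with the bookkeeping identity
\[\frac{A(h+im, h+jm)}{A(0, h+jm)} = \frac{1}{A(0, h+im) \cdot b e_{h+im}},\]
obtained by canceling the common tail $\prod_{s=i+1}^{j-1} be_{h+sm}$ from numerator and denominator in (\ref{eq : A(i,k)}). Letting $j\to\infty$ yields the sharp estimate
\[\I(\gamma_{h+im}, \bar\nu^h) \asym_{\kappa_0} \frac{1}{A(0, h+im) \cdot b e_{h+im}}.\]

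For $h''=h'$, only the upper bound $\I(\gamma_{h+im}, \gamma_{h'+jm}) \leq \kappa_0 A(h+im, h'+jm)$ is available. The smallest index $s$ with $h'+sm \geq h+im+m$ equals $i+1$ when $h<h'$ and $i+2$ when $h>h'$, so I split into the two cases. The former gives
\[\I(\gamma_{h+im}, \bar\nu^{h'}) \leq \frac{\kappa_0}{A(0, h'+im) \cdot b e_{h'+im}},\]
while the latter acquires an additional factor of $(be_{h'+(i+1)m})^{-1}$ on the right. Taking the ratio $\I(\gamma_{h+im},\bar\nu^h)/\I(\gamma_{h+im},\bar\nu^{h'})$, the products $A(0,\cdot)\cdot be_\cdot$ telescope to $P_i := \prod_{s=1}^{i} e_{h'+sm}/e_{h+sm}$. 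If $h<h'$, each factor is at least $a^{h'-h}$, so $P_i \geq a^{i(h'-h)} \to \infty$. If $h>h'$, each factor is at most $a^{-(h-h')}$ so $P_i$ decays like $a^{-i(h-h')}$, but the extra factor $be_{h'+(i+1)m} \geq be_0 \cdot a^{h'+(i+1)m}$ more than compensates: the net exponent of $a$ becomes $h' + m + i(m-(h-h'))$, which tends to infinity because $m - (h-h') \geq 1$ whenever $h \neq h'$ in $\{0,\ldots,m-1\}$.

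For the final sentence, if $\bar\nu^h$ were absolutely continuous with respect to $\bar\nu^{h'}$, then by ergodicity of both measures on $\nu$ from Theorem~\ref{thm : nue}, the Radon--Nikodym derivative $d\bar\nu^h/d\bar\nu^{h'}$ is leaf-flow invariant and hence $\bar\nu^{h'}$-almost-everywhere constant. This would give $\bar\nu^h = c \bar\nu^{h'}$ for some $c>0$, making the intersection ratio constantly equal to $c$, contradicting the divergence just established. The main obstacle is the case $h>h'$: lacking a two-sided lower bound on $\I(\gamma_{h+im}, \gamma_{h'+jm})$, one must rely on the extra twist factor $be_{h'+(i+1)m}$ arising from the shifted start of the product in $A(h+im,h'+jm)$ to beat the exponential decay $a^{-i(h-h')}$ in $P_i$, and this works only because the two residues $h,h'$ lie within a window of width strictly less than $m$.
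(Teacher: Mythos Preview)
Your computation of the ratio $\I(\gamma_{h+im},\bar\nu^h)/\I(\gamma_{h+im},\bar\nu^{h'})$ via the $A(\cdot,\cdot)$ bookkeeping is exactly the paper's approach; the paper just packages it slightly differently by multiplying both intersection numbers by $\I(\gamma_h,\gamma_{h+(i+1)m})\asym A(0,h+(i+1)m)$, so that one product is $\asym 1$ and the other $\lm a^{-i}$, which keeps all inequalities running in the convenient direction.

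There is a genuine gap in your treatment of the case $h>h'$. You correctly observe that each factor of $P_i=\prod_{s=1}^i e_{h'+sm}/e_{h+sm}$ is at most $a^{-(h-h')}$, but this gives only an \emph{upper} bound on $P_i$, while what you need is a \emph{lower} bound on $P_i\cdot be_{h'+(i+1)m}$. Since no upper bound on $e_{k+1}/e_k$ is assumed, $P_i$ can be much smaller than $a^{-i(h-h')}$ (think $e_k=2^{2^k}$), so you cannot simply multiply the two separate bounds. The fix is to absorb the extra factor before estimating: rewrite
\[
P_i\cdot e_{h'+(i+1)m}=e_{h'+m}\prod_{s=1}^{i}\frac{e_{h'+(s+1)m}}{e_{h+sm}}\ \geq\ e_{h'+m}\,a^{\,i(m-(h-h'))},
\]
since $h'+(s+1)m-(h+sm)=m-(h-h')\geq 1$; this gives the divergence you want and is essentially what the paper's ``calculation and Lemma~\ref{lem : intersection estimate ratios}'' amounts to.

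Your argument for the ``Consequently'' clause is circular: you invoke ergodicity of $\bar\nu^{h'}$ from Theorem~\ref{thm : nue}, but this lemma is precisely one of the inputs to the proof of Theorem~\ref{thm : nue} in the appendix (the ergodicity assertion is proved only afterwards). The paper does not spell out this implication either, deferring to \cite{nue2}; one acceptable route is to note that the ratio divergence immediately rules out any bound $\bar\nu^h\leq C\bar\nu^{h'}$, which suffices for the subsequent linear-independence/ergodicity argument.
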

\begin{proof}[Sketch of Proof.] As in the proof of \cite[Theorem~6.1]{nue2}, it clearly suffices to prove that for $i$ sufficiently large, 
\[ \I(\gamma_h,\gamma_{h+(i+1)m})\I(\gamma_{h+im},\bar \nu^h) \asym 1 \]
and
\[  \lim_{i \to \infty} \I(\gamma_h,\gamma_{h+(i+1)m})\I(\gamma_{h+im},\bar \nu^{h'}) = 0. \]
Appealing to Proposition~\ref{prop : convergence in ML} to estimate $\bar \nu^h$ by $\tfrac{\gamma_{h+km}}{A(0,h+km)}$ for large $k \gg i$, and Theorem~\ref{thm : intersection}, we have
\[ \I(\gamma_0,\gamma_{h+(i+1)m}) \I(\gamma_{h+im},\bar \nu^h)  \asym  \tfrac{A(0,h+(i+1)m)A(h+im,h+km)}{A(0,h+km)} = 1 \]
The last equality here follows from a simple calculation using the formula (\ref{eq : A(i,k)}) for $A(i,j)$ (see the proof of \cite[Theorem~6.1]{nue2} for details).  The multiplicative error here can be made arbitrarily close to $\kappa_0^2$ (taking $k$ sufficiently large).

Similarly, we estimate $\bar \nu^{h'} = \tfrac{\gamma_{h'+km}}{A(0,h'+km)}$ and apply Theorem~\ref{thm : intersection} to obtain
\[ \I(\gamma_0,\gamma_{h+(i+1)m})\I(\gamma_{h+im},\bar \nu^{h'}) \lm \tfrac{A(0,h+(i+1)m)A(h+im,h'+km)}{A(0,h'+km)} \lm a^{-i} \]
The first multiplicative error can be made arbitrarily close to $\kappa_0^2$ (again by taking $k$ sufficiently large).  The second bound follows from a calculation and Lemma~\ref{lem : intersection estimate ratios}, with multiplicative error depending only on whether $h >h'$ or $h' > h$  (see \cite{nue2} for details).
\end{proof}
\begin{proof}[Proof of Theorem~\ref{thm : nue}.]  All that remains is to prove that $\bar \nu^0,\ldots,\bar \nu^{m-1}$ are ergodic measures.  At this point, the proof is identical to the proof of the analogous statement Theorem~6.7 from \cite{nue2}, appealing to the facts proved so far.  This proof involves a detailed analysis of Teichm\"uller geodesics, drawing specifically on results of Lenzhen-Masur \cite{lenmasdivergence} and the fourth author \cite{rshteich}.  As this would take us too far afield of the current discussion, we refer the reader to that paper for the details.
\end{proof}

\bibliographystyle{amsalpha}
\bibliography{reference}
\end{document}